\title{El monoide de endomorfismos de $G$-conjuntos: estructuras y otras propiedades algebraicas}
\author{Ramón H. Ruiz-Medina\footnote{Email: harath.ruiz@academicos.udg.mx}  \\[1em]
\small{Centro Universitario de Ciencias Exactas e Ingenierías}, \\ 
\small{Universidad de Guadalajara, Guadalajara, México.}}
\date{}
\newtheorem{teorema}{Teorema}[section]
\newtheorem{lema}[teorema]{Lema}
\newtheorem{corolario}[teorema]{Corolario}
\newtheorem{proposicion}[teorema]{Proposición}
\newtheorem{ejemplo}[teorema]{Ejemplo}
\newtheorem{definicion}[teorema]{Definición}
\newtheorem{observacion}[]{Observación}
\newcommand{\EndG}{\mathrm{End}_{G}(X)}
\newcommand{\AutG}{\mathrm{Aut}_{G}(X)}
\begin{document}

\pagenumbering{roman}

\maketitle

\textbf{Abstract}\\
Given the action of a group $G$ on a set $X$, an endomorphism of $X$ is a function $f:X \rightarrow X$ which is $G$-equivariant, that is, it commutes with the action, i.e., $f(g\cdot x)= g\cdot f(x)$, for all $x\in X$. The set of endomorphisms of a $G$-set $X$ is a monoid, with the composition of functions , which we will denote $\EndG$. Given subsets $U,N\subseteq M$, we say that $U$ generates $M$ modulo $N$ if it is satisfied that $M= \langle U \cup N \rangle$. The relative rank of M modulo N is the minimum cardinality of a set $U$ to generate $M$ modulo $N$. In this work we address the particular case in which $G$ and $X$ are finite to calculate the relative rank of the endomorphism monoid $\EndG$ modulo its group of units, denoted by $\AutG$. We also address structure situations, such as isomorphisms of $\AutG$ and $\EndG$ with other known structures.\\

\textbf{Resumen}\\
Dada la acción de un grupo $G$ sobre un conjunto $X$, un endomorfismo de $X$ es una función $f:X \rightarrow X$ la cual es $G$-equivariante, es decir que conmuta con la acción, i.e., $f(g\cdot x)= g\cdot f(x)$, para todo $x\in X$. El conjunto de endomorfismos de un $G$-conjunto $X$ es un monoide, con la composición de funciones, al que denotaremos $\EndG$. Dados subconjuntos $U,N\subseteq M$, decimos que $U$ genera $M$ modulo $N$ si se satisface que $M= \langle U \cup N \rangle$. El rank relativo de M modulo N es la cardinalidad mínima de un conjunto $U$ para generar a $M$ modulo $N$. En este documento abordamos el caso particular en el que $G$ y $X$ son finitos para calcular el rank relativo del monoide de endomorfismos $\EndG$ modulo su grupo de unidades, denotado por $\AutG$. Además abordamos situaciones de estructura, como isomorfismos de $\AutG$ y $\EndG$ con otras estructuras conocidas.

\newpage
\section*{Agradecimientos}
\begin{flushright}
\textit{La vida me ha llenado de personas maravillosas las cuales han sido importantes para mi, tantas que la verdad sería difícil mencionarlas a todas. Sin embargo es importante mencionar al Dr. Alonso Castillo quien ha sido un maravilloso mentor. A mi madre, quien es la persona a la que debo todo lo que tengo y todo lo que soy. Es ella la persona más importante de mi vida y sin ella absolutamente nada de esto habría sido posible.  A mi novia Lore, que me ha acompañado durante todo este tiempo, apoyándome y motivándome a siempre ser la mejor versión posible de mi. Su amor y su cariño me han llevado hasta donde estoy.\\
Mi familia, mis amigos, mis compañeros, mis profesores y sinodales también han sido parte importante de todo este proceso y jamás tendré suficiente tiempo para expresarles el cariño y agradecimiento que les tengo. }
\end{flushright}
\newpage

\maketitle

\tableofcontents

\newpage

\setlength{\parindent}{0in}

\pagenumbering{arabic}
\setcounter{page}{1}
\section{Introducción}

Un monoide es una estructura algebraica que se compone de un conjunto dotado de una operación binaria la cual es asociativa y cumple con tener un elemento neutro. Si dicha operación cumple también con la propiedad de que todo elemento tiene un elemento inverso, entonces la estructura cambia su nombre a \emph{grupo}. Dado un grupo $G$, un $G$-conjunto es un conjunto $X$ el cual tiene definida una acción con $G$; una acción es una aplicación del producto cartesiano $G\times X$ al mismo conjunto $X$, la cual satisface un par de sencillos axiomas. Dentro de la categoría de los $G$-conjuntos los llamados morfismos son aquellas funciones que conmutan con la acción, esto quiere decir que si sobre un elemento cualquiera dentro de $X$ queremos aplicar tanto la función como la acción de un elemento de $G$, es indiferente para el resultado en qué orden estas aplicaciones se realicen. El conjunto de morfismos de $G$- conjuntos, también llamados funciones $G$-\emph{equivariantes}, es un monoide con la composición de funciones.    \\

Dentro de la teoría de monoides y grupos el estudio de los llamados \emph{rank} y \emph{rank relativo} es un campo de gran interés. El primero representa el cardinal mínimo de algún conjunto generador dentro del monoide; subconjuntos los cuales al operar todos sus elementos logramos obtener todo el monoide completo. El rank relativo es por su parte la cantidad mínima de elementos que se deben agregar a un subconjunto para convertirlo en un conjunto generador, decimos que dicha cantidad es el rank relativo del monoide módulo el subconjunto.\\   

Los autómatas celulares (AC) surgieron en la década de 1940 con John Von Neumann. Estos son objetos de gran importancia en las ciencias computacionales y a su vez son la motivación de nuestro trabajo. Dado un conjunto $A$, denominado \emph{alfabeto}, el espacio de configuraciones es el conjunto de funciones que trasforman elementos de un grupo cualquiera $G$ en elementos del alfabeto. Sobre este mismo espacio se puede definir una acción llamada \emph{la acción de traslación}, la cual es un objeto bastante estudiado en diversas ramas de las matemáticas como la dinámica simbólica, los sistemas dinámicos, entre otros, además de tener bastantes aplicaciones en otras ciencias como la física o la biología. Los autómatas celulares son funciones definidas a partir de restricciones dentro del grupo a un conjunto finito llamado \emph{conjunto memoria}. Por su definición los autómatas celulares conmutan con la acción de traslación, convirtiendo así al espacio de configuraciones en un $G$-conjunto. El Dr. Alonso Castillo, junto con algunos otros investigadores, han abordado el concepto de autómata celular desde un punto de vista meramente algebraico y son parte de nuestro trabajo al ser un ejemplo particular de un monoide de endomorfismos de $G$-conjuntos. \\

Uno de los objetivos de este trabajo es determinar el rank relativo del monoide de endomorfismos de $G$-conjuntos con respecto a su grupo de unidades, el grupo de los llamados automorfimos de $G$-conjuntos. Se definen los conceptos y las herramientas necesarias para determinarlo. El otro objetivo es determinar similitudes de la estructura algebraica del monoide, o de submonoides dentro de él, con objetos matemáticos más simples o más conocidos. \\

Los resultados de esta tesis forman parte del artículo \cite{raa}, el cual se encuentra publicado en la revista Semigroup Forum. Antes de enunciar el resultado principal de esta tesis es necesario establecer notación. Sea $G$ un grupo y $H$ un subgrupo denotamos por $[H]$ a la clase de conjugación de $H$, el conjunto de los subgrupos conjugados de $H$ por todos los elementos de $G$. Llamamos caja $\mathcal{B}_{[H]}$ al conjunto de puntos en $X$ cuyos estabilizadores son conjugados de $H$. Dado un subconjunto $N$ de $G$, definimos la $N$-clase de conjugación de $H$ como $[H]_{N}$, el cual es el conjunto de subgrupos conjugados de $H$ únicamente por los elementos de $N$. Definimos el conjunto $U(H)$ como el conjunto de $N$-clases de conjugación de los subgrupos de $G$ que contienen a $H$ y denotamos por $\kappa_{G}(X)$ al conjunto de subgrupos de $G$ los cuales tienen una sola $G$-órbita en su correspondiente caja. Entonces demostramos en esta tesis que el rank relativo del monoide de endomorfismos de un $G$-conjuntos $X$, $End_{G}(X)$, módulo su grupos de unidades, el grupo de automorfismos del $G$-conjunto $X$, $Aut_{G}(X)$, está dado por:
$$\sum_{i=i}^{r}{|U(H_{i})|}-|\kappa_{G}(X)|.$$

Este trabajo consta de cuatro partes, además de la sección de apéndices: en la segunda sección se enlistan y se demuestran una seria de conceptos preliminares los cuales son de suma importancia para comprender y desarrollar nuestros objetivos. Se profundiza lo necesario en los conceptos de monoide, grupo, acción y endomorfismo. En la tercer sección se desarrolla la parte medular de nuestro trabajo. Se definen las características especiales de nuestro trabajo y se demuestran todos los resultados sobresalientes. La cuarta sección es un apartado en el cual se toma el monoide de autómatas celulares para ejemplificar todos los resultados que aparecen en la sección número tres. En la quinta sección encontramos las conclusiones, donde se plasman de manera breve y concisa los resultados sintetizados y se dan algunos comentarios adicionales a nuestro trabajo. Finalmente este documento cuenta con una sección de apéndices en la cual se detallan algunos conceptos que se consideran relevantes para el desarrollo de este trabajo.

\newpage

\section{Conceptos preliminares}

\subsection{Los monoides como estructura algebraica}
El concepto básico más necesario para nuestro trabajo es el concepto de \emph{monoide}. Trabajaremos con distintas categorías, todas relacionadas, las cuales son construidas a partir de este concepto. En esta sección daremos los conceptos básicos que abarcan desde la definición de monoides hasta las definiciones de otras estructuras, morfismos, acciones, producto corona, entre otras, así como los resultados necesarios para la construcción de nuestro trabajo. 

\begin{definicion} {(Monoide)}
Un conjunto $M$ se dice \emph{semigrupo} si está dotado de una operación binaria $\cdot: M\times M \rightarrow M$ asociativa: $$(x\cdot y)\cdot z=x \cdot (y\cdot z),\ \forall x,y,z\in M.$$
Si además existe un elemento $e\in M$ tal que $$x\cdot e= e\cdot x= x,\ \forall x\in M, $$ decimos que $M$ es un \emph{monoide}.
\end{definicion}

El elemento $e$ recibe el nombre de elemento identidad. Nombramos \emph{unidades} a los elementos $x\in M$ para los cuales existe un elemento $y\in M$ tal que $x\cdot y=y\cdot x= e$. Denotaremos por $M^{*}$ al conjunto de unidades de $M$, también llamados elementos invertibles. Además denotamos $y=x^{-1}$ al nombrado elemento inverso de $x$.\\

Existen diversas notaciones para la operación definida en un conjunto, aunque este no cumpla con todas las características para ser un monoide, tales como $x\cdot y$, $x+y$, o incluso simplemente $xy$. Pero todas significan lo mismo, la operación dentro del conjunto de los elementos $x$ y $y$. Existen otras representaciones especiales para las operaciones en diversos conjuntos (aunque no sean monoides) como los conjuntos de funciones con la composición , $f\circ g$, o $\mathbb{R}^{3}$ con el producto cruz, $u \times v$.

\subsection{El monoide de transformaciones}
Recordamos que una función, transformación, mapeo o aplicación de un conjunto $X$ a un conjunto $Y$ es una relación la cual asigna a cada elemento de $X$ un único elemento en $Y$. Para una notación más agradable, cuando el conjunto $X$ es finito, podemos escribir
$$f=\left( \begin{array}{ccccc}
x_{1}& x_{2} & \hdots & x_{n} & \hdots \\
y_{1}& y_{2} & \hdots & y_{n} & \hdots \end{array} \right),$$
donde decimos que $y_{i}=f(x_{i})$ es la imagen del elemento $x_{i}$. También solemos denotar por $f:X\rightarrow Y$, a las funciones que van de $X$ en $Y$.\\

Llamaremos funciones \emph{constantes} a las funciones que para un $a\in Y$ fijo, $f(x)=a$, $\forall x\in X$. Recordemos que una función se dice \emph{inyectiva}, si se tiene que $f(x)=f(x')$ implica que $x=x'$. Y se dice \emph{sobreyectiva} si para cualquier elemento $y\in Y$, existe un elemento $x\in X$ tal que $f(x)=y$. Una función que cumple con ser inyectiva y sobreyectiva se dice \emph{biyectiva}. \\

Definimos una \emph{operación} sobre el conjunto de funciones. Dadas dos funciones $f:X\rightarrow Y'$ y $g:Y\rightarrow Z$, donde $Y' \subseteq Y$, definimos la \emph{composición} de $g$ con $f$ como:
$$g\circ f(x)= g(f(x)).$$
Notemos que $g \circ f$ define una función que toma elementos de $X$ y, dado que $f(x)$ es un elemento en $Y'$, les asigna elementos en $Z$, i.e., $g\circ f:X\rightarrow Z$. No es complicado verificar que la composición de funciones biyectivas, es también una función biyectiva.\\

Debemos notar que no existe una \emph{cerradura} bajo estas condiciones para la composición de funciones, pues las funciones que van de $X$ en $Y$, las funciones que van de $Y$ en $Z$ y las funciones de $X$ en $Z$ son objetos matemáticos completamente distintos. También debemos notar que la composición de $f$ con $g$ no es posible, pues $f(g(x))$ no tiene sentido, $g(x)$ es un elemento en $Z$ y $f$ toma elementos de $X$. Sin embargo, si $X$, $Y$ y $Z$ fueran el mismo conjunto, digamos $X$, entonces la composición de dos funciones $f,g:X\rightarrow X$ da como resultado una función $g\circ f:X\rightarrow X$ e incluso la composición $f\circ g$ es posible.\\

Llamamos \emph{identidad} a la función que satisface que 
$$f(x)=x,\ \forall x\in X,$$ denotada como $id$.

\begin{lema}
Dado un conjunto $X$, el conjunto de todas las funciones de $X$ en $X$,
$$Trans(X):=\{f:X\rightarrow X\},$$ dotado de la composición de funciones, es un monoide.\end{lema}

\begin{proof}
Debemos recordar que dos funciones $f,g:X\rightarrow Y$ son iguales si $f(x)=g(x)$, $\forall x\in X$. Entonces dadas tres funciones $f,g,h:X\rightarrow X$ tenemos que 
$$\begin{array}{rl}
(f\circ g)\circ h (x) &= (f\circ g)(h(x))\\
&=f(g(h(x)))\\
&=f(g\circ h(x))\\
&=f\circ (g\circ h)(x),\  \forall x\in X. 
\end{array}$$
Entonces $(f\circ g)\circ h=f\circ (g\circ h)$ y la operación es asociativa. La función identidad $id(x)=x$, cumple con ser la identidad del monoide.
$$\begin{array}{rl}
f\circ id (x)&= f(id(x))\\
&=f(x)\\
id\circ f (x)&=id(f(x)), \forall x\in X,

\end{array}$$
es decir que $f\circ id = id\circ f = f$.
\end{proof}

También definimos el concepto de \emph{transformación} o \emph{función parcial} de un conjunto como una función $f$ que va de un subconjunto $A$ de $X$, es decir, $f:A\subseteq X\rightarrow X$. Denotaremos por $PT(X)$ al conjunto de transformaciones parciales de un conjunto $X$ en sí mismo, es decir:
$$PT(X):=\{f:A \rightarrow X|\ A\subseteq X\}.$$
Es importante mencionar que para los elementos $x$ que no se encuentren en $A$ decimos que la función no está definida y podemos utilizar el símbolo del vacío para denotar esta situación, esto es
$$f(x)= \emptyset,\ \forall x\notin A.$$

\begin{ejemplo}

Sea $X$ un conjunto con 2 elementos, digamos $x\in \{a,b\}$. Utilizando la notación de arreglos, tenemos que los únicos elementos en $PT(X)$ son:
\begin{tiny}
$$PT(X)=
\left\{
\begin{array}{c}

f_{1}=\left( \begin{array}{cc} a&b \\ a& a \end{array} \right),\ 

f_{2}=\left( \begin{array}{cc} a&b \\ a& b \end{array} \right),\ 

f_{3}=\left( \begin{array}{cc} a&b \\ b& a \end{array} \right),\ 

f_{4}=\left( \begin{array}{cc} a&b \\ b& b \end{array} \right),\ 

\\

f_{5}=\left( \begin{array}{cc} a&b \\ a& \emptyset \end{array} \right),\ 

f_{6}=\left( \begin{array}{cc} a&b \\ b& \emptyset \end{array} \right),\ 

f_{7}=\left( \begin{array}{cc} a&b \\  \emptyset& a \end{array} \right),\ 

f_{8}=\left( \begin{array}{cc} a&b \\ \emptyset & b \end{array} \right),\ 

f_{9}=\left( \begin{array}{cc} a&b \\ \emptyset & \emptyset  \end{array} \right)

\end{array} \right\}.
$$ \end{tiny}
Realizamos, utilizando diagramas como herramienta visual, la composición de dos elementos de $PT(X)$.

$$f_{5} \circ f_{3}= \xymatrix{ 
a\ar[rd] &a \ar[r]  & a  \\
b\ar[ru]& b\ar[rd]  & b \\
 & & \emptyset 
}$$
Notemos lo que esta operación nos arroja como resultado:
$$f_{5} \circ f_{3} = \left( \begin{array}{ccc}  a \longrightarrow \emptyset \\
b \longrightarrow a\end{array}\right) = f_{7}.$$
Notemos que si $f_{i}(x)=\emptyset$, diremos que por definición $f_{i}(\emptyset)=\emptyset$, entonces tendrá sentido componer, por ejemplo $f_{5}$ con $f_{3}$. Esto para que al momento de componerlas en este orden la composición se logre realizar.
$$f_{3} \circ f_{5}= \xymatrix{ 
a\ar[r] &a \ar[rd]   & a  \\
b\ar[rd]& b \ar[ru] & b \\
\emptyset \ar[r]& \emptyset \ar[r]  & \emptyset 
}$$
Pues en general $f_{3}(\emptyset)$ no está definido. Con esta convención tiene sentido decir que 

$$f_{3} \circ f_{5} = \left( \begin{array}{ccc}  a \longrightarrow b \\
b \longrightarrow \emptyset \end{array}\right) = f_{6}.$$
Entonces existe una cerradura y por lo tanto $PT(X)$ es un monoide. También es importante notar que $f_{1}$, $f_{2}$, $f_{3}$ y $f_{4}$ son funciones definidas en todo $X$, por lo tanto están en $Trans(X)$ y $Trans(X) \subset PT(X)$.  

\end{ejemplo}

Debemos mencionar también que $$f_{3} \circ f_{5} \neq f_{5} \circ f_{3},$$
lo cual significa que la composición no tiene la propiedad \emph{conmutativa}. No todos los monoides cuentan con esta propiedad.\\

El siguiente resultado es bastante conocido para la caracterización de las unidades dentro del monoide de transformaciones. 

\begin{proposicion}
Dado un elemento $f\in Trans(X)$, $f$ es una unidad si y solo si es biyectivo.
\end{proposicion}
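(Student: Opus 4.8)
The plan is to prove both implications of the biconditional separately, relying on the standard fact that a function between sets is a bijection if and only if it has a two-sided inverse, together with the fact already noted in the excerpt that the composition of bijections is a bijection and that $id$ is the identity element of $Trans(X)$.

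First I would prove that if $f\in Trans(X)$ is a unit, then $f$ is biyective. By definition of unit, there exists $g\in Trans(X)$ with $f\circ g = g\circ f = id$. From $g\circ f = id$ I would deduce injectivity of $f$: if $f(x)=f(x')$, then applying $g$ gives $x = id(x) = g(f(x)) = g(f(x')) = id(x') = x'$. From $f\circ g = id$ I would deduce surjectivity of $f$: given any $y\in X$, the element $x = g(y)$ satisfies $f(x) = f(g(y)) = id(y) = y$. Hence $f$ is both injective and surjective, i.e.\ biyective.

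Conversely, I would show that if $f$ is biyective then it is a unit. Since $f$ is a bijection from $X$ to $X$, for each $y\in X$ there is a unique $x\in X$ with $f(x)=y$; defining $g(y)$ to be this unique $x$ gives a well-defined function $g:X\to X$, i.e.\ $g\in Trans(X)$. It then remains to check $f\circ g = id$ and $g\circ f = id$ directly from the definition of $g$: for $y\in X$, $f(g(y)) = f(x) = y = id(y)$; and for $x\in X$, writing $y=f(x)$, uniqueness forces $g(y)=x$, so $g(f(x)) = x = id(x)$. Therefore $g$ is a two-sided inverse of $f$ in $Trans(X)$, and $f$ is a unit.

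I do not expect any genuine obstacle here; the only point requiring a little care is the well-definedness of the inverse function $g$ in the converse direction, which is exactly where both injectivity (uniqueness of the preimage) and surjectivity (existence of the preimage) of $f$ are used — this is the step I would state most explicitly. Everything else is a routine unwinding of the definitions of composition and of the identity function $id$.
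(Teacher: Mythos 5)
Tu demostración es correcta y completa: la dirección ``unidad $\Rightarrow$ biyectiva'' usa correctamente $g\circ f = id$ para la inyectividad y $f\circ g = id$ para la sobreyectividad, y en el recíproco construyes la inversa conjuntista y verificas que es inversa bilateral en $Trans(X)$, señalando con razón que la buena definición de $g$ es justamente donde intervienen la inyectividad y la sobreyectividad de $f$. El documento enuncia esta proposición como un resultado bien conocido y no incluye demostración propia, de modo que tu argumento es precisamente la prueba estándar que el texto da por sobreentendida; no hay ninguna laguna.
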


\begin{definicion}
El conjunto de unidades de $Trans(X)$, es llamado el conjunto de simetrías, o permutaciones, de $X$ y lo denotaremos $Sym(X)$.  
\end{definicion}

Debido a que la composición de funciones biyectivas es una biyección, tenemos que el conjunto de permutaciones de un conjunto $X$ es un monoide, más específicamente un grupo, como veremos en secciones posteriores.\\

Toda permutación $\sigma \in Sym(X)$ de un conjunto finito, si utilizamos la notación de arreglos, puede ser visualizada como
$$\sigma= \left(  \begin{array}{cccc}  x_{1} & x_{2} & \cdots & x_{n} \\ y_{1} & y_{2} & \cdots & y_{n} \end{array}\right).$$
Además se cumple que cada $y_{i}$ es distinto e incluye todos los elementos de $X$.\\

Ambos conjuntos, el de transformaciones y el de simetrías, son estudiados de manera particular en secciones posteriores dentro de los apéndices de esta tesis por algunas de sus características las cuales son relevantes para nuestro trabajo. Entre estas características encontramos su cardinalidad, su cerradura y, lo más importante para nuestro trabajo, sus conjuntos generadores.

\subsubsection{Homomorfismos de monoides}
En esta sección introduciremos la noción de morfismo, la cual es pieza clave para trabajar con cualquier estructura algebraica, como por ejemplo para demostrar el Lema de Cayley, herramienta importante la cual nos permitirá trabajar con cualquier monoide a partir de algún monoide de transformaciones.

\begin{definicion}{(Homomorfismo)}

Dados dos monoides $(M,\cdot)$ y $(N,\circ)$, un \emph{homomorfirmo} es una función $\varphi:M\rightarrow N$ la cual satisface que 
$$\varphi(x \cdot y)=\varphi(x)\circ \varphi(y),\ \forall x,y\in M.$$
\end{definicion}

Si $\varphi$ es una función biyectiva, la cual cumple que $\varphi^{-1}$ es también un homomorfismo, entonces decimos que $\varphi$ es un \emph{isomorfismo} entre $M$ y $N$, decimos entonces que $M$ y $N$ son \emph{isomorfos} y denotamos $M\cong N$.\\

Dado un homomorfismo inyectivo $\psi$ entre 2 monoides $M$ y $N$, se tiene que $M$ es isomorfo a su imagen, i.e., $M\cong \psi(M)$. Y dado que la imagen de cualquier monoide bajo cualquier homomorfismo es un monoide en si, entonces se tiene que $\psi(M)$ es un submonoide de $N$, lo cual denotamos $\psi(M) \leq N$ y en particular nos permite decir que $$M \leq N,$$ a pesar de que como conjuntos son completamente distintos y no existe una contención real entre ellos.

\begin{ejemplo}
Tomemos el monoide de matrices de $2\times 2$, $M_{2\times 2}$ , con entradas reales y el grupo de transformaciones de un conjunto $X=\{a,b\}$.\\
Notemos que $Trans(X)$ solo tiene 4 elementos, de los cuales $f_{1}$ y $f_{4}$ son funciones constantes las cuales denotaremos por el elemento de su imagen, es decir 
$$\begin{array}{c}
f_{1}=a\\
f_{4}=b.
\end{array}$$
También se tiene que $f_{2}=id$ y denotaremos a $f_{3}=(1\ 2)$, usando la notación usual de ciclos cuando se trabaja el monoide de transformaciones (ver apéndice A). Entonces 
$$Trans(X)=\{id, (1,2),a,b\}.$$
Además expresaremos todas las operaciones de $Trans(X)$ en la siguiente matriz, a la cual llamaremos tabla de Cayley:

$$\begin{array}{c|cccc} 
\circ & id &(1\ 2) &a &b \\
\hline
id &id &(1\ 2) &a &b \\
 (1\ 2)&(1\ 2) &id &b &a \\
 a& a&a &a &a \\
 b& b& b&b &b 

\end{array}$$
Definimos entonces el mapeo $\varphi:Trans(X) \rightarrow M_{2\times 2}$ definido por:

$$\varphi= \left\{ \begin{array}{ccc}
 id& \longrightarrow & \left( \begin{array}{cc}1&0 \\0&1 \end{array}\right) \\
 (1\ 2)& \longrightarrow & \left( \begin{array}{cc}0&1 \\1&0 \end{array}\right)   \\
 a& \longrightarrow & \left( \begin{array}{cc}1&1 \\0&0 \end{array}\right)  \\
 b& \longrightarrow & \left( \begin{array}{cc}0&0 \\ 1&1 \end{array}\right)   \\

\end{array}\right. .$$
Es fácil ver que, por ejemplo, que 
$$\begin{array}{ccc}  
\left( \begin{array}{cc}0&1 \\1&0 \end{array}\right)   \left( \begin{array}{cc}1&1 \\0&0 \end{array}\right) &=& \left( \begin{array}{cc}0&0 \\1&1 \end{array}\right)\\
 \varphi((1\  2)) \varphi(a)&=&\varphi((1\ 2)\circ a)=\varphi(b).  \end{array}$$
Mediante cálculos sencillos se puede notar que estas igualdades se cumplen para cualquier producto en la tabla de Cayley de $Trans(X)$. 
Entonces podemos decir que $Trans(X)$ es un submonoide de $M_{2\times 2}$ y lo denotamos 
$$Trans(X) \leq M_{2\times 2},$$ a pesar de que son dos conjuntos completamente distintos.

\end{ejemplo}

\begin{teorema}{(Lema de Cayley)}\label{lema de cayley}

Todo monoide $M$ es isomorfo a un submonoide del monoide de transformaciones de algún conjunto. Es decir, existe un conjunto $X$ tal que $$M \leq Trans(X).$$

\end{teorema}

La prueba del Lema de Cayley requiere del concepto de \emph{acción de grupo}. Por lo que la demostración será incluida en la Sección \emph{Acciones y grupos}.

\subsubsection{Monoides finitos}
Abordaremos en esta sección un caso particular del monoide de transformaciones, el caso donde $X$ es un conjunto finito.\\

Dado un conjunto finito $X$, el conjunto de transformaciones de $X$ en $X$, $Trans(X)$, es también un conjunto finito. Específicamente si se tiene que $|X|=n$, entonces debe ocurrir que $|Trans(X)|=n^{n}$. Además se tiene que $|PT(X)|=(n+1)^{n}$.

\begin{ejemplo}\label{eje}
Sea $X=\{a,b\}$. Tenemos que los únicos elementos en $Trans(X)$ son:
$$\begin{array}{cc}   

f_{1}=\left\{ \begin{array}{ccc} a \rightarrow a \\   b\rightarrow a \end{array} \right.
& 
f_{2}=\left\{ \begin{array}{ccc} a \rightarrow a \\   b\rightarrow b \end{array} \right. 
\\

f_{3}=\left\{ \begin{array}{ccc} a \rightarrow b \\   b\rightarrow a \end{array} \right.
 &
 
  f_{4}=\left\{ \begin{array}{ccc} a \rightarrow b \\   b\rightarrow b \end{array} \right.

\end{array}$$

\end{ejemplo}

\begin{definicion}{(Ideal)}

Decimos que un submonoide $I$ de un monoide $M$ es un ideal de $M$ si para cualquier $a\in I$ se tiene que $xa\in  I$, para cualquier $x\in M$.\\
\end{definicion}

Presentamos el siguiente lema, el cual es un resultado conocido en el estudio del monoide de transformaciones, como herramienta para demostrar el teorema posterior.

\begin{lema}\label{asd}
Sea $X$ un conjunto finito y $f: X\rightarrow X$ una transformación en $Trans(X)$. Entonces $f$ es inyectiva si y solo si es sobreyectiva.
\end{lema}

\begin{teorema}
Sea $M$ un monoide finito, entonces el conjunto de no-unidades de $M$ es un ideal.
\end{teorema}
\begin{proof}
Sea $f:X \rightarrow X$ una no-unidad y sea $\sigma$ una unidad, entonces tenemos que verificar dos casos: si $f$ no es ni sobreyectiva o si no es inyectiva.\\

Caso 1. Supongamos que $f$ no es inyectiva. Entonces existen $x,y\in X$, distintos, tales que $f(x)=f(y)$, por lo tanto tenemos que $\sigma \circ f(x)=\sigma \circ f (y)$, por lo que $\sigma \circ f$ no es inyectiva, por lo tanto no es unidad. \\

Caso 2. Supongamos que $f$ no es sobreyectiva existe un $t\in X$ tal que $f(x)\neq t$, para todo $x\in X$. Entonces $\sigma(t)$ satisface que  $f\circ \sigma (x) \neq \sigma(t)$ para todo $x\in X$. 
\end{proof}
 Usaremos los datos del Ejemplo \ref{eje} para ejemplificar el Lema \ref{asd}.

\begin{ejemplo}
Sea $X=\{a,b\}$, tenemos:
$$\begin{array}{cc}   

f_{1}=\left\{ \begin{array}{ccc} a \rightarrow a \\   b\rightarrow a \end{array} \right.
& 
f_{2}=\left\{ \begin{array}{ccc} a \rightarrow a \\   b\rightarrow b \end{array} \right. 
\\

f_{3}=\left\{ \begin{array}{ccc} a \rightarrow b \\   b\rightarrow a \end{array} \right.
 &
 
  f_{4}=\left\{ \begin{array}{ccc} a \rightarrow b \\   b\rightarrow b \end{array} \right.

\end{array}.$$

De teoría básica de funciones, notamos que las únicas no-unidades son $f_{1}$ y $f_{4}$, pues $f_{2}$ y $f_{3}$ son unidades; $f_{2}$ es directamente la identidad y $f_{3}$ es su propio inverso. Queremos ver que no importa con quien compongamos a las no-unidades, estas seguirán siendo no-unidades. A continuación mostramos lo resultados de todas las posibles composiciones de parejas de elementos:

$$\begin{array}{c|c|c|c}  f_{1}\circ f_{1}=f_{1} &
f_{1}\circ f_{2}=f_{1} &
f_{1}\circ f_{3}=f_{1} &
f_{1}\circ f_{4}= f_{1}\\
f_{4}\circ f_{1}= f_{4}&
f_{4}\circ f_{2}= f_{4}&
f_{4}\circ f_{3}= f_{4}&
f_{4}\circ f_{4}=f_{4} 
\end{array}$$
Incluso si realizamos la composición por el otro lado:
$$\begin{array}{c|c|c|c}  f_{1}\circ f_{1}=f_{1} &
f_{2}\circ f_{1}=f_{1} &
f_{3}\circ f_{1}=f_{4} &
f_{4}\circ f_{1}= f_{4}\\
f_{1}\circ f_{4}= f_{1}&
f_{2}\circ f_{4}= f_{4}&
f_{3}\circ f_{4}= f_{1}&
f_{4}\circ f_{4}=f_{4} 
\end{array}$$

\end{ejemplo}

 Presentamos un contraejemplo del teorema para el caso donde $M$ no es un monoide finito.

\begin{ejemplo}
Sea $M=Trans(\mathbb{Z})$, tomamos los elementos:

$$\begin{array}{cc}

f(n)=\left\{\begin{array}{cc} n+1 & ,n>0 \\  n & ,n\leq 0\end{array}    \right.,    &    g(n)=\left\{\begin{array}{cc} 1-n & ,n>0 \\  -n & ,n\leq 0\end{array}    \right. 

\end{array}.$$

Es fácil ver que $f(n)$ no es una unidad, no es invertible, pues no es sobreyectiva. No existe $z\in \mathbb{Z}$ tal que $f(z)=1$. Del mismo modo se puede notar que $g(n)$ tampoco es una unidad, pues no es inyectiva, pues se tiene que $g(0)=g(1)$. Dicho de otro modo, no existen $f^{-1}$ ni $g^{-1}$. Sin embargo, se puede verificar fácilmente que $$g \circ f(n)=-n, $$ donde $h(n)=-n$ es una unidad de $Trans(\mathbb{Z})$, es decir, el producto de dos no-unidades da como resultado una unidad.

\end{ejemplo}

\subsubsection{Rank de monoides}

En esta sección definimos los conceptos de \emph{rank} y \emph{rank relativo}, los cuales son la base de nuestro trabajo. \\

Sea $T$ un subconjunto de un monoide $M$, definimos a $\langle T \rangle$ como el monoide más pequeño que contiene a $T$. Es fácil verificar que se satisfacen las siguientes propiedades:
\begin{enumerate}
\item[a)] $\langle T \rangle$ es la intersección de todos los submonoides de $M$ que contienen a $T$. 
\item[b)] Se cumple que:
$$\langle T \rangle= \{t_{1}t_{2}\cdots t_{k}:\ t_{i}\in T,\ k\in \mathbb{N}\}.$$
\end{enumerate}
Si se tiene que $\langle T \rangle =M$ decimos que $T$ genera a $M$, o que es un conjunto generador.

\begin{definicion}{(Rank)}

Dado un monoide $M$ definimos su rank como:
$$rank(M)=\min\{|T|:\ \langle T \rangle = M\}.$$

\end{definicion}

 Los siguientes son ejemplos bien estudiados del concepto de rank de un monoide. Algunos se pueden encontrar desarrollados en la sección de apéndices del presente documento.

\begin{ejemplo}[El rank de $Trans(X)$]
Sea $X$ un conjunto finito. Entonces se tiene que $rank(Trans(X))=3$. Además se tiene que $rank(Sym(X))=2$.

\end{ejemplo}

El rank de monoides no se comporta de manera agradable cuando tomamos submonoides, o subgrupos; en otras palabras, si $K$ es un submonoide de $M$, puede no haber relación entre el $rank(K)$ y el $rank(M)$.

\begin{ejemplo}[El cubo de rubik]
El cubo de rubik es un polígono de 6 lados (cubo) seccionados en 9 secciones congruentes cada lado. Es decir que es una figura con 54 elementos, los cuales, a grandes rasgos, es posible permutar. Es decir hablamos del conjunto de simetrías de 54 elementos, $S_{54}$.\\
\begin{figure}[ht]
\centering
\includegraphics[width=2in]{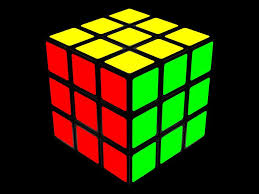}
\end{figure}

Sin embargo, el cubo de rubik es conocido por tener la limitación que solo pueden utilizarse las permutaciones ilustradas en la imagen. \\
\begin{figure}[ht]
\centering
\includegraphics[width=4in]{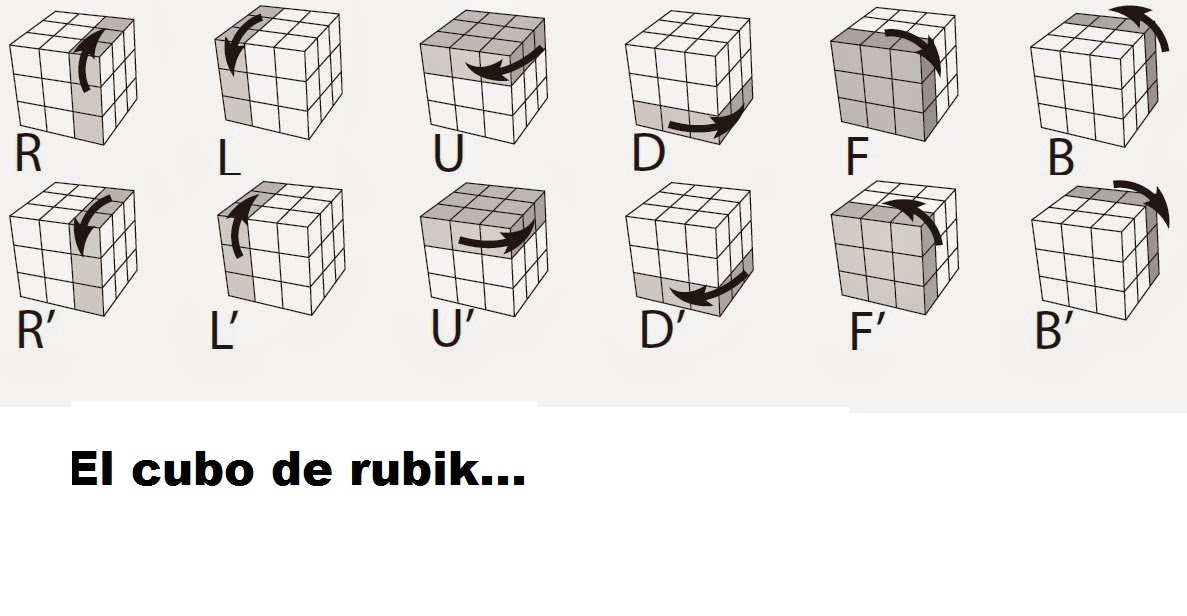}
\end{figure}

Para los que no lo han trabajado, se sabe que no todas las permutaciones existentes son posibles dentro del cubo de rubik. Por ejemplo, los colores verde, rojo y amarillo deben de permanecer siempre juntos para la pieza mostrada en la esquina superior al frente, o el rojo y el verde deben mantenerse juntos para la pieza del medio. En general, para ser más ilustrativos, hay una pieza de dos lados verde con rojo y una la cual tiene verde y naranja, es decir que una combinación de verde con azul o verde con naranja es completamente imposible. Esto nos muestra que el cubo de rubik es un subconjunto propio de $S_{54}$.\\

Sin embargo, es fácil ver que la idea del cubo de rubik es que es generado por las permutaciones 
$$Rubik=\langle\{R,R',L,L',U,U',D,D',F,F',B,B'\}\rangle,$$
por lo cual es un submonoide de $S_{54}$. Por su construcción podemos decir que 
$$rank(Rubik)\leq 12.$$
Más aún, es fácil identificar que $$R'=R^{3},$$
lo cual se satisface para cada par de elementos en el conjunto generador, por lo que entonces también se satisface la siguiente igualdad:
$$Rubik=\langle\{R,L,U,D,F,B\}\rangle,$$
lo cual indica que $$rank(Rubik)\leq 6.$$
Si numeramos los cuadros del cubo, podemos ver que cada elemento del conjunto generador deja fijos una cantidad específica de ellos, 33 para ser exactos, En particular existen por lo menos 2 movimientos, digamos $M$ y $N$, que dejan fijo a un elemento seleccionado $\alpha$, i.e., $M(\alpha)=N(\alpha)=\alpha$. Si tomamos un movimiento que no fije el elemento seleccionado $\alpha$, digamos $K$,i.e., $K(\alpha) \neq \alpha$, es imposible generar a $K$ como combinación de $M$ y $N$, i.e., $K\notin \langle M,N \rangle$. Esto significa que 2 elementos del conjunto generador no son suficientes para generar a todo el submonoide \emph{rubik}. Y por lo tanto tenemos que 

$$2<rank(Rubik).$$
Sin embargo, por el ejemplo anterior, sabemos que $rank(S_{54})=2$. Lo que nos da un contra-ejemplo para decir que el rank de un monoide no debe conservarse a cualquier submonoide.\\

\end{ejemplo}

El siguiente lema nos genera herramientas elementales para acotar el rank en algunos casos.

\begin{lema}
Sean $G$ y $H$ grupos y sea $N$ un subgrupo normal de $G$. Entonces:
\begin{enumerate}
\item[i)] $rank(G/N)\leq rank(G)$.
\item[ii)] $rank(G\times H)\leq rank(G)+rank(H)$.
\item[iii)] $rank(G\wr S_{n})\leq rank(G)+rank(S_{n})$, $\forall n\geq 1$ .
\item[iv)] $rank(\mathbb{Z}_{d}\wr S_{n})=2 $, $\forall n,d \geq 2$.

\end{enumerate}

\end{lema}

 Los pormenores y demostración de este lema pueden consultarse en documentos como \cite{cas8}.\\

Otro caso posible al momento de tratar de determinar el rank de un monoide es tener un monoide $M$ el cual es finitamente generado, pero algún submonoide $K$ no lo sea.

\begin{ejemplo}[El conmutador de un grupo libre]
Dado el grupo libre generado por dos símbolos $F_{2}$, digamos $F_{2}=\langle a,b\rangle$, podemos tomar el subgrupo
$$S=\langle \{b^{n}ab^{-n}:n\in \mathbb{N}\}\rangle.$$
Por su construcción, es fácil ver que $S\leq F_{2}$ y es un subgrupo generado por una cantidad infinita de elementos.
\end{ejemplo}

Dados dos subconjuntos $A$ y $U$ de un monoide $M$, decimos que $U$ genera a un monoide $M$ módulo $A$ si se satisface que $$M=\langle U \cup A \rangle.$$
Definimos pues el concepto de \emph{rank relativo}.

\begin{definicion}{(Rank relativo)}

Sea $M$ un monoide y $U$ un subconjunto de $M$, definimos el \emph{rank relativo} de $U$ sobre $M$ como:
$$rank(M:U)=min\left\{|V|:\ M=\langle U\cup V \rangle\right\}.$$
\end{definicion}
 Y recordemos que el conjunto de unidades es el conjunto de todos los elementos invertibles de $M$. Entonces presentamos el siguiente resultado.

\begin{teorema}[Rank relativo]
Sea $M$ un monoide y $U$ su conjunto de unidades, entonces 
$$rank(M)\leq rank(U)+rank(M:U).$$
\end{teorema}

Este teorema es bastante abordado en varias subramas de la teoría de grupos y se ha probado la igualdad para varios casos especiales, ver por ejemplo \cite{Ara}, \cite{caslineal} o \cite{finite monoids}, entre otros. Este resultado genera una herramienta importante para encontrar el rank de un monoide. 

\subsection{Acciones de grupos}

Esta sección es un repaso básico de dos conceptos conocidos en cualquier rama de las matemáticas: \emph{acción y grupo}. Nuestros objetos principales de estudio, los endomorfismos de $G$-conjuntos, se construyen sobre estos conceptos.

\begin{definicion}{(Grupo)}

Un grupo es un conjunto $G$, equipado con una operación binaria $\cdot:G\times G \rightarrow G$, la cual satisface las siguientes propiedades:
\begin{enumerate}
\item[i)] \emph{Asociativa} $$(g_{1}\cdot g_{2}) \cdot g_{3} = g_{1} \cdot (g_{2} \cdot g_{3}),\ \forall g_{1},g_{2},g_{3}\in G.$$
\item[ii)] \emph{Elemento neutro} $$\exists e\in G, e\cdot g= g\cdot e =g,\ \forall g\in G.$$

\item[iii)] \emph{Inversos} $$\forall g\in G, \exists g^{-1}\in G,\ g\cdot g^{-1}=g^{1}\cdot g= e.$$
\end{enumerate}

\end{definicion}

 Notemos que la diferencia entre un grupo y un monoide es que dentro de un grupo todos sus elementos deben de ser unidades, es decir deben de tener un inverso.

\begin{ejemplo}
El conjunto de simetrías de un conjunto, $Sym(X)$, es un grupo, bajo la composición de funciones.
\end{ejemplo}
Esto se sigue por definición. Como $Sym(X)$ está definido como el conjunto de unidades del monoide $Trans(X)$, por lo que las propiedades de grupos se cumplen automáticamente. \\

Dado un elemento $h \in G$ definimos el conjugado de $h$ por $g\in G$ como el elemento $g^{-1}hg$, decimos que dos elementos $h,k\in G$ son conjugados si existe un $g\in G$ tal que $h=g^{-1}kg$. Y dado un subconjunto $K$ de $G$ definimos el conjunto de elementos conjugados de $K$ como:
$$g^{-1}Kg:=\{g^{-1}kg:\ g\in G,\ k\in K\}.$$

\begin{proposicion}
Si $H$ es un subgrupo de $G$, entonces $g^{-1}Hg$ es un subgrupo de $G$, para cualquier $g\in G$.
\end{proposicion}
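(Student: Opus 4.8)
The plan is to verify the subgroup axioms directly for the set $g^{-1}Hg = \{\, g^{-1}hg : h \in H \,\}$ with $g \in G$ held fixed, exploiting that $H$ is already a subgroup and that conjugation by $g$ interacts well with the group operation. Note that, in contrast with the set $g^{-1}Kg$ defined above for an arbitrary subset $K$ (where $g$ ranged over all of $G$), here the element $g$ is fixed, so the relevant object is precisely the image of $H$ under the map $x \mapsto g^{-1}xg$. Associativity is inherited from $G$, so it suffices to check that the set is nonempty, closed under the operation, and closed under inverses.

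First I would note that $e = g^{-1}eg$ lies in $g^{-1}Hg$ since $e \in H$, so the set is nonempty and contains the identity. Next, for closure under products, given $g^{-1}h_1 g$ and $g^{-1}h_2 g$ with $h_1, h_2 \in H$, I compute $\big(g^{-1}h_1 g\big)\big(g^{-1}h_2 g\big) = g^{-1}h_1\big(g g^{-1}\big)h_2 g = g^{-1}(h_1 h_2) g$, which belongs to $g^{-1}Hg$ because $h_1 h_2 \in H$. For closure under inverses, the inverse of $g^{-1}hg$ is $g^{-1}h^{-1}g$, and this lies in $g^{-1}Hg$ since $h^{-1} \in H$. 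This establishes that $g^{-1}Hg \leq G$.

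A more conceptual route, which I would mention as an alternative, is to observe that the map $\gamma_g : G \to G$, $\gamma_g(x) = g^{-1}xg$, is a homomorphism — indeed an automorphism, with inverse $\gamma_{g^{-1}}$ — and that the image of a subgroup under a homomorphism is again a subgroup; applying this to $H$ yields $\gamma_g(H) = g^{-1}Hg \leq G$ at once. This viewpoint also makes transparent why the statement will be useful later, since it exhibits conjugate subgroups as honest copies of $H$ inside $G$.

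There is essentially no obstacle in this proof: the only step that uses more than bookkeeping is the cancellation $g g^{-1} = e$ in the middle of the product computation, which is exactly what makes conjugation multiplicative; everything else is a routine application of the subgroup criterion together with the fact that $H$ is closed under products and inverses.
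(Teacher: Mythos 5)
Your proof is correct and follows essentially the same route as the paper: a direct verification that $g^{-1}Hg$ contains the identity and is closed under inverses via $(g^{-1}hg)^{-1}=g^{-1}h^{-1}g$. In fact you are slightly more thorough, since you also verify closure under products (which the paper's proof leaves implicit) and you rightly read the statement with $g$ fixed, rather than letting $g$ range over $G$ as in the paper's earlier definition of $g^{-1}Kg$.
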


\begin{proof}
Se tiene que $e\in H$ y es fácil ver que $e=g^{-1}eg \in g^{-1}Hg$.\\
Además dado $h\in H$, se tiene que $h^{-1}\in H$, entonces $(g^{-1}hg)(g^{-1}h^{-1}g)=e$, por lo tanto $(g^{-1}hg)^{-1}=g^{-1}h^{-1}g\in g^{-1}Hg$.
\end{proof}

\begin{definicion}{(Normalizador de $H$ en $G$)}

Sea $G$ un grupo y $H$ un subconjunto de $G$. Entonces el normalizador de $H$ en $G$ se define como el conjunto 
$$N_{G}(H):=\{g\in G:\ gHg^{-1}=H\}.$$
\end{definicion}

\begin{lema}
$N_{G}(H)$ es un subgrupo de $G$.
\end{lema}
\begin{proof}
Realizamos esta demostración con el test de subgrupo. Se tiene que si $h\in N_{G}(H)$, se cumple que  $H=eHe=h^{-1}hHh^{-1}h=h^{-1}Hh$, entonces $h^{-1}\in N_{G}(H)$. Además si $g,h\in N_{g}(H)$ tenemos que $$\begin{array}{rl} (gh^{-1})H(gh^{-1})^{-1}&=ghHh^{-1}g^{-1}\\ &= gHg^{-1} \\ &=H \end{array},$$ entonces $gh^{-1}\in N_{G}(H)$.
\end{proof}

\begin{lema}
Si $H\leq G$, entonces $H \unlhd N_{G}(H)$.
\end{lema}

Recordemos que un subgrupo es normal ($\unlhd$) en un grupo si la conjugación por todo elemento en el grupo deja invariante al subgrupo, lo cual se cumple por definición del normalizador.

\begin{definicion}{(Clase de conjugación)}

Para un elemento $h$ o para un subgrupo $H$ de un grupo $G$ definimos su clase de conjugación como:
$$[h]=\{ghg^{-1}: g\in G\}$$
$$[H]=\{gHg^{-1}: g\in G\}.$$
\end{definicion}

\begin{lema}
El conjunto de clases de conjungación de elementos de $G$, 
$$\{ [h]:\ h\in G\},$$
forma una partición para $G$.
\end{lema}

\begin{lema}
El conjunto de clases de conjugación de subgrupos de $G$, 
$$\{[H]:H\leq G\},$$
es una partición para el conjunto de subgrupos de $G$.
\end{lema}

 Estos lemas se siguen del hecho que la relación de conjugación es una relación de equivalencia:
 \begin{enumerate}
\item[] (Reflexiva) $H=e^{-1}He$, $\forall H\leq G$. 
\item[] (Simétrica) Si $H_{1}=g^{-1}H_{2}g$, entonces se tiene que $H_{2}=gH_{1}g^{-1}$. 
\item[] (Transitiva) Si $H_{1}=g^{-1}H_{2}g$ y $H_{2}=h^{-1}H_{3}h$, entonces $H_{1}=(gh)^{-1}H_{3}(gh)$. 
\end{enumerate}

Denotamos por $Conj(G)$ el conjunto de clases de conjugación de todos los subgrupos de $G$,
$$Conj(G):=\{ [H]:\ H\leq G \}.$$
Para el caso en el que $G$ es un grupo finito podemos definir un orden parcial en $Conj(G)$ como sigue: dados $[H_{1}],[H_{2}]\in Conj(G)$,
$$[H_{1}]\leq [H_{2}] \mbox{ si y solo si existe } g\in G \mbox{ tal que } H_{1}\subseteq gH_{2}g^{-1}.$$
Debemos tomar en consideración el grafo de este orden parcial $(Conj(G),\varepsilon_{G})$, donde
$$\varepsilon_{G}:=\{([H_{i}],[H_{j}])\in {Conj(G)}^{2}:\ [H_{i}]\leq [H_{j}]\},$$
pues este objeto será utilizado en secciones posteriores.

\begin{definicion}{(Homomorfismo)}

Dados dos grupos $G$ y $H$, un homomorfismo es una función $\varphi:G\rightarrow H$ tal que 
$$\varphi(g_{1}\cdot g_{2})=\varphi(g_{1})\cdot\varphi(g_{2}),\ \forall g_{1},g_{2}\in G.$$
\end{definicion}

Recordando que un \emph{isomorfismo} es un homomorfismo $\varphi$ biyectivo, el cual cumple que su inversa es también un homomorfismo.   Podemos definir entonces dos conjuntos especiales dentro del monoide de transformaciones.

$$End(G):=\{\varphi:G\rightarrow G| \ \varphi(gh)=\varphi(g)\varphi(h), \ \forall g,h\in G\},$$
el conjunto de homomorfismos de $G$ en $G$ y 
$$Aut(G):=\{\varphi\in End(G)|\  biyectivo\}=End(G)\cap Sym(G).$$
Notemos que $$Aut(G) \leq End(G) \leq Trans(G),$$ pues si tenemos dos homomorfismos $\phi,\psi:G \rightarrow G$, tenemos que 
$$\begin{array}{ccc}
\phi \circ \psi (g\cdot h)&=& \phi(\psi(g\cdot h))\\
&=& \phi(\psi(g)\cdot\psi(h))\\
&=& \phi(\psi(g))\cdot\phi(\psi(h))\\
&=&\phi\circ\psi(g)\cdot\phi\circ\psi(h).

\end{array}$$

Entonces $End(G)$ es cerrado bajo composición. Y de igual manera, la composición de funciones biyectivas da una función biyectiva, entonces $Aut(G)$ es también cerrado bajo composiciones, es decir, ambos son submonoides de $Trans(G)$.\\
 Notemos que las definiciones de $End(G)$ y $Aut(G)$ tienen sentido únicamente por la estructura de grupo que $G$ posee.

\begin{definicion}{(Acción)}

Dado un grupo $G$ y un conjunto $X$ definimos una \emph{acción} como una aplicación $\cdot: G\times X \rightarrow X$, la cual cumple las siguientes propiedades:
\begin{enumerate}
\item[i)] $e\cdot x=x$, para todo $x\in X$.
\item[ii)] $(g_{1}g_{2})\cdot x=g_{1}\cdot(g_{2}\cdot x)$, $\forall g_{1},g_{2}\in G,\ \forall x\in X$.
\end{enumerate}
\end{definicion}

\begin{ejemplo}{(La acción Shift})

Sea $G$ un grupo y $A$ un conjunto, denotamos por $A^{G}$ al conjunto de funciones que van de $G$ en $A$,
$$A^{G}:=\{x:G\rightarrow A\}.$$
Definimos una acción de $G$ sobre $A^{G}$ como:
$$g\cdot x (t)=x(g^{-1}t),\ \forall g,t\in G.$$
\end{ejemplo}

Sea $G$ un grupo que actúa sobre un conjunto $X$, es posible definir, para cada elemento $g\in G$, una función $\varphi_{g}:X\rightarrow X$ dada por:
$$\varphi_{g}(x)=g\cdot x.$$
Esta función recibe el nombre de \emph{función de acción} del elemento $g$.\\
Notemos que el concepto de acción puede ser también definido sobre un monoide $M$. Entonces con estas herramientas podemos presentar una demostración para el Lema de Cayley.

\begin{proof}[Demostración. (Lema de Cayley)]
Dado un grupo $G$ (puede ser un monoide) y un conjunto no vacío $X'$. Tomamos un elemento $x\in X'$ sobre el cual $G$ actúa y definimos el siguiente conjunto:
$$X:=\{g\cdot x |\ g\in G\}.$$
Entonces $X$ es un conjunto no vacío. Es fácil ver que podemos definir una acción de $G$ sobre $X$ dada por
$$h\cdot (g\cdot x)= hg\cdot x.$$
Y definimos la siguiente función $\varphi:G \rightarrow Trans(X)$:

$$\varphi(g)=\varphi_{g},\ \mbox{tal que}\  \varphi_{g}(y)=g\cdot y, \ \forall y\in X.$$
La cual asigna a cada elemento $g$ su función de acción. Es fácil ver que $\varphi$ es un homomorfismo entre $G$ y $Trans(X)$. 
Sean $g,h\in G$ tenemos que 
$$\begin{array}{cl}  \varphi(gh)(y)&=(gh)\cdot y \\
& = g\cdot (h\cdot y)    \\
& =\varphi_{g}(\varphi_{h}(y)) \\
&= \varphi_{g} \circ \varphi_{h}(y)\\
&=\varphi(g) \circ \varphi(h) (y),
\end{array}$$
por tanto $\varphi(gh)=\varphi(g) \circ \varphi(h)$. 

Si $\varphi(g)=\varphi(h)$, para cualesquiera $g,h\in G$, se tiene que $g\cdot y = h\cdot y$ para todo $y\in X$, lo cual implica que $g=h$.
Por consecuencia podemos afirmar que $$G \leq Trans(X).$$
\end{proof}

Recalcamos que la función que asigna a cada elemento en un grupo su función de acción es un homomorfismo entre el grupo y el monoide de transformaciones del conjunto sobre el que actúa. En particular, a continuación probamos que si tomamos un grupo $G$ que actúa sobre sí mismo por conjugación, entonces la función de acción es un isomorfismo de $G$ con él mismo, es decir, es parte del conjunto de automorfismos de $G$.

\begin{lema}
Sea $G$ un grupo, el cual actúa sobre sí mismo por conjugación,
$$g\cdot h= g^{-1}hg,$$
entonces la función de acción $\varphi_{g}$ es un isomorfismo de $G$ en $G$.
\end{lema}

\begin{proof}
Primero se debe probar que $\varphi_{g}$ es un homomorfismo. Sean $h,k\in G$ tenemos que :
$$\begin{array}{ccl}
\varphi_{g}(hk)&=&g^{-1}(hk)g\\
&=&g^{-1}h(gg^{-1})kg\\
&=&(g^{-1}hg)(g^{-1}kg) \\
&=&\varphi_{g}(h)\varphi_{g}(k).

\end{array}$$
También debemos probar que es biyectiva. Supongamos que se tiene que $\varphi_{g}(h)=\varphi_{g}(k)$, entonces:
$$\begin{array}{ccl} 
\varphi_{g}(h)&=&\varphi_{g}(k)\\
g^{-1}hg &=& g^{-1}kg\\
\cancel{gg^{-1}}hg&=&\cancel{gg^{-1}}kg\\
h\cancel{gg^{-1}}&=&k\cancel{gg^{-1}}\\
h&=&k.
\end{array}$$
Entonces $\varphi_{g}$ es inyectiva. Además notemos que para cada $h\in G$, existe $k=ghg^{-1}\in G$ tal que 
$$\varphi_{g}(k)=g^{-1}(ghg^{-1})g=h,$$ por lo que $\varphi_{g}$ es sobreyectiva. Entonces $\varphi_{g}$ es biyectiva y por tanto es un isomorfismo.

\end{proof}

\subsubsection{La categoría de $G$-conjuntos}
En esta sección definimos una categoría matemática de especial interés en nuestro trabajo, la categoría de $G$-conjuntos.  Se definen todos los conceptos y se prueban algunos resultados que ésta incluye los cuales son de suma importancia para el desarrollo de nuestro trabajo.

\begin{definicion}{($G$-conjunto)}

Un $G$-conjunto es un conjunto $X$ sobre el cual $G$ actúa, es decir que existe una acción de $G$ sobre $X$. 
\end{definicion}

Los $G$-conjuntos son una categoría sumamente estudiada, en la cual adentramos nuestra investigación. Definimos pues los morfismos dentro de esta categoría.

\begin{definicion}{(Función $G$-equivariante)}

Dado un grupo $G$ y dos conjuntos $X$ y $Y$ sobre los que $G$ actúa. Una función $f:X\rightarrow Y$ se dice $G$-equivariante si se tiene que $$g\cdot f(x)=f(g\cdot x),\ \forall g\in G,\ \forall x\in X.$$
\end{definicion}

\begin{definicion}{(Acciones equivalentes)}

Dados dos conjuntos $X$ y $Y$ sobre los que $G$ actúa. Decimos que la acción de $G$ sobre $X$ es \emph{equivalente} a la acción de $G$ sobre $Y$ si existe una función $\varphi:X\rightarrow Y$ biyectiva de $X$ en $Y$ tal que:
$$\varphi(g\cdot x)=g\cdot \varphi(x),\ \forall g\in G,\   \forall x\in X.$$

\end{definicion}

Las funciones $G$-equivariantes son conocidas como los morfismos en esta categoría. Entonces definimos el conjunto de endomorfismos de un $G$-conjunto $X$ como:
$$End_{G}(X):=\{f:X\rightarrow X |\ g\cdot f(x)=f(g\cdot x),\ \forall g\in G,\ \forall x\in X\}.$$
Si además tenemos que $f$ es una biyección decimos que es un \emph{automorfismo}:
$$Aut_{G}(X):=End(X) \cap Sym(X).$$

\begin{definicion}{(Isomorfismo)}

Un isomorfismo entre dos $G$-conjuntos $X$ y $Y$ es una función $G$-equivariante la cual es biyectiva.
\end{definicion}
Dos $G$-conjuntos $X$ y $Y$ se dicen \emph{isomorfos} si existe un isomorfismo entre ellos. Además son isomorfos si y solo si la acción es equivalente en ambos.\\

Si una función $G$-equivariante tiene inversa, ésta es también una función $G$-equivariante, pues para $\lambda\in Aut_{G}(X)$ y para todo $y\in X$ existe $\lambda^{-1}\in End_{G}(X)$ tal que si denotamos por $y=\lambda(x)$, $x\in X$, se satisface que
$$\lambda^{-1}(g\cdot y)=\lambda^{-1}(g\cdot \lambda(x))=\cancel{\lambda}^{-1}(\cancel{\lambda}(g\cdot x))=g\cdot x= g\cdot \lambda^{-1}(y).$$

\subsubsection{Estabilizadores y órbitas}
Dado un grupo $G$ que actúa sobre un conjunto $X$, definimos para cada elemento $x\in X$ los siguientes conjuntos.

\begin{definicion}{(Estabilizador)}

$$G_{x}:=\{g\in G: \ g\cdot x=x\}.$$
\end{definicion}

\begin{definicion}{(Órbita)}
$$Gx:=\{g\cdot x:\  g\in G\}.$$
\end{definicion}

 En esta tesis hablaremos de $G$-órbitas y las $Aut_{G}(X)$-órbitas, las cuales son las órbitas de elementos en $X$ donde los elementos de $G$ actúan y en donde los elementos de $Aut_{G}(X)$ actúan respectivamente.\\
Y también mencionamos que existen distintas notaciones para estos conjuntos, por ejemplo $G_{x}=sta(x)$ y $Gx=orb(x)$.\\

\begin{proposicion}

Sea $G$ un grupo que actúa sobre un conjunto $X$. 
\begin{enumerate}

\item[i)] Los estabilizadores $G_{x}$ son subgrupos de $G$, $\forall x \in X$.

\item[ii)] Se tiene que las órbitas forman una partición para $X$.

\end{enumerate}
\end{proposicion}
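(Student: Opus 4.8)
The plan is to handle the two parts separately, each by a direct verification using only the two action axioms. For part (i), I would apply the subgroup test to $G_x$. First, the axiom $e\cdot x = x$ shows $e\in G_x$, so $G_x$ is nonempty. Next, for closure under inverses: if $g\in G_x$, then acting by $g^{-1}$ on both sides of $g\cdot x = x$ and using the second action axiom gives $g^{-1}\cdot(g\cdot x) = (g^{-1}g)\cdot x = e\cdot x = x$, while the left-hand side also equals $g^{-1}\cdot x$; hence $g^{-1}\cdot x = x$ and $g^{-1}\in G_x$. Finally, for closure under the operation, if $g,h\in G_x$ then $(gh)\cdot x = g\cdot(h\cdot x) = g\cdot x = x$, so $gh\in G_x$. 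Therefore $G_x\leq G$.

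For part (ii), I would introduce the relation $\sim$ on $X$ defined by $x\sim y$ if and only if $y = g\cdot x$ for some $g\in G$, and show it is an equivalence relation whose classes are precisely the orbits $Gx$. Reflexivity is the first action axiom ($x = e\cdot x$). Symmetry holds because $y = g\cdot x$ implies $g^{-1}\cdot y = g^{-1}\cdot(g\cdot x) = (g^{-1}g)\cdot x = x$. Transitivity holds because $y = g\cdot x$ and $z = h\cdot y$ give $z = h\cdot(g\cdot x) = (hg)\cdot x$. Since $\sim$ is an equivalence relation, its classes partition $X$; and the class of $x$ is $\{y\in X : y = g\cdot x \text{ for some } g\in G\} = Gx$, so the orbits partition $X$.

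As for the main difficulty: there is essentially none of substance, since both statements follow immediately from the action axioms. The only place requiring a little care is the inverse-closure step in part (i), where the right move is to act by $g^{-1}$ on the identity $g\cdot x = x$ and invoke the associativity axiom of the action, rather than attempting to manipulate $g$ directly; and in part (ii), the point worth spelling out is that the equivalence class of $x$ coincides with $Gx$, which is what upgrades the abstract statement "$\sim$ partitions $X$" into the claimed statement about orbits.
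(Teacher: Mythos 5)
Your proposal is correct and follows essentially the same route as the paper: inverse closure of $G_x$ via acting by $g^{-1}$ on $g\cdot x=x$, and the orbit partition via the equivalence relation $x\sim y \iff y=g\cdot x$ whose classes are the orbits. The only cosmetic difference is that you verify closure under products and inverses separately, while the paper combines them in the one-step subgroup test with $g^{-1}h$.
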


\begin{proof}

\begin{enumerate}
\item[i)]Si $g\in G_{x}$, entonces $g\cdot x=x$ y actuando por $g^{-1}$ tenemos que  $g^{-1}\cdot x=g^{-1}\cdot(g\cdot x)=(g^{-1}g)\cdot x=e\cdot x=x$, por lo que $g^{-1}\in G_{x}$.\\ Entonces por el test de subgrupo, dados $g,h\in G_{x}$ tenemos que 
$$(g^{-1}h)\cdot x= g^{-1}\cdot(h\cdot x)=g^{-1}\cdot x=x,$$
entonces $g^{-1}h\in G_{x}$ y por lo tanto $G_{x}$ es un subgrupo de $G$.

\item[ii)] Es fácil ver que la relación $x\sim y$ si y solo si $\exists g\in G$ tal que $x=g\cdot y$ es una relación de equivalencia. \\
$e\cdot x=x$ implica que $x\sim x$. (Reflexiva)\\
Como $g\cdot y=x$, al accionar por $g^{-1}$ tenemos que, $g^{-1}\cdot (g \cdot y)=y=g^{-1}\cdot x$ y se tiene que $y \sim x$. (Simétrica)\\
Además si $x=g\cdot y$ y $y=h\cdot z$ se tiene que $x=g\cdot (h\cdot z)=(gh)\cdot z$, por lo que $x\sim z$. (Transitiva)\\
Entonces las clases de equivalencia $$\overline{x}:=\{y\in X,\ x\sim y\}$$ forman una partición para $X$.\\ Además es fácil visualizar que $$Gx=\overline{x}.$$

\end{enumerate}

\end{proof}

Definimos otros conjuntos los cuales son de gran importancia para nuestro trabajo, los conjuntos de puntos fijos:

$$X^{g}=Fix(g):=\{x\in X:\ g \cdot x= x\}$$
$$Fix(H):=\{x\in X: h\cdot x=x,\ \forall h\in H\}.$$

Ahora, para cada subgrupo $H$ de $G$, donde $G$ actúa sobre $X$, definimos los siguientes conjuntos a los cuales denominamos \emph{cajas}:
$$\mathcal{B}_{H}:=\{x\in X:\ H=G_{x}\}.$$
A partir de la relación de equivalencia que definimos sobre $Conj(G)$ podemos definir también los siguientes conjuntos, los cuales denominamos cajas:

$$\mathcal{B}_{[H]}:=\{x\in X:\ [G_{x}]=[H]\}.$$

Tenemos algunas propiedades a partir de estos conjuntos.

\begin{proposicion}
Dados $x\in X$ y $K,H\leq G$, tales que $K\in [H]$. Entonces:
\begin{enumerate}
\item[i)] $\mathcal{B}_{K} \subseteq \mathcal{B}_{[H]}$.
\item[ii)] $\mathcal{B}_{K}=\mathcal{B}_{H}$ ó $\mathcal{B}_{K}\cap \mathcal{B}_{H}=\emptyset$.
\item[iii)] $\mathcal{B}_{G_{x}}= Aut_{G}(x)$. (La órbita de $x$ bajo la acción de $Aut_{G}(X)$)
\end{enumerate}
\end{proposicion}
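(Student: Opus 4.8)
The three claims concern how the boxes $\mathcal{B}_K$, $\mathcal{B}_H$, $\mathcal{B}_{[H]}$ interact when $K$ and $H$ are conjugate subgroups, plus an identification of $\mathcal{B}_{G_x}$ with an $\mathrm{Aut}_G(X)$-orbit. I would treat them in order, as (i) and (ii) are essentially unwinding definitions, while (iii) carries the real content.

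\textbf{Part (i).} Let $x \in \mathcal{B}_K$, so $G_x = K$. Since $K \in [H]$, the classes $[G_x] = [K] = [H]$ coincide, hence $x \in \mathcal{B}_{[H]}$ by definition. So $\mathcal{B}_K \subseteq \mathcal{B}_{[H]}$. (One should also note the trivial remark that $\mathcal{B}_{[H]} = \bigcup_{K \in [H]} \mathcal{B}_K$, which makes the containment transparent.)

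\textbf{Part (ii).} If $K = H$ the equality is immediate. If $K \neq H$, suppose for contradiction that some $x \in \mathcal{B}_K \cap \mathcal{B}_H$; then $K = G_x = H$, contradicting $K \neq H$. Hence the two boxes are disjoint. The only subtlety is that the statement is phrased for $K \in [H]$, but that hypothesis is not actually needed for the dichotomy — the boxes $\{\mathcal{B}_K : K \leq G\}$ are pairwise disjoint for the banal reason that a point has exactly one stabilizer. I would phrase the proof so this is clear.

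\textbf{Part (iii).} This is the substantive step and the main obstacle. I want to show $\mathcal{B}_{G_x} = \mathrm{Aut}_G(x)$, the orbit of $x$ under the action of $\mathrm{Aut}_G(X)$ on $X$. The plan is a double inclusion. For ``$\supseteq$'': if $\lambda \in \mathrm{Aut}_G(X)$, then $G$-equivariance gives $G_{\lambda(x)} = G_x$ (if $g \cdot x = x$ then $g \cdot \lambda(x) = \lambda(g\cdot x) = \lambda(x)$, and conversely using $\lambda^{-1}$, which is also $G$-equivariant by the earlier remark in the excerpt), so $\lambda(x) \in \mathcal{B}_{G_x}$. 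For ``$\subseteq$'': given $y \in \mathcal{B}_{G_x}$, i.e. $G_y = G_x$, I must produce an automorphism of the $G$-set $X$ sending $x$ to $y$. The natural candidate is defined first on the orbit $Gx$ by $g \cdot x \mapsto g \cdot y$; this is well-defined and $G$-equivariant precisely because $G_x = G_y$ (if $g\cdot x = g'\cdot x$ then $g^{-1}g' \in G_x = G_y$, so $g \cdot y = g' \cdot y$), and it is a bijection $Gx \to Gy$. To get a full automorphism of $X$ I would pair it with the analogous map $g\cdot y \mapsto g \cdot x$ from $Gy \to Gx$, and use the identity on $X \setminus (Gx \cup Gy)$, checking the two cases $Gx = Gy$ and $Gx \cap Gy = \emptyset$ separately (orbits partition $X$, so these are the only options). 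The resulting involution-like map is a $G$-equivariant bijection $X \to X$, hence lies in $\mathrm{Aut}_G(X)$ and sends $x$ to $y$, giving $y \in \mathrm{Aut}_G(x)$. The delicate point to get right is the well-definedness and equivariance of the orbit map, which hinges entirely on the equality of stabilizers; everything else is bookkeeping about the orbit partition.
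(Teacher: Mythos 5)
Your proposal is correct and takes essentially the paper's own route: parts (i) and (ii) are read off the definitions, and part (iii) is exactly the content of Lemma \ref{lema2} (which the paper invokes later via Corollary \ref{qw}), proved there by the same two steps — automorphisms preserve stabilizers via $\lambda,\lambda^{-1}$, and, conversely, the orbit-swap map $[x\leftrightarrow y]$ when $Gx\neq Gy$ together with a translation map when $Gx=Gy$. One small caution for your case split: when $Gx=Gy$ you cannot impose both rules $g\cdot x\mapsto g\cdot y$ and $g\cdot y\mapsto g\cdot x$ simultaneously on the single orbit (they conflict unless $k^{2}\in G_{x}$, where $y=k\cdot x$); there the single rule $g\cdot x\mapsto g\cdot y=gk\cdot x$, well defined precisely because $k\in N_{G}(G_{x})$, is already a bijection of the orbit — this is the paper's $\tau_{x,k}$ — and should be used alone.
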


La demostración de las afirmaciones i) y ii) se sigue de la definición de estos conjuntos, para la afirmación iii) se presentará en la siguiente sección utilizando herramientas que facilitan su demostración.

\begin{teorema}{(Órbita-estabilizador)}\label{orbest}

Dado un grupo $G$ que actúa sobre un conjunto $X$. Para cualquier elemento $x\in X$ se cumple que:
$$|Gx|=|G/G_{x}|.$$
\end{teorema}

\begin{proof}
Cualquier elemento de $Gx$ puede verse como $g\cdot x$ para algún $g \in G$. Definimos una asignación biyectiva de $g\cdot x$ a $gG_{x}$ o bien, $\phi(g\cdot x) := gG_{x}$, la cual está bien definida y es una biyección: 

\begin{enumerate}
\item[1.] $\phi$ está bien definida: \\ Sean $g\cdot x , h \cdot x \in Gx$ tales que $g\cdot x = h\cdot x$. Entonces $h^{-1} g\cdot x= x$ y $h^{-1}g \in G_{x}$. Esto implica que $gG_{x}=hG_{x}$ y $\phi(g\cdot x)= \phi(h \cdot x)$.
\item[2.] $\phi$ es inyectiva: \\ Supongamos que $\phi(g\cdot x)= \phi(h \cdot x)$. Entonces $gG_{x}=hG_{x}$ lo que implica que $h^{-1}g \in G_{x}$, pues $h^{-1}g \cdot x = x$ y así $g\cdot x= h\cdot x$.

\item[3.] $\phi$ es sobreyectiva: \\ Si $gG_{x}$ es una clase lateral genérica en $G/G_{x}$, es claro que su preimagen bajo $\phi$ es $g\cdot x$.

\end{enumerate}

\end{proof}

Tenemos también una proposición de utilidad para las clases de conjugación de los estabilizadores de dos elementos. 

\begin{proposicion}\label{equivalencia}
Sea $G$ un grupo que actúa sobre un conjunto $X$ y dados 2 elementos $x,y$ en $X$, entonces se tiene que la acción de $G$ es equivalente sobre $Gx$ y sobre $Gy$ si y solo si  $[G_{x}]=[G_{y}]$.

\end{proposicion}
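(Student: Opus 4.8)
The plan is to prove both implications by exploiting the interaction between stabilizers and $G$-equivariant bijections, together with the standard computation of the stabilizer of a translated point. First I would record the auxiliary fact that for any $z\in X$ and any $g\in G$ one has $G_{g\cdot z}=gG_{z}g^{-1}$: indeed $k\in G_{g\cdot z}$ iff $kg\cdot z=g\cdot z$ iff $g^{-1}kg\cdot z=z$ iff $g^{-1}kg\in G_{z}$. In particular all stabilizers of points lying in one orbit form a single conjugacy class, so the symbol $[G_{x}]$ really depends only on the orbit $Gx$; this observation will be used silently in both directions.

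For the forward implication, assume there is a $G$-equivariant bijection $\varphi:Gx\to Gy$. For $g\in G$ we then have $g\in G_{x}$ iff $g\cdot x=x$ iff $\varphi(g\cdot x)=\varphi(x)$ (injectivity of $\varphi$) iff $g\cdot\varphi(x)=\varphi(x)$ (equivariance) iff $g\in G_{\varphi(x)}$; hence $G_{x}=G_{\varphi(x)}$. Since $\varphi(x)\in Gy$, write $\varphi(x)=h\cdot y$; then by the auxiliary fact $G_{x}=G_{h\cdot y}=hG_{y}h^{-1}$, so $[G_{x}]=[G_{y}]$.

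For the converse, assume $[G_{x}]=[G_{y}]$, so there is $h\in G$ with $G_{x}=hG_{y}h^{-1}=G_{h\cdot y}$. Put $y'=h\cdot y$ and note $Gy'=Gy$, so it is enough to produce a $G$-equivariant bijection $Gx\to Gy'$. Define $\varphi(g\cdot x):=g\cdot y'$. This is well defined and injective because $g\cdot x=g'\cdot x$ iff $g'^{-1}g\in G_{x}=G_{y'}$ iff $g\cdot y'=g'\cdot y'$; it is surjective by construction; and it is equivariant since $\varphi\bigl(t\cdot(g\cdot x)\bigr)=(tg)\cdot y'=t\cdot\varphi(g\cdot x)$ for all $t\in G$. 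Hence the action of $G$ on $Gx$ is equivalent to the action of $G$ on $Gy$.

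I expect no step to be computationally heavy; the only genuinely delicate point is keeping the conjugation convention consistent with the paper's definition $[H]=\{gHg^{-1}:g\in G\}$ and justifying the passage from $y$ to the conjugated representative $y'=h\cdot y$ via the orbit-invariance of $[G_{y}]$ noted at the outset. As an alternative route one could instead invoke the Orbit--Stabilizer Theorem~\ref{orbest} to identify $Gx$ with $G/G_{x}$ and $Gy$ with $G/G_{y}$ as $G$-sets and then cite the classification of transitive $G$-sets up to equivalence, but the direct construction above keeps the argument self-contained.
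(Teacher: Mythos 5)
Your proof is correct and follows essentially the same route as the paper: extracting $g^{-1}G_{x}g=G_{y}$ from an equivariant bijection in one direction, and in the other constructing the map $g\cdot x\mapsto g\cdot(h\cdot y)$ (the paper's $\lambda(g\cdot x)=g\beta\cdot y$ with $\beta=h$) and checking it is well defined, bijective and equivariant. The only differences are cosmetic: you use injectivity of $\varphi$ where the paper invokes $\lambda^{-1}$ for the reverse containment, and you pass to the conjugated representative $y'=h\cdot y$ to compress the well-definedness and injectivity checks into one chain of equivalences.
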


\begin{proof}
Supongamos que la acción de $G$ es equivalente sobre $Gx$ y $Gy$, entonces existe una función $\lambda:Gx \rightarrow Gy$, la cual es biyectiva y $G$-equivariante.
 Sea $\alpha \in G_{x}$ y dado que $\lambda(x)$ es un elemento de $Gy$, tenemos que $\lambda(x)=g\cdot y$, para algún $g\in G$ y entonces se tiene que:

$$\begin{array}{rl} 
\alpha \cdot x&=x\\
\lambda(\alpha\cdot x)&=\alpha\cdot \lambda(x)=\lambda(x)\\
\alpha \cdot (g\cdot y)&=(\alpha g) \cdot y=g\cdot y \end{array}$$
y actuando por $g^{-1}$\\
$$g^{-1}\cdot ((\alpha g)\cdot y)= g^{-1}\cdot (g\cdot y).$$
Al final obtenemos que 
$$(g^{-1}\alpha g)\cdot y=y,$$
lo que implica que $g^{-1}\alpha g \in G_{y}$. Esto significa que $g^{-1}G_{x}g \subseteq G_{y}$. Por una construcción análoga utilizando $\lambda^{-1}$ se obtiene la otra contención, por lo tanto $g^{-1}G_{x}g=G_{y}$.\\

Por el otro lado, si suponemos que $[G_{x}]=[G_{y}]$, existe un $\beta\in G$ tal que $\beta^{-1}G_{x}\beta=G_{y}$. Dado un elemento en $Gx$, es de la forma $g\cdot x$, para algún $g\in G$. Entonces definimos una función $\lambda:Gx\rightarrow Gy$ dada por $\lambda(g\cdot x)=g\beta \cdot y$. Se debe verificar que $\lambda$ está bien definida y que es una biyección $G$-equivariante.\\
Supongamos que $g_{1}\cdot x=g_{2}\cdot x$, notemos que esto implica que $g_{2}^{-1}g_{1}\cdot x=x$. y por lo tanto $g_{2}^{-1}g_{1}\in G_{x}$. Entonces al aplicarle $\lambda$ tenemos que
$$\begin{array}{rl} \lambda(g_{2}^{-1}g_{1}\cdot x)&=\lambda(x) \\
g_{2}^{-1}g_{1}\beta\cdot y&= \beta \cdot y \end{array}$$
lo cual implica que

$$\begin{array}{rl}
g_{1}\beta\cdot y&=g_{2}\beta \cdot y\\
\lambda(g_{1}\cdot x)&=\lambda(g_{2}\cdot x),
\end{array}$$ por lo tanto $\lambda $ está bien definida. Además notemos que dado un elemento $g \cdot x \in Gx$ y un $h\in G$ se tiene que:
$$\begin{array}{rl} \lambda(h\cdot(g\cdot x))&= hg\beta\cdot y\\
&=h\cdot(g\beta\cdot y)\\
&=h\cdot\lambda(g\cdot x),\end{array}$$ por lo tanto es $G$-equivariante.\\

Es fácil ver que para cualquier $g\cdot y\in Gy$ existe $g\beta^{-1}\cdot x\in Gx$ el cual cumple que $\lambda(g\beta^{-1}\cdot x)=g\cdot y$, por lo tanto $\lambda$ es sobreyectiva. Y supongamos que $\lambda(g_{1}\cdot x)=\lambda(g_{2}\cdot x)$ entonces tenemos que 
$$ g_{1}\beta \cdot y = g_{2}\beta \cdot y . $$
Esto significa que $$\beta^{-1}g_{2}^{-1}g_{1}\beta \in G_{y},$$ y como $\beta^{-1}G_{x}\beta = G_{y}$, significa que $g_{2}^{-1}g_{1}\in G_{x}$. Luego $g_{1}\cdot x= g_{2}\cdot x$ y por tanto $\lambda$ es inyectiva.
\end{proof}

Una acción se dice libre, o semiregular, o libre de puntos fijos, si dados $g,h\in G$ y si $\exists x\in X$ tal que $g\cdot x= h\cdot x$, entonces se tiene que $g=h$.\\
O equivalentemente, si dado $g\in G$, $\exists x\in X$ tal que $g\cdot x=x$, implica que $g$ es la identidad en $G$.

\begin{lema}[Cauchy-Frobenius]
Sea $G$ un grupo finito que actúa sobre $X$. Para cada $g\in G$ tomemos $X^{g}:=\{x\in X: \ g\cdot x=x\}$. Y además denotemos por $|X/G|$ al número de órbitas de $X$ dadas por la acción de $G$. Entonces 
$$|X/G|=\frac{1}{|G|}\sum_{g\in G}{|X^{g}|}.$$
\end{lema}

\begin{proof}
Notemos que 
$$\sum_{g\in G}{|X^{g}|}=|\{(g,x)\in G\times X:\ g\cdot x=x\}|= \sum_{g\in G}{|G_{x}|}.$$
Por el Teorema \ref{orbest} existe una biyección natural entre $Gx$ y $G/G_{x}$. Y por el Teorema de Lagrange:
$$|Gx|=[G:G_{x}]=\frac{|G|}{|G_{x}|},$$
ó equivalentemente
$$|G_{x}|=\frac{|G|}{|Gx|},$$
por lo que obtenemos que 
$$\sum_{x\in X}{|G_{x}|}=|G| \sum_{x\in X}{\frac{1}{|Gx|}}.$$
Notemos que $X$ es la unión disjunta de las órbitas en $X/G$, entonces la suma por $X$ se puede separar como las sumas en cada órbita. Es decir, para $Gx$ una órbita, i.e., $A\in X/G$, tenemos que 
$$\sum_{x\in X}{\frac{1}{|Gx|}}=\sum_{A\in X/G}\sum_{x\in A}{\frac{1}{|A|}}=\sum_{A\in X/G}{1}=|X/G|.$$
Entonces, sustituyendo de las expresiones anteriores tenemos que $$\sum_{g\in G}{|X^{g}|}=|G||X/G|.$$
\end{proof}
Dicho de otra forma, el Lema de Cauchy-Frobenius nos dice que el número de órbitas es el promedio de puntos fijos para cada elemento en $G$.

Para un conjunto $X$ definimos dos conjuntos especiales de subgrupos de $G$ a partir de su acción: 
$$Stab_{G}(X):=\{K\leq G| \ K=G_{x} \mbox{ para algún }x\in X\},$$
y 
$$Conj_{G}(X):=\{[K]\in Conj(G)| \ K=G_{x} \mbox{ para algún }x\in X\}.$$

\begin{proposicion}
Dado un grupo $G$ que actúa sobre un conjunto $X$ y sea $\sim$ la relación de conjugación, entonces:

$$Conj_{G}(X)=\ Stab_{G}(X)/ \sim .$$
\end{proposicion}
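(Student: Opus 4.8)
The plan is to show that the two sides of the asserted identity are the same collection of subsets of $G$, by checking that the $\sim$-equivalence classes of the elements of $Stab_G(X)$ are exactly the conjugacy classes $[K]$ with $K\in Stab_G(X)$, and that these coincide with the members of $Conj_G(X)$.

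First I would establish the key identity: for every $x\in X$ and every $g\in G$ one has $G_{g\cdot x}=g\,G_x\,g^{-1}$. This follows from a direct chain of equivalences using only the action axioms: $h\in G_{g\cdot x}$ iff $h\cdot(g\cdot x)=g\cdot x$ iff $(g^{-1}hg)\cdot x=x$ iff $g^{-1}hg\in G_x$ iff $h\in gG_xg^{-1}$. In particular, conjugating a stabilizer always yields a stabilizer, so if $K=G_x$ belongs to $Stab_G(X)$ then its whole conjugacy class $[K]=\{gKg^{-1}:g\in G\}$ is contained in $Stab_G(X)$. Hence the $\sim$-equivalence class of $K$ computed inside $Stab_G(X)$ is, as a subset of $G$, exactly $[K]$.

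Next I would introduce the map $\Phi:Stab_G(X)/\!\sim\ \longrightarrow Conj_G(X)$ sending the $\sim$-class of $K$ to $[K]$. It is well defined because $K\sim K'$ means precisely $[K]=[K']$, and its image lands in $Conj_G(X)$ by the very definition of that set. Surjectivity is immediate: any element of $Conj_G(X)$ is by definition $[K]$ for some $K=G_x$, which equals $\Phi$ of the $\sim$-class of $K$. For injectivity, if $[K_1]=[K_2]$ then $K_1\sim K_2$ by definition of the conjugation relation, so the two $\sim$-classes agree. Thus $\Phi$ is a bijection, and combined with the previous paragraph—where each $\sim$-class in $Stab_G(X)/\!\sim$ is literally the subset $[K]\in Conj_G(X)$—this yields the claimed equality rather than a mere bijection.

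I do not expect a genuine obstacle: the statement is essentially a repackaging of the fact that the conjugates of a point-stabilizer are exactly the stabilizers of the points in its $G$-orbit (a special case of $G_{g\cdot x}=gG_xg^{-1}$), together with the already-recorded fact that conjugation is an equivalence relation on subgroups of $G$ whose classes partition the set of all subgroups. The only point needing a little care is to keep the distinction between the abstract quotient set $Stab_G(X)/\!\sim$ and the set $Conj_G(X)$ of conjugacy classes viewed as subsets of $G$, and to note that membership of $[K]$ in $Conj_G(X)$ is independent of the chosen representative $x$ with $G_x=K$.
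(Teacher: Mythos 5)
Your proof is correct: the key identity $G_{g\cdot x}=gG_{x}g^{-1}$ (which the paper itself invokes later, at the start of Section 3) guarantees that the full conjugacy class of any stabilizer consists of stabilizers, so the $\sim$-classes inside $Stab_{G}(X)$ are literally the sets $[K]\in Conj_{G}(X)$, and your care in upgrading the bijection to an actual equality of sets is exactly the point that matters. The paper states this proposition without proof, and your argument is the natural one it implicitly relies on, so there is nothing to compare beyond noting the agreement.
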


\subsubsection{Acciones transitivas}

En esta sección incluiremos un teorema de gran importancia sobre las acciones transitivas.

\begin{definicion}{(Acción transitiva)}

Dado un grupo $G$ que actúa sobre un conjunto $X$, la acción de $G$ sobre $X$ se dice \emph{transitiva} si solo tiene una única $G$-órbita.
\end{definicion}

A continuación presentamos un ejemplo. 

\begin{ejemplo}
Dado un grupo $G$ y un subgrupo $H$. Tenemos el conjunto de clases laterales izquierdas
$$G/H:=\{kH:\ k\in G\}.$$

El grupo $G$ actúa sobre $G/H$ por multiplicación izquierda:
$$g\cdot (kH)=gkH.$$
Esta acción es transitiva. Dadas dos clases laterales $k_{1}H,k_{2}H\in G/H$ si tenemos un elemento en la $G$-órbita de $k_{1}H$, $G_{k_{1}H}$, éste puede escribirse como $gk_{1}h$, para algún $g\in G$ y para algún $h\in H$. Notemos que:
$$gk_{1}h=gk_{1}{k_{2}}^{-1}k_{2}h=g'k_{2}h.$$
Por lo tanto pertenece a la $G$-órbita de $k_{2}H$. Por lo tanto las $G$-órbitas son iguales y en consecuencia es una única $G$-órbita.
\end{ejemplo}

\begin{teorema}
Toda acción transitiva de un grupo $G$ sobre un conjunto $X$ es equivalente a la acción de multiplicación de $G$ sobre las clases laterales $G/H$, para algún subgrupo $H\leq G$.
\end{teorema}
\begin{proof}
Fijemos un elemento $x_{0}$ en $X$. Como la acción es transitiva todo elemento en $X$ es de la forma $g\cdot x_{0}$, para algún $g\in G$.\\
Dado $H=G_{x_{0}}$ definimos una función $\varphi:G/H \rightarrow X$ dada por $\varphi(gH)=g\cdot x_{0}$.\\
Notemos  que dados dos elementos $g,g'\in G$ se satisface que:
$$g'H=gH \leftrightarrow g^{-1}g'\in H \leftrightarrow g^{-1}g'\cdot x_{0}=x_{0}\leftrightarrow g'\cdot x_{0}=g\cdot x_{0} \leftrightarrow \varphi(g'H)=\varphi(gH).$$
Esto muestra que la función $\varphi$ está bien definida y además es inyectiva. Notemos además que para cualquier elemento $h\cdot x_{0}$, con $h\in G$, existe $hH\in G/H$ tal que $\varphi(hH)=h\cdot x_{0}$, por lo tanto es sobreyectiva. \\
Dado que $G$ actúa sobre $G/H$ por multiplicación izquierda tenemos que $h\cdot (gH)=(hg)H$, para todo $g,h \in G$. Entonces 
$$h\cdot \varphi(gH)=h\cdot(g\cdot x_{0})=(hg)\cdot x_{0}=\varphi((hg)H)=\varphi(h\cdot (gH)),$$ por lo tanto es $G$-equivariante y la acción de $G$ sobre $X$ es equivalente a su acción sobre $G/H$.

\end{proof}

\subsubsection{Isomorfismos entre acciones}

En este apartado mostramos una breve diferencia sobre el concepto de isomorfismo y la equivalencia entre acciones. Diferencias importantes para la construcción de nuestro trabajo. 

\begin{definicion} {(Acciones Isomorfas)}

Sea $G$ un grupo que actúa en $\Gamma$ y sea $H$ un grupo que actúa en $\Omega$. Se dice que éstas son acciones isomorfas si existe un isomorfismo $\varphi: G \rightarrow H$ y una biyección  $\lambda:\Gamma \rightarrow \Omega$ tal que $\lambda(g\cdot x)= \varphi(g)\cdot \lambda(x) $ para toda $g\in G$, $x\in \Gamma$.
\end{definicion}

\begin{definicion}{(Acciones equivalentes)}
Sea $G$ un grupo que actúa en $\Gamma$ y sea $H$ un grupo que actúa en $\Omega$. Se dice que éstas son acciones equivalentes si $G=H$ y una biyección  $\lambda:\Gamma \rightarrow \Omega$, tal que $\lambda(g\cdot x)= g\cdot \lambda(x) $ para toda $g\in G$, $x\in \Gamma$.
\end{definicion}

Si dos acciones son equivalentes, entonces son isomorfas. Sin embargo, el converso es falso. Hay acciones isomorfas (aún con $G = H$) que no son equivalentes (en tal caso el isomorfismo $\varphi: G \rightarrow G$ es un automorfismo no trivial del grupo).

\begin{ejemplo}
Sean $G$ y $H$ subgrupos de $Sym(\Omega)$. Las acciones de $G$ y $H$ sobre $\Omega$ son isomorfas si y solo si $G$ y $H$ son subgrupos conjugados en $Sym (\Omega)$.
\end{ejemplo}

\subsection{El producto corona}

Es posible definir variaciones sobre la estructura de un grupo, cuando este puede ser construido a partir de otros grupos más pequeños. Desarrollamos en esta sección varias construcciones que nos permiten conocer la estructura de un grupo. 

\begin{definicion}{(Producto directo)}

Sean $G$ y $H$ grupos con operaciones representadas por $\cdot$ y $+$ respectivamente. Tomamos el conjunto $G\bigotimes H= G \times H$ y lo dotamos con la operación binaria:
$$(g_{1},h_{1})\otimes(g_{2},h_{2})=(g_{1}\cdot g_{2},h_{1}+h_{2})\ \forall g_{1},g_{2}\in G,\ \forall h_{1},h_{2}\in H.$$

\end{definicion}

Sean $G$ y $H$ dos grupos y dado un homomorfismo $\varphi: H \rightarrow Aut(G)$, el \emph{producto semidirecto} esta determinado por el homomorfismo $\varphi$.

\begin{definicion}{(Producto semidirecto)}

Dados dos grupos $G$ y $H$ y un homomorfismo $\varphi:H \rightarrow Aut(G)$, definimos sobre el conjunto $G\times H$ la operación
$$(g_{1},h_{1})\rtimes_{\varphi}(g_{2},h_{2})=(g_{1}\cdot \varphi_{h_{1}}(g_{2}),h_{1}+h_{2}),$$
siendo $\varphi_{h_{1}}=\varphi(h_{1})$.
\end{definicion}

Cuando $H$ y $N$ son subgrupos de un grupo $G$, siendo $N$ normal en $G$, tiene sentido determinar $\varphi_{h}(n)=h^{-1}nh$. De esta manera se cumple que $G=H\rtimes N$ y decimos que $G$ es el producto semidirecto interno de $H$ y $N$.\\

Ahora sea $H$ un grupo y $\Omega$ un conjunto en el cual actúa. i.e., $\exists \cdot : H\times \Omega \rightarrow \Omega$. Notemos que una acción induce un homomorfismo natural
$$\varphi:H\rightarrow Sym(\Omega),$$
dada por $\varphi(h)=\varphi_{h}$, donde $\varphi_{h}(x)=h\cdot x$, $x\in \Omega$.\\

Notemos también que al no ser $\Omega$ un grupo, no podemos hablar sobre automorfismos. Y ahora tomamos el producto directo de $G$ indexado por $\Omega$:
$$K=\prod_{\alpha\in \Omega}{G},$$
es decir $|\Omega|$ copias de $G$. Entonces dos elementos  $x,y\in K$ pueden visualizarse como: 
$$\begin{array}{cc}x=\{x_{\alpha}\}_{\alpha\in \Omega} & y=\{y_{\alpha}\}_{\alpha\in \Omega} \end{array},\ x_{\alpha},y_{\alpha}\in G,\ \forall \alpha \in \Omega.$$
Por ejemplo, si $\Omega= \mathbb{N}$ podríamos identificar 
$$\begin{array}{rl}
xy=\{x_{\alpha}\}_{\alpha\in \mathbb{N}}\{y_{\alpha}\}_{\alpha\in \mathbb{N}}=&\{x_{1},x_{2},\cdots\}\{y_{1},y_{2},\cdots\}  \\
 =&\{x_{1}y_{1},x_{2}y_{2},\cdots\}=\{x_{\alpha}y_{\alpha}\}_{\alpha\in \mathbb{N}}
\end{array}$$
y como $H$ actúa sobre $\Omega$ podemos extender naturalmente la acción de $H$ a todo $K$ de la siguiente manera:

$$h\cdot x=h\cdot \{x_{\alpha}\}_{\alpha\in \Omega} =\{x_{h^{-1}\cdot \alpha}\}_{\alpha\in \Omega}.$$

\begin{definicion}{(Producto corona)}

Dados dos grupos $G$ y $H$ y $\Omega$ un conjunto sobre el cual actúa $H$. Siendo $\varphi$ el homomorfismo que asigna a cada elemento en $H$ su función de acción. 
Entonces definimos el \emph{producto corona}  como 
$$G\wr H= \left(\prod_{\alpha\in \Omega}{G} \right) \rtimes_{\varphi} H,$$
cuya operación está dada por:
$$(x,h_{1})\wr(y,h_{2})=(x\varphi(h_{1})(y),h_{1}h_{2}),$$
para todo $x,y\in G^{\Omega}$ y para todo $h_{1},h_{2}\in H$.

\end{definicion}
Notamos que la operación sobre $G\wr H$ puede verse como:
$$(\{x_{\alpha}\},h)\wr(\{y_{\alpha}\},h')=(\{x_{\alpha}y_{h^{-1}\cdot \alpha}\},hh'),\ \alpha \in \Omega.$$
En el caso más común tomamos $\Omega=H$, donde la acción de $H$ en $H$ es, de manera natural, operar consigo mismo con la multiplicación por inverso por la izquierda. i.e., $\alpha \cdot h= \alpha^{-1}h$. Y de esta manera, dados dos elementos en $G\wr H$, tenemos que son de la forma 
$$(x,h_{1}),(y,h_{2})\in G\wr H,\ x=\{x_{\alpha}\}_{\alpha \in H}, y=\{y_{\alpha}\}_{\alpha \in H},\ x_{\alpha},y_{\alpha}\in G,\ \forall \alpha \in H.$$
 Y al operarlos obtenemos
 $$\begin{array}{rl}  
(x,h_{1})\wr(y,h_{2}) =&(x (h_{1}\cdot y),h_{1}h_{2})   \\
 =& (\{x_{\alpha}\}h_{1}\cdot \{y_{\alpha}\},h_{1}h_{2})  \\
 =&   (\{x_{\alpha}\}\{y_{h_{1}\alpha}\},h_{1}h_{2}) \\
 =& (\{x_{\alpha}y_{h_{1}\alpha}\},h_{1}h_{2}).

 \end{array}$$
En general, sea $\varphi:H \rightarrow Aut(G)$ un homomorfismo, la operación en $G\wr H$ puede visualizarse como:
$$(x,h_{1})\wr(y,h_{2})=(\{x_{\alpha}\},h_{1})\wr(\{y_{\alpha}\},h_{2})=(\{x_{\alpha}\varphi(h_{1})(y_{ \alpha})\},h_{1}h_{2}).$$
Podemos también identificar $\prod{G}$ como un conjunto de funciones de $\Omega$ en $G$.  Es decir que un elemento $f\in K$ puede ser visto como $f:\Omega \rightarrow G$. Entonces el producto corona se puede visualizar como
$$(f,h_{1})\wr (g,h_{2})= (f\varphi(h_{1})(g),h_{1}h_{2}).$$
Además, si el homomorfismo que da el producto semidirecto  $\varphi:H \rightarrow Aut(K)$ es la función de la acción de $H$ sobre $\Omega$, entonces
$$(f,h_{1})\wr (g,h_{2})=(fg(\varphi_{h_{1}}),h_{1}h_{2}),$$
donde $\varphi_{h_{1}}(\omega)=h_{1}\cdot \omega$ es la función de acción de $h_{1}$.

\subsubsection{Producto corona permutacional}
Sea $\Omega$ un conjunto numerable, podemos identificarlo simplemente por 
$$\Omega=\{1,2,\dots,n,\dots\}.$$ 
Además si $\Omega$ es un conjunto finito de orden $n$ podemos fácilmente visualizar que 
$$\prod_{\alpha \in \Omega}{G_{\alpha}}=G^{n}$$
y sea $H$ el grupo de simetrías de $n$ elementos, $S_{n}$, la acción natural de $S_{n}$ en $\Omega$ está dada por evaluación:  $$\sigma \cdot n=\sigma(n),$$ recordando que $\sigma$ es una permutación. Entonces podemos visualizar al producto corona como $$G\wr S_{n}=G\rtimes S_{n}=\{(v,\sigma):\ v\in G^{n},\ \sigma\in S_{n}\},$$
donde la operación puede ser vista como
$$(g_{1},g_{2},\dots,g_{n})\sigma \wr (h_{1},h_{2},\dots,h_{n})\kappa=(g_{1}h_{\sigma^{-1}(1)},g_{2}h_{\sigma^{-1}(2)},\dots,g_{n}h_{\sigma^{-1}(n)})\sigma\kappa,$$
el cual trabaja con la acción natural sobre $G^{n}$ dada por:
$$v=(g_{1},g_{2},\dots,g_{n}) \mapsto \sigma \cdot v= (g_{\sigma^{-1}(1)},g_{\sigma^{-1}(2)},\dots,g_{\sigma^{-1}(n)}),$$
es decir:
$$(v,\sigma)\wr (w,\kappa)=(v(\sigma\cdot w), \sigma \kappa).$$

\subsubsection{La acción imprimitiva de $G\wr H$}
La acción imprimitiva está definida como una acción que preserva alguna partición no trivial de un conjunto.\\

Sea $G$ un subgrupo de un grupo de permutaciones $Sym(\Gamma)$. Entonces para un grupo $H$ actuando sobre sí mismo, se tiene que se puede definir una acción de $G\wr H$ sobre $\Gamma \times H$ de la siguiente manera:
$$(s_{h_{1}},...,s_{h_{m}})h\cdot (\alpha, k)=(s_{h}(\alpha),hk).$$
También tenemos que si $H$ es un subgrupo de algún grupo de permutaciones $Sym(\Omega)$, entonces 
$G\wr H$ actúa sobre $\Gamma \times \Omega$ como:
$$(s_{h_{1}},...,s_{h_{m}})r \cdot (\alpha, \omega)=(s_{r}(\alpha),r(\omega)),$$
donde $s_{x}\in G\leq Sym(\Gamma)$, $r\in\{h_{1},...,h_{m}\}= H \leq Sym(\Omega)$, $\alpha \in \Gamma$, $\omega \in \Omega$.\\

\subsubsection{Particiones uniformes}
Dado un conjunto $X$, una partición de $X$ es una colección $\mathcal{P}$ de subconjuntos disjuntos de $X$, cuya unión es igual al conjunto completo. 
La partición se dice uniforme si $|P_{i}|=|P_{j}|$, para todo $P_{i},P_{j}\in \mathcal{P}$.
Una partición $\mathcal{P}$ se dice invariante bajo una acción $\sigma$ si se tiene que $$\sigma(\mathcal{P})=\mathcal{P}.$$
Decimos que $f:X\rightarrow X$ preserva la partición $\mathcal{P}$ si para cada $P\in  \mathcal{P}$ se tiene que 
$$f(P)\subseteq P',\ \mbox{para algún } P'\in \mathcal{P}.$$ \\

Para un conjunto de la forma $X=Y\times Z$, digamos que $Y$ tiene $n$ elementos y $Z$ tiene $m$ elementos, entonces podemos identificarlos directamente con los números del 1 al $n$ y del 1 al $m$ respectivamente. Entonces la partición natural para $X$ está dada por:
$$\mathcal{P}=\{\{(1,1),...,(n,1)\},\{(1,2),...,(n,2)\}...,\{(1,m),...,(n,m)\}\}.$$

\begin{proposicion}
Sean $G\leq Sym(\Gamma)$ y $H\leq Sym (\Omega) $. Entonces la partición natural de $\Gamma \times \Omega$ es invariante bajo la acción imprimitiva de $G\wr H$.
\end{proposicion}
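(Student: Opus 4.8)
El plan es verificar la afirmación directamente a partir de la fórmula de la acción imprimitiva, comprobando que todo elemento de $G\wr H$ permuta los bloques de la partición natural. Escribamos $\mathcal{P}=\{P_{\omega}\}_{\omega\in\Omega}$ con $P_{\omega}:=\Gamma\times\{\omega\}$; estos son exactamente los bloques de la partición natural de $\Gamma\times\Omega$ descrita arriba (tomando $\Gamma$ como el primer factor y $\Omega$ como el segundo). Intuitivamente, el vector de la base $(s_{h_{1}},\dots,s_{h_{m}})$ mueve únicamente la coordenada en $\Gamma$ y deja fija la coordenada en $\Omega$, mientras que el factor $r\in H$ permuta los bloques; la demostración formaliza esta observación.

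Fijaría un elemento arbitrario $\theta=(s_{h_{1}},\dots,s_{h_{m}})\,r\in G\wr H$, con cada $s_{h_{i}}\in G\leq Sym(\Gamma)$ y $r\in H\leq Sym(\Omega)$, y un bloque $P_{\omega}$. Para cualquier $(\alpha,\omega)\in P_{\omega}$, la definición de la acción da $\theta\cdot(\alpha,\omega)=(s_{r}(\alpha),r(\omega))$; el punto central es que la segunda coordenada $r(\omega)$ no depende de $\alpha$, de modo que $\theta(P_{\omega})\subseteq P_{r(\omega)}$, es decir, $\theta$ preserva $\mathcal{P}$ en el sentido ya definido. Para mejorar esto a la igualdad $\theta(P_{\omega})=P_{r(\omega)}$ usaría que $s_{r}\in Sym(\Gamma)$ es biyectiva: dado $\beta\in\Gamma$ existe $\alpha\in\Gamma$ con $s_{r}(\alpha)=\beta$, luego $(\beta,r(\omega))=\theta\cdot(\alpha,\omega)\in\theta(P_{\omega})$. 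Así $\theta(P_{\omega})=P_{r(\omega)}\in\mathcal{P}$.

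Finalmente, como $r\in H\leq Sym(\Omega)$ es una permutación de $\Omega$, la asignación $\omega\mapsto r(\omega)$ es biyectiva, así que $\theta$ envía la colección $\{P_{\omega}:\omega\in\Omega\}$ biyectivamente sobre sí misma; esto es precisamente $\theta(\mathcal{P})=\mathcal{P}$, y como $\theta$ era arbitrario se concluye que $\mathcal{P}$ es invariante bajo la acción imprimitiva de $G\wr H$. No anticipo un obstáculo sustancial: todo se reduce a notar que la segunda coordenada del punto transformado no involucra al factor de $\Gamma$. Las únicas precauciones serán ser consistente con la indexación de la base (según la fórmula del enunciado se usa la componente $s_{r}$) y obtener la igualdad de bloques ---y no solo la contención---, para lo cual basta la biyectividad de las $s_{h_{i}}$; esto garantiza que $\theta$ realmente permuta los bloques y no solo los encaja unos dentro de otros. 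Alternativamente bastaría comprobarlo sobre un conjunto generador de $G\wr H$, pero el argumento directo sobre un elemento general ya es elemental.
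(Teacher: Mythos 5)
Tu propuesta es correcta y sigue esencialmente el mismo camino que la demostración del texto: verificación directa con la fórmula de la acción imprimitiva, observando que la segunda coordenada sólo depende de $r(\omega)$ y usando la biyectividad de la componente en $Sym(\Gamma)$ (y de $r$ en $\Omega$) para obtener que los bloques se permutan. De hecho tu redacción es algo más cuidadosa, pues pruebas explícitamente la contención $\theta(P_{\omega})\subseteq P_{r(\omega)}$ que el texto despacha como automática.
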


\begin{proof}

Queremos probar que $x\cdot \mathcal{P}=\mathcal{P}$, para cualquier $x\in G\wr H$.  Como la acción  está definida de $G\wr H \times \mathcal{P}$ en $\mathcal{P}$, entonces la primer contención es automática: $x\cdot \mathcal{P}\subseteq \mathcal{P}$.\\
Un elemento en $P\in \mathcal{P}$ es de la forma $(a,b)$, con $a\in \Gamma$ y $b\in \Omega$. Tenemos que $x$ es de la forma $(g_{\lambda})h$, donde $\lambda\in H$.\\
Sea $P'=\{(y,h^{-1}(b)): y\in \Gamma\}\in \mathcal{P}$, se tiene que $x\cdot P'=\{(g_{h}(y)),b):\ y\in \Gamma\}$. Como $g_{h}\in Sym(\Gamma)$, se tiene que este conjunto es igual a $P$. Es decir $P=x\cdot P'$ y por consecuencia $P\in x\cdot \mathcal{P}$.

\end{proof}

\begin{ejemplo}
Sea $g=(a\ b\ c\ d)$ y $G=\langle g \rangle\leq Sym(\{a,b,c,d\})$ y sea $H=S_{3}$, de tal manera que identificamos $\{Id,(1\ 2\ 3),(1\ 3\ 2)\} \mapsto \{h_{1},h_{2},h_{3}\}$. Entonces $G\wr H$ actúa sobre $\{a,b,c,d\}\times\{1,2,3\}$.\\
Por ejemplo:
$$[(g,g^{2},e),h_{2}]\cdot(c,2)=(s_{h_{2}}(c),h_{2}(2))=(g^{2}(c),(1\ 2\ 3)(2))=(a,3).$$

\end{ejemplo}
\begin{proposicion}\label{conservacion de particion}
La acción de evaluación del monoide $End_{G}(\mathcal{B}_{[H_{i}]})$ sobre $\mathcal{B}_{[H_{i}]}$ preserva la partición de $G$-órbitas. 
\end{proposicion}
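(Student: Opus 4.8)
The plan is to show that every endomorphism $f \in \mathrm{End}_G(\mathcal{B}_{[H_i]})$ maps each $G$-orbit inside the box $\mathcal{B}_{[H_i]}$ into a single $G$-orbit. Fix a $G$-orbit $O = Gx \subseteq \mathcal{B}_{[H_i]}$. The key observation is that $G$-equivariance forces $f$ to respect orbits: for any $g \in G$ and any $y \in O$, we have $f(g\cdot y) = g\cdot f(y)$, so the image $f(O)$ is closed under the $G$-action, hence is a union of $G$-orbits. What I must argue is that it is in fact a \emph{single} orbit, and this is exactly where the hypothesis that we are inside one box $\mathcal{B}_{[H_i]}$ is used.

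First I would recall that any point $y \in O$ can be written as $y = g\cdot x$, so every element of $f(O)$ has the form $f(g\cdot x) = g\cdot f(x)$; thus $f(O) = G\cdot f(x) = G f(x)$, which is precisely the $G$-orbit of the single point $f(x)$. So $f(O)$ is itself one $G$-orbit, and in particular $f$ sends the orbit $O$ into (actually onto) the orbit $Gf(x)$. This already shows $f$ preserves the partition in the sense of the definition of \emph{preserva la partición} given just before: for each $P \in \mathcal{P}$ (the partition of $\mathcal{B}_{[H_i]}$ into $G$-orbits), $f(P) \subseteq P'$ for some $P' \in \mathcal{P}$, namely $P' = Gf(x)$ for any chosen $x \in P$. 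The only thing needing a remark is well-definedness: if $g_1 \cdot x = g_2 \cdot x$ then $g_1^{-1}g_2 \in G_x$ and $g_1 \cdot f(x) = g_2 \cdot f(x)$ follows because $g_1^{-1}g_2$ also stabilizes $f(x)$ — here one uses that $f$ is $G$-equivariant, so $G_x \subseteq G_{f(x)}$.

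For the stronger reading — that the induced map on the set of $G$-orbits is itself a well-defined transformation, and in fact that this transformation together with its orbit-blocks witnesses $\mathrm{End}_G(\mathcal{B}_{[H_i]})$ acting through an imprimitive wreath-type structure — I would additionally note that inside a single box all stabilizers lie in $[H_i]$, so by Proposition \ref{equivalencia} all the $G$-orbits in $\mathcal{B}_{[H_i]}$ carry equivalent $G$-actions; hence the partition $\mathcal{P}$ into orbits is \emph{uniform} ($|P| = |G/H_i|$ for every block $P$), which is the setting of \S(Particiones uniformes) and matches the imprimitive action of a wreath product. The honest remark here is that no real obstacle appears: the whole statement is essentially immediate from $G$-equivariance plus the order--stabilizer theorem (Theorem \ref{orbest}) giving the uniform block size. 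The one point to state carefully — and the closest thing to a subtlety — is the well-definedness/stabilizer-containment step $G_x \subseteq G_{f(x)}$, since it is what guarantees $f(O)$ is a single orbit rather than merely orbit-closed, and it is the place where equivariance is genuinely invoked rather than just the set-theoretic structure.
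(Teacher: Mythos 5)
Tu argumento central es exactamente el de la demostración del texto: por $G$-equivarianza, $\tau(Gx)=G\tau(x)$, de modo que cada $G$-órbita se envía dentro de (de hecho, sobre) una sola $G$-órbita, y eso es todo lo que se necesita. Los añadidos sobre buena definición y uniformidad de los bloques son correctos pero superfluos aquí, pues $\tau$ ya es una función dada y no se está definiendo nada por representantes.
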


\begin{proof}
Es fácil ver que un elemento $z\in \tau(Gx)$ es de la forma $z=\tau(g\cdot x)=g\cdot \tau(x)$. Esto muestra que $\tau(Gx)=G\tau(x)$. Y por tanto la acción de evaluación preserva la partición por $G$-órbitas.
\end{proof}

\subsection{La acción y el producto corona}

Ahora probaremos una isomorfía importante sobre los endomorfismos de $G$-conjuntos, pero en esta ocasión restringidos a un conjunto de configuraciones más grande, los conjuntos $\mathcal{B}_{[H]}$.\\

Se tiene que $G$ actúa sobre el conjunto de clases laterales $G/G_{x}$ por multiplicación izquierda:
$$h\cdot gG_{x}=hgG_{x}.$$
Denotemos por $G^{x}=G/G_{x}$ al cociente y por $gG_{x}=\overline{g}$a sus elementos. Entonces esta acción puede extenderse a $G^{x}\times X$ como:
$$h\cdot(\overline{g},x_{\lambda})=(h\cdot\overline{g},x_{\lambda})=(\overline{hg},x_{\lambda}).$$
Trabajaremos el caso donde $G_{x}=H$. Además $G$ también actúa sobre $Gx$ por multiplicación izquierda:
$$h\cdot(g\cdot x)=hg\cdot x.$$

\begin{proposicion}
La acción de $G$ sobre $G^{x}$ es equivalente a su acción sobre $Gx$, para todo $x\in X$. 
\end{proposicion}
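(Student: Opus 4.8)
The plan is to exhibit an explicit $G$-equivariant bijection between $G^{x} = G/G_x$ and $Gx$. First I would fix the natural candidate map $\psi : G/G_x \to Gx$ given by $\psi(\overline{g}) = \psi(gG_x) = g\cdot x$. This is precisely the assignment used in the proof of the orbit–stabilizer theorem (Teorema \ref{orbest}), so I can reuse that argument: there it is shown that $\phi(g\cdot x) := gG_x$ is well defined, injective, and surjective, and $\psi$ is simply its inverse $\phi^{-1}$. Hence $\psi$ is already known to be a bijection, and I only need to check that it intertwines the two left-multiplication actions.

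Next I would verify $G$-equivariance directly. For any $h\in G$ and any coset $\overline{g} = gG_x \in G^{x}$, using the extended action $h\cdot\overline{g} = \overline{hg}$ on the left and the action $h\cdot(g\cdot x) = hg\cdot x$ on $Gx$ on the right, one computes
$$\psi(h\cdot\overline{g}) = \psi(\overline{hg}) = (hg)\cdot x = h\cdot(g\cdot x) = h\cdot\psi(\overline{g}),$$
which is the defining condition for $\psi$ to be a morphism of $G$-sets. Combined with the bijectivity inherited from Teorema \ref{orbest}, this shows $\psi$ is an isomorphism of $G$-sets, and by the remark following the definition of isomorphism (that a bijective $G$-equivariant map has $G$-equivariant inverse), the action of $G$ on $G^{x}$ is equivalent to its action on $Gx$.

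The only genuinely delicate point is the well-definedness of $\psi$ — that $gG_x = g'G_x$ forces $g\cdot x = g'\cdot x$ — since a coset has many representatives; but this is immediate because $g^{-1}g' \in G_x$ gives $g^{-1}g'\cdot x = x$, hence $g'\cdot x = g\cdot x$. So there is no real obstacle here: the statement is essentially a repackaging of the orbit–stabilizer correspondence together with the trivial observation that that correspondence respects the left $G$-action. I would simply remark that this proposition is the bridge needed to later identify $\mathrm{End}_G(\mathcal{B}_{[H]})$ with a wreath-product-type structure, since it lets us replace the orbit $Gx$ by the more rigid coset space $G/H$.
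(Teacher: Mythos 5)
Your proposal is correct and follows essentially the same route as the paper: both use the map $\overline{g}\mapsto g\cdot x$, verify it is well defined and bijective (you partly delegate this to the orbit--estabilizador theorem, the paper redoes the chain of equivalences directly), and establish equivariance by the same one-line computation $\phi(\overline{hg})=hg\cdot x=h\cdot\phi(\overline{g})$. No gaps.
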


\begin{proof}
Tenemos una función $\phi:G^{x} \rightarrow Gx$ dada por $\phi(\overline{g})=g\cdot x$. Notemos que 

 $$\overline{g}=\overline{h}\iff h^{-1}g \in G_{x}\iff h^{-1}g\cdot x=x\iff g\cdot x=h\cdot x.$$
 Por lo tanto $\phi$ está bien definida y además es inyectiva.\\
 También se tiene que para todo $g\cdot x\in Gx$, $\phi(\overline{g})=g\cdot x$, por lo tanto es sobreyectiva. \\
 Por otro lado: $$\phi(h\cdot \overline{g})= \phi (\overline{hg})=hg\cdot x= h\cdot (g\cdot x)= h\cdot \phi(\overline{g}).$$
 Esto muestra la equivalencia entre sus acciones. 
\end{proof}

\begin{proposicion}
La acción de $G$ sobre $G^{x}$ es isomorfa a su acción sobre el conjunto $G^{x}\times \{x\}$, para todo $x\in X$. 
\end{proposicion}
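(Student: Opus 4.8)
Since the two actions in question are actions of one and the same group $G$, it suffices to produce an \emph{equivalence} of actions: by the definitions above, an equivalence is precisely an isomorphism of actions in which the accompanying group isomorphism $\varphi\colon G\to G$ is taken to be $\mathrm{id}_{G}$. So the whole task reduces to exhibiting a $G$-equivariant bijection $\lambda\colon G^{x}\rightarrow G^{x}\times\{x\}$. The obvious candidate is $\lambda(\overline{g})=(\overline{g},x)$, i.e. the map that simply appends the constant second coordinate $x$.

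\textbf{Steps.} First I would check that $\lambda$ is a bijection: its inverse is the projection onto the first coordinate, $(\overline{g},x)\mapsto\overline{g}$, and the two composites are visibly the identity maps on $G^{x}$ and on $G^{x}\times\{x\}$ respectively. Note that, unlike the arguments for $\phi\colon G^{x}\to Gx$ in the previous propositions, here there is no well-definedness subtlety at all, because $\lambda$ never passes to a coset representative; it transports the coset $\overline{g}$ unchanged and only adjoins the fixed element $x$. Second, I would verify $G$-equivariance directly from the extension of the action to $G^{x}\times\{x\}$ recorded earlier, namely $h\cdot(\overline{g},x)=(h\cdot\overline{g},x)=(\overline{hg},x)$. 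Then, for every $h\in G$ and every $\overline{g}\in G^{x}$,
$$\lambda(h\cdot\overline{g})=\lambda(\overline{hg})=(\overline{hg},x)=h\cdot(\overline{g},x)=h\cdot\lambda(\overline{g}),$$
which is exactly the equivariance condition. Taking $\varphi=\mathrm{id}_{G}$, this shows the action of $G$ on $G^{x}$ is isomorphic (indeed equivalent) to its action on $G^{x}\times\{x\}$.

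\textbf{Main obstacle.} Frankly, there is no real obstacle here: the statement is a formality whose content is that the second coordinate of $G^{x}\times\{x\}$ is inert under the $G$-action, so adjoining it neither creates nor destroys any structure. The only point deserving a word of care is the one already flagged — that $\lambda$ is patently well defined because it does not involve a choice of representative — and the observation that, because the two sides are acted on by the same group, ``isomorphic'' and ``equivalent'' coincide once $\varphi$ is chosen to be the identity. Composing this equivalence with the preceding proposition (equivalence of the $G$-actions on $G^{x}$ and on $Gx$) immediately yields that the action of $G$ on $Gx$ is isomorphic to its action on $G^{x}\times\{x\}$, which is the form in which this fact will be used in the crown-product description that follows.
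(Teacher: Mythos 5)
Tu propuesta es correcta y sigue esencialmente el mismo camino que la demostración del documento: se define la misma aplicación $\overline{g}\mapsto(\overline{g},x)$, se observa que el isomorfismo de grupos es la identidad (es decir, la isomorfía es de hecho una equivalencia) y se verifica la $G$-equivarianza con el mismo cálculo $\lambda(h\cdot\overline{g})=(\overline{hg},x)=h\cdot\lambda(\overline{g})$. Tu única adición es hacer explícita la biyectividad y la buena definición (que el texto remite a la demostración anterior), lo cual es un detalle de presentación, no una diferencia de método.
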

\begin{proof}
Tenemos una función $\phi_{x}:G^{x} \rightarrow G^{x}\times X$ dada por $\phi_{x}(\overline{g})=(\overline{g},x)$.
Por argumentos análogos a la demostración anterior se tiene que $\phi_{x}$ está bien definida y $G$ es isomorfo a sí mismo trivialmente. Además se tiene que:
 $$\phi_{x}(h\cdot \overline{g})= \phi_{x} (\overline{hg})=(\overline{hg}, x)= h\cdot (\overline{g}, x)= h\cdot \phi_{x}(\overline{g}).$$
\end{proof}

Este resultado nos permite visualizar $G^{x}$ dentro de $G^{x}\times X$, i.e., $G^{x}\leq G^{x}\times X$. Específicamente hay $|X|$ copias de $G^{x}$ dentro de $G^{x}\times X$.\\

Para este apartado $G$ actúa sobre un conjunto $Y$ y $X$ es un subconjunto de $Y$ el cual contiene un elemento de cada $G$-órbita en $Y$. Dado un subgrupo $H$ de un grupo $G$, denotamos por $X$ a algún conjunto de representantes de las $G$-órbitas de $\mathcal{B}_{[H]}$, tal que todos los elementos en $X$ tienen el mismo estabilizador, i.e., para cuales quiera $x,y\in X$ se tiene que $Gx\neq Gy$ y $G_{x}=G_{y}$.\\ Notemos que, por la $G$-equivarianza, cada función $G$-equivariante $\tau$ induce dentro de la caja una transformación en $Trans(X)$.\\
Es decir, dado 
$$\tau(g\cdot x_{\lambda})= h\cdot x_{\kappa},$$
existe una transformación $f:X\rightarrow X$ tal que 
$$f(x_{\lambda})=x_{\kappa}.$$
De igual manera, para cada $x_{\lambda}$, se induce una transformación $\varphi: G^{x}\rightarrow G^{x}$ dada por
$$\varphi(\overline{g})=\overline{h}.$$
Denotaremos a estas funciones por $f_{\tau}$ y $\varphi_{\lambda,\tau}$ respectivamente.\\

\begin{teorema}\label{abc1}
Sea $G$ un grupo actuando sobre un conjunto $Y$, sea $H$ un subgrupo de $G$ y $X$ el conjunto de representantes de $G$-órbitas de $\mathcal{B}_{[H]}$. Entonces la acción de evaluación de $End_{G}(\mathcal{B}_{[H]})$ sobre $\mathcal{B}_{[H]}$ es isomorfa a la acción de algún subgrupo de $Trans(Gx)\wr Trans(X)$ sobre $Gx\times X$, para todo $x\in X$. 
\end{teorema}

\begin{proof}
Primero definimos una biyección entre $\mathcal{B}_{[H]}$ y $G^{x} \times X$. Tomamos la función $\theta:g\cdot x_{\lambda} \mapsto (\overline{g},x_{\lambda})$ la cual satisface que:
$$g\cdot x_{\lambda}=h\cdot x_{\lambda} \iff\overline{g}=\overline{h}\iff(\overline{g},x_{\lambda})=(\overline{h},x_{\lambda})\iff \theta(g\cdot x_{\lambda}) = \theta(h\cdot x_{\lambda}).$$
Esto muestra que $\theta$ está bien definida y además es inyectiva. 
Y, por como está construida, es fácil identificar que es sobreyectiva, pues para cualquier elemento de la forma $(\overline{g},x_{\lambda})$ se tiene que su preimagen es de la forma $g\cdot x_{\lambda}$.\\

Notemos que para dos elementos $\pi, \tau \in End_{G}(\mathcal{B}_{[H]})$ que satisfacen
$$\tau(g\cdot x_{\lambda})=h\cdot x_{\kappa} \hspace{0.2in}\mbox{ y }\hspace{0.2in}\pi(h\cdot x_{\kappa})=k\cdot x_{\rho}$$
se tiene que:
$$f_{\tau}(x_{\lambda})=x_{\kappa},\hspace{0.2in} f_{\pi}(x_{\kappa})=x_{\rho}$$ y 
$$\varphi_{\lambda,\tau}(\overline{g})=\overline{h},\hspace{0.2in} \varphi_{\kappa,\pi}(\overline{h})=\overline{k}.$$
Por otro lado tenemos que 
$$\pi\tau(g\cdot x_{\lambda})=k\cdot x_{\rho},$$
lo que implica que $$f_{\pi \tau}(x_{\lambda})=x_{\rho},\hspace{0.2in} \varphi_{\lambda,\pi\tau}(\overline{g})=\overline{k}. $$ Tomamos una función $\sigma_{\tau}:X\rightarrow \{\varphi_{\lambda,\tau}\}$, dada por $\sigma_{\tau}(x_{\lambda})= \varphi_{\lambda,\tau}$. Y además definimos una función  $\varphi:End_{G}(\mathcal{B}_{[H]})\rightarrow Trans(G^{x})\wr Trans(X)$ dada por $\varphi(\tau)= (\sigma_{\tau},f_{\tau})$. \\
Entonces tenemos que:
$$\begin{array}{rll}
\varphi(\pi)\varphi(\tau)\cdot(\overline{g},x_{\lambda}) &= 
(\sigma_{\pi},f_{\pi})(\sigma_{\tau},f_{\tau})\cdot(\overline{g},x_{\lambda})&=
(\sigma_{\pi},f_{\pi})\cdot\left(\sigma_{\tau}(x_{\lambda})(\overline{g}),f_{\tau}(x_{\lambda})    \right)\\
& =
(\sigma_{\pi},f_{\pi})\cdot(\varphi_{\lambda,\tau}(\overline{g}),f_{\tau}(x_{\lambda}))&=
(\sigma_{\pi},f_{\pi})\cdot(\overline{h},x_{\kappa})\\
&=(\sigma_{\pi}(x_{\kappa})(\overline{h}),f_{\pi}(x_{\kappa}))&=
(\varphi_{\kappa,\pi}(\overline{h}),f_{\pi}(x_{\kappa}))\\
&= (\overline{k},x_{\rho}) &= (\varphi_{\lambda, \pi \tau}(\overline{g}),f_{\pi\tau}(x_{\lambda}))\\
&= (\sigma_{\pi\tau}(x_{\lambda})(\overline{g}),f_{\pi\tau}(x_{\lambda})) &= (\sigma_{\pi \tau},f_{\pi\tau})\cdot(\overline{g},x_{\lambda})\\
 & & = \varphi(\pi \tau)\cdot(\overline{g},x_{\lambda})
\end{array},$$
para cuales quiera elementos $\overline{g}\in G^{x},  x_{\lambda}\in X$. Entonces $\varphi(\pi)\varphi(\tau)=\varphi(\pi\tau)$, $\forall \pi,\tau\in End_{G}(\mathcal{B}_{[H]})$, por lo que $\varphi$ es un homomorfismo.\\

 Notemos que la identidad en $Trans(G^{x})\wr Trans(X)$ es de la forma $(Id,Id)$, donde la primer identidad es la función que manda todo $x_{\lambda}\in X$ a la identidad de $G^{x}$ en sí mismo. Entonces sea $\tau\in End_{G}(\mathcal{B}_{[H]})$ tal que $\varphi(\tau)=1$, se tiene que $\sigma_{\tau}(x_{\lambda})(\overline{g})=\varphi_{\lambda,\tau}(\overline{g})=\overline{g}$, para todo $\overline{g}\in G^{x}$ , entonces $\varphi_{\lambda,\tau}$ es la identidad en $G^{x}$, para todo $x_{\lambda}\in X$, entonces $\sigma_{\tau}$ es la identidad mencionada. Además $f_{\tau}=Id$. Entonces $\tau$ es la identidad en $End_{G}(\mathcal{B}_{[H]})$ y por tanto $\varphi$ es inyectiva.\\

Debemos mostrar también que $(\theta, \varphi)$ es un embebimiento de acción. Sea $g\cdot x_{\lambda} \in \mathcal{B}_{[H_{i}]}$ y $\tau\in End_{G}(\mathcal{B}_{[H_{i}]})$, tenemos que 
$$ \varphi(\tau)\cdot(\theta(g\cdot x_{\lambda}))=(\sigma_{\tau},f_{\tau})\cdot(\overline{g},x_{\lambda})=(\sigma_{\tau}(x_{\lambda})(\overline{g}),f_{\tau}(x_{\lambda}))$$
$$= (\overline{h},x_{\kappa})= \theta(h\cdot x_{\kappa}) =\theta(\tau(g\cdot x_{\lambda})).$$

\end{proof}

Como concecuencia de este resultado, podemos decir que se puede visualizar a $End_{G}(\mathcal{B}_{[H]})$ como un submonoide de $Trans(Gx)\wr Trans(X)$, i.e.:
$$End_{G}(\mathcal{B}_{[H]})\leq Trans(Gx)\wr Trans(X).$$

\begin{teorema}\label{abc2}
Sea $G$ un grupo actuando sobre un conjunto $Y$, sea $H$ un subgrupo de $G$ y $X$ el conjunto de representantes de $G$-órbitas de $\mathcal{B}_{[H]}$. Dada una función $\tau:\mathcal{B}_{[H]}\rightarrow \mathcal{B}_{[H]}$, vista como $\tau=(\sigma_{\tau},f_{\tau})$. Entonces $\tau\in End_{G}(\mathcal{B}_{[H]})$ si y solo si $\sigma_{\tau}(x_{\lambda}) \in End_{G}(Gx)$, para todo $x_{\lambda}\in X$.
\end{teorema}

\begin{proof}
Supongamos que $\sigma_{\tau}(x_{\lambda})\in End_{G}(Gx)$ para todo $x_{\lambda}\in X$. Si visualizamos los elementos en $Gx$ por su equivalencia con $G^{x}$ entonces tenemos que para cualquier $x\in \mathcal{B}_{[H]}$, existe un $g\in G$ y un $x_{\lambda}\in X$ tal que $x=g\cdot x_{\lambda}$. Entonces:
$$\begin{array}{rl}
\tau(h\cdot x)&= \tau(hg\cdot x_{\lambda})\\
& \approx  (\sigma_{\tau},f_{\tau})\cdot (\overline{hg},x_{\lambda})\\
&= (\sigma_{\tau}(x_{\lambda})(\overline{hg}),f_{\tau}(x_{\lambda}))\\
&=(h\cdot \sigma_{\tau}(x_{\lambda})(\overline{g}),f_{\tau}(x_{\lambda}))\\
&=h\cdot( \sigma_{\tau}(x_{\lambda})(\overline{g}),f_{\tau}(x_{\lambda}))\\
&= h\cdot [(\sigma_{\tau},f_{\tau})\cdot (\overline{g},x_{\lambda})]\\
& \approx h\cdot \tau(g\cdot x_{\lambda})\\
&= h\cdot \tau(x).

\end{array}$$

Supongamos ahora que $\tau\in End_{G}(\mathcal{B}_{[H]})$ y supongamos además que existe un $x_{\kappa}\in X$ tal que $\sigma_{\tau}(x_{\kappa})$ no es $G$-equivariante, es decir que $h\cdot \sigma_{\tau}(x_{\kappa})(\overline{g})\neq\sigma_{\tau}(x_{\kappa})(\overline{hg}) $. Entonces tenemos que:

$$\begin{array}{rcl}
h\cdot \tau(x)&=& \tau(h\cdot x)\\
h\cdot \tau (g\cdot x_{\lambda})&=& \tau(hg\cdot x_{\lambda})\\
h\cdot [(\sigma_{\tau},f_{\tau})\cdot(\overline{g},x_{\lambda})]&=& (\sigma_{\tau},f_{\tau})\cdot (\overline{hg},x_{\lambda})\\
h\cdot(\sigma_{\tau}(x_{\lambda})(\overline{g}),f_{\tau}(x_{\lambda}))&=& (\sigma_{\tau}(x_{\lambda})(\overline{hg}),f_{\tau}(x_{\lambda}))\\
(h\cdot\sigma_{\tau}(x_{\lambda})(\overline{g}),f_{\tau}(x_{\lambda}))&=& (\sigma_{\tau}(x_{\lambda})(\overline{hg}),f_{\tau}(x_{\lambda})).
\end{array}$$

Como $x$ es cualquier elemento en la caja, se tiene que estas ecuaciones deben satisfacerse para cualquier $g,h\in G$ y para cualquier $x_{\lambda} \in X$. De la última ecuación se infiere directamente que 
$$h\cdot \sigma_{\tau}(x_{\lambda})(\overline{g})=\sigma_{\tau}(x_{\lambda})(\overline{hg}),\ \forall x_{\lambda}\in X.$$

 \end{proof}

Como consecuencia de los Teoremas \ref{abc1} y \ref{abc2} tenemos los siguientes resultados. 

\begin{corolario}
Sea $G$ un grupo actuando sobre un conjunto $Y$, sea $H$ un subgrupo de $G$, $X$ el conjunto de representantes de $G$-órbitas de $\mathcal{B}_{[H]}$ y sea $x\in \mathcal{B}_{[H]}$. Se satisface que:
$$End_{G}(\mathcal{B}_{[H]})\cong End_{G}(Gx)\wr Trans(X).$$

\end{corolario}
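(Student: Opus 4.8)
The corollary should fall out of Theorems \ref{abc1} and \ref{abc2} once the image of the embedding built in the first of them is identified. Throughout I would use, exactly as is already done inside the proof of Theorem \ref{abc1}, the equivalence of the $G$-actions on $G^{x}=G/G_{x}$ and on $Gx$ to identify these two $G$-sets; this identification also matches $Trans(G^{x})$ with $Trans(Gx)$, the submonoid $End_{G}(G^{x})$ with $End_{G}(Gx)$, and the associated wreath products, so I may freely work with whichever of $G^{x}$, $Gx$ is convenient. It therefore suffices to show $End_{G}(\mathcal{B}_{[H]})\cong End_{G}(Gx)\wr Trans(X)$.

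First I would recall from Theorem \ref{abc1} that $\varphi\colon End_{G}(\mathcal{B}_{[H]})\to Trans(Gx)\wr Trans(X)$, $\tau\mapsto(\sigma_{\tau},f_{\tau})$, is an injective monoid homomorphism, so $End_{G}(\mathcal{B}_{[H]})$ is isomorphic to its image $\mathrm{Im}(\varphi)$, regarded as a submonoid of the monoid wreath product $Trans(Gx)\wr Trans(X)$ whose operation is the one written out in the proof of that theorem. The task is then to show $\mathrm{Im}(\varphi)=\{(\sigma,f):\sigma\colon X\to End_{G}(Gx),\ f\in Trans(X)\}$. The inclusion $\subseteq$ is immediate from Theorem \ref{abc2}: if $\tau\in End_{G}(\mathcal{B}_{[H]})$ then $\sigma_{\tau}(x_{\lambda})\in End_{G}(Gx)$ for every $x_{\lambda}\in X$, while $f_{\tau}\in Trans(X)$ is unrestricted. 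For $\supseteq$, given a pair $(\sigma,f)$ with $\sigma$ valued in $End_{G}(Gx)$ and $f\in Trans(X)$, I would set $\tau:=\theta^{-1}\circ(\sigma,f)\circ\theta$, using the bijection $\theta\colon\mathcal{B}_{[H]}\to Gx\times X$ of Theorem \ref{abc1}; then $\sigma_{\tau}=\sigma$ and $f_{\tau}=f$, so the ``if'' direction of Theorem \ref{abc2} gives $\tau\in End_{G}(\mathcal{B}_{[H]})$, and by construction $\varphi(\tau)=(\sigma,f)$. Hence $\mathrm{Im}(\varphi)$ is exactly that set.

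It remains to observe that this set, with the operation inherited from $Trans(Gx)\wr Trans(X)$, is by definition the wreath product $End_{G}(Gx)\wr Trans(X)$: its base is the direct product of $|X|$ copies of the submonoid $End_{G}(Gx)\leq Trans(Gx)$ (composition of $G$-equivariant maps is again $G$-equivariant), $Trans(X)$ acts on the index set $X$ by evaluation $f\cdot x_{\lambda}=f(x_{\lambda})$, and the inherited product is precisely the resulting wreath operation. Therefore $End_{G}(\mathcal{B}_{[H]})\cong\mathrm{Im}(\varphi)=End_{G}(Gx)\wr Trans(X)$, which is the claim.

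The step I expect to demand the most care is the reverse inclusion together with the bookkeeping that the operation inherited on $\mathrm{Im}(\varphi)$ really coincides with the wreath-product operation of $End_{G}(Gx)\wr Trans(X)$: one has to match the (monoid, not group) wreath-product conventions used in Theorem \ref{abc1} exactly, confirm that an arbitrary base transformation $f$ does occur with no compatibility obstruction, and check that the identification $G^{x}\cong Gx$ is compatible with the whole wreath structure. None of this is deep, but it is where a careless argument could slip.
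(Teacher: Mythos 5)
Tu propuesta es correcta y sigue esencialmente el mismo camino que el texto: el documento enuncia el corolario como consecuencia inmediata de los Teoremas \ref{abc1} y \ref{abc2} sin dar demostración separada, y tu argumento simplemente explicita esa deducción (identificar la imagen del encaje $\varphi$ con los pares $(\sigma,f)$ con $\sigma$ valuada en $End_{G}(Gx)$ y $f\in Trans(X)$, usando la dirección ``si'' del Teorema \ref{abc2} para la sobreyectividad sobre ese conjunto). El cuidado que señalas con la identificación $G^{x}\cong Gx$ y con la operación corona heredada es exactamente el detalle que el texto deja implícito.
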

\begin{corolario}
Sea $G$ un grupo actuando sobre un conjunto $Y$, sea $H$ un subgrupo de $G$ y $X$ el conjunto de representantes de $G$-órbitas de $\mathcal{B}_{[H]}$. Dada $\tau\in Aut_{G}(\mathcal{B}_{[H]})$, vista como $\tau=(\sigma_{\tau},f_{\tau})$, entonces se satisfacen las siguiente afirmaciones:
\begin{enumerate}
\item[i)] $$\sigma_{\tau}(x_{\lambda})\in Aut_{G}(Gx),\ \forall x_{\lambda}\in X. $$
\item[ii)] $$f_{\tau}\in Sym (X).$$
\end{enumerate}

\end{corolario}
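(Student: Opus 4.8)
El plan es aprovechar que $Aut_{G}(\mathcal{B}_{[H]})$ es precisamente el grupo de unidades de $End_{G}(\mathcal{B}_{[H]})$: sus elementos son las biyecciones $G$-equivariantes de $\mathcal{B}_{[H]}$ y, como ya se observó en esta sección, la inversa de una biyección $G$-equivariante es de nuevo $G$-equivariante, de modo que si $\tau\in Aut_{G}(\mathcal{B}_{[H]})$ entonces $\tau^{-1}\in End_{G}(\mathcal{B}_{[H]})$ con $\tau\tau^{-1}=\tau^{-1}\tau=id$. Como el homomorfismo $\varphi$ del Teorema \ref{abc1} es inyectivo y manda la identidad en $(Id,Id)$, al aplicarlo se obtiene que $\varphi(\tau)=(\sigma_{\tau},f_{\tau})$ y $\varphi(\tau^{-1})=(\sigma_{\tau^{-1}},f_{\tau^{-1}})$ son inversos mutuos dentro de $Trans(G^{x})\wr Trans(X)$, esto es $\varphi(\tau)\varphi(\tau^{-1})=\varphi(\tau^{-1})\varphi(\tau)=(Id,Id)$.

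Primero escribiría explícitamente, igual que en la demostración del Teorema \ref{abc1}, la acción de los productos $(\sigma_{\tau^{-1}},f_{\tau^{-1}})(\sigma_{\tau},f_{\tau})$ y $(\sigma_{\tau},f_{\tau})(\sigma_{\tau^{-1}},f_{\tau^{-1}})$ sobre un par arbitrario $(\overline{g},x_{\lambda})\in G^{x}\times X$, igualándolas ambas a $(\overline{g},x_{\lambda})$. Leyendo la segunda coordenada resulta $f_{\tau^{-1}}\circ f_{\tau}=Id$ y $f_{\tau}\circ f_{\tau^{-1}}=Id$, así que $f_{\tau}$ y $f_{\tau^{-1}}$ son transformaciones de $X$ mutuamente inversas; esto prueba ii), a saber $f_{\tau}\in Sym(X)$. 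Leyendo la primera coordenada se obtienen, para todo $x_{\lambda}\in X$, identidades en $Trans(G^{x})$ de la forma $\sigma_{\tau^{-1}}(f_{\tau}(x_{\lambda}))\circ\sigma_{\tau}(x_{\lambda})=Id$ y $\sigma_{\tau}(f_{\tau^{-1}}(x_{\lambda}))\circ\sigma_{\tau^{-1}}(x_{\lambda})=Id$.

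De la primera de estas identidades se sigue que cada $\sigma_{\tau}(x_{\lambda})$ es inyectiva (posee inversa por la izquierda); reindexando la segunda con $x_{\lambda}\mapsto f_{\tau}(x_{\mu})$ y usando ii) se llega a $\sigma_{\tau}(x_{\mu})\circ\sigma_{\tau^{-1}}(f_{\tau}(x_{\mu}))=Id$, de donde $\sigma_{\tau}(x_{\mu})$ es sobreyectiva. Por lo tanto $\sigma_{\tau}(x_{\lambda})\in Sym(G^{x})$, es decir $\sigma_{\tau}(x_{\lambda})\in Sym(Gx)$ (identificando $G^{x}$ con $Gx$) para todo $x_{\lambda}\in X$. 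Finalmente, como $\tau\in End_{G}(\mathcal{B}_{[H]})$, el Teorema \ref{abc2} da $\sigma_{\tau}(x_{\lambda})\in End_{G}(Gx)$; combinando, $\sigma_{\tau}(x_{\lambda})\in End_{G}(Gx)\cap Sym(Gx)=Aut_{G}(Gx)$, que es i).

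El único punto delicado será la contabilidad de índices al componer en el producto corona, es decir colocar en su sitio los argumentos $f_{\tau^{-1}}(x_{\lambda})$ frente a $f_{\tau}(x_{\lambda})$; por eso compondría siempre en el mismo orden fijado en el Teorema \ref{abc1}, tras lo cual todo lo demás es inmediato. De manera equivalente, se puede invocar el isomorfismo $End_{G}(\mathcal{B}_{[H]})\cong End_{G}(Gx)\wr Trans(X)$ del corolario anterior y observar que las unidades de un producto corona de monoides $M\wr Trans(X)$ son exactamente $M^{*}\wr Sym(X)$; tomando $M=End_{G}(Gx)$, con $M^{*}=Aut_{G}(Gx)$, se obtienen las dos afirmaciones de golpe.
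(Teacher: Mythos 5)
Tu propuesta es correcta, pero sustituye con un argumento completo lo que el texto despacha en una sola frase: la demostración del documento se limita a afirmar que, para que $\tau=(\sigma_{\tau},f_{\tau})$ sea biyectiva, \emph{ambas partes deben ser biyectivas}, sin justificar ese paso ni mencionar la $G$-equivarianza de $\sigma_{\tau}(x_{\lambda})$. Tú en cambio trabajas con unidades: usas que $\tau^{-1}$ es $G$-equivariante por ser inversa de una biyección $G$-equivariante, lo empujas por el embebimiento $\varphi$ del Teorema \ref{abc1}, y de $\varphi(\tau)\varphi(\tau^{-1})=\varphi(\tau^{-1})\varphi(\tau)=(Id,Id)$ lees coordenada a coordenada que $f_{\tau}$ tiene inversa bilateral en $Trans(X)$ y que cada $\sigma_{\tau}(x_{\lambda})$ tiene inversa por la izquierda y por la derecha (tu contabilidad de índices, $\sigma_{\tau^{-1}}(f_{\tau}(x_{\lambda}))\circ\sigma_{\tau}(x_{\lambda})=Id$ y la reindexación vía $f_{\tau}$, es exactamente la correcta con el orden de composición fijado en ese teorema); después invocas el Teorema \ref{abc2} para concluir $\sigma_{\tau}(x_{\lambda})\in End_{G}(Gx)\cap Sym(Gx)=Aut_{G}(Gx)$, detalle que el texto omite. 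Esto compra algo real: la implicación escueta \emph{todo biyectivo} $\Rightarrow$ \emph{partes biyectivas} no es inmediata si la órbita $Gx$ fuera infinita (dos columnas distintas podrían, en principio, inyectarse con imágenes disjuntas en una misma columna), mientras que tu argumento con inversas bilaterales, o tu observación equivalente de que las unidades de $End_{G}(Gx)\wr Trans(X)$ son precisamente $Aut_{G}(Gx)\wr Sym(X)$, funciona sin esa reserva y además obtiene i) y ii) de un solo golpe, en el mismo espíritu que luego se usa en el Lema \ref{equi2}.
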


\begin{proof}
Esto se verifica directamente del hecho que un para que un elemento $\tau=\sigma_{\tau},f_{\tau})$ sea biyectivo ambas partes deben ser funciones biyectivas.   
\end{proof}

\subsection{Algunos resultados adicionales}

Dado un grupo finito $G$, entonces podemos indexar sus clases de conjugación de subgrupos, incluso ordenarlas con base en su cardinal:
$$|H_{1}| \leq |H_{2}| \leq ... \leq |H_{r}|.$$

Sea $X_{i}$ un conjunto de representantes de las $G$-órbitas de la caja $\mathcal{B}_{[H_{i}]}$, tales que tienen el mismo estabilizador, es decir, sea $x\in \mathcal{B}_{[H_{i}]}$ :
$$X_{i}=\{y\in\mathcal{B}_{[H_{i}]}:\ Gx\neq Gy,G_{x}=G_{y}\}.$$

\begin{proposicion}
Sea $G$ un grupo finito que actúa sobre un conjunto $X$ y sea $H_{i}$ un subgrupo en $Sub_{G}(X)$, entonces $\mathcal{B}_{[H_{i}]}$ es isomorfo a $G/H_{i}  \times X_{i}$ y se tiene que la partición por $G$-órbitas dentro de la caja $\mathcal{B}_{[H_{i}]}$ es equivalente a la partición natural de $G/H_{i}\times X_{i}$. 
\end{proposicion}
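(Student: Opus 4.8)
The plan is to produce an explicit $G$-equivariant bijection $\theta\colon \mathcal{B}_{[H_i]}\to (G/H_i)\times X_i$ and then check that it carries the $G$-orbit partition to the natural partition of the product. This is essentially the bijection already built inside the proof of Theorem~\ref{abc1}, so the substance is in verifying the three facts that make $\theta$ an isomorphism of $G$-sets and then reading off the two partitions.

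First I would pin down $X_i$. By definition of the box, every $x\in\mathcal{B}_{[H_i]}$ has $G_x$ conjugate to $H_i$, say $G_x=gH_ig^{-1}$; then $G_{g^{-1}\cdot x}=g^{-1}G_xg=H_i$, so every $G$-orbit meeting the box contains a point whose stabilizer is exactly $H_i$. Hence $X_i$ may indeed be chosen as a set of representatives of the $G$-orbits of $\mathcal{B}_{[H_i]}$, one per orbit, all with stabilizer $H_i$ (this is what the informal set-builder description in the statement is meant to capture). Consequently $\mathcal{B}_{[H_i]}=\bigsqcup_{x_\lambda\in X_i}Gx_\lambda$, and every element of the box is uniquely of the form $g\cdot x_\lambda$ with $x_\lambda\in X_i$ and $g$ determined up to right multiplication by $H_i=G_{x_\lambda}$.

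Next I would define $\theta(g\cdot x_\lambda):=(gH_i,\,x_\lambda)$ and verify it is well defined and bijective: restricted to a single orbit $Gx_\lambda$ it is the assignment $gH_i\mapsto g\cdot x_\lambda$, which is a bijection onto $Gx_\lambda$ by the orbit–estabilizador theorem (Theorem~\ref{orbest}); taking the disjoint union over $x_\lambda\in X_i$ yields a bijection onto $(G/H_i)\times X_i$. Then I would check $G$-equivariance, where $G$ acts on $(G/H_i)\times X_i$ by left multiplication on the first coordinate and trivially on the second: $\theta(h\cdot(g\cdot x_\lambda))=\theta(hg\cdot x_\lambda)=(hgH_i,x_\lambda)=h\cdot(gH_i,x_\lambda)=h\cdot\theta(g\cdot x_\lambda)$. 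This makes $\theta$ an isomorphism of $G$-sets, i.e.\ $\mathcal{B}_{[H_i]}\cong (G/H_i)\times X_i$.

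Finally, for the partitions: the $G$-orbits of $\mathcal{B}_{[H_i]}$ are exactly the sets $Gx_\lambda$, $x_\lambda\in X_i$, and by construction $\theta(Gx_\lambda)=(G/H_i)\times\{x_\lambda\}$; on the other hand the natural partition of the product $(G/H_i)\times X_i$, in the sense recalled earlier for $Y\times Z$, is precisely $\{(G/H_i)\times\{x_\lambda\}:x_\lambda\in X_i\}$. Hence $\theta$ maps the $G$-orbit partition bijectively onto the natural partition, which is the asserted equivalence. I expect the only delicate point to be the first step — checking that $X_i$ genuinely is a transversal of the $G$-orbits all of whose elements are stabilized exactly by $H_i$, rather than merely by a conjugate; once that is settled, the rest is the routine equivariance computation together with the remark about the natural partition of a cartesian product.
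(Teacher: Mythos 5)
Tu propuesta es correcta y sigue en esencia el mismo camino que la demostración del texto: la identificación $g\cdot x_{\lambda}\leftrightarrow (gH_{i},x_{\lambda})$, que el texto enuncia de manera muy escueta y que también aparece como la biyección $\theta$ en la prueba del Teorema~\ref{abc1}. Lo único que añades es la verificación explícita (elección del transversal $X_{i}$ con estabilizador exactamente $H_{i}$, buena definición vía órbita--estabilizador, equivarianza y lectura de las particiones), detalles que el texto deja implícitos.
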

\begin{proof}
Tenemos que 
$$\mathcal{B}_{[H_{i}]}=\{g\cdot x_{\lambda}:\ g\in G, x_{\lambda}\in X_{i}\}=\{(gH_{i},x_{\lambda}),\ g\in G,\ x_{\lambda}\in X_{i}\}= G/H_{i}\times X_{i},$$
y se tiene que la partición por $G$-órbitas dentro de la caja $\mathcal{B}_{[H]}$ es equivalente a la partición natural de $G/H_{i} \times X_{i}$. 
\end{proof}

El siguiente es una versión particular de un teorema que describe un monoide de transformaciones en términos de un producto corona (ver \cite{Csaba}). Más adelante se presentará una demostración de dicho resultado generalizado dentro de la estructura algebraica específica de nuestro interés. 

\begin{teorema}
Sea $G$ un grupo de permutaciones sobre un conjunto $\Omega$, sea $\mathcal{P}$ una partición $G$-invariante de $\Omega$ con una cantidad finita de bloques y sea $P\in \mathcal{P}$. Supongamos que $G$ induce un grupo de permutaciones transitivo sobre $\mathcal{P}$. Entonces $G$ es permutacionalmente isomorfo a un subgrupo de $Sym(P)\wr Sym(\mathcal{P})$ que actúa sobre $P\times \mathcal{P}$.
\end{teorema}

Sea $Y\subseteq X$, definimos los estabilizadores \emph{de conjunto} y \emph{puntual} de $Y$, los cuales son subgrupos de $G$, como sigue respectivamente:
$$G_{Y}:=\{g\in G: g\cdot Y = Y\} \hspace{0.3in}\mbox{ y }\hspace{0.3in} G_{(Y)}:=\{g\in G: \ g\cdot y = y,\ \forall y\in Y\}.$$

Una partición $\mathcal{P}$ se dice $G$-invariante si $g\cdot P \in \mathcal{P}$, para todo $P\in \mathcal{P}$. Un subconjunto $Y \subset X$ se dice $G$-invariante si $g\cdot y \in Y$, para todo $y\in Y$. Es fácil probar que $Y\subseteq X$ es $G$-invariante si y solo si $Y$ es la unión de $G$-órbitas. Deberemos restringir la acción de $G$ a $Y$ y considerar el monoide $End_{G}(Y)$ y el grupo $Aut_{G}(Y)$.

\begin{lema}\label{lema0}
Para cada subconjunto $Y \subseteq X$ $G$-invariante, se satisface lo siguiente:
\begin{enumerate}
\item[i)] $End_{G}(Y)$ es isomorfo a un submonoide de $End_{G}(Y)$ y $Aut_{G}(Y)$ es isomorfo a un subgrupo de $Aut_{G}(X)$. 
\item[ii)] Si además $Y$ es $Aut_{G}(X)$-invariante, entonces $Aut_{G}(Y)$ es isomorfo a un subgrupo normal de $Aut_{G}(X)$. 
\item[iii)] $Aut_{G}(Y)\cong Aut_{G}(X)_{Y}/Aut_{G}(X)_{(Y)}$.

\end{enumerate}

\end{lema}

\begin{proof}
El monoide $End_{G}(Y)$ se embebe en $End_{G}(X)$ vía el homomorfismo inyectivo $\Phi:End_{G}(Y) \rightarrow End_{G}(X)$ dado por
$$\Phi(\tau):= \left\{ \begin{array}{cc} \tau(x) & \mbox{si } y\in Y \\ x & \mbox{otro caso.}  \end{array} \right.$$
Es fácil verificar que de hecho $\Phi(\tau)(x)\in End_{G}(X)$ usando el hecho que $g\cdot y\in Y$ si y solo si $x\in Y$. Al restringir $\Phi$ a $Aut_{G}(Y)$, se muestra que $Aut_{G}(Y)$ esta embebido en $Aut_{G}(X)$.\\

Ahora, si fijamos $\sigma \in Aut_{G}(X)$ y $\tau \in Aut_{G}(Y)$ y asumiendo que $Y$ es $Aut_{G}(X)$-invariante, como $\sigma(y)\in Y$, para todo $y\in Y$, se tiene que $\sigma|_{Y}\in Aut_{G}(Y)$. Entonces $\sigma \Phi(\tau) \sigma^{-1}=\Phi(\sigma|_{Y} \tau \sigma^{-1}|_{Y})\in \Phi(Aut_{G}(Y))$, lo cual muestra que $Aut_{G}(Y)$ es normal en $Aut_{G}(X)$.\\

Finalmente, consideremos el homomorfismo de restricción $\Psi: Aut_{G}(X) \rightarrow Aut_{G}(Y)$ dado por $\Psi(\tau)=\tau|_{Y}$, para todo $\tau\in Aut_{G}(X)$. No cuesta trabajo verificar que $\Psi(Aut_{G}(X)_{Y})=Aut_{G}(Y)$ y $ker(\Psi)=Aut_{G}(X)_{(Y)}$, entonces la tercera parte del resultado se cumple por el primer Teorema de isomorfía. 
\end{proof}

\newpage

\section{El monoide de transformaciones $G$-equivariantes}

En esta sección presentamos los resultados más sobresalientes de todo el trabajo realizado durante el doctorado. En primera instancia debemos introducir algunos conceptos y notación los cuales son más especializados y son de gran importancia para nuestro trabajo. 

\begin{lema}\label{lema1}
Sean $x,y\in X$ tales que $Gx\neq Gy$. Entonces existe un endomorfismo no invertible $\tau\in End_{G}(X)$ tal que $\tau(x)=y$ si y solo si $G_{x} \leq G_{y}$.  
\end{lema}

\begin{proof}
Supongamos que existe $\tau \in End_{G}(X)$ tal que $\tau(x)=y$. Dado $g\in G_{x}$, tenemos que $g\cdot x=x$, entonces 
$$y=\tau(x)=\tau(g\cdot x)= g\cdot \tau(x)= g\cdot y.$$
En consecuencia $g\in G_{y}$ y por tanto $G_{x} \leq G_{y}$.\\
Ahora supongamos que $G_{x}\leq G_{y}$. Definimos $\tau$ como:
$$\tau(z)= \left\{ \begin{array}{cl} g\cdot y & if\ z=g\cdot x  \\ z & \mbox{ en otro caso. }  \end{array}  \right.$$
Debemos probar que $\tau:X\rightarrow X$ está bien definida. Supongamos que $z=g\cdot x=h\cdot x $. Esto significa que $h^{-1}g\in G_{x}$. Como $G_{x} \leq G_{y}$, tenemos que:
$$\tau(h^{-1}g\cdot x)=h^{-1}g \cdot y=y \Rightarrow g\cdot y= h \cdot y \Rightarrow \tau(g\cdot x)= \tau(h\cdot x).$$
El caso donde $z\neq g\cdot x$ no requiere un proceso adicional, esto es por que $z$ no puede tener otra representación.\\
Debemos verificar que $\tau$ es también $G$-equivariante. Sea $z=g\cdot x$, si $h\in G$ actúa sobre $\tau(z)$ obtenemos que:
$$h\cdot \tau(z)=h\cdot \tau(g\cdot x)= h\cdot(g\cdot y)=hg\cdot y=\tau(hg\cdot x),$$
donde $h\cdot z=hg\cdot x$.\\
Si $z$ no pertenece a $Gx$:
$$\tau(g\cdot z)=g\cdot z=g\cdot \tau(z).$$

\end{proof}

El siguiente lema es una generalización de \cite[Lemma 3]{finite groups}.

\begin{lema}\label{lema2}
Sean $x,y\in X$ entonces existe un automorfismo $\tau\in Aut_{G}(X)$ tal que $\tau(x)=y$ si y solo si $G_{x}=G_{y}$.
\end{lema}

\begin{proof}
Supongamos que existe $\tau \in Aut_{G}(X)$ tal que $\tau(x)=y$. Dado $g\in G_{x}$, tenemos que $g\cdot x=x$, entonces: 
$$y=\tau(x)=\tau(g\cdot x)= g\cdot \tau(x)= g\cdot y,$$ en consecuencia $g\in G_{y}$. \\
Por el otro lado, dado $h\in G_{y}$, como $\tau$ es invertible, existe $\tau^{-1}\in Aut_{G}(X)$ tal que $\tau^{-1}(y)=x$. Y utilizando el mismo proceso $h\in G_{x}$.\\

Ahora supongamos que $G_{x}=G_{y}$. 
Identificamos dos casos: cuando $Gx=Gy$ y cuando $Gx\neq Gy$. \\

Sean $x,y$ tales que $Gx\neq Gy$, definimos $\tau$ como:
$$\tau(z)= \left\{ \begin{array}{cl} g\cdot y & if\ z=g\cdot x \\ g\cdot x & if\ z=g\cdot y \\ z & \mbox{ en otro caso. }  \end{array}  \right.$$
En primer lugar debemos mostrar que $\tau:X\rightarrow X$ está bien definida. Supongamos que $z=g\cdot x=h\cdot x $, esto significa que $h^{-1}g\in G_{x}$. Entonces tenemos las siguientes implicaciones:
$$\tau(h^{-1}g\cdot x)=h^{-1}g\cdot y =y  \Rightarrow g\cdot y= h \cdot y \Rightarrow \tau(g\cdot x)= \tau(h\cdot x).$$
El proceso es análogo para $z=g\cdot y= h\cdot y$ y el tercer caso no requiere de ningún proceso adicional para demostrarse.\\
Debemos verificar que $\tau$ es además $G$-equivariante. Sea $z=g\cdot x$, si $h\in G$ actúa sobre $\tau(z)$, entonces tenemos que:
$$h\cdot \tau(z)=h\cdot \tau(g\cdot x)= h\cdot(g\cdot y)=hg\cdot y=\tau(hg\cdot x),$$
donde $h\cdot z=hg\cdot x$. 
Una vez mas, el proceso para $z=g\cdot y$ es análogo.\\
Si $z\notin Gx \cup Gy$, entonces $g\cdot z \notin Gx\cup Gy$. Entonces $$\tau(g\cdot z)=g\cdot z=g\cdot \tau(z).$$
Debemos probar que $\tau$ es además biyectiva. Dado $\tau(z)=g\cdot y$. Elegimos $z=g\cdot x $, entonces $\tau(g\cdot x)=\tau(z)$. Un fenómeno análogo sucede para $\tau(z)=g\cdot x$, eligiendo $z=g\cdot y$ y para el caso donde $z\notin Gx \cup Gy$ se satisface que $\tau(z)=z$.  Entonces $\tau$ es sobreyectiva y en consecuencia, como $A^{G}$ es finito, es también inyectiva.

Sean $x,y$ tales que $Gx= Gy$, entonces existe $k\in G$ tal que $y=k\cdot x$ y definimos $\tau_{k}$ como:
$$\tau_{x,k}(z)= \left\{ \begin{array}{cl} gk\cdot x & if\ z=g\cdot x, \\ z & \mbox{ en otro caso. }  \end{array}  \right.$$

Notemos que las funciones $G$-equivariantes biyectivas $\tau_{x,k}$ están definidas si y solo si se satisface que $G_{x}=G_{k\cdot x}=kG_{x}k^{-1}$, lo cual se cumple si y solo si $k\in N_{G}(G-{x})$. Más aún, son $G$-equivariantes, por un argumento similar al caso anterior y son invertibles pues $\tau_{x,k^{-1}}$ es su función inversa.

\end{proof}

\begin{corolario}\label{qw}
Sea $x\in \mathcal{B}_{H}$, denotemos por $Aut_{G}(x)$ a la $Aut_{G}(X)$-órbita de $x$, entonces $$\mathcal{B}_{H}=Aut_{G}(x).$$
\end{corolario}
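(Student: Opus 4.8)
The statement is an immediate corollary of Lemma \ref{lema2}, so the plan is simply to verify the two set inclusions $\mathrm{Aut}_{G}(x)\subseteq \mathcal{B}_{H}$ and $\mathcal{B}_{H}\subseteq \mathrm{Aut}_{G}(x)$, using nothing beyond that lemma and the definition $\mathcal{B}_{H}=\{z\in X:\ G_{z}=H\}$ (recall that $x\in\mathcal{B}_{H}$ means $G_{x}=H$).

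\textbf{First inclusion.} I would take an arbitrary $y\in \mathrm{Aut}_{G}(x)$, i.e.\ $y=\tau(x)$ for some $\tau\in \mathrm{Aut}_{G}(X)$. The goal is $G_{y}=H$. By the ``only if'' direction of Lemma \ref{lema2} (equivalently, by the short $G$-equivariance computation carried out at the start of its proof: if $g\in G_{x}$ then $g\cdot y=g\cdot\tau(x)=\tau(g\cdot x)=\tau(x)=y$, and applying $\tau^{-1}$ gives the reverse containment), one gets $G_{y}=G_{x}=H$. Hence $y\in\mathcal{B}_{H}$, and since $y$ was arbitrary, $\mathrm{Aut}_{G}(x)\subseteq\mathcal{B}_{H}$.

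\textbf{Second inclusion.} Conversely, I would take $y\in\mathcal{B}_{H}$, so that $G_{y}=H=G_{x}$. Now the ``if'' direction of Lemma \ref{lema2} yields an automorphism $\tau\in\mathrm{Aut}_{G}(X)$ with $\tau(x)=y$, which is exactly the statement that $y$ lies in the $\mathrm{Aut}_{G}(X)$-orbit of $x$, i.e.\ $y\in\mathrm{Aut}_{G}(x)$. Therefore $\mathcal{B}_{H}\subseteq\mathrm{Aut}_{G}(x)$, and combining the two inclusions gives $\mathcal{B}_{H}=\mathrm{Aut}_{G}(x)$.

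\textbf{Main obstacle.} There is essentially no obstacle here: all the real work was done in Lemma \ref{lema2}, and the only thing to be careful about is matching the two directions of the ``if and only if'' with the two inclusions, and observing that the orbit $\mathrm{Aut}_{G}(x)$ is a well-defined notion because $\mathrm{Aut}_{G}(X)$ is a group (so being in the same orbit is an equivalence relation). This also retroactively proves part iii) of the earlier proposition on boxes, namely $\mathcal{B}_{G_{x}}=\mathrm{Aut}_{G}(x)$, which was deferred.
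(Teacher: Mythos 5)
Tu propuesta es correcta y coincide con el argumento que el trabajo tiene en mente: el corolario se enuncia sin demostración precisamente porque es consecuencia inmediata del Lema \ref{lema2}, y tus dos inclusiones corresponden exactamente a las dos direcciones del ``si y solo si'' de ese lema (la equivarianza da $G_{\tau(x)}=G_{x}=H$, y la igualdad de estabilizadores da el automorfismo que lleva $x$ a $y$). No hay nada que objetar; tu observación final de que esto también justifica la afirmación iii) de la proposición sobre cajas ($\mathcal{B}_{G_{x}}=\mathrm{Aut}_{G}(x)$) es acertada y es justamente lo que el texto anuncia al diferir esa demostración.
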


\begin{ejemplo}
Para cualquier grupo $G$ y un conjunto $A$ tal que $|A|\geq 2$, consideremos la acción shift de $G$ sobre $A^{G}$. Sin pérdida de generalidad podemos asumir que $\{0,1\}\subset A$. Para cada subgrupo $H \leq G$ definimos la función de indicación $\chi_{H}:G \rightarrow A$ como
$$\chi_{H}(g):= \left\{ \begin{array}{cc} 1 & \mbox{si }g\in H \\ 0 & \mbox{otro caso.}  \end{array} \right.$$
Es fácil ver que el estabilizador de $\chi_{H}$ satisface que $G_{\chi_{H}}=H$ y por consiguiente se tiene que $Stab_{G}(A^{G})=\{H:\ H \leq G\}$. Es decir que todo subgrupo de $G$ es el estabilizador de alguna configuración $x$ en $A^{G}$, en este caso las configuraciones en cuestión son las funciones de indicación.  
\end{ejemplo}

Como $G_{g\cdot x}=gG_{x}g^{-1}$, para todo $g\in G$ y para todo $x\in X$, el conjunto $\mathcal{B}_{[H]}$ es $G$-invariante. Más aún, por el Lema \ref{lema1}, es también $Aut_{G}(X)$-invariante y por tanto se tiene que $Aut_{G}(\mathcal{B}_{[H]})$ es isomorfo a un subgrupo de $Aut_{G}(X)$, $$Aut_{G}(\mathcal{B}_{[H]}) \leq Aut_{G}(X).$$

De hecho es posible calcular el número de $Aut_{G}(X)$-órbitas dentro de $\mathcal{B}_{[H]}$.

\begin{teorema}\label{qwe}
Sea $G$ un grupo que actúa sobre un conjunto $X$ y $H$ un subgrupo dado, entonces el número de $Aut_{G}(X)$-órbitas dentro de $\mathcal{B}_{[H]}$ es $[G:N_{G}(H)]$.
\end{teorema}

\begin{proof}

Denotemos por $N$ a $N_{G}(H)$. Notemos que por su definición tenemos que $\mathcal{B}_{K}=\mathcal{B
}_{J}$ si y solo si $K=J$, para dos subgrupos $K,J\leq G$. Y recordemos que $Aut_{G}(x)=\mathcal{B}_{G_{x}}$.\\ Además todos los subgrupos que definen las $Aut_{G}$-órbitas en $\mathcal{B}_{[H]}$ deben ser conjugados de $H$. 
Entonces basta probar que la aplicación  $\mathcal{B}_{gHg^{-1}} \longmapsto gN$ es una biyección, lo cual se sigue de las siguientes implicaciones:
$$\mathcal{B}_{gHg^{-1}}= \mathcal{B}_{kHk^{-1}} \iff gHg^{-1}=kHk^{-1} \iff$$
$$k^{-1}gHg^{-1}k=(k^{-1}g)H(k^{-1}g)^{-1}=H \iff k^{-1}g\in N \iff gN=kN.$$
\end{proof}

Por el Teorema de Órbita-estabilizador, la cardinalidad de cada $G$-órbita dentro de $\mathcal{B}_{[H]}$ es el índice de $H$ en $G$, $[G:H]$.\\

\subsection{La estructura del monoide; caso finito}

Encontrar isomorfismos entre espacios nuevos con respecto a espacios conocidos ha sido siempre una tarea clásica de cualquier algebrista. En esta sección buscamos equivalencias entre los objetos de nuestro interés y algunos otros a partir de construcciones como el producto corona.

\subsubsection{Estructura dentro de las $G$-órbitas}

Abordamos nuestro trabajo desglozando con base en sus particiones al conjunto $X$. Comenzamos trabajando con el caso más pequeño: las $G$-órbitas. \\

Denotemos por $\mathcal{O}_{[H]}$ el conjunto de $G$-órbitas contenidas en $\mathcal{B}_{[H]}$:
$$\mathcal{O}_{[H]}:= \{Gx\subseteq X:\ [G_{x}]=[H]\},$$
y sea $\alpha_{[H]}:=|\mathcal{O}_{[H]}|$.\\

Notemos que para cada clase $[H]\in Conj_{G}(X)$ y dada cualquier función $f:\mathcal{O}_{[H]} \rightarrow \mathcal{O}_{[H]}$, por el axioma de elección , para cada $O\in \mathcal{O}_{[H]}$ es posible elegir un elemento $x_{O}\in O$ tal que $G_{x_{O}}=H$. Entonces es posible definir una función $\tau:\mathcal{B}_{[H]} \rightarrow \mathcal{B}_{[H]}$ como $\tau(g\cdot x_{O})=g\cdot x_{f(O)}$. Es fácil verificar que $\tau\in End_{G}(\mathcal{B}_{[H]})$ y que $\tau(O)=f(O)$ para todo $O\in \mathcal{O}_{[H]}$. Más aún, si $f\in Sym(\mathcal{O}_{[H]})$, entonces $\tau \in Aut_{G}(\mathcal{B}_{[H]})$.\\

Cuando $H$ es un subgrupo de índice finito de un grupo finitamente generado $G$ podemos usar el látice de subgrupos de índice finito $L(G)$ de $G$, el cual es localmente finito y su correspondiente función de Möbius $\mu:L(G)\times L(G) \rightarrow \mathbb{Z}$ para calcular el valor de $\alpha_{[H]}$, como se marca en el siguiente lema (ver \cite{undim} para más detalles).

\begin{lema}
Supongamos que $H\in Stab_{G}(X)$ es un subgrupo de índice finito  de un grupo finitamente generado $G$ y que $X$ es un conjunto finito. Entonces,
$$\alpha_{[H]}=\frac{[G:N_{G}(H)]}{[G:H]}\sum_{H\leq K \leq G}{\mu(H,K)}|Fix(K)|,$$
donde $Fix(K):=\{x\in X:\ k\cdot x=x, \ \forall k\in K\}$.
\end{lema}

Notemos que la suma del lema anterior es finita, ya que el número de subgrupos infinitos entre $G$ y $H$ es finito, ya que $H$ es de índice finito y $G$ es finitamente generado (y además tiene una cantidad finita de subgrupos de un índice finito dado). Más aún, cualquier grupo que contiene a $H$ debe de tener índice finito, y, en particular, $N_{G}(H)$ es de índice finito. Cuando $X=A^{G}$ y la acción de $G$ sobre $A^{G}$ es la acción shift, entonces $|Fix(K)|=|A^{G/K}|$, para todo $H\leq G$. Entonces, el resultado anterior se satisface para la acción shift siempre y cuando $A$ sea un conjunto finito. \\

Si la acción de $G$ sobre $X$ es no-transitiva, el Lema \ref{lema2} muestra que $Aut_{G}(X)$ está propiamente contenido en $End_{G}(X)$. Cuando la acción es transitiva, el siguiente resultado caracteriza cuando $Aut_{G}(X)$ resulta ser igual que $End_{G}(X)$.

\begin{teorema}
Supongamos que $G$ actúa transitivamente sobre $X$ y sea $x\in X$. El subgrupo $G_{x}$ no está propiamente contenido en ninguno de sus conjugados si y solo si $$End_{G}(X)=Aut_{G}(X).$$ 
\end{teorema}

\begin{proof}
Supongamos que $G_{x}$ no está propiamente contenido en ninguno de sus conjugados. Tomamos una función $\tau\in End_{G}(X)$. Como la acción es transitiva, tenemos que $X=Gx$ para todo $x\in X$. Por $G$-equivarianza tenemos que:
$$\tau(X)=\tau(Gx)=G\tau(x)=X,$$
lo que muestra que $\tau$ es sobreyectiva. Supongamos que $\tau(x)=\tau(y)$, para algunos $x,y\in X$. Por transitividad, existe un $g\in G$ tal que $y=g\cdot x$. Entonces $\tau(x)=\tau(y)=\tau(g\cdot x)=g\cdot \tau(x)$, implica que $g\in G_{\tau(x)}$. Por el Lema \ref{lema2} se tiene que $G_{x}\leq G_{\tau(x)}$ y de nuevo, por transitividad existe un $h\in G$ tal que $G_{\tau(x)}=hG_{x}h^{-1}$. Entonces, por la hipótesis que $G_{x}$ no está contenido propiamente en ninguno de sus conjugados se tiene que $G_{x}=G_{\tau(x)}$, entonces $g\in G_{x}$. Así $y=g\cdot x= x$, lo que muestra que $\tau$ es inyectiva. y por tanto $\tau\in Aut_{G}(X)$.\\

Probamos el converso por contrapositiva. Supongamos que existen $x\in X$ y $h\in G$ tales que $G_{x} < hG_{x}h^{-1}$.  Definamos una función $\tau$ como sigue:
$$\tau(g\cdot x)= gh\cdot x,\ \forall g\in G.$$
Es fácil ver que $\tau\in End_{G}(X)$, pero mostraremos que no es inyectiva, lo cual será una contradicción. Consideremos $k\in hG_{x}h^{-1} \setminus G_{x}$. Entonces $k\cdot x \neq x$. De cualquier forma, $k\in hG_{x}h^{-1}$ implica que 
$$k\cdot(h\cdot x)=h\cdot x \iff \tau(k\cdot x)=\tau(x).$$
Entonces $\tau$ no es una función biyectiva y por consiguiente $Aut_{G}(X)$ debería estar contenido propiamente en $End_{G}(X)$. 
\end{proof}

Una observación importante es notar que si $H\leq G$ es un grupo finito, o de índice finito, o normal, entonces $H$ no está contenido propiamente en ninguno de sus conjugados y por consiguiente $End_{G}(Gx)=Aut_{G}(Gx)$ en estos casos.

\begin{ejemplo}
Sea $G$ un grupo infinito tal que existe un subgrupo $K \leq G$ y $h\in G$ tal que $K < hKh^{-1}$ (ver \cite{mikko} para algunos ejemplos específicos). En esta situación, consideremos la acción shift de $G$ sobre $A^{G}$, donde $A$ es un conjunto con al menos dos elementos. Sea $x:=\chi_{K}\in A^{G}$ la función indicador de $K$. Entonces $G_{x}=K$ y entonces, por el teorema anterior, $Aut_{G}(Gx)$ esta propiamente contenida en $End_{G}(Gx)$.
\end{ejemplo}

\begin{lema}\label{equi1}
Si la acción de $G$ sobre $X$ es transitiva, entonces:
$$Aut_{G}(X) \cong N_{G}(G_{x})/G_{x},\ \forall x\in X.$$
\end{lema}

\begin{proof}
Sea $x\in X$. Definimos un automorfismo de $X$.
$$\tau_{x,k}(z)= \left\{ \begin{array}{cc}
gk\cdot x & z=g\cdot x\\
z & \mbox{otro caso.}
\end{array}  \right.$$
Es fácil verificar que $\tau_{x,k}$ es un automorfismo si y solo si $k\in N_{G}(G_{x})$. 
$$k\in N_{G}(G_{x}) \iff G_{x}=k^{-1}G_{x}k .$$

Entonces se tiene que 
$$g\cdot x = h\cdot x  \iff h^{-1}g\cdot x= x \iff   k^{-1}h^{-1}gk \cdot x = x $$

$$\iff gk\cdot x = hk\cdot x \iff \tau_{x,k}(g\cdot x)= \tau_{x,k}(h\cdot x).$$
Entonces el mapeo $k \mapsto \tau_{x,k}$ de $N_{G}(G_{x})$ en $Aut_{G}(X)$ es una función sobreyectiva con kernel $G_{x}$. Lo que satisface el teorema por el Primer Teorema de Isomorfía. 
\end{proof}

Es interesante mencionar que el resultado anterior es valido aún si $G$ no es un grupo finito.

\subsubsection{Estructura dentro de las cajas}

Nuestro siguiente objetivo es describir la estructura de $Aut_{G}(X)$ para acciones no-transitivas, pero es importante ver cómo es la estructura si nos restringimos a cada caja y más importante aún es intentar describir la estructura completa del monoide a partir de la estructura de las cajas.

\begin{lema}\label{equi2}
Sea $G$ un grupo actuando sobre un conjunto $X$ y sea $H\in Stab_{G}(X)$. Entonces
$$Aut_{G}(\mathcal{B}_{[H]})\cong (N_{G}(H)/H)\wr Sym(\mathcal{O}_{[H]}).$$
\end{lema}

\begin{proof}
El conjunto $\mathcal{O}_{[H]}$ de $G$-órbitas dentro de $\mathcal{B}_{[H]}$ forman una partición $G$-invariante de $\mathcal{B}_{[H]}$, con $|\mathcal{O}_{[H]}|=\alpha_{[H]}$. Notemos que la acción inducida de $Aut_{G}(\mathcal{B}_{[H]})$ sobre $\mathcal{O}_{[H]}$ es isomorfa al grupo simétrico completo $Sym(\mathcal{O}_{[H]})$. Por el Teorema del embebimiento corona imprimitivo  (ver \cite[Teorema 5.5]{Csaba}) y por el Lema \ref{lema0}, $Aut_{G}(\mathcal{B}_{[H]})$ es permutacionalmente isomorfo a un subgrupo R de $Aut_{G}(Gx)\wr Sym(\mathcal{O}_{[H]})$, con $Gx\in \mathcal{O}_{[H]}$. Observemos que el kernel de la proyección de $R$ sobre $Sym(\mathcal{O}_{[H]})$ es isomorfa al producto directo $Aut(Gx)^{\mathcal{O}_{[H]}}$. Entonces, $$R=Aut_{G}(Gx)\wr Sym(\mathcal{O}_{[H]}).$$ El resultado se sigue del Lema \ref{equi1}. 
\end{proof}

La acción imprimitiva del producto corona tal como se establece en la sección 2.4.2 requiere que la partición $\mathcal{O}_{[H]}$ tenga una cantidad finita de bloques; de cualquier forma, esta hipótesis puede ser descartada y la demostración del Lema \ref{equi2} también funciona usando el axioma de elección. \\

El resultado anterior permite un avance muy grande en la estructura de todo el grupo de automorfimos, donde el resultado principal se presenta a continuación.

\begin{teorema}
Sea $G$ un grupo actuando sobre un conjunto $X$. Entonces:
$$Aut_{G}(X) \cong \prod_{[H]\in Conj_{G}(X)}{(N_{G}(H)/H)\wr Sym(\mathcal{O}_{[H]})}.$$
\end{teorema}

\begin{proof}
Para cada $\tau\in Aut_{G}(X)$, definimos una función
$$F:Aut_{G}(X) \rightarrow \prod_{[H]\in Conj_{G}(X)}{Aut_{G}(\mathcal{B}_{[H]})},$$ dada por $F(\tau)_{[H]}:=\tau|_{\mathcal{B}_{[H]}}$ para cada $\tau\in Aut_{G}(X)$. La función $F$ es inyectiva ya que $\{\mathcal{B}_{[H]}: [H]\in Conj_{G}(X)\}$ es una partición de $X$. Para mostrar que $F$ es sobreyectiva, observemos que para cada $\tau_{[H]}$, $[H]\in Conj_{G}(X)$, en el producto podemos definir $\tau \in Aut_{G}(X)$ como $\tau(x)= \tau_{[H]}(x)$ si y solo si $x\in \mathcal{B}_{[H]}$. Se sigue entonces que $\tau$ es de hecho $G$-equivariante ya que, para cada $[H]\in Conj_{G}(X)$, $\tau_{[H]}$ es $G$-equivariante y es $\mathcal{B}_{[H]}$ es $G$-invariante. Finalmente, $F$ es un homomorfismo ya que $\mathcal{B}_{[H]}$ es $Aut_{G}(X)$-invariante, entonces $(\tau \circ \sigma )|\mathcal{B}_{[H]}=\tau|_{\mathcal{B}_{[H]}}\circ \sigma|_{\mathcal{B}_{[H]}}$, para todo $\sigma, \tau \in Aut_{G}(X)$. El resultado se sigue por el Lema \ref{equi2}.  
\end{proof}

\begin{teorema}
Sea $G$ un grupo actuando sobre un conjunto $X$ y sea $H\in Stab_{G}(X)$. Supongamos que $H$ no está propiamente contenido en ninguno de sus conjugados. Entonces:
$$End_{G}(\mathcal{B}_{[H]})\cong (N_{G}(H)/H) \wr Trans( \mathcal{O}_{[H]}).$$
\end{teorema}

\begin{proof}
Como $End_{G}(\mathcal{B}_{[H]})$ preserva la partición uniforme de $\mathcal{B}_{[H]}$ en $G$-órbitas, se sigue que es isomorfo a un submonoide $M$ de $End_{G}(Gx)\wr Trans(\mathcal{O}_{[H]})$ para cada $x\in \mathcal{B}_{[H]}$ (Ver \cite[Lema 2.1]{Csaba}). Notemos también que la acción inducida de $End_{G}(\mathcal{B}_{[H]})$ sobre $\mathcal{O}_{[H]}$ genera a todo $Trans(\mathcal{O}_{[H]})$. Por los lemas anteriores se tiene que $End_{G}(Gx)\cong Aut_{G}(Gx)\cong N_{G}(H)/H$. Como en demostraciones anteriores, el kernel de la proyección de $M$ sobre $Sym(\mathcal{O}_{[H]})$ es isomorfo al producto directo $Aut_{G}(Gx)^{\mathcal{O}_{[H]}}$ y el resultado se sigue.  
\end{proof}

\subsection{El monoide finito de transformaciones $G$-equivariantes}

Existen dos parámetros muy importantes para nuestro trabajo: el cardinal del grupo que actúa $G$ y el cardinal del conjunto sobre el que se actúa $X$. En esta sección abordamos el caso donde ambos, tanto $G$ como $X$, son finitos. Esto tiene por consecuencia directa que el monoide de endomorfismos de $X$, $End_{G}(X)$, sea finito. \\

Dado un grupo finito $G$, entonces podemos enlistar las clases de conjugación de subgrupos:
$$[H_{1}]\leq [H_{2}] \leq ... \leq [H_{r}].$$
Entonces para cualquier endomorfismo $\tau\in End_{G}(X)$ podemos definir otros $r$ endomorfismos dados por:

$$\tau_{k}(z)=  \left\{\begin{array}{cc} \tau(z) & \mbox{ si }\ z\in \mathcal{B}_{[H_{k}]} \\ z& \mbox{ otro caso. }   \end{array}  \right.$$

\begin{proposicion}\label{descomposicion}
Sea $G$ un grupo finito y $X$ un conjunto finito. Dadas sus clases de conjugación $[ H_{1}],[H_{2}],\cdots,[H_{r}]$, ordenadas por $|H_{1}|\leq |H_{2}|\leq\cdots\leq|H_{r}|$. Sea $\tau\in End_{G}(X)$ un endomorfismo no invertible, entonces:
$$\tau=\tau_{1} \tau_{2} \cdots  \tau_{r}.$$

\end{proposicion}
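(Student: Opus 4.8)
The idea is to verify the identity pointwise: I will fix an arbitrary $x\in X$ and track what the composite $\tau_{1}\tau_{2}\cdots\tau_{r}$ does to it, keeping in mind the composition convention used throughout the paper, namely that in a product $\pi\sigma$ the factor $\sigma$ is applied first, so a product is read from right to left.

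First I would record two preliminary facts. \emph{(a)} Each $\tau_{k}$ is a genuine endomorphism, $\tau_{k}\in\mathrm{End}_{G}(X)$. Since $\mathcal{B}_{[H_{k}]}$ and its complement in $X$ are both $G$-invariant (recall $G_{g\cdot z}=gG_{z}g^{-1}$), checking $g\cdot\tau_{k}(z)=\tau_{k}(g\cdot z)$ splits into the case $z\in\mathcal{B}_{[H_{k}]}$, where $g\cdot z$ also lies in the box and we invoke the $G$-equivariance of $\tau$, and the case $z\notin\mathcal{B}_{[H_{k}]}$, where $g\cdot z$ is not in the box either and both sides equal $g\cdot z$; this is just the embedding $\Phi$ of Lemma \ref{lema0} in the mild variant where the target is $X$ rather than the subset. \emph{(b)} For every $z\in X$ one has $G_{z}\leq G_{\tau(z)}$: this is the forward implication of Lemma \ref{lema1}, which uses only $G$-equivariance. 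Consequently, if $z\in\mathcal{B}_{[H_{m}]}$ and $\tau(z)\in\mathcal{B}_{[H_{m'}]}$, then $m\leq m'$; indeed $|H_{m}|=|G_{z}|\leq|G_{\tau(z)}|=|H_{m'}|$, and since the classes are listed in non-decreasing order of size this already forces $m\leq m'$ unless $|H_{m}|=|H_{m'}|$, in which case $G_{z}\leq G_{\tau(z)}$ together with finiteness gives $G_{z}=G_{\tau(z)}$ and hence $m=m'$. In words: $\tau$ never lowers the box index.

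Now fix $x\in X$ and let $m$ be the index with $x\in\mathcal{B}_{[H_{m}]}$. For each $j>m$ we have $x\notin\mathcal{B}_{[H_{j}]}$, so $\tau_{j}(x)=x$; hence applying $\tau_{r},\tau_{r-1},\dots,\tau_{m+1}$ successively leaves $x$ unchanged. Next, $\tau_{m}(x)=\tau(x)$; write $y:=\tau(x)$ and let $m'$ be the index with $y\in\mathcal{B}_{[H_{m'}]}$, so $m'\geq m$ by fact (b). Finally, for each $j<m$ we have $j\neq m'$, so $y\notin\mathcal{B}_{[H_{j}]}$ and $\tau_{j}(y)=y$; hence applying $\tau_{m-1},\dots,\tau_{1}$ successively leaves $y$ unchanged. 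Putting the three stages together, $(\tau_{1}\tau_{2}\cdots\tau_{r})(x)=y=\tau(x)$. Since $x$ was arbitrary, $\tau=\tau_{1}\tau_{2}\cdots\tau_{r}$.

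The argument is essentially bookkeeping; the one point that makes it work is fact (b), the monotonicity of the box index under $\tau$, which is exactly what guarantees that the trailing factors $\tau_{m-1},\dots,\tau_{1}$ do not disturb the value $\tau(x)$ already produced by $\tau_{m}$. It is worth noting that non-invertibility of $\tau$ plays no role in this identity — it is relevant only for the subsequent rank computation — so the statement in fact holds for every $\tau\in\mathrm{End}_{G}(X)$.
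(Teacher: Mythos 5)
Tu prueba es correcta y sigue esencialmente el mismo camino que la demostración del texto: verificación puntual del producto leído de derecha a izquierda, usando que los factores de índice mayor al de la caja de $x$ actúan como la identidad, que $\tau_{m}$ reproduce $\tau(x)$, y que los factores de índice menor no alteran el resultado porque, por el Lema \ref{lema1}, el índice de caja nunca disminuye bajo $\tau$. Los detalles que agregas —la comprobación de que cada $\tau_{k}$ es $G$-equivariante y el análisis del caso $|H_{m}|=|H_{m'}|$ vía finitud— son precisiones de pasos que el texto deja implícitos, no un enfoque distinto.
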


\begin{proof}
Supongamos que $z\in\mathcal{B}_{[H_{i}]}$, entonces  $\tau_{k}(z)=z$ para todo $k>i$. Por definición   $\tau_{i}(z)=\tau(z)\in\mathcal{B}_{[H_{j}]}$, donde $i \leq j$, por el Lema \ref{lema1}.  Entonces $\tau_{k}(\tau_{i}(z))=\tau_{i}(z)=\tau(z)$ para todo $k<i$ y por consecuencia $\tau_{1}  \tau_{2}  \cdots  \tau_{r}(z)=\tau(z)$. 
\end{proof}

Esta construcción nos genera una herramienta importante para cumplir nuestro objetivo principal: determinar el rank relativo de $End_{G}(X)$.

\subsubsection{Conjuntos generadores}

Para este apartado seguimos considerando que $G$ es un grupo finito actuando sobre un conjunto finito $X$. También consideraremos que $[H_{1}],[H_{2}],...,[H_{r}]$ es la lista de clases de conjugación de subgrupos de $G$ en $Conj_{G}(X)$. Denotaremos $\mathcal{B}_{i}:=\mathcal{B}_{[H_{i}]}$, $\alpha_{i}:=\alpha_{[H_{i}]}$ y para cada $n\in \mathbb{N}$ denotamos $[n]:=\{1,2,...,n\}$.\\

Introducimos un poco de terminología y algunas definiciones de funciones necesarias para nuestra construcción. 
$$[x\mapsto y](z):= \left\{\begin{array}{cc} g\cdot y & \mbox{si }z=g\cdot x \\ z & \mbox{otro caso}\end{array} \right.$$
y
$$[x\leftrightarrow  y](z):= \left\{\begin{array}{cc} g\cdot y & \mbox{si }z=g\cdot x \\
g\cdot x & \mbox{si }z=g\cdot y \\
z & \mbox{otro caso.}\end{array} \right.$$

Es fácil verificar que $[x \mapsto y]$ es un endomorfismo y que $[x \leftrightarrow y]$ es un automorfismo de $X$.\\

Es importante mencionar que 
$$End_{G}(\mathcal{B}_{H}) \leq End_{G}(X), \forall H \leq G,$$
vía el homomorfismo de inclusión ($End_{G}(\mathcal{B}_{[H]})\hookrightarrow End_{G}(X)$).

\begin{lema}\label{generador base}
El conjunto
$$W:=\{[x \mapsto y]:\ x,y\in X, Gx\neq Gy,\ G_{x} \leq G_{y}\}$$
genera a $End_{G}(X)$ módulo $Aut_{G}(X)$. 
\end{lema}

\begin{proof}
Dado un endomorfismo $\tau\in End_{G}(X)$ y su descomposición
$$\tau=\tau_{1}\tau_{2}...\tau_{r},$$ dada en la Proposición \ref{descomposicion}, por el Lema \ref{lema2} tenemos que $G_{x} \leq G_{\tau(x)}$, para todo $x\in X$. Entonces $ \tau(\mathcal{B}_{i})\subseteq \bigcup_{i\leq j}{\mathcal{B}_{j}}$ y definimos para cada $i\in [r]$ los conjuntos
$$\mathcal{B}_{i}^{0}:=\{x\in \mathcal{B}_{i}: \tau(x)\in \mathcal{B}_{i}\}$$
$$\mathcal{B}_{i}^{1}:=\{x\in \mathcal{B}_{i}: \tau(x)\in \mathcal{B}_{j}, i < j\}.$$

Para $\epsilon\in \{0,1\}$ es fácil ver que $\mathcal{B}_{i}^{\epsilon}$ es un subconjunto $G$-invariante de $X$, entonces podemos definir endomorfismos $\tau_{i}^{\epsilon}\in End_{G}(X)$ como
$$\tau_{i}^{\epsilon}:= \left\{\begin{array}{cc} \tau_{i}(x) & \mbox{si } x\in \mathcal{B}_{i}^{\epsilon}\\
x& \mbox{otro caso.}\end{array} \right.$$

Se sigue que $\tau_{i}=\tau_{i}^{0} \tau_{i}^{1}$. Terminamos la demostración probando que $\tau_{i}^{\epsilon}\in \langle W \cup Aut_{G}(X) \rangle$, para $\epsilon\in \{0,1\}$.

\begin{enumerate}
\item[] 
Caso $\epsilon=0$.\\
Mostraremos que $\tau_{i}^{0}\in \langle W \cup Aut_{G}(X)\rangle$. En este caso, $\tau_{i}^{0}$ está contenido en un submonoide de $End_{G}(X)$ isomorfo a $End_{G}(\mathcal{B}_{i})$. Si $\alpha_{i}=1$, entonces $End_{G}(\mathcal{B}_{i})=End_{G}(Gx)=Aut_{G}(Gx)$, para algún $x\in \mathcal{B}_{i}$, así que asumimos que $\alpha_{i} \geq 2$. Recordemos que por el Lema \ref{equi2}, $End_{G}(\mathcal{B}_{i})\cong Aut_{G}(Gx)\wr Trans(\mathcal{O}_{[H_{i}]})$, para cualquier $x\in  \mathcal{B}_{i}$. El monoide $Trans(\mathcal{O}_{[H_{i}]})$ es generado por $Sym(\mathcal{O}_{[H_{i}]})$ junto con cualquier mapeo con imagen de tamaño $\alpha_{i}-1$ (ver \cite[Prop. 1.2]{undim}). Ahora, $Sym(\mathcal{O}_{[H_{i}]})$ es generado por las permutaciones en $\mathcal{O}_{[H_{i}]}$ inducidas por el conjunto 
$$\{[a\leftrightarrow b]|_{\mathcal{B}_{i}}:\ a,b\in \mathcal{B}_{i}, Ga\neq Gb,\ G_{a}=G_{b}\}\leq Aut_{G}(\mathcal{B}_{i}),$$

además, para cada $y\in \mathcal{B}_{i}$ tal que $Gx\neq Gy$, la función $[x\mapsto y]$ induce sobre $\mathcal{O}_{[H_{i}]}$ una función con imagen de tamaño $\alpha_{i}-1$. Se sigue que $End_{G}(\mathcal{B}_{i})$ está generado por $Aut_{G}(\mathcal{B}_{i})\cup \{[x\mapsto y]_{\mathcal{B}_{i}} \} $. Entonces, $\tau_{i}^{0}\in \langle W \cup Aut_{G}(X)\rangle$.

\item[] Caso $\epsilon=1$.\\
Mostramos que $\tau_{i}^{1}\in \langle W \cup Aut_{G}(X) \rangle$. Supongamos que $\mathcal{B}_{i}^{1}$ es la unión de las siguientes $G$-órbitas
$$\mathcal{B}_{i}^{1}=Gx_{1}\cup Gx_{2}\cup ... \cup Gx_{s},$$
para algunos $x_{1},x_{2},...,x_{s}\in X$. Denotamos por $y_{k}=\tau_{i}^{1}(x_{k})$, para $k\in \{1,2,...,s\}$. Como $y_{k}\notin \mathcal{B}_{i}$, entonces $Gx_{k}\neq Gy_{k}$. Entonces, tenemos la siguiente factorización para $\tau_{i}^{1}$:
$$\tau_{i}^{1}=[x_{1}\mapsto y_{1}][x_{2}\mapsto y_{2}]...[x_{s}\mapsto y_{s}]\in \langle W \cup Aut_{G}(X)\rangle.$$

\end{enumerate}

\end{proof}

Para cualesquiera $N,H\leq G$, definimos la $N$-clase de conjugación de $H$ como $[H]_{N}:=\{gHg^{-1}: g\in N\}$. Y para economizar notación expresamos $[x\ y]=[x \leftrightarrow y]$.

\begin{observacion}
Notemos que si $G_{x} \leq G_{y}$ y dado un $n\in N_{G}(G_{x})$, entonces $G_{x}\leq n^{-1}G_{y}n$. 
\end{observacion}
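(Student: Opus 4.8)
The plan is to observe that this is an immediate consequence of two elementary facts: that conjugation by a fixed element preserves the subgroup-inclusion order, and that $N_G(G_x)$ is precisely the set of elements that fix $G_x$ under conjugation. First I would recall that, by the definition of the normalizer adopted earlier, $n\in N_G(G_x)$ means $nG_xn^{-1}=G_x$; conjugating this identity by $n^{-1}$ gives equally $n^{-1}G_xn=G_x$. So the element $n$ also stabilizes $G_x$ under the conjugation $K\mapsto n^{-1}Kn$.

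Next I would use that the map $K\mapsto n^{-1}Kn$ is an inclusion-preserving operation on subgroups of $G$ (if $K\leq L$ then $n^{-1}Kn\leq n^{-1}Ln$, since conjugation is an automorphism of $G$ by the proposition on conjugates of subgroups). Applying this to the hypothesis $G_x\leq G_y$ yields $n^{-1}G_xn\leq n^{-1}G_yn$. Combining with the previous step, $G_x=n^{-1}G_xn\leq n^{-1}G_yn$, which is exactly the claim.

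There is essentially no obstacle here; the only point requiring the slightest care is matching the side on which one conjugates, i.e.\ checking that the condition $nG_xn^{-1}=G_x$ defining membership in $N_G(G_x)$ is equivalent to $n^{-1}G_xn=G_x$, which is handled by conjugating the defining identity by $n^{-1}$ as indicated above.
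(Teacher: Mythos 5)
Your argument is correct and is precisely the immediate reasoning the paper leaves implicit for this observation: $n\in N_{G}(G_{x})$ gives $n^{-1}G_{x}n=G_{x}$, and conjugation by $n^{-1}$ preserves inclusions, so $G_{x}=n^{-1}G_{x}n\leq n^{-1}G_{y}n$. Nothing further is needed.
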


\begin{teorema} \label{chido3}
Sea $G$ un grupo finito actuando sobre un conjunto finito $X$. Para cada $i\in [r]$, $N_{i}:=N_{G}(H_{i})$ y sean $[K_{i,1}]_{N_{i}},[K_{i,2}]_{N_{i}},...,[K_{i,r_{i}}]_{N_{i}}$ todas las $N_{i}$-clases de conjugación de subgrupos $K_{i,j}\in Stab_{G}(X)$ tales que $H_{i}\leq K_{i,j}$. Fijamos algunos elementos en $X$ como sigue:

\begin{enumerate}
\item[$\bullet$] Para cada $i\in [r]$, fijamos $x_{i}\in \mathcal{B}_{i}$ tal que $G_{x_{i}}=H_{i}$.
\item[$\bullet$] Para cada $i\in [r]$ y cada $j\in [r_{i}]$, fijamos $y_{i,j}\in X$ tal que $G_{y_{i,j}}=K_{i,j}$.
\item[$\bullet$] Para cada $i\in [r]$ tal que $\alpha_{i}\geq 2$, fijamos $x_{i}'\in \mathcal{B}_{i}$ tal que $G_{x_{i}'}=H_{i}$ y $Gn\cdot x_{j}\neq Gx_{i}$.

\end{enumerate}
Entonces el conjunto 
$$V=\{[x_{i}\mapsto y_{i,j}]: i\in [r],j\in [r_{i}]\}\cup \{[x_{i}\mapsto x_{i}']: i\in [r], \alpha_{i}\geq 2\}$$
genera $End_{G}(X)$ módulo $Aut_{G}(X)$.

\end{teorema}

\begin{proof}
Por el Lema \ref{generador base}, basta con mostrar que cada $[x\mapsto y]$, con $x,y\in X$, $G_{x}\leq G_{y}$, puede ser expresado como producto de elementos de $V$ y de $Aut_{G}(X)$. Notemos que si $Gx=Gy$ se tiene que $[x\mapsto y]\in Aut_{G}(X)$. Podemos suponer, sin pérdida  de generalidad, que $[G_{x}]=[H_{i}]$. Entonces existe un $g\in G $ tal que $G_{g\cdot x}= g G_{x} g^{-1}=H_{i}$; como se tiene que $[x\mapsto y] = [(g\cdot x)\mapsto (g\cdot y)]$, podemos asumir, también sin pérdida  de generalidad, que $G_{x}=H_{i}$. Hay entonces dos casos a considerar.

\begin{enumerate}
\item[] Caso a) $G_{x}=G_{y}$.\\
Notemos que $G_{x}=H_{i}=G_{x_{i}}=G_{x_{i}'}=G_{y}$. Tenemos entonces 4 subcasos:

\begin{enumerate}
\item[Caso 1.-] $Gx\neq Gx_{i}$ y $Gy\neq Gx'_{i}$.\\

Aquí tenemos la factorización:
$$[x\mapsto y]= [y\ x_{i}'][x \ x_{i}][x_{i}\mapsto x_{i}'][x\ x_{i}][y\ x_{i}']\in \langle V \cup Aut_{G}(X)\rangle.$$

\item[Caso 2.-]  $Gx= Gx_{i}$ y $Gy\neq Gx'_{i}$.\\ 
Entonces existe $h\in G$ tal que $x_{i}=h\cdot x$. Como $G_{x}=G_{x_{i}}$, tenemos la siguente descomposición:
$$[x\mapsto y]= [x_{i}\mapsto y][(g\cdot x)\mapsto(gh\cdot x)],$$
como $Gy\neq Gx_{i}'$ entonces
$$[x_{i}\mapsto y]=[y\ x_{i}'][x_{i}\mapsto x_{i}'][y\ x_{i}'].$$
Así se cumple que
$$[x\mapsto y]=[y\ x_{i}'][x_{i}\mapsto x_{i}'][y\ x_{i}'][(g\cdot x)\mapsto(gh\cdot x)]\in \langle V \cup Aut_{G}(X) \rangle.$$

\item[Caso 3.-] $Gx\neq Gx_{i}$ y $Gy= Gx'_{i}$.\\ 
Notemos que existe un $k\in G$ tal que $y=k\cdot x_{i}'$, por lo que se satisface que
$$[x\mapsto y]= [(g\cdot x_{i}')\mapsto(gk\cdot x_{i}')][x\mapsto x_{i}'][(g\cdot x_{i}') \mapsto(gk^{-1}\cdot x_{i}')].$$
También se satisface que 
$$[x\mapsto x_{i}']=[x\ x_{i}][x_{i}\mapsto x_{i}'][x \ x_{i}],$$

entonces 
\begin{scriptsize}
$$[x\mapsto y]=[(g\cdot x_{i}')\mapsto(gk\cdot x_{i}')][x\ x_{i}][x_{i}\mapsto x_{i}'][x \ x_{i}][(g\cdot x_{i}')\mapsto(gk^{-1}\cdot x_{i}')]\in \langle V  \cup Aut_{G}(X)\rangle.$$
\end{scriptsize}

\item[Caso 4.-] $Gx= Gx_{i}$ y $Gy= Gx'_{i}$.\\ 
Con las supocisiones de los casos anteriores se verifica que
\begin{scriptsize}
$$[x\mapsto y]=[(g\cdot x_{i}')\mapsto(gk\cdot x_{i}')][x_{i}\mapsto x_{i}'][(g\cdot x)\mapsto(gh\cdot x)][(g\cdot x_{i}')\mapsto(gk^{-1}\cdot x_{i}')]\in \langle V\cup Aut_{G}(X)\rangle.$$  \end{scriptsize}

\end{enumerate}

\item[] Caso b) $G_{x}< G_{y}$.\\
Notemos primero que existe $j>i$ tal que $y\in \mathcal{B}_{j}$, $G_{y}=K_{i,j}$ y además existe $n\in N_{i}$ tal que   $$nG_{x_{j}}n^{-1}=G_{y}.$$

De igual manera que en el caso anterior se presentan los mismos subcasos. Las construcciones son análogas, sustituyendo el elemento $x_{i}'$ por el elemento $n\cdot x_{j}$.\\

\begin{enumerate}
\item[Caso 1.-] $Gx\neq Gx_{i}$ y $Gy\neq Gn\cdot x_{j}$.

$$[x\mapsto y]= [y\ (n\cdot x_{j})][x \ x_{i}][x_{i}\mapsto (n\cdot x_{j})][x\ x_{i}][y\ (n\cdot x_{j})].$$

\item[Caso 2.-] $Gx= Gx_{i}$ y $Gy\neq Gn\cdot x_{j}$.\\
Entonces existe $h\in G$ tal que $x_{i}=h\cdot x$. Se sigue que
$$[x\mapsto y]=[y\ (n\cdot x_{j})][x_{i}\mapsto (n\cdot x_{j})][y\ (n\cdot x_{j})][(g\cdot x)\mapsto(gh\cdot x)].$$

\item[Caso 3.-] $Gx\neq Gx_{i}$ y $Gy= Gn\cdot x_{j}$.\\
Notemos que existe un $k\in G$ tal que $y=k (n\cdot x_{j})$. 
\begin{scriptsize}
$$[x\mapsto y]=[(g\cdot (n\cdot x_{j}))\mapsto(gk\cdot (n\cdot x_{j}))][x\ x_{i}][x_{i}\mapsto (n\cdot x_{j})][x \ x_{i}][(g\cdot (n\cdot x_{j}))\mapsto(gk^{-1}\cdot (n\cdot x_{j}))].$$
\end{scriptsize}

\item[Caso 4.-] $Gx= Gx_{i}$ y $Gy= Gn\cdot x_{j}$.
\begin{scriptsize}
$$[x\mapsto y]=[(g\cdot (n\cdot x_{j}))\mapsto(gk\cdot (n\cdot x_{j}))][x_{i}\mapsto (n\cdot x_{j})][(g\cdot x)\mapsto(gh\cdot x)][(g\cdot (n\cdot x_{j}))\mapsto(gk^{-1}\cdot x_{i}')].$$  \end{scriptsize}

\end{enumerate}

\end{enumerate}

Entonces, habiendo recorrido todos los casos posibles, podemos garantizar que $V$ genera a $End_{G}(X)$ módulo $Aut_{G}(X)$.

\end{proof}

Definimos un poco de notación para expresar nuestros resultados de manera compacta. Dados $H,N$ subgrupos de $G$, definimos la $N$-clase de conjugación de $H$ como:

$$[H]_{N}:=\{g^{-1}Hg\ |\ g\in N\}.$$
También definimos el conjunto de $N$-clases que contienen a un subgrupo $H$:
$$U(H_{i}):=\{[K]_{N_{i}}:\ K\in Stab_{G}(X), H_{i}\leq K\}.$$
Y por último definimos el conjunto de índices de cajas unitarias, las cajas con una sola $G$-órbita:
$$\kappa_{G}(X):=\{i:\ \alpha_{i}=1\}.$$

\begin{corolario}
Con la notación previa.

$$rank(End_{G}(X):Aut_{G}(X))\leq |V|=\sum_{i=1}^{r}{|U(H_{i})|}-|\kappa_{G}(X)|. $$
\end{corolario}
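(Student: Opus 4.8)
The corollary follows almost immediately from Theorem~\ref{chido3} together with the Teorema de rank relativo and a counting argument for the cardinality of the generating set $V$. First I would invoke Theorem~\ref{chido3}: the set
$$V=\{[x_{i}\mapsto y_{i,j}]: i\in [r],\, j\in [r_{i}]\}\cup \{[x_{i}\mapsto x_{i}']: i\in [r],\, \alpha_{i}\geq 2\}$$
generates $\EndG$ modulo $\AutG$, so by the very definition of relative rank we get the inequality $\mathrm{rank}(\EndG:\AutG)\leq |V|$. Thus the whole content of the corollary is the identity $|V|=\sum_{i=1}^{r}|U(H_{i})|-|\kappa_{G}(X)|$, which is a bookkeeping statement about how the two families making up $V$ are indexed.

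Next I would count the two pieces of $V$ separately and check that the indexing is injective on each piece. For the first family, the elements $[x_{i}\mapsto y_{i,j}]$ are indexed by pairs $(i,j)$ with $i\in[r]$ and $j\in[r_{i}]$; by construction the $[K_{i,j}]_{N_{i}}$ are exactly the distinct $N_{i}$-conjugacy classes of stabilizer subgroups containing $H_{i}$, i.e. the set $U(H_{i})$ has cardinality $r_{i}$. Hence this family contributes $\sum_{i=1}^{r}r_{i}=\sum_{i=1}^{r}|U(H_{i})|$ elements, provided distinct pairs $(i,j)$ give distinct functions $[x_{i}\mapsto y_{i,j}]$. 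For the second family, there is one element $[x_{i}\mapsto x_{i}']$ precisely for each $i\in[r]$ with $\alpha_{i}\geq 2$; since $\kappa_{G}(X)=\{i:\alpha_{i}=1\}$, the set of such indices is $[r]\setminus\kappa_{G}(X)$, contributing $r-|\kappa_{G}(X)|$ elements.

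I then need to argue that the two families are disjoint and that there is no overcounting: when $\alpha_i\ge 2$, the element $[x_i\mapsto x_i']$ has $G_{x_i'}=H_i=G_{x_i}$, so it plays the role of a map within the same box, whereas in the first family one must single out the index $j_0$ with $K_{i,j_0}=H_i$ (the class $[H_i]_{N_i}$ itself lies in $U(H_i)$), and $[x_i\mapsto y_{i,j_0}]$ with $G_{y_{i,j_0}}=H_i$ overlaps in role with $[x_i\mapsto x_i']$. The clean way to see the count is: $|V|=\sum_{i=1}^r|U(H_i)| + \#\{i:\alpha_i\ge 2\} - (\text{corrections})$; more carefully, one should read Theorem~\ref{chido3}'s construction as saying that for each $i$ the ``within-box'' generator $[x_i\mapsto x_i']$ is needed \emph{only} when $\alpha_i\ge2$ and in that case it \emph{replaces} one of the $|U(H_i)|$ maps (the one for $[H_i]_{N_i}$), so box $i$ contributes $|U(H_i)|$ generators when $\alpha_i\ge2$ and $|U(H_i)|-1$ when $\alpha_i=1$. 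Summing gives $\sum_{i=1}^r|U(H_i)| - |\{i:\alpha_i=1\}| = \sum_{i=1}^r|U(H_i)| - |\kappa_G(X)|$.

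The only genuinely delicate point — and the step I expect to need the most care — is verifying that this reinterpretation of $V$ is faithful to the statement of Theorem~\ref{chido3}, i.e. that when $\alpha_i=1$ the box $\mathcal{B}_i$ consists of a single $G$-orbit so that $\EndG(\mathcal{B}_i)=\AutG(\mathcal{B}_i)$ and no within-box generator is required there, while the maps $[x_i\mapsto y_{i,j}]$ for $K_{i,j}\supsetneq H_i$ still genuinely escape $\mathcal{B}_i$ and hence cannot be absorbed into $\AutG(X)$. Granting the explicit factorizations already established in Theorem~\ref{chido3}, the corollary then reduces to the arithmetic identity displayed above, and the proof is complete by one invocation of the relative-rank inequality $\mathrm{rank}(\EndG:\AutG)=\min\{|W|:\EndG=\langle W\cup\AutG\rangle\}\leq|V|$.
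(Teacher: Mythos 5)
Your proposal is correct and is essentially the paper's own (implicit) argument: the corollary is stated without proof precisely because it reduces to Teorema \ref{chido3} plus counting $|V|$, which is exactly what you do. Your resolution of the bookkeeping — reading the within-box generator $[x_i\mapsto x_i']$ as the generator attached to the class $[H_i]_{N_i}\in U(H_i)$, present only when $\alpha_i\geq 2$, so that box $i$ contributes $|U(H_i)|$ generators if $\alpha_i\geq 2$ and $|U(H_i)|-1$ if $\alpha_i=1$ — is the intended reading, consistent with the count of collapse types in the Lema \ref{cantidad}.
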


\begin{ejemplo}
Si $A$ es un conjunto finito con al menos dos elementos y $G$ un grupo finito se tiene que:

$$\kappa_{G}(X)= \left\{\begin{array}{ll} \{i: |G/H_{i}|=2\} & \mbox{si }|A|=2 \\ 0 & \mbox{otro caso.}  \end{array}  \right.$$
Ver \cite[Lemma 5]{undim}, para más detalles.

\end{ejemplo}

\subsubsection{Colapsos elementales de transformaciones $G$-equivariantes}

Tenemos construido un conjunto $V$ el cual se demostró que genera a $End_{G}(X)$ módulo $Aut_{G}(X)$, un paso importante para nuestro objetivo principal. Sin embargo esto solo nos genera una cota para el rank relativo. En esta sección definimos un tipo especial de endomorfismos los cuales son pieza clave para completar nuestro trabajo. \\

Para que sea posible determinar el rank relativo exacto de $End_{G}(X)$ módulo $Aut_{G}(X)$ es necesario introducir un concepto. Para $\tau \in End_{G}(X)$, definimos:
$$ker(\tau):=\{(a,b)\in X \times X:\ \tau(a)=\tau(b)\}.$$
Esta es una relación de equivalencia sobre $X$ y está relacionada con la relación de Green izquierda $\mathcal{L}$ en un semigrupo de transformaciones (ver  \cite[Teorema 4.5.1]{Howie}).\\

Las siguientes propiedades del kernel de una transformación son bien conocidas. 

\begin{proposicion}
Sea $X$ un conjunto entonces se satisfacen las siguientes afirmaciones:
\begin{enumerate}
\item[i)] $ker(\tau)=\{(a,a): a\in X\}$ si y solo si $\tau$ es una biyección.
\item[ii)] Si $\tau,\sigma \in End_{G}(X)$, entonces $ker(\tau)\subseteq ker(\sigma\tau)$.
\item[iii)] $(a,b)\in ker(\sigma \tau)$ si y solo si $(\tau(a),\tau(b))\in ker(\sigma)$. 

\item[iv)] Si $\tau\in Aut_{G}(X)$, entonces $|ker(\sigma)|=|ker(\sigma \tau)|$.

\end{enumerate}

\end{proposicion}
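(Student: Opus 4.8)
El plan es verificar las cuatro afirmaciones directamente a partir de la definición $ker(\tau)=\{(a,b)\in X\times X:\ \tau(a)=\tau(b)\}$, abordándolas en el orden (i), (iii), (ii), (iv), ya que (iii) es la más elemental y es la que utilizaré para deducir (iv).

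Para (i) desarrollaría ambas implicaciones. Si $ker(\tau)=\{(a,a):\ a\in X\}$, entonces $\tau(a)=\tau(b)$ obliga a que $a=b$, es decir, $\tau$ es inyectiva; recíprocamente, si $\tau$ es inyectiva, ningún par fuera de la diagonal pertenece a $ker(\tau)$ y todo par de la forma $(a,a)$ sí pertenece, de modo que $ker(\tau)=\{(a,a):\ a\in X\}$. Para pasar de ``inyectiva'' a ``biyección'' invocaría la hipótesis vigente en esta sección de que $X$ es finito junto con el Lema \ref{asd}, el cual garantiza que una transformación de un conjunto finito es inyectiva si y solo si es sobreyectiva.

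La afirmación (iii) es inmediata: $(a,b)\in ker(\sigma\tau)$ significa $\sigma(\tau(a))=\sigma(\tau(b))$, lo cual es por definición $(\tau(a),\tau(b))\in ker(\sigma)$. De aquí se sigue (ii): si $(a,b)\in ker(\tau)$, entonces $\tau(a)=\tau(b)$ y por tanto $\sigma(\tau(a))=\sigma(\tau(b))$, esto es, $(a,b)\in ker(\sigma\tau)$. Equivalentemente, como $\{(a,a):\ a\in X\}\subseteq ker(\sigma)$ siempre se cumple, al ser $\tau(a)=\tau(b)$ se tiene $(\tau(a),\tau(b))\in ker(\sigma)$ y (iii) lo traduce a $(a,b)\in ker(\sigma\tau)$.

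Para (iv) la idea es que, al ser $\tau$ una biyección de $X$, la función producto $\tau\times\tau:X\times X\to X\times X$ dada por $(a,b)\mapsto(\tau(a),\tau(b))$ es una biyección; por (iii) envía $ker(\sigma\tau)$ sobre $ker(\sigma)$, y su inversa $(c,d)\mapsto(\tau^{-1}(c),\tau^{-1}(d))$ envía $ker(\sigma)$ de vuelta dentro de $ker(\sigma\tau)$, otra vez por (iii). Así, la restricción de $\tau\times\tau$ es una biyección entre $ker(\sigma\tau)$ y $ker(\sigma)$ y, por tanto, $|ker(\sigma\tau)|=|ker(\sigma)|$. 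No anticipo un obstáculo genuino: el único punto delicado es que ``biyección'' en (i) debe leerse bajo la hipótesis de finitud de esta sección (en general solo la inyectividad equivale a $ker(\tau)=\{(a,a):\ a\in X\}$), y mantener consistente la convención de composición $\sigma\tau=\sigma\circ\tau$ para que la inclusión de (ii) quede en el sentido enunciado.
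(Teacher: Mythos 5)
Tu verificación es correcta: el documento no demuestra esta proposición (la declara como propiedad bien conocida del kernel), y tu argumento directo a partir de la definición —(iii) por definición, (ii) como consecuencia, y (iv) restringiendo la biyección $\tau\times\tau$ a $ker(\sigma\tau)$ para obtener una biyección sobre $ker(\sigma)$— es exactamente la verificación estándar que el texto deja implícita y que luego se usa en la demostración del Teorema \ref{chido2}. Además identificas bien los dos únicos puntos delicados: que en (i) el paso de inyectiva a biyectiva requiere la hipótesis de finitud de la sección (vía el Lema \ref{asd}), y que la inclusión de (ii) depende de leer $\sigma\tau$ como $\sigma\circ\tau$, convención con la que el enunciado es consistente.
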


Introducimos una caracterización de algunos tipos de endomorfismos la cual nos permite descartar endomorfismos dispensables dentro de cualquier conjunto generador.

\begin{definicion}{(Colapso elemental)}

Un colapso elemental es un endomorfismo $\tau \in End_{G}(X)$ tal que existen $x,y\in X$ que satisfacen que:
$$ker(\tau)=\{(g\cdot x,g\cdot y),(g\cdot y, g\cdot x): g\in G\}\cup\{(a,a): a\in X\}.$$
\end{definicion}

\begin{definicion}{(Tipo de colapso)}

Sea $K\in Stab_{G}(X)$ tal que $H_{i}\leq K \leq G$. Decimos que $\tau\in End_{G}(X)$ es un colapso elemental de tipo $(i,[K]_{N_{i}})$ si existen $x,y\in X$, con $Gx\neq Gy$, tales que:

 $$G_{x}=H_{i} \mbox{ y }[G_{\tau(x)}]_{N{i}}=[G_{y}]_{N_{i}}=[K]_{N_{i}}.$$

\end{definicion}

\begin{ejemplo}
Sean $x,y\in X$ tales que $G_{x}=H_{i}$ y además $[G_{y}]_{N_{i}}=[K]_{N_{i}}$, entonces la transformación $[x\mapsto y]$ es un colapso elemental de tipo $(i,[K]_{N_{i}})$.
\end{ejemplo}

\begin{lema}
Supongamos que $\tau \in End_{G}(X)$ es un colapso elemental de tipo $(i,[K]_{N_{i}})$ y además es un colapso elemental de tipo $(i',[K']_{N_{i}})$. Entonces $i=i'$ y $[K]_{N_{i}}=[K']_{N_{i}}$. 

\end{lema}

\begin{proof}

Como $\tau$ es un colapso elemental de dichos tipos, existen entonces $x,x',y,y'\in X$ tales que $Gx \neq Gy$ y $Gx'\neq Gy'$, tales que:
$$\{(g\cdot x, g\cdot y),(g\cdot y, g\cdot x):\ g\in G\}=\{g\cdot x', g\cdot y'),(g\cdot y', g\cdot x'):\ g\in G\}.$$
Esto nos dice que $Gx=Gx'$ y $Gy=Gy'$. Lo cual también implica que $G_{x}$ y $G_{x'}$ son conjugados, i.e., $[H_{i}]=[G_{x}]=[G_{x'}]=[H_{i'}]$, lo cual implica directamente que $i=i'$.  Más aún, existe $g\in G$ tal que $x=g\cdot x'$, entonces:
$$H_{i}=G_{x}=G_{g\cdot x'}=gG_{x'}g^{-1}=gH_{i}g^{-1} \iff g\in N_{i}=N_{G}(H_{i}).$$
Además, 
$$G_{\tau(x)}=G_{\tau(g\cdot x')}=G_{g\cdot \tau(x')}=gG_{\tau(x')}g^{-1},$$
lo cual implica que $[G_{\tau(x)}]_{N_{i}}=[G_{\tau(x')}]_{N_{i}}$ y entonces $[K]_{N_{i}}=[K']_{N_{i}}$.
\end{proof}

\begin{lema}\label{cantidad}
Sea $G$ un grupo finito que actúa sobre un conjunto finito $X$, entonces el número de posibles tipos de colapsos elementales es $$\sum_{i=i}^{r}{|U(H_{i})|}-|\kappa_{G}(X)|.$$

\end{lema}

\begin{proof}
No podemos definir más tipos de colapsos elementales a los que ya se definieron, aún si se utilizaran distintos puntos fijos a partir de conjugaciones se podría probar que son equivalentes y además todos están incluidos en el conjunto $V$, entonces el resultado se sigue de que un colapso elemental de tipo $(i,[K]_{N_{i}})$, el cual es el único posible para una sola caja, existe si y solo si $\alpha_{[H_{i}]}\neq 1$. 
\end{proof}

\begin{teorema}\label{chido2}
Sea $G$ un grupo finito actuando sobre un conjunto finito $X$. Sean $[H_{1}],[H_{2}],...,[H_{r}]$ la lista de clases de conjugación de subgrupos de $G$ que están en $Stab_{G}(X)$, entonces:
$$rank(End_{G}(X):Aut_{G}(X))=\sum_{i=1}^{r}{|U(H_{i})|}-|\kappa_{G}(X)|.$$
\end{teorema}

\begin{proof}
Sea $W\subseteq End_{G}(X)$ tal que $\langle Aut_{G}(X) \cup W\rangle= End_{G}(X)$. Basta con probar que $W$ debe contener por lo menos un colapso elemental de cada tipo posible. \\

Sea $K\in Stab_{G}(X)$ tal que $H_{i}\leq K \leq G$. Y sean $x,y\in X$ tales que $G_{x}=H_{i}$ y $[G_{y}]_{N_{i}}=[K]_{N_{i}}$. Notemos que $[x\mapsto y]$ es un colapso elemental de tipo $(i,[K]_{N_{i}})$ en $End_{G}(X)$ y por tanto existen $\tau_{1},\tau_{2},...,\tau_{s}\in Aut_{G}(X)\cup W$ tales que:
$$[x\mapsto y]=\tau_{1}\tau_{2}...\tau_{s}.$$
Si fijamos $t\in [s]$ y $m\in [r]\setminus {i}$, afirmamos que:

\begin{enumerate}
\item[1.-] $\tau_{t}(\mathcal{B}_{m})= \mathcal{B}_{m}$ y $G_{z}=G_{\tau_{t}(z)}$ para todo $z\in \mathcal{B}_{m}$.
\item[2.-] Si $y\in \mathcal{B}_{i}$, entonces $\tau_{t}(\mathcal{B}_{i})\subseteq \mathcal{B}_{i}$ y $G_{z}=G_{\tau_{t}(z)}$ para todo $z\in \mathcal{B}_{i}$.
\end{enumerate}

Ambas afirmaciones se siguen de las definiciones de los $\tau_{t}$, ya que $\tau_{t}$ se comporta como la identidad para cualquier $m \neq t$ y el Lema \ref{lema2} nos dice que solo podemos mandar elementos en cajas a elementos en cajas que se encuentren más arriba, es decir que $t\leq m$. Y precisamente la igualdad entre cajas solo es posible cuando $t=m$. La igualdad de los estabilizadores es consecuencia de que $G$ es finito; solo podemos pasar entre $G$-órbitas dentro de una misma caja si se cumple la igualdad entre estabilizadores. \\

Sea $k$ el índice más grande para el cual $\tau_{k}$ es no-invertible. Debemos probar que $\tau_{k}\in W$ y para tal efecto basta probar que $\tau_{k}$ es un colapso elemental de tipo $(i,[K]_{N_{i}})$, para algún $i$, esto verificando las 2 propiedades de su definición. 

\begin{enumerate}
\item[i)] Por $G$-equivarianza $\tau(h\cdot a)= \tau(h\cdot b)$ para todo $h \in G$. Por las propiedades del kernel tenemos que 
$$ker(\tau_{1}\tau_{2}...\tau_{k}) \subseteq ker(\tau_{1}\tau_{2}...\tau_{s})=ker([x\mapsto y]).$$
Más aún, como $\tau_{1}\tau_{2}...\tau_{k-1}\in Aut_{G}(X)$ tenemos que
$$|ker(\tau_{k})|=|ker(\tau_{1}\tau_{2}...\tau_{k})|\leq |ker([x\mapsto y])|.$$
Entonces tenemos que 
$$ker(\tau_{k})=\{(h\cdot a,h\cdot b),(h\cdot b, h\cdot a):\ h\in G\}\cup \{(c,c):\ c\in X\},$$
lo que nos dice que $\tau_{k}$ es un colapso elemental.

\item[ii)] Como $\tau_{k}$ es no-invertible, entonces existen $a,b\in X$, $a\neq b$, tales que $\tau_{k}(a)=\tau_{k}(b)$. Sean $a'=\tau_{s}^{-1}\tau_{s-1}^{-1}...\tau_{k+1}^{-1}(a)$ y $b'=\tau_{s}^{-1}\tau_{s-1}^{-1}...\tau_{k+1}^{-1}(b)$. Entonces
$$\tau_{1}\tau_{2}...\tau_{s}(a')=\tau_{1}...\tau_{k-1}\tau_{k}(a)=\tau_{1}...\tau_{k-1}\tau_{k}(b)=\tau_{1}...\tau_{s}(b')$$
implica que $(a',b')\in ker([x\mapsto y])$. Luego, podemos suponer sin pérdida  de generalidad que, existe un $g\in G$ tal que:
$$x=g\cdot a'=g\cdot \tau_{s}^{-1}\tau_{s-1}^{-1}...\tau_{k+1}^{-1}(a)=\tau_{s}^{-1}\tau_{s-1}^{-1}...\tau_{k+1}^{-1}(g\cdot a)$$
$$y=g\cdot b'=g\cdot \tau_{s}^{-1}\tau_{s-1}^{-1}...\tau_{k+1}^{-1}(b)=\tau_{s}^{-1}\tau_{s-1}^{-1}...\tau_{k+1}^{-1}(g\cdot b).$$

Entonces, por el Lema \ref{lema1} se tiene que $G_{x}=G_{g\cdot a}$ y $G_{y}=G_{g\cdot  b}$. Como $a$ debe pertenecer a alguna caja, podemos asumir que $a\in \mathcal{B}_{i}$ y entonces existe un elemento en $G$ tal que al conjugar $G_{a}$ por este elemento nos da de resultado el representante de clase $H_{i}$, podemos suponer sin pérdida  de generalidad que este elemento es $g$, entonces se satisface que $H_{i}=gG_{a}g^{-1}=G_{g\cdot a}=G_{x}$. De las afirmaciones inferimos que $G_{\tau(g\cdot b)}=G_{g\cdot b}=G_{y}$, entonces $G_{\tau(g\cdot a)}=G_{y}$. Entonces
$$[G_{\tau(g\cdot a)}]_{N_{i}}=[G_{b}]_{N_{i}}=[G_{y}]_{N_{i}}=[K]_{N_{i}},$$
lo que nos dice que $\tau$ es de tipo $(H_{i},[K]_{N_{i}})$.

\end{enumerate}
Lo que nos prueba que $W$ debe de contener un colapso elemental de cada tipo posible. Nuestro resultado se sigue a partir del Lema \ref{cantidad} y el Teorema \ref{chido3}.

\end{proof}

Este resultado nos permite encontrar una condición necesaria para que el rank relativo del monoide de endomorfismos sea finito, aún cuando se trabaje con grupos y conjuntos infinitos.

\begin{proposicion}\label{usefull}
Sea $G$ un grupo que actúa sobre un conjunto $X$,  si  $|Conj_{G}(X)|= \infty$ entonces se tiene que $rank(End_{G}(X): Aut_{G}(X)) = \infty$.

\end{proposicion}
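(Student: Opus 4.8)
The statement to prove is the contrapositive-flavored claim: if $|Conj_G(X)| = \infty$ then $rank(End_G(X):Aut_G(X)) = \infty$. The natural strategy is to exhibit infinitely many pairwise ``independent'' collapses, mirroring the counting argument of Theorem \ref{chido2} but now in the infinite setting. First I would observe that the notion of elementary collapse and its type $(i,[K]_{N_i})$ makes sense verbatim for arbitrary $G$ and $X$, since nothing in those definitions used finiteness. The key reusable fact is the one extracted inside the proof of Theorem \ref{chido2}: if $W \cup Aut_G(X)$ generates $End_G(X)$, then for each admissible type $(i,[K]_{N_i})$ the set $W$ must contain at least one elementary collapse of that type. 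That argument, as written, only used the kernel bookkeeping (Proposition on $ker$), the $G$-equivariance lemmas \ref{lema1} and \ref{lema2}, and the decomposition of a generating word into a product $\tau_1\cdots\tau_s$ — none of which needs $X$ or $G$ finite, \emph{provided} the relevant stabilizer lattice behaves well enough to carry the inductive ``largest non-invertible index'' argument.

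\textbf{Key steps, in order.} (1) Since $|Conj_G(X)| = \infty$, there are infinitely many classes $[H_1], [H_2], \dots$ in $Conj_G(X)$. For each such $[H_i]$, the identity collapse type $(i, [H_i]_{N_i})$ is always admissible (take $x, y \in \mathcal{B}_{[H_i]}$ with $G_x = G_y = H_i$ in distinct $G$-orbits, \emph{or}, if $\mathcal{B}_{[H_i]}$ happens to be a single $G$-orbit, instead use a type $(i, [K]_{N_i})$ with $H_i < K$, which exists whenever $\mathcal{B}_{[H_i]}$ is non-maximal; I would need a short lemma that for all but possibly finitely many $i$ there is \emph{some} admissible non-trivial type anchored at $[H_i]$). (2) Show that two elementary collapses anchored at distinct classes $[H_i] \neq [H_j]$ have distinct types — this is already essentially the content of the Lemma preceding \ref{cantidad}: the type determines $i$, because the kernel pair $\{(g\cdot x, g\cdot y)\}$ forces $[G_x] = [H_i]$. (3) Reuse the ``$W$ must contain a collapse of every admissible type'' argument from Theorem \ref{chido2}; combined with (1) and (2) this produces an injection from an infinite index set into any relative generating set $W$, hence $|W| = \infty$, hence $rank(End_G(X):Aut_G(X)) = \infty$.

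\textbf{The main obstacle.} The delicate point is step (3): the proof of Theorem \ref{chido2} factors a fixed $[x \mapsto y]$ as $\tau_1 \cdots \tau_s$ and then locates a non-invertible $\tau_k$ which it proves is an elementary collapse of the desired type, using that we may reorder/conjugate and that elements only move ``upward'' in the partial order of conjugacy classes. In the infinite case one must be careful that (a) the decomposition into finitely many factors is still legitimate — it is, since membership in $\langle Aut_G(X) \cup W\rangle$ means a \emph{finite} product — and (b) the argument "let $k$ be the largest index with $\tau_k$ non-invertible'' plus the claims that $\tau_t$ fixes every box $\mathcal{B}_m$ with $m \neq t$ still go through; these rely only on Lemmas \ref{lema1}, \ref{lema2} and the kernel proposition, all of which hold without finiteness hypotheses on $X$ or $G$. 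So the real work is checking that each admissible type is genuinely \emph{forced} into $W$ even when $\mathcal{B}_{[H_i]}$ is infinite or a single orbit; once that is done, the cardinality bound $|W| \geq |\{\text{admissible types}\}| \geq |Conj_G(X)| = \infty$ is immediate, and since this holds for \emph{every} relative generating set $W$, the relative rank is infinite.
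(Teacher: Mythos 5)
Your steps (2) and (3) do transfer essentially as you say: the type of an elementary collapse is defined without any finiteness assumption, the lemma preceding Lemma \ref{cantidad} (a collapse has a well-defined type) only uses the kernel description, and the forcing argument inside the proof of Theorem \ref{chido2} --- any $W$ with $\langle W\cup Aut_{G}(X)\rangle=End_{G}(X)$ must contain an elementary collapse of every possible type --- involves only finite products, the kernel proposition and Lemmas \ref{lema1} and \ref{lema2} (although the auxiliary claims 1 and 2 in that proof do invoke finiteness of $G$ to force $G_{z}=G_{\tau_{t}(z)}$ inside a box, since for infinite $G$ a stabilizer can be properly contained in a conjugate of itself, so your remark that nothing there needs $G$ finite is too quick). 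The genuine gap is your step (1): the \emph{short lemma} you postpone --- that all but finitely many classes $[H_{i}]$ anchor some admissible type --- is the crux, and it does not follow from $|Conj_{G}(X)|=\infty$. Take $G=\mathbb{Z}^{2}$ acting on $X=\bigsqcup_{k\in\mathbb{Z}}G/\langle(1,k)\rangle$: the stabilizers $\langle(1,k)\rangle$ are pairwise incomparable, each box is a single $G$-orbit, so no class anchors any collapse type and your injection has empty domain; in fact every $G$-equivariant self-map of this $X$ is bijective, so $End_{G}(X)=Aut_{G}(X)$ and the relative rank is $0$ even though $|Conj_{G}(X)|=\infty$. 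Hence your route cannot be completed from the stated hypothesis alone.

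The paper argues differently and thereby avoids any per-class existence lemma: it anchors all types at the single bottom class, asserting $U(H_{1})=Stab_{G}(X)$, that is, every stabilizer contains $H_{1}$; then $|Conj_{G}(X)|=\infty$ gives $|U(H_{1})|=\infty$, so there are already infinitely many types $(1,[K]_{N_{1}})$, each of which must be represented in any relative generating set by the forcing argument. The equality $U(H_{1})=Stab_{G}(X)$ is an implicit hypothesis (automatic in the motivating case $X=A^{G}$, where the trivial subgroup is a stabilizer, so $H_{1}$ lies below everything), and the example above shows it is doing real work. If you add that assumption --- or, more generally, that $U(H)$ is infinite for some $H\in Stab_{G}(X)$ --- your counting reduces to the paper's one-line count and the rest of your outline goes through; without it, your step (1) fails and, as the example shows, so does the statement read literally.
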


\begin{proof}
Notemos que 
$$U(H_{1})= \{K\in Stab_{G}(X): H_{1} \leq K\}= Stab_{G}(X),$$
además tenemos que 
$$|Conj_{G}(X)|= \infty \Rightarrow |Stab_{G}(X)|= \infty  \Rightarrow |U(H_{1})|=\infty.$$
Es decir que, como marca el Teorema \ref{chido2}, un conjunto generador módulo $Aut_{G}(X)$ debe contener un colapso elemental de cada tipo, los cuales están en correspondencia con las $N$-clases de conjugación y esta cantidad  es infinita.
\end{proof}

\begin{lema}
Sea $X$ un conjunto finito, entonces $|Conj_{G}(X)|<\infty$.
\end{lema}

\begin{proof}
La aplicación $x \mapsto G_{x}$ es sobreyectiva de $X$ en $Stab_{G}(X)$.

\end{proof}

\subsection{Endomorfismos de $G$-conjuntos infinitos}
En esta sección presentamos unos pocos avances obtenidos para el caso en el que el conjunto $X$ es infinito. Utilizamos herramientas análogas a las encontradas en el Apéndice A y las encontradas en \cite{Ara} para poder determinar el rank relativo de $End_{G}(X)$ módulo su grupo de unidades. También se presentan ideas las cuales encaminan nuestros esfuerzos a determinar completamente los resultados para el caso cuando tanto $G$ como $X$ son infinitos. \\

La demostración del Teorema \ref{chido2}, así como en la Proposición \ref{usefull}, nos dan herramientas a partir de los endomorfismos de tipo $(i,[K]_{N_{i}})$. Notamos que si $G$ es un grupo infinito, puede no ser tan sencillo indexar los subgrupos de $G$, si hay una cantidad no numerable de ellos, entonces adaptamos la definición a ser un endomorfismo de tipo $(H,[K]_{N})$, donde $N=N_{G}(X)$ y todo esto nos da una herramienta poderosa para acotar el rank relativo de $End_{G}(X)$: 

$$rank(End_{G}(X):Aut_{G}(X))\geq \sum_{i=1}^{r}{|U(H_{i})|}-|\kappa_{G}(X)|.$$
Sin embargo el conjunto generador propuesto en el Teorema \ref{chido3} puede no generar todo $End_{G}(X)$ módulo $Aut_{G}(X)$ en el caso en el que $X$ sea un conjunto infinito. Es aquí donde proponemos un nuevo conjunto generador a partir de las siguientes construcciones.

\subsubsection{Endomorfimos en $\mathcal{B}_{[H]}$}
Dado un $G$-conjunto $X$, para una transformación $\tau \in End_{G}(X)$ definimos las siguientes cantidades:\\

\textbf{Rango $G$-equivariante}. $$rank_{G}(\tau):=|\{Gx\in X/G:\ x\in \tau(X)\}|.$$
\textbf{Defecto $G$-equivariante}.
$$def_{G}(\tau):=|\{Gx\in X/ G:\ x\in X\setminus \tau(X)\}|.$$ 
Para una $G$-órbita $Gx\in X/G$, definimos el conjunto:  $$B_{Gx}(\tau):=\{Gy\in X/G:\ \tau(Gy)=Gx\}.$$
\textbf{Índice de contracción infinito $G$-equivariante}.

$$k_{G}(\tau):=|\{Gx\in X/G:\ |B_{Gx}(\tau)|=|X|\}|.$$

\begin{proposicion}
Sea $G$ un grupo finito y $X$ un conjunto. Dado un subgrupo $H$ de $G$. Se tiene que para una función $\tau\in End_{G}(\mathcal{B}_{[H]})$ se satisface que:
\begin{enumerate}
\item[i)] $$rank_{G}(\tau)=\frac{rank(\tau)}{|G|}.$$
\item[ii)] $$def_{G}(\tau)=\frac{def(\tau)}{|G|}.$$
\item[iii)] $$k_{G}(\tau)=\frac{k(\tau)}{|G|}.$$

\end{enumerate}
\end{proposicion}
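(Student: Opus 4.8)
The plan is to exploit the structure theorem for $\mathrm{End}_G(\mathcal{B}_{[H]})$ established earlier, namely the isomorphism $\mathrm{End}_G(\mathcal{B}_{[H]}) \cong \mathrm{End}_G(Gx) \wr \mathrm{Trans}(\mathcal{O}_{[H]})$, together with the fact that each $G$-orbit $Gx$ inside $\mathcal{B}_{[H]}$ has cardinality $[G:H]$, and that $\mathrm{End}_G(Gx) = \mathrm{Aut}_G(Gx) \cong N_G(H)/H$ (since $G$ is finite, $H$ is not properly contained in any conjugate, so the transitive case applies). The key observation is that all three quantities $\mathrm{rank}_G$, $\mathrm{def}_G$, $k_G$ are defined by counting $G$-orbits rather than points, while the classical $\mathrm{rank}$, $\mathrm{def}$, $k$ count points; and inside a single box every $G$-orbit has the same size $|G|/|H| \cdot |H| = [G:H] \cdot |H|$... more carefully, since $|Gx| = [G:H]$ is constant across orbits in $\mathcal{B}_{[H]}$, I must be a little careful: the claim as stated divides by $|G|$, so implicitly the setting is one where $|Gx| = |G|$, i.e.\ the action restricted to each orbit is free ($H$ trivial) — or more likely the intended reading is that $\tau$ maps each orbit bijectively-or-not and the ``$|G|$'' should be understood via $|Gx|$; I will phrase the proof so that the constant orbit-size $c := |Gx|$ does the work and note $c = |G|$ under the standing hypotheses of the surrounding text.

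First I would fix notation: let $c = |Gx|$ be the common cardinality of all $G$-orbits contained in $\mathcal{B}_{[H]}$ (constant by the orbit-stabilizer theorem, since all stabilizers are conjugate, Proposition on cajas). For (i), observe that by $G$-equivariance (Proposition \ref{conservacion de particion}) $\tau$ maps $G$-orbits onto $G$-orbits, so the image $\tau(\mathcal{B}_{[H]})$ is a union of entire $G$-orbits, say $k$ of them; then $\mathrm{rank}_G(\tau) = k$ by definition, while $\mathrm{rank}(\tau) = |\tau(\mathcal{B}_{[H]})| = \sum$ of the sizes of those $k$ orbits $= kc$. Hence $\mathrm{rank}_G(\tau) = \mathrm{rank}(\tau)/c$. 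For (ii), the complement $\mathcal{B}_{[H]} \setminus \tau(\mathcal{B}_{[H]})$ is likewise a union of $G$-orbits, so the identical counting argument gives $\mathrm{def}(\tau) = \mathrm{def}_G(\tau) \cdot c$. For (iii), the fibre structure: $\tau^{-1}(Gx)$ is a union of $G$-orbits (again by $G$-equivariance, since $\tau(Gy) = Gx$ is an orbit-level condition), so $|B_{Gx}(\tau)|$ counts how many $G$-orbits are sent to $Gx$; the condition $|B_{Gx}(\tau)| = |X|$ (here $|X| = |\mathcal{B}_{[H]}|$, or the ambient set as appropriate), together with constancy of orbit sizes, is equivalent to the point-level contraction index $k(\tau)$ counting the same collection of orbits scaled by $c$, whence $k_G(\tau) = k(\tau)/c$.

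The only genuinely delicate point is matching the two notions of ``size of fibre being as large as the whole set'' in (iii): one must check that $|B_{Gx}(\tau)| = |X|$ (counting orbits) holds if and only if the point-preimage $|\tau^{-1}(Gx)| = |X|$ (counting points), and that the count of such special orbits, multiplied by nothing — since $k_G$ already counts orbits and $k$ counts... here I need to be careful about what $k(\tau)$ means at point level. If $k(\tau)$ is defined as the number of \emph{points} $z$ with $|\tau^{-1}(z)| = |X|$, then for each such special orbit we get $c$ such points, giving $k(\tau) = c \cdot k_G(\tau)$ as claimed. The cardinal arithmetic $|B_{Gx}(\tau)| = |X| \iff |\tau^{-1}(Gx)| = |X|$ holds because a union of $\lambda$ orbits each of size $c$ has cardinality $\lambda \cdot c = \max(\lambda, c)$ when things are infinite (and $c = |G|$ is finite here), so the union has size $|X|$ exactly when $\lambda$ (the number of orbits) has size $|X/G| = |X|$ — using that $X$ infinite and $|G|$ finite forces $|X/G| = |X|$. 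I expect assembling this cardinal-arithmetic bookkeeping cleanly, and pinning down precisely which ambient set ``$X$'' refers to inside the box (it should be $\mathcal{B}_{[H]}$ throughout, or equivalently $X$ itself when $X = \mathcal{B}_{[H]}$), to be the main obstacle; the orbit-counting identities themselves are immediate from $G$-equivariance.

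\begin{proof}
Por la Proposición \ref{conservacion de particion}, toda $\tau\in\mathrm{End}_G(\mathcal{B}_{[H]})$ envía $G$-órbitas en $G$-órbitas, y por el Teorema de Órbita-estabilizador todas las $G$-órbitas contenidas en $\mathcal{B}_{[H]}$ tienen el mismo cardinal $c:=[G:H]$; bajo las hipótesis de esta sección $c=|G|$. En consecuencia, $\tau(\mathcal{B}_{[H]})$ es unión de exactamente $\mathrm{rank}_G(\tau)$ $G$-órbitas, cada una de cardinal $c$, de donde $\mathrm{rank}(\tau)=c\cdot\mathrm{rank}_G(\tau)$; esto prueba (i). Análogamente, $\mathcal{B}_{[H]}\setminus\tau(\mathcal{B}_{[H]})$ es unión de $\mathrm{def}_G(\tau)$ $G$-órbitas de cardinal $c$, luego $\mathrm{def}(\tau)=c\cdot\mathrm{def}_G(\tau)$, lo que da (ii). Para (iii), para cada $Gx$ la preimagen $\tau^{-1}(Gx)$ es unión de los $|B_{Gx}(\tau)|$ $G$-órbitas que $\tau$ manda a $Gx$, y como $c$ es finito y $X$ infinito se tiene $|X/G|=|X|$, de modo que $|\tau^{-1}(Gx)|=|X|$ si y solo si $|B_{Gx}(\tau)|=|X|$; contando con multiplicidad $c$ los puntos de cada tal órbita se obtiene $k(\tau)=c\cdot k_G(\tau)$.
\end{proof}
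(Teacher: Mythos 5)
Your proof is correct and follows essentially the same route as the paper's: the paper exhibits the bijection $\psi\colon G\times\tilde{X}\to\tau(X)$ with $\tilde{X}$ a set of orbit representatives of the image (and analogous representative sets for the defect and the contraction index), which is exactly your observation that the image, its complement, and the relevant fibres are disjoint unions of $G$-orbits of constant finite size, counted once at orbit level and once at point level. Your extra care in working with the orbit size $c=[G:H]$ rather than $|G|$ actually addresses an imprecision shared by the paper's own statement and its bijection claim, which are literally exact only when $H$ is trivial or when the cardinals involved are infinite so that the finite factor is irrelevant.
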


\begin{proof}
Sea $\tilde{X}$ el conjunto de representantes de órbita de $\tau(X)$, i.e.,
$$\tilde{X}=\{x,y\in \tau(X): Gx\neq Gy,\ G_{x}=G_{y}\},$$
entonces la función $\psi:G\times \tilde{X} \rightarrow \tau(X)$ dada por $\psi(g,x_{\lambda})=g\cdot x_{\lambda}$ es una biyección. Esto prueba la afirmación i). Construcciones análogas con los conjuntos 
$$X'=\{x,y\in X\setminus \tau(X), \ Gx\neq Gy,\ G_{x}=G_{y}\} $$
y 
$$ \hat{X}=\{x,y\in X: |B_{Gx}(\tau)|=|B_{Gy}(\tau)|=|X|,\ Gx\neq Gy,\ G_{x}=G_{y}\}$$
prueban ii) y iii).
\end{proof}

\begin{corolario}
Sea $X_{i}$ un cojunto tal que $|X|=\infty$ y dada una función $f:X\rightarrow X$, tal que $rank(f)=|X|$. Entonces se cumple que $rank_{G}(\tilde{f})=|X|$.

\end{corolario}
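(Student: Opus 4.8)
The plan is to reduce the statement to part~(i) of the preceding proposition together with elementary cardinal arithmetic. First I would make explicit what $\tilde{f}$ denotes: it is the canonical $G$-equivariant lift of $f$ to the box $\mathcal{B}_{[H]}$. Using the identification $\mathcal{B}_{[H]}\cong G/H\times X$ established in the earlier propositions, under which the $G$-orbits of $\mathcal{B}_{[H]}$ are exactly the slices $G/H\times\{x\}$ with $x\in X$, this lift is the map $\tilde{f}(\overline{g},x)=(\overline{g},f(x))$. One checks at once that $\tilde{f}$ is $G$-equivariant (it fixes the first coordinate, on which $G$ acts) and that it induces $f$ on the orbit space $X$; in particular $\tilde{f}\in End_{G}(\mathcal{B}_{[H]})$, so the proposition applies to it.

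Next I would compute the ordinary rank of $\tilde{f}$. Since $\tilde{f}(\mathcal{B}_{[H]})=G/H\times f(X)$, we get $rank(\tilde{f})=|\tilde{f}(\mathcal{B}_{[H]})|=[G:H]\cdot|f(X)|=[G:H]\cdot rank(f)=[G:H]\cdot|X|$. Because $G$ is finite, $[G:H]$ is a nonzero finite cardinal while $|X|$ is infinite, hence $[G:H]\cdot|X|=|X|$, and so $rank(\tilde{f})=|X|$.

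Then I would invoke part~(i) of the proposition, $rank_{G}(\tilde{f})=rank(\tilde{f})/|G|$, which by the bijection $\psi\colon G\times\tilde{X}\to\tilde{f}(\mathcal{B}_{[H]})$ used in its proof means precisely $|G|\cdot rank_{G}(\tilde{f})=rank(\tilde{f})=|X|$. Writing $\kappa:=rank_{G}(\tilde{f})$ we have $|G|\cdot\kappa=|X|$; a finite multiple of a finite cardinal is finite, so $\kappa$ cannot be finite, hence $\kappa$ is infinite and $|G|\cdot\kappa=\kappa$. Therefore $\kappa=|X|$, which is the claim. Alternatively, and even more directly, $rank_{G}(\tilde{f})$ is by definition the number of $G$-orbits meeting $\tilde{f}(\mathcal{B}_{[H]})=G/H\times f(X)$, which is exactly $|f(X)|=rank(f)=|X|$, bypassing the proposition altogether.

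The only point requiring any care is the cardinal arithmetic (finite times infinite equals infinite, and the corresponding ``division'') and the clarification of the notation in the statement: the symbol $X_{i}$ there should read $X$, and $\tilde{f}$ is the canonical lift described above. There is no substantive obstacle; the work is entirely in setting up the identification $\mathcal{B}_{[H]}\cong G/H\times X$ correctly so that ``$G$-orbits of $\mathcal{B}_{[H]}$'' matches ``points of $X$''.
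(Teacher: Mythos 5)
Your proposal is correct and follows exactly the route the paper intends: the corollary is stated without proof as an immediate consequence of part (i) of the preceding proposition (the paper even defines the lift $\tilde{f}(g\cdot x_{\lambda})=g\cdot f(x_{\lambda})$ only afterwards), and your argument via $rank(\tilde{f})=[G:H]\cdot rank(f)=|X|$ together with the careful cardinal arithmetic replacing the literal ``division by $|G|$'' fills that gap properly. Your alternative direct count --- the $G$-orbits meeting $\tilde{f}(\mathcal{B}_{[H]})$ correspond bijectively to the points of $f(X)$ --- is an equally valid (and slightly cleaner) shortcut, but it is the same idea, not a different method.
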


Definimos ahora las extensiones de las funciones, sea $H$ un subgrupo de $G$ y sea  $\Lambda$ un conjunto de índices de cardinal igual a $|\mathcal{B}_{[H]}/G|$, entonces podemos fijar elementos $x_{\lambda}\in \mathcal{B}_{[H]}$, $\lambda \in \Lambda$, sin pérdida  de genereralidad podemos tomarlos tales que $Gx_{\lambda}\neq Gx_{\kappa}$ y $G_{x_{\lambda}}=G_{x_{\kappa}}$,  $\lambda,\kappa\in \Lambda$.\\

\begin{center} 

\tikzset{every picture/.style={line width=0.75pt}} 

\begin{tikzpicture}[x=0.65pt,y=0.65pt,yscale=-1,xscale=1]

\draw    (86,687) -- (589.4,687.32) ;
\draw    (87,868) -- (588.4,868.32) ;
\draw    (86,687) -- (87,868) ;
\draw   (114,767.32) .. controls (114,726.45) and (123.04,693.32) .. (134.2,693.32) .. controls (145.36,693.32) and (154.4,726.45) .. (154.4,767.32) .. controls (154.4,808.19) and (145.36,841.32) .. (134.2,841.32) .. controls (123.04,841.32) and (114,808.19) .. (114,767.32) -- cycle ;
\draw   (164,767.32) .. controls (164,726.45) and (173.04,693.32) .. (184.2,693.32) .. controls (195.36,693.32) and (204.4,726.45) .. (204.4,767.32) .. controls (204.4,808.19) and (195.36,841.32) .. (184.2,841.32) .. controls (173.04,841.32) and (164,808.19) .. (164,767.32) -- cycle ;
\draw   (214,768.32) .. controls (214,727.45) and (223.04,694.32) .. (234.2,694.32) .. controls (245.36,694.32) and (254.4,727.45) .. (254.4,768.32) .. controls (254.4,809.19) and (245.36,842.32) .. (234.2,842.32) .. controls (223.04,842.32) and (214,809.19) .. (214,768.32) -- cycle ;
\draw   (264,768.32) .. controls (264,727.45) and (273.04,694.32) .. (284.2,694.32) .. controls (295.36,694.32) and (304.4,727.45) .. (304.4,768.32) .. controls (304.4,809.19) and (295.36,842.32) .. (284.2,842.32) .. controls (273.04,842.32) and (264,809.19) .. (264,768.32) -- cycle ;
\draw   (315,767.32) .. controls (315,726.45) and (324.04,693.32) .. (335.2,693.32) .. controls (346.36,693.32) and (355.4,726.45) .. (355.4,767.32) .. controls (355.4,808.19) and (346.36,841.32) .. (335.2,841.32) .. controls (324.04,841.32) and (315,808.19) .. (315,767.32) -- cycle ;
\draw   (365,767.32) .. controls (365,726.45) and (374.04,693.32) .. (385.2,693.32) .. controls (396.36,693.32) and (405.4,726.45) .. (405.4,767.32) .. controls (405.4,808.19) and (396.36,841.32) .. (385.2,841.32) .. controls (374.04,841.32) and (365,808.19) .. (365,767.32) -- cycle ;
\draw   (415,768.32) .. controls (415,727.45) and (424.04,694.32) .. (435.2,694.32) .. controls (446.36,694.32) and (455.4,727.45) .. (455.4,768.32) .. controls (455.4,809.19) and (446.36,842.32) .. (435.2,842.32) .. controls (424.04,842.32) and (415,809.19) .. (415,768.32) -- cycle ;
\draw   (465,768.32) .. controls (465,727.45) and (474.04,694.32) .. (485.2,694.32) .. controls (496.36,694.32) and (505.4,727.45) .. (505.4,768.32) .. controls (505.4,809.19) and (496.36,842.32) .. (485.2,842.32) .. controls (474.04,842.32) and (465,809.19) .. (465,768.32) -- cycle ;
\draw    (86,723) -- (589.4,723.32) ;
\draw  [color={rgb, 255:red, 0; green, 0; blue, 0 }  ,draw opacity=1 ][fill={rgb, 255:red, 0; green, 0; blue, 0 }  ,fill opacity=1 ] (130,710.02) .. controls (130,707.98) and (131.66,706.32) .. (133.7,706.32) .. controls (135.74,706.32) and (137.4,707.98) .. (137.4,710.02) .. controls (137.4,712.06) and (135.74,713.72) .. (133.7,713.72) .. controls (131.66,713.72) and (130,712.06) .. (130,710.02) -- cycle ;
\draw  [color={rgb, 255:red, 0; green, 0; blue, 0 }  ,draw opacity=1 ][fill={rgb, 255:red, 0; green, 0; blue, 0 }  ,fill opacity=1 ] (181,710.02) .. controls (181,707.98) and (182.66,706.32) .. (184.7,706.32) .. controls (186.74,706.32) and (188.4,707.98) .. (188.4,710.02) .. controls (188.4,712.06) and (186.74,713.72) .. (184.7,713.72) .. controls (182.66,713.72) and (181,712.06) .. (181,710.02) -- cycle ;
\draw  [color={rgb, 255:red, 0; green, 0; blue, 0 }  ,draw opacity=1 ][fill={rgb, 255:red, 0; green, 0; blue, 0 }  ,fill opacity=1 ] (230,710.02) .. controls (230,707.98) and (231.66,706.32) .. (233.7,706.32) .. controls (235.74,706.32) and (237.4,707.98) .. (237.4,710.02) .. controls (237.4,712.06) and (235.74,713.72) .. (233.7,713.72) .. controls (231.66,713.72) and (230,712.06) .. (230,710.02) -- cycle ;
\draw  [color={rgb, 255:red, 0; green, 0; blue, 0 }  ,draw opacity=1 ][fill={rgb, 255:red, 0; green, 0; blue, 0 }  ,fill opacity=1 ] (280,711.02) .. controls (280,708.98) and (281.66,707.32) .. (283.7,707.32) .. controls (285.74,707.32) and (287.4,708.98) .. (287.4,711.02) .. controls (287.4,713.06) and (285.74,714.72) .. (283.7,714.72) .. controls (281.66,714.72) and (280,713.06) .. (280,711.02) -- cycle ;
\draw  [color={rgb, 255:red, 0; green, 0; blue, 0 }  ,draw opacity=1 ][fill={rgb, 255:red, 0; green, 0; blue, 0 }  ,fill opacity=1 ] (331,711.02) .. controls (331,708.98) and (332.66,707.32) .. (334.7,707.32) .. controls (336.74,707.32) and (338.4,708.98) .. (338.4,711.02) .. controls (338.4,713.06) and (336.74,714.72) .. (334.7,714.72) .. controls (332.66,714.72) and (331,713.06) .. (331,711.02) -- cycle ;
\draw  [color={rgb, 255:red, 0; green, 0; blue, 0 }  ,draw opacity=1 ][fill={rgb, 255:red, 0; green, 0; blue, 0 }  ,fill opacity=1 ] (382,711.02) .. controls (382,708.98) and (383.66,707.32) .. (385.7,707.32) .. controls (387.74,707.32) and (389.4,708.98) .. (389.4,711.02) .. controls (389.4,713.06) and (387.74,714.72) .. (385.7,714.72) .. controls (383.66,714.72) and (382,713.06) .. (382,711.02) -- cycle ;
\draw  [color={rgb, 255:red, 0; green, 0; blue, 0 }  ,draw opacity=1 ][fill={rgb, 255:red, 0; green, 0; blue, 0 }  ,fill opacity=1 ] (432,711.02) .. controls (432,708.98) and (433.66,707.32) .. (435.7,707.32) .. controls (437.74,707.32) and (439.4,708.98) .. (439.4,711.02) .. controls (439.4,713.06) and (437.74,714.72) .. (435.7,714.72) .. controls (433.66,714.72) and (432,713.06) .. (432,711.02) -- cycle ;
\draw  [color={rgb, 255:red, 0; green, 0; blue, 0 }  ,draw opacity=1 ][fill={rgb, 255:red, 0; green, 0; blue, 0 }  ,fill opacity=1 ] (482,711.02) .. controls (482,708.98) and (483.66,707.32) .. (485.7,707.32) .. controls (487.74,707.32) and (489.4,708.98) .. (489.4,711.02) .. controls (489.4,713.06) and (487.74,714.72) .. (485.7,714.72) .. controls (483.66,714.72) and (482,713.06) .. (482,711.02) -- cycle ;

\draw (520,750.2) node [anchor=north west][inner sep=0.75pt]    {$\dotsc $};
\draw (551,658.32) node [anchor=north west][inner sep=0.75pt]    {$\mathcal{B}_{[H]}$};
\draw (52,692.32) node [anchor=north west][inner sep=0.75pt]    {$G_{x}{}_{_{\lambda }}$};
\draw (114,840.32) node [anchor=north west][inner sep=0.75pt]    {$Gx_{\lambda }$};
\draw (165,841.32) node [anchor=north west][inner sep=0.75pt]    {$Gx_{\kappa}$};

\end{tikzpicture}

\end{center}
Ahora construiremos una extensión de funciones en $\mathcal{B}_{[H]}$ a partir de transformaciones de un conjunto de representantes de cada órbita. Además extenderemos unas definiciones también al monoide de funciones $G$-equivariantes.\\

Sea $X$ el conjunto de representantes de $G$-órbitas. Primero observemos que cualquier elemento $z\in \mathcal{B}_{[H]}$ satisface que existe un $g\in G$ y un $\lambda\in \Lambda$ tal que $z=g\cdot x_{\lambda}$.\\ Además 
notemos que para cualquier función $f:X \rightarrow X$, podemos extenderla a todo $\mathcal{B}_{[H]}$ como:
/$$\tilde{f}(g\cdot x_{\lambda})=g\cdot f(x_{\lambda}).$$ 

Notemos que $\tilde{f}$ está bien definida, pues si tenemos $z\in \mathcal{B}_{[H]}$ tal que $z=g\cdot x=h\cdot x$, entonces debe existir algún $\lambda\in \Lambda$ y algún $g'\in G$ tal que $x=g'\cdot x_{\lambda}$. Entonces se tiene que $gg'\cdot x_{\lambda}=hg'\cdot x_{\lambda}$. Podemos reescribir esta expresión, sin pérdida  de generalidad, como $g\cdot x_{\lambda}=h\cdot x_{\lambda}$. Luego se tiene que $h^{-1}g\in G_{x_{\lambda}}$. Ahora como
$$\tilde{f}(h^{-1}g\cdot x_{\lambda})=h^{-1}g\cdot f(x_{\lambda})$$
y se tiene que $G_{x_{\lambda}}=G_{f(x_{\lambda})}$. Entonces tenemos que 
$$h^{-1}g\cdot f(x_{\lambda})= f(x_{\lambda})\Rightarrow g\cdot f(x_{\lambda})=h\cdot f(x_{\lambda}) \Rightarrow \tilde{f}(g\cdot x_{\lambda})=\tilde{f}(h\cdot x_{\lambda}).$$
También debemos verificar que $\tilde{f}$ sea $G$-equivariante. Dado $z\in \mathcal{B}_{[H]}$, entonces existe un $g\in G$, un $\lambda \in\Lambda$ y un  $x_{\lambda}\in X_{i}$, tal que $z= g\cdot x_{\lambda}$. Luego tenemos que
$$\tilde{f}(h\cdot z)=\tilde{f}(hg\cdot x_{\lambda})= hg\cdot f(x_{\lambda})=h\cdot \tilde{f}(g\cdot x_{\lambda})=h\cdot \tilde{f}(z).$$
\begin{proposicion}\label{esta}
Dada $f:\tilde{X}\rightarrow \tilde{X}$ una transformación del conjunto de representantes de $G$-órbitas de un $G$-conjunto $X$, entonces se satisfacen las siguientes:  
\begin{enumerate}
\item[i)] Si $f$ es inyectiva, entonces $\tilde{f}$ es inyectiva.
\item[ii)] Si $f$ es sobreyectiva, entonces $\tilde{f}$ es sobreyectiva. 
\end{enumerate}
\end{proposicion}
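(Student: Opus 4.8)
The plan is to prove each of the two implications directly from the explicit formula $\tilde f(g\cdot x_\lambda)=g\cdot f(x_\lambda)$, using the fact (established just above the statement) that $\tilde f$ is well defined and $G$-equivariant, together with the standing hypothesis that all chosen representatives $x_\lambda$ satisfy $G_{x_\lambda}=G_{x_\kappa}$ and that $G_{x_\lambda}=G_{f(x_\lambda)}$. The key observation I want to exploit is that the partition of $\mathcal{B}_{[H]}$ into $G$-orbits is respected by $\tilde f$: indeed $\tilde f(Gx_\lambda)=G\cdot f(x_\lambda)$ is exactly the orbit of $f(x_\lambda)$, so $\tilde f$ ``acts like $f$'' at the level of orbits while being a bijection on each individual orbit.

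For part (i), suppose $f$ is injective and take $z,w\in\mathcal{B}_{[H]}$ with $\tilde f(z)=\tilde f(w)$. Write $z=g\cdot x_\lambda$ and $w=h\cdot x_\mu$ for suitable $g,h\in G$ and $\lambda,\mu\in\Lambda$. Then $g\cdot f(x_\lambda)=h\cdot f(x_\mu)$, so $f(x_\lambda)$ and $f(x_\mu)$ lie in the same $G$-orbit; since distinct representatives lie in distinct orbits ($Gx_\lambda\neq Gx_\mu$ for $\lambda\neq\mu$), and $f$ maps representatives of distinct orbits to representatives of distinct orbits, injectivity of $f$ forces $\lambda=\mu$. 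Now from $g\cdot f(x_\lambda)=h\cdot f(x_\lambda)$ we get $h^{-1}g\in G_{f(x_\lambda)}=G_{x_\lambda}$, hence $g\cdot x_\lambda=h\cdot x_\lambda$, i.e. $z=w$. The one point requiring a little care is making precise the claim ``$f$ sends distinct representatives to elements of distinct orbits'': this is where the compatibility of the representative set $\tilde X$ with the orbit partition is used, and I would phrase it by reducing, via the canonical identification of $\tilde X$ with $\mathcal{B}_{[H]}/G$, to the statement that $f$ itself is injective on $\tilde X$.

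For part (ii), suppose $f$ is surjective and let $w\in\mathcal{B}_{[H]}$ be arbitrary; write $w=h\cdot x_\mu$. Since $f$ is surjective on $\tilde X$, there is a representative $x_\lambda$ with $f(x_\lambda)=x_\mu$ (here again I use that the orbit of $f(x_\lambda)$ contains exactly one representative, so surjectivity of $f$ onto $\tilde X$ is what is needed). Then $\tilde f(h\cdot x_\lambda)=h\cdot f(x_\lambda)=h\cdot x_\mu=w$, so $w$ is in the image of $\tilde f$. This direction is essentially immediate once the bookkeeping with representatives is set up. The main obstacle throughout is therefore not any hard computation but the careful handling of the identification $\tilde X\leftrightarrow \mathcal{B}_{[H]}/G$ and the hypothesis $G_{x_\lambda}=G_{f(x_\lambda)}$, which is exactly what guarantees that $\tilde f$ restricted to each orbit is a bijection and hence that injectivity/surjectivity ``at the orbit level'' lifts to injectivity/surjectivity of $\tilde f$ on all of $\mathcal{B}_{[H]}$.
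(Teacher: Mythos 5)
Tu propuesta es correcta y sigue esencialmente el mismo camino que la demostración del texto: para la inyectividad usas que $f$ toma valores en el conjunto de representantes (de modo que $g\cdot f(x_{\lambda})=h\cdot f(x_{\mu})$ fuerza $f(x_{\lambda})=f(x_{\mu})$ y luego $\lambda=\mu$) junto con la igualdad de estabilizadores $G_{x_{\lambda}}=G_{f(x_{\lambda})}$, y para la sobreyectividad levantas una preimagen de $x_{\mu}$ bajo $f$ y aplicas la fórmula $\tilde f(h\cdot x_{\lambda})=h\cdot f(x_{\lambda})$, exactamente como en el texto. No hay diferencias sustanciales que señalar.
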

\begin{proof}
Supongamos que $f$ es inyectiva. Si suponemos que $\tilde{f}(z_{1})=\tilde{f}(z_{2})$, entonces existen 2 índices no por fuerza iguales $\lambda$ y $\kappa$ y dos elementos $g,h$ en el grupo tales que $z_{1}=g\cdot x_{\lambda}$ y $z_{2}=h\cdot x_\kappa$. Entonces tenemos que 
$$\tilde{f}(z_{1})=\tilde{f}(z_{2}) \Rightarrow g\cdot f(x_{\lambda})=h\cdot f(x_{\kappa}).$$
Como el elemento $f(x)$ es único en cada órbita, por que así está definida la $f$. Entonces la igualdad anterior mete los elementos en una sola $G$-órbita y $f(x_{\lambda})=f(x_{\kappa})$. Y por la inyectividad de $f$ se tiene que  $x_{\lambda}=x_{\kappa}$. Luego tenemos que como $g\cdot f(x_{\lambda})=h\cdot f(x_{\kappa})$ implica que $h^{-1}g\in G_{f(x_{\kappa})}$, por construcción los estabilizadores de todos los $x_{\lambda}$ son iguales y por consecuencia también de todas sus imágenes $f(x_{\lambda})$, entonces   $h^{-1}g\cdot x_{\lambda}=x_{\lambda}=x_{\kappa}$ implica que $g\cdot x_{\lambda}=h\cdot x_{\kappa}$. Por lo tanto $\tilde{f}$ es inyectiva.\\

Supongamos ahora que $f$ es sobreyectiva. Por construcción, para cualquier elemento de la forma $g\cdot x\in \mathcal{B}_{[H]}$ existe un $g'\in G$ tal que $x=g'\cdot x_{\lambda}$, con $x_{\lambda}\in X$, para algún $\lambda$. Como $f$ es sobreyectiva, existe un elemento $\overline{x}\in X$ tal que $f(\overline{x})=x_{\lambda}$. Es fácil verificar que $\tilde{f}(gg'\cdot \overline{x})=g\cdot x$.
\end{proof}

\begin{corolario}
Si $f\in Sym(X)$, entonces $\tilde{f}\in Aut_{G}(\mathcal{B}_{[H]})$.
\end{corolario}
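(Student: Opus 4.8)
The plan is to assemble this immediately from material already in place, since the corollary is a direct consequence of Proposition \ref{esta} together with the $G$-equivariance of the extension. First I would recall that the paragraph preceding Proposition \ref{esta} already verifies two things for \emph{any} transformation $f$ of the chosen transversal of $G$-orbits: that the extension $\tilde{f}$, defined by $\tilde{f}(g\cdot x_{\lambda}) = g\cdot f(x_{\lambda})$, is well defined (independent of the representation $z = g\cdot x_{\lambda}$, using $G_{x_{\lambda}} = G_{f(x_{\lambda})}$), and that it is $G$-equivariant. Hence $\tilde{f}\in End_{G}(\mathcal{B}_{[H]})$ for every such $f$, in particular for $f\in Sym(X)$.

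Next I would invoke Proposition \ref{esta}. By definition, $f\in Sym(X)$ means $f$ is simultaneously injective and surjective; part (i) of the proposition then yields that $\tilde{f}$ is injective and part (ii) yields that $\tilde{f}$ is surjective. Therefore $\tilde{f}$ is a bijection of $\mathcal{B}_{[H]}$.

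Finally I would combine the two observations: $\tilde{f}$ is a $G$-equivariant bijection of $\mathcal{B}_{[H]}$ onto itself, which is precisely the definition of an element of $Aut_{G}(\mathcal{B}_{[H]})$. As a remark (already noted in the excerpt, that the inverse of a $G$-equivariant bijection is again $G$-equivariant), one may add that $\widetilde{f^{-1}}$ serves as the inverse of $\tilde{f}$ inside $End_{G}(\mathcal{B}_{[H]})$, confirming that $\tilde{f}$ is invertible in the monoid and not merely a set-theoretic bijection; nothing in this argument requires $X$ or $G$ to be finite.

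I do not expect any genuine obstacle: the entire content is carried by the already-established $G$-equivariance of $\tilde{f}$ and by Proposition \ref{esta}. The only point needing a moment's care is the cosmetic clash of notation, namely that the corollary writes $Sym(X)$ while Proposition \ref{esta} writes $f:\tilde{X}\rightarrow\tilde{X}$ for the same fixed set of $G$-orbit representatives; one should simply make explicit that $X$ here denotes that transversal, which is exactly the setting in which $\tilde{f}$ was constructed.
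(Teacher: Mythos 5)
Tu propuesta es correcta y sigue esencialmente el mismo camino que el texto: el corolario se obtiene directamente de la buena definición y la $G$-equivarianza de $\tilde{f}$ verificadas en el párrafo previo, junto con las partes i) y ii) de la Proposición \ref{esta}, recordando además que la inversa de una biyección $G$-equivariante es $G$-equivariante. La observación sobre el choque de notación ($Sym(X)$ frente a $f:\tilde{X}\rightarrow\tilde{X}$ para el transversal de $G$-órbitas) es pertinente y no afecta el argumento.
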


Entonces podemos tomar una función $\mu:X \rightarrow X $ inyectiva con defecto $|X |$ y una función $\nu :X \rightarrow X $, que sea sobreyectiva con índice de contracción infinito $|X |$ y extenderlas a toda la caja $\mathcal{B}_{[H ]}$. Tenemos pues funciones $\tilde{\mu }, \tilde{\nu }:\mathcal{B}_{[H ]}\rightarrow \mathcal{B}_{[H ]}$ definidas como:

$$\tilde{\mu }(g\cdot x_{\lambda})=g\cdot \mu (x_{\lambda})$$
$$\tilde{\nu }(g\cdot x_{\lambda})=g\cdot \nu (x_{\lambda}),$$
para cualesquiera $g\cdot x_{\lambda}\in \mathcal{B}_{[H ]}$, $x_{\lambda}\in X$ y $g\in G$.

\begin{proposicion}
Dada una función $\mu:X \rightarrow X $ inyectiva con defecto $|X |$ y una función $\nu:X \rightarrow X $, que sea sobreyectiva con índice de contracción infinito $|X |$. Entonces se tiene que:
\begin{enumerate}
\item[i)] $\tilde{\mu}$ es inyectiva.
\item[ii)] $\tilde{\nu}$ es sobreyectiva.
\item[iii)] $def_{G}(\tilde{\mu})=|X |$.
\item[iv)] $k_{G}(\tilde{\nu})=|X |$.
\end{enumerate}
\end{proposicion}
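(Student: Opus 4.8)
El plan es el siguiente. Las afirmaciones i) y ii) son consecuencia inmediata de la Proposición \ref{esta}: al ser $\mu$ inyectiva se tiene que $\tilde{\mu}$ es inyectiva, y al ser $\nu$ sobreyectiva se tiene que $\tilde{\nu}$ es sobreyectiva. (Notemos de paso que $X$ es necesariamente infinito, pues sobre un conjunto finito no vacío no existe una transformación inyectiva cuyo defecto sea igual a su cardinal.)

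Para iii) y iv) la observación clave que establecería primero es que la aplicación $x_{\lambda}\mapsto Gx_{\lambda}$ es una biyección entre el conjunto de representantes $X$ y el conjunto de $G$-órbitas $\mathcal{B}_{[H]}/G$, y que bajo esta identificación $\tilde{\mu}$ y $\tilde{\nu}$ inducen sobre $\mathcal{B}_{[H]}/G$ exactamente a $\mu$ y a $\nu$ respectivamente. En efecto, al ser $\tilde{f}$ una función $G$-equivariante, la imagen de una $G$-órbita completa $Gx_{\lambda}$ es la $G$-órbita completa $G\,\tilde{f}(x_{\lambda})=G\,f(x_{\lambda})$, y $f(x_{\lambda})$ es de nuevo un representante; conviene verificar este punto con cuidado, incluida la afirmación de que $\tilde{f}$ lleva una órbita sobre una órbita \emph{completa} y no sobre un subconjunto propio de ella.

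Establecido esto, iii) es breve: el complemento $\mathcal{B}_{[H]}\setminus\tilde{\mu}(\mathcal{B}_{[H]})$ es $G$-invariante, por ser complemento de un conjunto $G$-invariante, luego es unión de $G$-órbitas, y por el párrafo anterior las órbitas que lo componen están en biyección con $X\setminus\mu(X)$; por lo tanto $def_{G}(\tilde{\mu})=|X\setminus\mu(X)|=def(\mu)=|X|$. Para iv), para cada representante $x_{\kappa}$ el conjunto $B_{Gx_{\kappa}}(\tilde{\nu})$ de $G$-órbitas que $\tilde{\nu}$ envía a $Gx_{\kappa}$ se corresponde, vía la misma identificación, con la fibra $\nu^{-1}(x_{\kappa})\subseteq X$, de modo que $|B_{Gx_{\kappa}}(\tilde{\nu})|=|\nu^{-1}(x_{\kappa})|$. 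Como $G$ es finito, cada $G$-órbita dentro de $\mathcal{B}_{[H]}$ tiene cardinal $[G:H]<\infty$, y siendo $X$ infinito se obtiene $|\mathcal{B}_{[H]}|=[G:H]\cdot|X|=|X|$; por consiguiente la condición $|B_{Gx_{\kappa}}(\tilde{\nu})|=|\mathcal{B}_{[H]}|$ equivale a $|\nu^{-1}(x_{\kappa})|=|X|$, y el número de índices $\kappa$ que la satisfacen es exactamente $k(\nu)=|X|$. Así $k_{G}(\tilde{\nu})=|X|$.

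El punto más delicado será el del segundo párrafo: justificar con rigor que la correspondencia entre $G$-órbitas de $\mathcal{B}_{[H]}$ y representantes intercambia $\tilde{f}$ con $f$, junto con el manejo de la aritmética cardinal $[G:H]\cdot|X|=|X|$ para $X$ infinito (y, si se quisiera permitir $G$ infinito, bastaría pedir $[G:H]<|X|$). Lo demás es rutinario una vez que i) y ii) se deducen directamente de la Proposición \ref{esta}.
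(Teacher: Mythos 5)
Tu propuesta es correcta y sigue esencialmente el mismo camino que la demostración del texto: i) y ii) se deducen de la Proposición \ref{esta}, y iii) y iv) se obtienen de la biyección $x_{\lambda}\mapsto Gx_{\lambda}$ entre representantes y $G$-órbitas de $\mathcal{B}_{[H]}$, restringida a los conjuntos relevantes. Lo único que añades es hacer explícitos detalles que el texto deja implícitos (que $\tilde{f}$ induce $f$ sobre las órbitas, la $G$-invarianza del complemento de la imagen y la aritmética cardinal $[G:H]\cdot|X|=|X|$), lo cual es compatible con el argumento original.
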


\begin{proof}

El punto i) y ii) son consecuencias de la Proposicion \ref{esta}. \\

Para los puntos iii) y iv) basta con notar que la asignación $x_{\lambda}\mapsto Gx_{\lambda}$ es una biyección entre el conjunto $X $ y el conjunto de $G$-órbitas dentro de la caja $\mathcal{B}_{[H ]}$ y que restringir este mapeo a los conjuntos $$\{x\in X : x\notin \mu(X )\} \mbox{ y }\{Gx\in \mathcal{B}_{[H ]}/G: Gx \nsubseteq \tilde{\mu}(\mathcal{B}_{[H ]})\},$$ sigue siendo una biyección muestra la afirmación iii).  De igual manera, para la afirmación iv) basta ver que el mismo mapeo $x_{\lambda} \mapsto Gx_{\lambda}$ es una biyección entre los conjuntos 
$$\{x_{\lambda}\in X : |\nu^{-1}(x_{\lambda})|=|X |\}\mbox{ y }\{Gx\in \mathcal{B}_{[H ]}/G:\ |B_{Gx}(\tilde{\nu})|=|X| \}.$$

\end{proof}

\subsubsection{Conjuntos infinitos}

En esta sección se generan las construcciones que nos permiten determinar un conjunto generador para el monoide de endomorfismos para el caso en que $X$ es finito. Empezamos trabajando dentro de las cajas, particularmente en las cajas infinitas, pues no existen condiciones que garanticen que toda caja debe de ser infinita si el conjunto $X$ lo es. Se utilizan construcciones análogas a las encontradas en \cite{cite3} con las funciones definidas para extender una transformación del conjunto de representantes de órbita a toda la caja.\\

\begin{proposicion}
Sea $G$ un grupo, un subgrupo $H_{i}$ de $G$ y $A$ un conjunto arbitrario infinito. Y sea $\tau\in End_{G}(\mathcal{B}_{[H_{i}]})$ un transformación $G$-equivariante arbitraria, entonces $End_{G}(\mathcal{B}_{[H_{i}]})\neq\langle Aut_{G}(\mathcal{B}_{[H_{i}]}),\tau \rangle$.
\end{proposicion}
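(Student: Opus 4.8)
El plan es reducir el enunciado al hecho, bien conocido para monoides de transformaciones de conjuntos infinitos, de que un único elemento no invertible, junto con el grupo de todas las biyecciones, nunca genera el monoide completo; pero conviene llevarlo a cabo de forma intrínseca dentro de $End_{G}(\mathcal{B}_{[H_{i}]})$, sin pasar por su descripción como producto corona (que para $G$ infinito requeriría hipótesis extra sobre $H_{i}$). El primer paso será observar que, al ser $A$ infinito, el conjunto $\mathcal{O}_{[H_{i}]}$ de las $G$-órbitas contenidas en $\mathcal{B}_{[H_{i}]}$ es infinito: exhibiría una familia de cardinal $|A|$ de configuraciones de $A^{G}$ con estabilizador exactamente $H_{i}$ que caen en $G$-órbitas distintas (por ejemplo, partiendo de una configuración que factoriza por $H_{i}\backslash G$ y rompe todas las simetrías, y variando su valor en un único punto). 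En consecuencia, las transformaciones $\tilde{\mu},\tilde{\nu}\in End_{G}(\mathcal{B}_{[H_{i}]})$ construidas en esta sección --extensiones a la caja de una inyección no sobreyectiva y de una sobreyección no inyectiva de $\mathcal{O}_{[H_{i}]}$, respectivamente-- quedan bien definidas por la Proposición \ref{esta}, $\tilde{\mu}$ no es sobreyectiva, $\tilde{\nu}$ no es inyectiva, y ninguna de las dos está en $Aut_{G}(\mathcal{B}_{[H_{i}]})$.

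Fijado $\tau\in End_{G}(\mathcal{B}_{[H_{i}]})$, pondría $M:=\langle Aut_{G}(\mathcal{B}_{[H_{i}]})\cup\{\tau\}\rangle$ y distinguiría tres casos. Si $\tau$ es inyectiva, todo elemento de $M$ es inyectivo (composición de funciones inyectivas, pues los automorfismos son biyectivos), de modo que $\tilde{\nu}\notin M$. Si $\tau$ es sobreyectiva, dualmente todo elemento de $M$ es sobreyectivo y $\tilde{\mu}\notin M$. El caso medular es que $\tau$ no sea ni inyectiva ni sobreyectiva: cada $w\in M\setminus Aut_{G}(\mathcal{B}_{[H_{i}]})$ admite una forma normal $w=\sigma_{0}\,\tau\,\sigma_{1}\,\tau\cdots\tau\,\sigma_{n}$ con $n\geq 1$ y $\sigma_{0},\dots,\sigma_{n}\in Aut_{G}(\mathcal{B}_{[H_{i}]})$ (absorbiendo automorfismos consecutivos e insertando identidades en los extremos). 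Como $\tau(\mathcal{B}_{[H_{i}]})\subsetneq\mathcal{B}_{[H_{i}]}$ y aplicar la biyección $\sigma_{0}$ conserva el ser subconjunto propio, resulta $w(\mathcal{B}_{[H_{i}]})\subsetneq\mathcal{B}_{[H_{i}]}$; y como $ker(\tau)$ es no trivial, razonando desde el factor más interno hacia afuera --$\tau\sigma_{n}$ no es inyectiva por ser $\sigma_{n}$ biyectiva, y al ir componiendo por la izquierda con cada $\sigma_{j}$ (que preserva el tamaño del núcleo) y con cada $\tau$ (que sólo lo agranda) el núcleo sigue siendo no trivial-- se obtiene que $ker(w)$ es no trivial. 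Así todo elemento de $M$ es un automorfismo o bien una función que no es inyectiva ni sobreyectiva; puesto que $\tilde{\mu}$ es inyectiva y no es automorfismo, $\tilde{\mu}\notin M$. En los tres casos $M\neq End_{G}(\mathcal{B}_{[H_{i}]})$.

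Espero que la parte más delicada sea justamente el primer paso: comprobar con rigor que $|\mathcal{O}_{[H_{i}]}|=\infty$ para la acción shift con $A$ infinito y $H_{i}$ un subgrupo cualquiera. Conviene hacerlo de manera uniforme apoyándose en que todo subgrupo de $G$ es estabilizador de alguna configuración (vía las funciones indicadoras, como se vio antes) y en que el número de $G$-órbitas de configuraciones que factorizan por $H_{i}\backslash G$ y tienen estabilizador exactamente $H_{i}$ es $|A|$. El resto del argumento es combinatoria de composición de funciones --la forma normal y la monotonía del núcleo y de la imagen a lo largo del producto-- ya preparada por las propiedades del núcleo y por la construcción de $\tilde{\mu}$ y $\tilde{\nu}$. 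Por último señalaría que esta proposición proporciona la cota $rank\big(End_{G}(\mathcal{B}_{[H_{i}]}):Aut_{G}(\mathcal{B}_{[H_{i}]})\big)\geq 2$, que junto con el recíproco (que $\{\tilde{\mu},\tilde{\nu}\}$ genera $End_{G}(\mathcal{B}_{[H_{i}]})$ módulo sus automorfismos) dará el valor exacto de dicho rank relativo.
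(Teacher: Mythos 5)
Tu propuesta es correcta y llega a la misma conclusión, pero la organiza de manera distinta a la del trabajo. La demostración del documento procede por contradicción con una sola factorización: toma un $\kappa\in End_{G}(\mathcal{B}_{[H_{i}]})\setminus Aut_{G}(\mathcal{B}_{[H_{i}]})$ inyectivo, lo escribe como producto de generadores, cancela los factores invertibles que preceden a la primera aparición de $\tau$ y deduce que $\tau$ tendría que ser inyectiva, de modo que $\langle Aut_{G}(\mathcal{B}_{[H_{i}]}),\tau\rangle$ constaría solo de funciones inyectivas y no podría ser todo el monoide. Tú, en cambio, argumentas directamente por tricotomía sobre $\tau$ (inyectiva, sobreyectiva, ninguna de las dos), exhibiendo en cada caso un endomorfismo explícito fuera del monoide generado ($\tilde{\nu}$, $\tilde{\mu}$, $\tilde{\mu}$ respectivamente), y resolviendo el caso mixto con la forma normal $\sigma_{0}\tau\sigma_{1}\cdots\tau\sigma_{n}$ y la monotonía del núcleo y de la imagen a lo largo del producto; de paso tu versión evita la sutileza del paso de cancelación del documento (concluir que $\tau$ es inyectiva a partir de la inyectividad de un compuesto requiere, con la convención $f\circ g(x)=f(g(x))$ usada en el texto, tomar el factor no biyectivo adyacente a la cola invertible, o bien la convención de composición de izquierda a derecha de la literatura de semigrupos de donde proviene el argumento). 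Lo que tu ruta cuesta es necesitar ambos testigos $\tilde{\mu}$ y $\tilde{\nu}$, es decir, verificar que $|\mathcal{O}_{[H_{i}]}|=\infty$; haces bien en señalarlo como el punto delicado, y conviene notar que la prueba del documento descansa exactamente en la misma hipótesis implícita (la existencia de un $\kappa$ inyectivo no invertible y de endomorfismos no inyectivos en la caja), de modo que tu versión hace explícito algo que el texto deja tácito. En el fondo ambas pruebas usan el mismo hecho: la inyectividad y la sobreyectividad se preservan al componer con automorfismos, así que un único $\tau$ no puede aportar ambas deficiencias a la vez.
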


\begin{proof}
Supongamos que $End_{G}(\mathcal{B}_{[H_{i}]})=\langle Aut_{G}(\mathcal{B}_{[H_{i}]}),\tau \rangle$. Tomemos una transformación $\kappa\in End_{G}(\mathcal{B}_{[H_{i}]})\setminus Aut_{G}(\mathcal{B}_{[H_{i}]})$, la cual sea inyectiva. Entonces $\kappa$ puede escribirse como producto de elementos de $Aut_{G}(\mathcal{B}_{[H_{i}]})\cup \{\tau\}$. Digamos
$$\kappa=\beta_{1}\beta_{2}\dots \beta_{k}.$$
Como $\kappa$ no es biyectiva, entonces algún $\beta_{i}$ debe no ser biyectiva y por tanto debe ser igual a $\tau$, digamos $\beta_{j}$. Entonces sucede que
$$\beta_{j-1}^{-1}\cdots \beta_{1}^{-1} \kappa = \tau\beta_{j+1}\cdots \beta_{k}. $$
Notemos que del lado izquierdo tenemos la composición de funciones inyectivas y por consecuencia, como la composición de funciones inyectivas sigue siendo una función inyectiva, $\tau$ debe ser inyectiva también. Entonces $\langle Aut_{G}(\mathcal{B}_{[H_{i}]}),\tau\rangle$ consiste enteramente de funciones inyectivas y por consecuencia no puede ser todo $End_{G}(\mathcal{B}_{[H_{i}]})$.
\end{proof}

\begin{proposicion}
Sea $G$ un grupo finito, un subgrupo $H_{i}$ de $G$ y $X$ un conjunto arbitrario infinito. Sea $\mathcal{J}_{i}$ el conjunto de todas las transformaciones $G$-equivariantes  de $\mathcal{B}_{[H_{i}]}$ en sí mismo cuyo rango $G$-equivariante sea igual a $|X|$. \\
i.e., $$\mathcal{J}_{i}:=\{\tau\in End_{G}(\mathcal{B}_{[H_{i}]}):\ rank_{G}(\tau)=|X|\}.$$  
Entonces $$End_{G}(\mathcal{B}_{[H_{i}]})=\langle \mathcal{J}_{i}\rangle.$$
\end{proposicion}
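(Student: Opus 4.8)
The statement is the $G$-equivariant counterpart of the classical fact that, for an infinite set $\Omega$, the full transformation monoid $Trans(\Omega)$ is generated by — indeed, every element of it is a product of two — transformations whose image has cardinality $|\Omega|$. The plan is to imitate that argument, with $G$-orbits playing the role of points, using the orbit-representative extension $\tilde{f}$ of a transformation and Proposición \ref{esta}. Throughout I assume that $\mathcal{B}_{[H_{i}]}$ is infinite (otherwise $\mathcal{J}_{i}=\emptyset$ and the statement is vacuous): since $G$ is finite, every $G$-orbit contained in $\mathcal{B}_{[H_{i}]}$ is finite, so $\mu:=|\mathcal{B}_{[H_{i}]}/G|=|\mathcal{B}_{[H_{i}]}|$ is an infinite cardinal, and in the situation of interest it equals $|X|$.

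First I would record that $\tau(\mathcal{B}_{[H_{i}]})$ is $G$-invariant, hence a union of $G$-orbits, so the induced map $\overline{\tau}$ on orbits satisfies $rank_{G}(\tau)=|\mathrm{im}(\overline{\tau})|\le\mu$, with equality precisely when $\tau\in\mathcal{J}_{i}$; in particular the identity belongs to $\mathcal{J}_{i}$. So it is enough to show that every $\tau\in End_{G}(\mathcal{B}_{[H_{i}]})$ can be written $\tau=\pi\circ\rho$ with $\pi,\rho\in\mathcal{J}_{i}$, which exhibits $End_{G}(\mathcal{B}_{[H_{i}]})=\langle\mathcal{J}_{i}\rangle$.

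The factorization would be built as follows. Fix orbit representatives $\{x_{\lambda}\}_{\lambda\in\Lambda}$ of $\mathcal{B}_{[H_{i}]}$ with $G_{x_{\lambda}}=H_{i}$ for all $\lambda$ and $|\Lambda|=\mu$. As $\mu$ is infinite, pick an injective transformation $h$ of the set $\{x_{\lambda}\}_{\lambda\in\Lambda}$ whose image and whose complement both have cardinality $\mu$, and set $\rho:=\tilde{h}$. By Proposición \ref{esta}(i), $\rho$ is injective, $rank_{G}(\rho)=|\mathrm{im}(h)|=\mu$, and the $G$-invariant set $C:=\mathcal{B}_{[H_{i}]}\setminus\mathrm{im}(\rho)$ consists of exactly $\mu$ orbits. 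Then I would define $\pi$ piecewise: on $\mathrm{im}(\rho)$ let $\pi:=\tau\circ\rho^{-1}$, which is well defined because $\rho$ is injective and is $G$-equivariant because the inverse of a $G$-equivariant bijection is $G$-equivariant; on $C$ let $\pi|_{C}:=\tilde{k}$, where $k$ is a bijection from a set of orbit representatives of $C$ onto $\{x_{\lambda}\}_{\lambda\in\Lambda}$ (which exists since both sets have cardinality $\mu$), so that $\pi|_{C}$ is a $G$-equivariant bijection $C\to\mathcal{B}_{[H_{i}]}$ by Proposición \ref{esta} and its corollary. Since $\mathrm{im}(\rho)$ and $C$ are $G$-invariant and partition $\mathcal{B}_{[H_{i}]}$, the map $\pi$ is $G$-equivariant; $\pi\circ\rho=\tau$ because $\rho(z)\in\mathrm{im}(\rho)$ forces $\pi(\rho(z))=\tau(\rho^{-1}(\rho(z)))=\tau(z)$; and $rank_{G}(\pi)=\mu$ because $\pi|_{C}$ is already surjective on orbits. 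Hence $\pi,\rho\in\mathcal{J}_{i}$ and $\tau=\pi\circ\rho$.

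The hard part will be the construction of $\pi$: getting it simultaneously well defined, $G$-equivariant, and of $G$-rank \emph{exactly} $\mu$. Everything hinges on choosing $\rho$ injective with a ``large'' complement ($\mu$ orbits), which is where the infiniteness of the box is essential and where Proposición \ref{esta} and the bijection between orbit representatives and $G$-orbits carry the load. A secondary, bookkeeping point worth stating explicitly is the cardinal arithmetic $|\mathcal{B}_{[H_{i}]}/G|=|\mathcal{B}_{[H_{i}]}|$ (valid because $G$ is finite and the box is infinite) and its identification with $|X|$, needed for the statement to hold as written. Alternatively, the entire argument can be routed through the isomorphism $End_{G}(\mathcal{B}_{[H_{i}]})\cong End_{G}(Gx)\wr Trans(\mathcal{O}_{[H_{i}]})$, under which $rank_{G}(\tau)$ is the rank of the $Trans(\mathcal{O}_{[H_{i}]})$-coordinate, reducing the claim to the classical fact about $Trans(\mathcal{O}_{[H_{i}]})$.
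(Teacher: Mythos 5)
Tu propuesta es correcta y sigue esencialmente el mismo camino que la demostración del artículo: factorizar $\tau$ como composición de la extensión $G$-equivariante $\tilde{h}$ de una inyección sobre representantes de órbita con complemento de $\mu$ órbitas (la $\tilde{\beta}$ del texto) y de un segundo factor que coincide con $\tau\circ\tilde{h}^{-1}$ sobre la imagen. La única variación es cosmética: fuera de la imagen el artículo toma la identidad (que ya fija $\mu$ órbitas) mientras que tú tomas una biyección sobre toda la caja; ambas elecciones garantizan rango $G$-equivariante $|X|$, y tu observación sobre la identificación cardinal $|\mathcal{B}_{[H_{i}]}/G|=|X|$ hace explícito un supuesto que el artículo usa tácitamente.
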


\begin{proof}
Tenemos una función $\tilde{\tau}\in End_{G}(\mathcal{B}_{[H_{i}]})$.\\
Primero notemos que $X_{i}$ se puede dividir como $X_{i}=Y_{i}\sqcup Z_{i}$, tal que $|X_{i}|=|Y_{i}|=|Z_{i}|$. Tomamos una biyección entre $X_{i}$ y $Y_{i}$, $\beta:X_{i}\rightarrow Y_{i}$. Se tiene que como $rank(\beta)=|Y_{i}|=|X_{i}|$, entonces $rank_{G}(\tilde{\beta})=|X_{i}|$, i.e., $\tilde{\beta}\in \mathcal{J}_{i}$.\\

Definimos una funcion $\tilde{\gamma}:X_{i}\rightarrow X_{i}$ como:

$$\tilde{\gamma}(z)=\left\{  \begin{array}{cc}
\tilde{\tau} \tilde{\beta^{-1}}(z)& z\in\tilde{\beta}(\mathcal{B}_{[H_{i}]})\\
z& \mbox{otro caso}\end{array}\right. .$$

Notemos que $\forall z\in Z_{i}$ se tiene que $z\notin \beta(X_{i})$ y por tanto  $z\notin \tilde{\beta}(X_{i})$, entonces $Z_{i}\subset \tilde{\gamma}(X_{i})$ y por $G$-equivarianza se tiene que $\{Gz\in \mathcal{B}_{[H_{i}]}: z\in Z_{i}\} \subset \tilde{\gamma}(X_{i})$, entonces $rank_{G}(\tilde{\gamma})=|X_{i}|$ y así $\tilde{\gamma}\in \mathcal{J}_{i}$. 
 Es sencillo ver que $\tilde{\tau}=\tilde{\gamma}\tilde{\beta}$. Entonces $\tilde{\tau}\in \langle \mathcal{J}_{i}\rangle.$
\end{proof}

\begin{teorema} \label{cajas}
Sea $X$ un conjunto arbitrario infinito, $G$ un grupo que actúa sobre $X$, tales que $|Conj_{G}(X)|<\infty$ y sea  $H_{i}\in Stab_{G}(X)$ tal que $|\mathcal{B}_{[H_{i}]}|=\infty$. Entonces existen  $\tilde{\mu}_{i},\tilde{\nu}_{i}\in End_{G}(\mathcal{B}_{[H_{i}]})$ tales que $\tilde{\mu}_{i}$ es inyectiva de rango $G$-equivariante $|X|$ y $\tilde{\nu}_{i}$ es sobreyectiva de índice de contracción infinito $G$-equivariante igual a $|X|$ y se cumple que:  $$End_{G}(\mathcal{B}_{[H_{i}]})=\langle Aut_{G}(\mathcal{B}_{[H_{i}]}),\tilde{\mu}_{i},\tilde{\nu}_{i}\rangle.$$
\end{teorema}

\begin{proof}
Sean $\mu_{i}:X_{i}\rightarrow X_{i}$ una función inyectiva de defecto $|X_{i}|$ y $\nu_{i}:X_{i}\rightarrow X_{i}$ una función sobreyectiva de índice de contracción infinito $|X_{i}|$, entonces $\tilde{\mu}_{i}$ y $\tilde{\nu}_{i}$ satisfacen con las condiciones deseadas. Basta con mostrar que $\mathcal{J}_{i}\subset \langle Aut_{G}(\mathcal{B}_{[H_{i}]}),\tilde{\mu}_{i},\tilde{\nu}_{i}\rangle$, para que se cumpla la afirmación.\\

Sea $\tau\in \mathcal{J}_{i}$, i.e., $rank_{G}(\tau)=|X|=|X_{i}|$. Se tiene entonces la familia 
$$\Sigma=\{\tilde{C}\subseteq \mathcal{B}_{[H_{i}]}: G\tau(x)=G\tau(y),\ \forall x,y\in \tilde{C}\}$$
o equivalentemente
$$\{\tilde{C}=\tau^{-1}(Gy):\ y\in \mathcal{B}_{[H_{i}]}\}.$$
Notemos que si $x\in \tilde{C}$ se tiene que $Gx \subseteq \tilde{C}$. Sea $\Lambda$ un conjunto de índices tal que $|\Lambda|=|X_{i}|$. Podemos dividir $\Lambda=\Lambda_{1}\sqcup \Lambda_{2}$, tal que $|\Lambda|=|\Lambda_{1}|=|\Lambda_{2}|$. Entonces podemos indexar a todos los elementos de $\Sigma$ por elementos en $\Lambda_{1}$. Es decir que podemos visualizar a cada $\tilde{C}_{\lambda}$, $\lambda\in \Lambda_{1}$, como el conjunto de $G$-órbitas en $\mathcal{B}_{[H_{i}]}$ las cuales sus imágenes recaen en la misma $G$-órbita bajo $\tau$. \\

Ahora, como $k_{G}(\tilde{\nu}_{i})=|X_{i}|$. Tenemos entonces la familia
$$\Delta=\{\tilde{B}\subseteq \mathcal{B}_{[H_{i}]}:\ G\tilde{\nu}_{i}(x)=G\tilde{\nu}_{i}(y),\ \forall x,y\in \tilde{B},\ |\tilde{B}/G|=|X_{i} \},$$
y podemos indexar esta familia con el conjunto $\Lambda$. Es decir que $\tilde{B}_{\lambda}$, $\lambda \in \Lambda$, es el conjunto de $G$-órbitas cuyas imágenes caen en la misma $G$-órbita bajo $\tilde{\nu}_{i}$ y además la cantidad de $G$-órbitas en $\tilde{B}_{\lambda}$ es $|X_{i}|$. \\

Sean $B_{\lambda}=\tilde{B}_{\lambda}\cap X_{i}$ y $C_{\lambda}=\tilde{C}_{\lambda}\cap X_{i}$, los conjuntos de representantes de las órbitas dentro de $\tilde{B}_{\lambda}$ y $\tilde{C}_{\lambda}$ respectivamente.\\

Se tiene que $|B_{\lambda}|=|X_{i}|$, entonces definimos una permutación $\pi$ como sigue. Para cada $\lambda \in \Lambda_{1}$ existe una función inyectiva 
$$\pi_{\lambda}:\mu(C_{\lambda})\rightarrow B_{\lambda}.$$
Sea $$\overline{\pi}=\bigcup_{\lambda\in \Lambda_{1}}{\pi_{\lambda}},$$ entonces es claro que $\overline{\pi}$ es una inyección parcial y tenemos que 
$$|X_{i}\setminus dom(\overline{\pi})|=|X_{i}\setminus im(\mu)|=def(\mu)= |X_{i}|.$$
Notemos también que $$im(\overline{\pi})\subseteq \bigcup_{\lambda\in \Lambda_{1}}{B_{\lambda}},$$
entonces tenemos que 
$$|X_{i}|=|\bigcup_{\lambda\in \Lambda_{2}}{B_{\lambda}}|\leq |X_{i}\setminus im(\overline{\pi})|.$$
Entonces, $\overline{\pi}$ puede ser extendida a una biyección $\pi\in Sym(X_{i})$ y tomamos $\tilde{\pi}\in Aut_{G}(\mathcal{B}_{[H_{i}]})$.\\

Notemos pues que para cada $\lambda \in \Lambda$, se tiene que la imagen de cada bloque $B_{\lambda}$ bajo $\nu_{i}$, $\nu_{i}(B_{\lambda})$, es un conjunto unipuntual y podemos identificarlos con el elemento que contiene. Entoces ahora utilizamos el axioma de elección para seleccionar, para cada $\lambda\in \Lambda_{1}$, un elemento $z_{\lambda}$ en $\nu_{i}^{-1}(\tau(C_{\lambda}))$. Notemos que $\nu_{i}^{-1}(\tau(C_{\lambda}))$ es no vacío, debido a que $\nu_{i}$ es sobreyectiva. Entonces definimos una función $\overline{\sigma}:\{\nu_{i}(B_{\lambda}): \lambda\in \Lambda_{1}\}\rightarrow X$ definida por:
$$\overline{\sigma}(\nu_{i}(B_{\lambda}))=z_{\lambda}.$$
Se tiene que $$\nu_{i}(z_{\lambda})=\tau(C_{\lambda}),\ \lambda\in \Lambda_{1}.$$
Recordemos que $\Lambda=\Lambda_{1}\sqcup \Lambda_{2}$, es decir que si $\lambda\in \Lambda_{2}$, no está en $\Lambda_{1}$ y por tanto $B_{\lambda}$ no pertenece a $dom(\overline{\sigma})$ y por tanto:
$$|X_{i}\setminus dom(\overline{\sigma})|=|X_{i}|.$$
Notemos que $im(\overline{\sigma})$ contiene a lo mucho un elemento de cada uno de los bloques $B_{\lambda}$. Como $|B_{\lambda}|=|X_{i}|$, se sigue que $$|X_{i}\setminus im(\overline{\sigma})|=|X_{i}|,$$
y por tanto $\overline{\sigma}$ puede ser extendida a una biyección $\sigma\in Sym(X_{i})$. Tomamos pues $\tilde{\sigma}\in Aut_{G}(\mathcal{B}_{[H_{i}]})$.\\
Afirmamos que:
$$\tau=\tilde{\nu}_{i}\tilde{\sigma}\tilde{\nu}_{i}\tilde{\pi}\tilde{\mu}_{i}.$$
Sea $z\in \mathcal{B}_{[H_{i}]}$ un elemento arbitrario, entonces existe $g\in G$ y $x\in X_{i}$ tal que $z=g\cdot x$. Como $x\in X_{i}$, existe un único índice $\lambda\in \Lambda$ tal que $x\in C_{\lambda}$, en particular podemos suponer sin pérdida  de generalidad que $\lambda\in \Lambda_{1}$. En consecuencia se tiene que $z=g\cdot x \in \tilde{C}_{\lambda}$ y $\tilde{\mu}_{i}(z)\in \tilde{\mu}_{i}(\tilde{C}_{\lambda})$. Por construcción como $\mu_{i}(x)\in \mu_{i}(C_{\lambda})$, se tiene que $\pi\mu_{i}(x)\in B_{\lambda}$. Entonces $\tilde{\pi}\tilde{\mu}_{i}(g\cdot x)=g\cdot \pi \mu_{i}(x)\in \tilde{B}_{\lambda}$. Se tiene pues que $\tilde{\nu}_{i}\tilde{\pi}\tilde{\mu}_{i}(g\cdot x)=g\cdot \nu_{i}\pi\mu_{i}(x)=g\cdot \nu_{i}(B_{\lambda})$ es un solo punto. Como $\sigma(\nu_{i}(B_{\lambda}))=z_{\lambda}$, tenemos que $\tilde{\sigma}\tilde{\nu}_{i}\tilde{\pi}\tilde{\mu}_{i}(g\cdot x)=g\cdot z_{\lambda}$. Luego $\tilde{\nu}_{i}(g\cdot z_{\lambda})= g\cdot \nu_{i}(z_{\lambda})=g\cdot \tau(C_{\lambda})$. Pero como $\tau(C_{\lambda})=\tau(x)$, entonces se tiene que $$\tilde{\nu}_{i}\tilde{\sigma}\tilde{\nu}_{i}\tilde{\pi}\tilde{\mu}_{i}(z)=\tilde{\nu}_{i}\tilde{\sigma}\tilde{\nu}_{i}\tilde{\pi}\tilde{\mu}_{i}(g\cdot x)=g\cdot \tau(x)= \tau(g\cdot x)=\tau(z).$$

\end{proof}

\begin{corolario}
Sea $G$ un grupo finito, $H$ un subgrupo de $G$ y $X$ un conjunto sobre el cual $G$ actúa, de tal manera que $|\mathcal{B}_{[H]}|=\infty$  . Entonces se tiene que $$rank(End_{G}(\mathcal{B}_{[H]}):Aut_{G}(\mathcal{B}_{[H]}))=2.$$
\end{corolario}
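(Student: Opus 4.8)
El plan es acotar el rank relativo entre $2$ y $2$. Primero establecería la cota superior $rank(End_{G}(\mathcal{B}_{[H]}):Aut_{G}(\mathcal{B}_{[H]}))\leq 2$. Como $G$ es finito posee solo finitos subgrupos, de modo que $|Conj_{G}(X)|<\infty$; junto con la hipótesis $|\mathcal{B}_{[H]}|=\infty$ esto nos coloca exactamente en las hipótesis del Teorema \ref{cajas}, el cual produce dos endomorfismos $\tilde{\mu},\tilde{\nu}\in End_{G}(\mathcal{B}_{[H]})$ —uno inyectivo de rango $G$-equivariante $|X|$, otro sobreyectivo de índice de contracción infinito $G$-equivariante igual a $|X|$— tales que $\langle Aut_{G}(\mathcal{B}_{[H]}),\tilde{\mu},\tilde{\nu}\rangle=End_{G}(\mathcal{B}_{[H]})$. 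Por tanto $\{\tilde{\mu},\tilde{\nu}\}$ genera $End_{G}(\mathcal{B}_{[H]})$ módulo $Aut_{G}(\mathcal{B}_{[H]})$, y la cota $\leq 2$ se sigue de inmediato.

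A continuación probaría la cota inferior correspondiente. El rank relativo no puede ser $0$: la transformación $\tilde{\nu}$ es sobreyectiva pero no inyectiva (colapsa infinitas $G$-órbitas sobre una misma $G$-órbita, pues su índice de contracción infinito es positivo), de modo que $\tilde{\nu}\in End_{G}(\mathcal{B}_{[H]})\setminus Aut_{G}(\mathcal{B}_{[H]})$ y en consecuencia $End_{G}(\mathcal{B}_{[H]})\neq Aut_{G}(\mathcal{B}_{[H]})=\langle Aut_{G}(\mathcal{B}_{[H]})\rangle$. Tampoco puede ser $1$: si existiera un único $\tau\in End_{G}(\mathcal{B}_{[H]})$ con $End_{G}(\mathcal{B}_{[H]})=\langle Aut_{G}(\mathcal{B}_{[H]}),\tau\rangle$, esto contradiría directamente la proposición (inmediatamente anterior al Teorema \ref{cajas}) que afirma que $End_{G}(\mathcal{B}_{[H]})\neq\langle Aut_{G}(\mathcal{B}_{[H]}),\tau\rangle$ para toda transformación $G$-equivariante $\tau$. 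Así el rank relativo es $\geq 2$, y combinando ambas desigualdades se obtiene la igualdad buscada.

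No espero un obstáculo genuino aquí: todo el trabajo pesado —la factorización explícita $\tau=\tilde{\nu}\tilde{\sigma}\tilde{\nu}\tilde{\pi}\tilde{\mu}$ del Teorema \ref{cajas} y el argumento de propagación de inyectividad de la proposición previa— ya está disponible. El único punto que merece cuidado es verificar que las hipótesis del Teorema \ref{cajas} se transfieren al contexto del corolario: hay que notar que la finitud de $G$ fuerza $|Conj_{G}(X)|<\infty$ aun cuando $X$ sea infinito (vía la sobreyección $x\mapsto G_{x}$ de $X$ en $Stab_{G}(X)$), y que la caja $\mathcal{B}_{[H]}$, siendo un subconjunto $G$-invariante de $X$, es a su vez un $G$-conjunto con la acción restringida, de suerte que $End_{G}(\mathcal{B}_{[H]})$ y $Aut_{G}(\mathcal{B}_{[H]})$ tienen pleno sentido y los resultados citados se aplican literalmente.
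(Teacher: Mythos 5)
Tu propuesta es correcta y sigue esencialmente el mismo camino que el texto: la cota superior es consecuencia inmediata del Teorema \ref{cajas} (cuyas hipótesis se cumplen porque $G$ finito tiene finitos subgrupos, luego $Stab_{G}(X)$ y por tanto $Conj_{G}(X)$ son finitos), y la cota inferior se obtiene de la proposición previa que excluye un generador único junto con el hecho de que $End_{G}(\mathcal{B}_{[H]})\neq Aut_{G}(\mathcal{B}_{[H]})$. Solo observa que tu paréntesis final sobre la sobreyección $x\mapsto G_{x}$ es innecesario (y no ayuda cuando $X$ es infinito): basta con que $Stab_{G}(X)$ esté contenido en el conjunto finito de subgrupos de $G$, como tú mismo indicas al inicio.
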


\subsubsection{Generadores de $End_{G}(X)$}

Ahora buscamos acotar el rank relativo del monoide de endomorfismos completo. Para esto debemos extender estas funciones a todo $X$.\\
Debemos recordar que $Conj_{G}(X)$ debe ser finito para que el rank relativo sea finito. Así que nos restringiremos a este caso. \\

Sean $\hat{\mu_{i}}:X\rightarrow X$ y $\hat{\nu_{i}}:X \rightarrow X$, definidas como:

$$\hat{\mu_{i}}(x):= \left\{  \begin{array}{cc}  \tilde{\mu_{i}}(x) & x\in \mathcal{B}_{[H_{i}]}\\
x & \mbox{otro caso,}\end{array} \right.$$
$$\hat{\nu_{i}}(x):= \left\{  \begin{array}{cc}  \tilde{\nu_{i}}(x) & x\in \mathcal{B}_{[H_{i}]}\\
x & \mbox{otro caso.}\end{array} \right.$$

\begin{teorema}
Dado un grupo $G$ y un conjunto $X$, tal que $|Conj_{G}(X)|< \infty$. Sea el conjunto $V:=\{[x_{i}\mapsto y_{i,j}^{n}],[x_{i}\mapsto x'_{i}], \hat{\mu_{i}}, \hat{\nu_{i}},\  0< i,j \leq r\}$, entonces se satisface que:
$$End_{G}(X)=\langle Aut_{G}(X) \cup V \rangle. $$
\end{teorema}

\begin{proof}
Basta probar que para cualquier endomorfismo $\tau$ se cumple que $\tau_{r}\in \langle Aut_{G}(X) \cup V \rangle $, para todo $r$. Del mismo modo que en el Teorema \ref{chido3}. Dado un elemento $z\in X$ y un endomorfismo $\tau\in End_{G}(X)$, denotamos a la imagen de $z$ por $\tau(z)=z^{j}_{m}$, donde $j$ denota el índice de la clase de conjugación a la cual $\tau(z)$ pertenece, $\mathcal{B}_{[H_{j}]}$ y $m$ su $G$-órbita.\\

Para cada $\tau_{i}$ descomponemos  $\mathcal{B}_{[H_{i}]}=T_{i}' \cup T''_{i}$ donde:
$$T'_{i}=\bigcup \left\{ Gx:\ Gx \subseteq \mathcal{B}_{[H_{i}]},\ \tau_{i}(Gx)\in \mathcal{B}_{[H_{j}]},\ i< j \right \},$$

$$T''_{i}=\bigcup \left\{ Gx:\ Gx \subseteq \mathcal{B}_{[H_{i}]},\ \tau_{i}(Gx)\in \mathcal{B}_{[H_{i}]}\right \}.$$
Definimos un par de endomorfismos  $\tau'_{i},\tau''_{i}\in End_{G}(X)$ como:

$$\tau'_{i}(z)= \left\{ \begin{array}{cc}\tau_{i}(z) & \ \mbox{si }\ z\in T'_{i}\\
z & \mbox{ otro caso} \end{array} \right.$$

$$\tau''_{i}(z)= \left\{ \begin{array}{cc}\tau_{i}(z) & \ \mbox{si }\ z\in T''_{i}\\
z & \mbox{ otro caso. } \end{array} \right.$$
Entonces se satisface que $\tau_{i}=\tau''_{i}  \tau'_{i}$.\\

Sea $\{n_{k}\}$ la lista de índices tales que $Gw^{i}_{n_{k}}\subseteq T'_{i}$. A causa de la $G$-equivarianza podemos asumir sin pérdida  de generalidad que $z=w_{n}^{i}$, para cada $n\in \{n_{k}\}$, entonces podemos verificar que:
$$\tau'_{i}= \prod_{n\in\{n_{k}\}}{  (x^{j}_{t}\ z^{j}_{m})(i\mapsto j_{t})(1\ n)^{i}(x^{j}_{t}\ z^{j}_{m})} $$
para algún $t=1,2,..., \beta_{ij}$. Debemos mencionar que $t$ y $j$ dependen de $n$. Entonces $\tau'_{i}\in \langle V \cup Aut_{G}(X) \rangle$.\\

Para probar que  $\tau''_{i}$ pertenece a $\langle V \cup Aut_{G}(X) \rangle$, usamos un procedimiento análogo a la Proposición \ref{cajas}. Si construimos $\pi$ y $\sigma$ en $Sym(X_{i})$ de la misma manera que en la Proposición \ref{cajas}, pero para $\tau''_{i}$, entonces se satisface que:
$$\tau''_{i}=\hat{\nu}_{i}\hat{\sigma}\hat{\nu}_{i}\hat{\pi}\hat{\mu}_{i},$$
y por lo tanto $\tau''_{i}\in \langle V \cup Aut_{G}(X) \rangle$.
\end{proof}

En este punto solo hemos podido encontrar cotas para el rank relativo del monoide de endomorfismos:

\begin{small}
$$\sum_{i=1}^{r}{|U(H_{i})|}-|\kappa_{G}(X)| \leq rank(End_{G}(X):Aut_{G}(X))\leq \sum_{i=1}^{r}{|U(H_{i})|}-|\kappa_{G}(X)|+ |\eta_{G}(X)|.$$
\end{small}
Donde $\eta_{G}(X)=\{i:\ \alpha_{i}= \infty\}$. Los objetivos a corto plazo son probar la igualdad. La conjetura dice que la igualdad se cumple del lado derecho. 

\newpage

\section{Aplicaciones en el monoide de autómatas celulares}

En esta sección definiremos el espacio de autómatas celulares, espacio sobre el cual basamos nuestro trabajo. Analizaremos los aspectos básicos de la estructura como $G$-conjunto y aplicaremos los resultados obtenidos a estos conceptos.

\subsection{El espacio de configuraciones $A^{G}$}

Sea $G$ un grupo y $A$ un conjunto al que nos referiremos como alfabeto. Es de particular interés el espacio de configuraciones, esto es, el conjunto de las funciones $x: G \rightarrow A$. Entonces definimos el espacio de configuraciones como $$A^{G}:=\{\alpha : G \rightarrow A\}.$$
Toda función en $A^{G}$ puede representarse como una túpla de elementos de $A$ como se muestra a continuación,

$$x \in A^{G}, \hspace{0.2in} x=(...x(g_{0}),x(g_{1}),x(g_{2}),...), \hspace{0.2in} g_{i}\in G,$$
notando que $x(g_{i})$ son elementos del alfabeto.\\

En particular podemos ver un elemento de la manera $x=(...a_{0},a_{1},a_{2},...)$ donde es fácil ver que $x(g_{i})=a_{i}$. Esta representación funciona cuando $G$ es un conjunto contable, no lo generalizaremos, solo utilizamos esta notación para generar un mejor entendimiento del concepto de autómata celular.

\subsection{La acción natural de $G$ en $A^{G}$}

La llamada acción natural de $G$ en $A^{G}$ está definida de la siguiente manera:
$$\begin{array}{l}
\alpha \in A^{G},\ g\in G \\
g \cdot \alpha \in A^{G},\  g \cdot \alpha: G \rightarrow A \\
(g \cdot \alpha )(h) = \alpha (g^{-1}h),\ \forall h\in G.
\end{array} $$
Es necesario verificar que la acción natural es precisamente una acción de $G$ en $A^{G}$. Para esto sea $e\in G$ la identidad:\\
\begin{eqnarray*} 
(e\cdot \alpha)(h) &=& \alpha (e^{-1} h) \\
&=&\alpha (eh) \\
&=& \alpha (h) \hspace{0.1in} \forall h \in G,
\end{eqnarray*}
esto implica que $e\cdot \alpha = \alpha$. \\
Luego para $g_{1}, g_{2} \in G$
\begin{eqnarray*} 
(g_{1}g_{2})\cdot \alpha \rightarrow ((g_{1}g_{2})\cdot \alpha)(h) &=& \alpha ((g_{1}g_{2})^{-1} h) \\
&=&\alpha ((g_{2}^{-1}g_{1}^{-1})h) \\
&=& \alpha (g_{2}^{-1}(g_{1}^{-1}h)) \\
&=& (g_{2}\cdot \alpha)(g_{1}^{-1}h) \\
&=& (g_{1}\cdot ( g_{2} \cdot\alpha)) (h) \hspace{0.1in} \forall h \in G,
\end{eqnarray*}
lo cual implica que $(g_{1}g_{2})\cdot \alpha = g_{1}\cdot(g_{2}\cdot \alpha)$.

\subsection{Autómatas celulares}

\begin{definicion}{(Autómata Celular)}\label{automata}

Dado un grupo $G$ y un alfabeto $A$, un autómata celular es una función $\tau : A^{G} \rightarrow A^{G}$ si existe un subconjunto finito $S\subseteq G$ y una función $\mu : A^{S} \rightarrow A$ tal que $$(\tau(\alpha))(g)=\mu \left(\left(\alpha \circ R_{g}\right)|_{S}\right).$$
Donde $R_{g}:G \rightarrow G$ está dada por $R_{g}(h)=gh$.\\
\end{definicion}

La función $\mu$ es llamada regla local y el conjunto $S$ es llamado conjunto memoria.\\

\begin{lema}\label{intersecciondememorias}
Sea $\tau:A^{G} \rightarrow A^{G}$ un autómata celular. Sean $S_{1}$ y $S_{2}$ dos conjuntos memoria para $\tau$. Entonces $S_{1}\cup S_{2}$ es también un conjunto memoria para $\tau$.
\end{lema}

\begin{proposicion}\label{memoriaminimal}
Sea $\tau:A^{G} \rightarrow A^{G}$ un autómata celular. Entonces existe un único conjunto memoria $S_{0}\subset G$ para $\tau$ de cardinalidad mínima. De hecho, si $S$ es un subconjunto finito de $G$, entonces $S$ es un conjunto memoria para $\tau$ si y solo si $S_{0} \subset S$.
\end{proposicion}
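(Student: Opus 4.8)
The plan is to derive the whole statement from a single structural fact: the intersection of two memory sets of $\tau$ is again a memory set. This is the useful content of Lemma~\ref{intersecciondememorias} (note that it is the intersection, not the union, that must be invoked here, since any finite set containing a memory set is automatically a memory set). Indeed, that last observation is the first thing I would record: if $S_0$ is a memory set with local rule $\mu_0\colon A^{S_0}\to A$ and $S_0\subseteq S$ with $S$ finite, then $\mu\colon A^{S}\to A$, $\mu(p):=\mu_0(p|_{S_0})$, realizes $\tau$ with memory set $S$, because $(\tau(\alpha))(g)=\mu_0((\alpha\circ R_g)|_{S_0})=\mu((\alpha\circ R_g)|_{S})$ for all $\alpha\in A^{G}$, $g\in G$. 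So every finite superset of a memory set is a memory set.

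Granting the intersection property, the rest is cardinality bookkeeping. Since $\tau$ is a cellular automaton it has at least one finite memory set, so the set of cardinalities of memory sets is a nonempty subset of $\mathbb{N}$ and admits a least element $n$; fix a memory set $S_0$ with $|S_0|=n$. For uniqueness, let $S_0'$ be another memory set with $|S_0'|=n$: by Lemma~\ref{intersecciondememorias}, $S_0\cap S_0'$ is a memory set, so $|S_0\cap S_0'|\ge n$ by minimality, while $S_0\cap S_0'\subseteq S_0$ gives $|S_0\cap S_0'|\le n$; hence $S_0\cap S_0'=S_0=S_0'$. For the last claim, the implication ``$S_0\subseteq S\Rightarrow S$ a memory set'' is precisely the easy observation of the first paragraph, and conversely, if $S$ is any memory set then $S_0\cap S$ is a memory set, so $n\le|S_0\cap S|\le|S_0|=n$, forcing $S_0\cap S=S_0$, i.e. $S_0\subseteq S$.

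The one step that actually requires work is the intersection property itself; I expect that to be the main obstacle (and it is what Lemma~\ref{intersecciondememorias} is there to supply). The standard argument I would use: given $S_1,S_2$ memory sets with local rules $\mu_1,\mu_2$ and given $x,y\in A^{G}$ with $x|_{S_1\cap S_2}=y|_{S_1\cap S_2}$, define $z\in A^{G}$ by $z|_{S_1}=x|_{S_1}$ and $z|_{G\setminus S_1}=y|_{G\setminus S_1}$; then $z$ agrees with $x$ on $S_1$ and, since $S_2\cap S_1\subseteq S_1$ and $S_2\setminus S_1\subseteq G\setminus S_1$, it agrees with $y$ on $S_2$, whence $\tau(x)(1_G)=\mu_1(z|_{S_1})=\tau(z)(1_G)=\mu_2(z|_{S_2})=\tau(y)(1_G)$. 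Using that $\tau$ commutes with the shift (Definition~\ref{automata}), this shows $(\tau(\alpha))(g)$ depends only on $(\alpha\circ R_g)|_{S_1\cap S_2}$, which produces a well-defined local rule on $A^{S_1\cap S_2}$ and hence makes $S_1\cap S_2$ a memory set. Everything after that is the routine bookkeeping above.
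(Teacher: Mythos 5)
Tu propuesta es correcta y sigue, en esencia, el argumento estándar: la tesis no incluye demostración propia de la Proposición \ref{memoriaminimal}, sino que remite a \cite[Lema 1.5.1, Prop. 1.5.2]{Cecc}, y tu prueba (lema de intersección de conjuntos memoria más el conteo de cardinalidades) coincide con la de esa referencia. Señalas además, con razón, que el hecho que realmente se necesita es que la \emph{intersección} de dos conjuntos memoria es conjunto memoria —el Lema \ref{intersecciondememorias} tal como está enunciado en el texto habla de la unión, que es la dirección trivial (todo superconjunto finito de un conjunto memoria es conjunto memoria)— y tu demostración de esa propiedad de intersección, vía la configuración intermedia $z$ y la $G$-equivarianza, es correcta y es precisamente la del texto citado.
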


Dicho conjunto $S_{0}$ es llamado \emph{conjunto memoria minimal}.\\
Las demostraciones del Lema \ref{intersecciondememorias} y la Proposición \ref{memoriaminimal} pueden ser encontradas en \cite[Lema \ 1.5.1, Prop. 1.5.2]{Cecc} .\\

Definimos entonces el conjunto de autómatas celulares de $G$ en $A$:

$$\text{CA}(G;A):=\{ \tau: A^{G}\rightarrow A^{G}|\  \tau\mbox{ es autómata celular}\}.$$

\subsubsection{Propiedades de los autómatas celulares}
Mencionamos en este apartado las propiedades básicas que los autómatas celulares tienen.

\begin{lema}\label{invariante}
 Todo autómata celular es $G$-equivariante.
\end{lema}

\begin{proof}
Sea $S$ el conjunto memoria de $\tau$ y sea $\mu:A^{S}\rightarrow S$ su regla local. Para cada $g,h \in G$ y para cada $x\in A^{G}$ se tiene que : \begin{eqnarray*}
[g\cdot \tau(\alpha)](h)&=&\tau(\alpha)(g^{-1}h)\\
&=&\mu((\alpha \circ R_{g^{-1}h})|_{S})\\
&=&\mu(((g^{-1}h)^{-1}\cdot \alpha))|_{S})\\
&=&\mu((h^{-1}g)\cdot \alpha)|_{S})\\
&=&\mu((h^{-1}\cdot(g\cdot \alpha))|_{S})\\
&=&\mu(((g \cdot \alpha)\circ R_{h})|_{S})\\
&=&\tau(g\cdot \alpha)(h), \hspace{0.1in} \forall h\in G.
\end{eqnarray*}
Entonces concluimos que $g\cdot \tau(\alpha)=\tau(g\cdot \alpha)$, es decir que es $G$-equivariante.\\
\end{proof}

Esta proposición nos genera una conclusión importante: 
$$\text{CA}(G;A) \leq End_{G}(A^{G}). $$
Es decir que todos los autómatas celulares son endomorfismos del $G$-conjunto $A^{G}$.

\begin{lema}\label{continuo}
Sea $\tau$ un autómata celular de $A$ en $G$. Entonces $\tau$ es una función continua.\\
\end{lema}

\begin{proof}
Sea $x\in A^{G}$ y dado un conjunto $\Omega \subset G$, definimos el conjunto $$V(x,\Omega):=\{y\in A^{G} : x|_{\Omega}=y|_{\Omega}\}.$$ Se puede mostrar que $V(x,\Omega)$ es un entorno para $x$. Sea $\tau$ un autómata celular, $S$ su conjunto memoria y sea $W$ un entorno de $x$. Entonces podemos encontrar un subconjunto finito $\Omega \subset G$ tal que $$V(\tau(x),\Omega)\subset W.$$
Consideremos el conjunto finito $\Omega S:=\{gs: g\in \Omega, s\in S\}$. Si $y\in A^{G}$ coincide con $x$ en $\Omega S$, entonces $\tau(y)$ debe coincidir con $\tau(x)$ en $\Omega$. Así podemos decir que $$\tau(V(x,\Omega S))\subset V(\tau(x),\Omega),$$
esto prueba que la función $\tau$ es continua.
\end{proof}

Debemos notar que $\text{CA}(G;A)$ también actúa naturalmente sobre $A^{G}$, con la acción de evaluación, i.e., 
$$\tau\cdot x= \tau(x),\ \tau \in \text{CA}(G;A),\ x\in A^{G}.$$

\subsection{Autómatas celulares de alfabeto finito}
En esta sección trabajaremos algunas propiedades importantes de los autómatas celulares que fueron punto de inicio para nuestro trabajo; los alfabetos finitos.\\

\subsubsection{Teorema de Curtis-Hedlund}

\begin{teorema}{(Teorema de Curtis-Hedlund)}\label{CurtisHed}

Sea $A$ un conjunto finito y dado $G$ un grupo las siguientes afirmaciones son equivalentes:\\

a) $\tau$ es un autómata celular.\\

b) $\tau$ es $G$-equivariante. $$\tau(g \cdot \alpha )=g \cdot  \tau(\alpha), \hspace{0.2in} \forall \alpha \in A^{G}, \forall g\in G,$$
y además es continuo en la topología prodiscreta de $A^{G}$.\\
\end{teorema}

\begin{proof}
Ambos Lemas \ref{invariante} y \ref{continuo} prueban la primer implicación del teorema.\\

Conversamente, supongamos que $\tau$ es continua y $G$-equivariante. La función $\phi:A^{G}\rightarrow A$ definida por $\phi(x)=\tau(x)(1_{G})$ es continua. Ahora, para cada configuración $x\in A^{G}$ podemos encontrar un subconjunto finito $\Omega_{x} \subset G$ tal que $y\in \Omega_{x}$ si coincide con $x$ en $\Omega_{x}$, esto significa que $y\in V(x,\Omega_{x})$, entonces $\tau(x)(1_{G})=\tau(y)(1_{G})$. Recordemos que los conjuntos $V(x,\Omega_{x})$ son conjuntos abiertos y forman una cubierta abierta para $A^{G}$. Además, al ser $G$ finito, tenemos que $A^{G}$ es compacto. Esto significa que podemos cubrir $A^{G}$ con un número finito de conjuntos $V(x,\Omega_{x})$. Sea $F\subset G$ el conjunto de los $x$ cuyos $V(x,\Omega_{x})$ cubren $A^{G}$. Es importante mencionar que $F$ es un conjunto finito. Sea $S=\bigcup_{x\in F}{V(x,\Omega_{x})}$ y supongamos que $y,z \in A^{G}$ son dos configuraciones que coinciden en $S$. Seleccionemos $x_{0}$ de tal manera que $y\in V(x_{0},\Omega_{x_{0}})$. Esto significa que $y|_{\Omega_{x_{0}}}=x_{0}|_{\Omega_{x_{0}}}$. Como $S\supset \Omega_{x_{0}}$, se tiene entonces que $y|_{\Omega_{x_{0}}}=z|_{\Omega_{x_{0}}}$ y por consiguiente tenemos que $\tau(y)(1_{G})=\tau(x_{0})(1_{G})=\tau(z)(1_{G})$. Entonces existe una función $\mu:A^{S}\rightarrow A$ tal que $\tau(x)(1_{G})=\mu(x|_{S})$ $\forall x\in A^{G}$. Como $\tau$ es $G$-equivariante, por una propiedad básica de los autómatas celulares, vease \cite[Prop. 1.4.6]{Cec}, se tiene que $\tau$ es un autómata celular con conjunto memoria $S$ y regla local $\mu=\phi$.
\end{proof}

\begin{ejemplo}
Sea $G$ un grupo arbitrario infinito y tomemos $A=G$ como el alfabeto. Consideremos la función $\tau:A^{G} \rightarrow A^{G}$ definida por $$\tau(x)(g)=x(g\cdot x(g)) \ \forall x\in A^{G}, \  g\in G.$$ Mostraremos que $\tau$ no es un autómata celular.\\

Para evitar confusiones, recordemos que la acción entre un elemento de $G$ y una configuración $x$ la denotamos $g \cdot x$, mientras que el producto de dos elementos $g,h \in G$  lo denotamos directamente $gh$.

Dados $x\in A^{G}$ y $g,h\in G$ tenemos que $$\begin{array}{cl}
g\cdot( \tau(x))(h)&= \tau(x)(g^{-1}h)\\
&=x(g^{-1}hx(g^{-1}h))\\
&=x(g^{-1}h(g\cdot x)(h))\\
&=(g\cdot x)(h(g\cdot x)(h))\\
&=\tau(g\cdot x)(h).
\end{array}$$

Esto muestra que $g\cdot( \tau(x))=\tau(g\cdot x)$ $\forall x\in A^{G}$ y $\forall g\in G$. Y así $\tau$ es $G$-equivariante. Ahora dada una configuración $x\in A^{G}$ y sea $K\subset G$ un subconjunto finito. Sea $F=K\cup\{kx(k): k\in K\}$, entonces si $y\in V(x,F)$ tenemos que $\forall k\in K$ se cumple que: $$\tau(x)(k)=x(kx(k))=y(kx(k))=y(ky(k))=\tau(y)(k).$$ Esto significa que $\tau(y)\in V(\tau(x),K)$, o dicho de otra forma $\tau(V(x,F))\subset V(\tau(x),K)$. Y por lo tanto la función $\tau$ es continua.\\

Ahora, sea $g_{0}\in G$ un elemento distinto de la identidad, $g\neq 1_{G}$. Definimos para cualquier elemento $g\in G$ las configuraciones $x_{g},y_{g}\in A^{G}$ de la manera siguiente:
$$\begin{array}{ccc}
x_{g}(h)=\left\{\begin{array}{ll}
g, & h=1_{G}\\
g_{0}, & h=g\\
1_{G}, & \mbox{de otro modo}
\end{array}\right.  & \mbox{ y } & y_{g}(h)=\left\{\begin{array}{ll}
g, & h=1_{G}\\
1_{G}, & \mbox{de otro modo.}
\end{array}\right.
\end{array}$$

Notemos que $x_{g}|_{G-\{g\}}=y_{g}|_{G-\{g\}}$. Entonces para cualquier conjunto finito $F \subset G$ podemos elegir un elemento $g\in G-F$, por el hecho de que $G$ es infinito. Así tenemos que $x_{g}|_{F}=y_{g}|_{F}$.
Pero también tenemos las siguientes igualdades $$\tau(x_{g})(1_{G})=x_{g}(x_{g}(1_{G}))=x_{g}(g)=g_{0} $$ $$ \tau(y_{g})(1_{G})=y_{g}(y_{g}(1_{G}))=y_{g}(g)=1_{G} $$
y por consiguiente tenemos que $\tau(x_{g})(1_{G}) \neq \tau(y_{g})(1_{G})$.\\
De la Definición \ref{automata}, como el mapeo restricción $A^{G} \rightarrow A^{S}$ dado por $x \mapsto x|_{S}$ es sobreyectivo, tenemos que si $S$ es un conjunto memoria para el autómata celular $\tau$, entonces habrá una única función $\mu: A^{S} \rightarrow A$ que cumpla que $\tau(x)(g)=\mu ((x \circ R_{g})|_{S})$. Entonces decimos que esta función $\mu$ está completamente definida por el conjunto $S$. Pero de la manera en la que construimos las configuraciones $x_{g}$ y $y_{g}$, no existe un conjunto finito $F\subset G$ tal que para cualquier configuración $x\in A^{G}$ el valor de$\tau(x)(1_{G})$ dependa solo de los valores de $x|_{F}$, es decir del conjunto $F$. O dicho de otra manera, $\mu$ y por consecuencia $\tau$, no están bien definidos. \qed 
\end{ejemplo}

\subsubsection{Autómatas celulares invertibles}

Las unidades de cualquier monoide son un grupo de particular interés en el estudio de dicho monoide, es por eso que en esta sección presentamos un poco del grupo de unidades del monoide de autómatas celulares, el grupo de autómatas celulares invertibles.

\begin{definicion}{(Autómata celular invertible)}

Decimos que un autómata celular $\tau$ es un autómata celular invertible, si es una función biyectiva y su función inversa $\tau^{-1}:A^{G}\rightarrow A^{G}$ es también un autómata celular.\\
\end{definicion}

\begin{teorema}{(Autómatas celulares invertibles)}\label{invertibles}

Sea $G$ un grupo y $A$ un conjunto finito. Entonces todo autómata celular de $A$ en $G$ biyectivo, es un autómata celular invertible.\\
\end{teorema}

\begin{proof}
Sea $\tau$ un autómata celular biyectivo. Entonces existe $\tau^{-1}:A^{G}\rightarrow A^{G}$ tal que $\tau^{-1} \circ \tau=Id$.\\

$$\begin{array}{rl}
Id(g\cdot \alpha)&=g \cdot Id(\alpha)\\
(\tau^{-1}\circ \tau)(g\cdot \alpha)&=g\cdot (\tau^{-1}\circ \tau)(\alpha)\\
\tau^{-1}(\tau(g\cdot \alpha))&=g\cdot (\tau^{-1}(\tau(\alpha)))\\
\tau^{-1}(g\cdot\tau(\alpha))&=g\cdot \tau^{-1}(\beta)\\
\tau^{-1}(g\cdot \beta)&=g\cdot \tau^{-1}(\beta).
\end{array}$$
Por lo tanto $\tau^{-1}$ es $G$-equivariante. \\
Además al ser $\tau$ una función continua con la topología prodiscreta en $A^{G}$, por las propiedades de $A^{G}$ como espacio topológico, tales como ser Haussdorff, su inversa también es una función continua. Entonces, por el Teorema de Curtis-Hedlund, $\tau^{-1}$ es un autómata celular. Por lo tanto $\tau$ es una autómata celular invertible.\\

\end{proof}

Definimos pues el conjunto de autómatas celulares invertibles como $Aut_{G}(X)$. Además si $A$ es un conjunto finito, por el Teorema \ref{invertibles}, se tiene la igualdad:
$$\text{ICA}(G;A):=\text{CA}(G;A) \cap Sym(A^{G}).$$

\subsection{Particiones de $A^{G}$}

Ejemplificamos en esta sección las particiones que se presentan en el espacio de configuraciones a partir de la acción del grupo $G$. 

\begin{ejemplo}
Sea $G=\mathbb{Z}_{4}$ y $A=\{0,1\}$. Dada la configuración $x=(0,0,0,1)\in A^{G}$ tenemos que:
$$Gx=\left\{ \begin{array}{c} (0,0,0,1)\\ (0,0,1,0)\\ (0,1,0,0)\\ (1,0,0,0) \end{array}\right\}.$$

Recordemos que visualizamos una configuración $x:G\rightarrow A$ como
$$x: \left( \begin{array}{cccc} g_{0}&g_{1}&\dots&g_{n}\\
\downarrow & \downarrow & \dots & \downarrow \\ a_{1}&a_{2}&\dots&a_{n}\end{array} \right),$$
en específico tenemos:
$$x: \left( \begin{array}{cccc} 0&1&2&3 \\
\downarrow & \downarrow & \downarrow & \downarrow \\ 0&0&0&1\end{array} \right)$$
y al accionar con un elemento en $G$
$$h\cdot x \longrightarrow h \cdot x(g)= x(h^{-1}g). $$
Por ejemplo $3\in G$, tenemos que 
$$3\cdot x \longrightarrow 3 \cdot x(g)= x(3^{-1}g).$$
Sabemos que en $\mathbb{Z}_{4}$ la operación en la suma y que $3^{-1}=1$ en esta notación, entonces:
$$3\cdot x \longrightarrow 3 \cdot x(g)= x(1+g). $$
Así tenemos que:

$$3\cdot x: \left( \begin{array}{cccc} 0&1&2&3 \\
\downarrow & \downarrow & \downarrow & \downarrow \\ 
x(1+0)&x(1+1)&x(1+2)&x(1+3) \\
\downarrow & \downarrow & \downarrow & \downarrow \\ 
x(1)&x(2)&x(3)&x(0)\\
\downarrow & \downarrow & \downarrow & \downarrow \\ 
0&0&1&0\end{array} \right),$$
entonces visualizamos 
$$3\cdot x: \left( \begin{array}{cccc} 0&1&2&3 \\
\downarrow & \downarrow & \downarrow & \downarrow \\ 0&0&1&0\end{array} \right).$$
Siguiendo estos procesos para accionar con los demás elementos de $G$ y compactando notación, tenemos que:
$$\begin{array}{rl}
x&=(0,0,0,1)\\
3\cdot x&=(0,0,1,0)\\
2\cdot x&=(0,1,0,0)\\
1\cdot x&=(1,0,0,0).
\end{array}
$$

\end{ejemplo}

Ahora, dado un subgrupo $H$ de $G$,  ejemplificamos las particiones en cajas, los conjuntos:
$$\mathcal{B}_{H}:=\{x\in X:\ H=G_{x}\}.$$

\begin{ejemplo}
Sea $G=\mathbb{Z}_{4}$ y $A=\{0,1\}$. Identificamos que los únicos subgrupos de $\mathbb{Z}_{4}$ son:
$$\begin{array}{ccccc}

 \{0\}&\subseteq &\{0,2\} &\subseteq &\mathbb{Z}_{4}  \\
 H_{0}&\subseteq &H_{1} &\subseteq & H_{2}.

\end{array}$$
Notemos que para la configuración $x=(1,0,0,0)\in A^{G}$ el único elemento $g\in \mathbb{Z}_{4}$ que cumple que $g\cdot x=x$ es la identidad, i.e., $g=0$. O dicho de otra manera $G_{x}=\{0\}$. Esto significa que $x\in \mathcal{B}_{H_{0}}$. Por otro lado, para la configuración $y=(1,0,1,0)\in A^{G}$ se tiene que no solo $g=0$ cumple con $g\cdot y=y$, sino también $g=2$. Entonces se tiene que $G_{y}=\{0,2\}=H_{1}$. Es decir, $y\in \mathcal{B}_{H_{1}}$.\\

Se tiene que para las configuraciones constantes, por ejemplo $z=(1,1,1,1)\in A^{G}$, se cumple que cualquier elemento $g\in \mathbb{Z}_{4}$ cumple que $g\cdot z=z$, por lo que $z\in \mathcal{B}_{H_{2}}$.
Entonces tenemos el siguiente diagrama:\\

$$\begin{array}{|l|}

\hline

\begin{array}{lccc}
\mathcal{B}_{H_{2}} &
\begin{array}{|c}
(1,0,0,0)\\ (0,1,0,0)\\ (0,0,1,0)\\ (0,0,0,1)
\end{array} &
\begin{array}{c}
(1,1,0,0)\\ (0,1,1,0)\\ (0,0,1,1)\\ (1,0,0,1)
\end{array} &
\begin{array}{c|}
(1,1,1,0)\\ (1,1,0,1)\\(1,0,1,1)\\ (0,1,1,1)
\end{array} 
\end{array}   \\
\hline

\begin{array}{lcccccccccc}
\mathcal{B}_{H_{1}} & & & & & 
\begin{array}{|c|}
(1,0,1,0)\\ (0,1,0,1)
\end{array}   & & & & & \end{array} \\

\hline

\begin{array}{lccc|ccc|cc}
\mathcal{B}_{H_{0}} & & & & (1,1,1,1)& &(0,0,0,0) & &
\end{array} \\
\hline
\end{array}.$$

\end{ejemplo}

Uno de nuestros objetivos es visualizar otras particiones de $\mathcal{B}_{H}$ a partir también de sus $ICA$-órbitas. Entonces generaremos un par de ejemplos adicionales como el anterior.\\

Para los siguientes ejemplos utilizaremos grupos de 6 elementos, $\mathbb{Z}_{6}$ y $S_{3}$, utilizando como alfabeto nuevamente $A=\{0,1\}$.\\

A continuación presentamos las tablas de Cayley de los grupos $\mathbb{Z}_{6}$ y $S_{3}$.

$$
\begin{array}{cc}

\begin{array}{c||c|c|c|c|c|c|}   
\mathbb{Z}_{6} & 0&1&2&3&4&5 \\
\hline
\hline
0& 0&1&2&3&4&5 \\
\hline
1& 1&2&3&4&5&0 \\
\hline
2& 2&3&4&5&0&1 \\
\hline
3& 3&4&5&0&1&2 \\
\hline
4& 4&5&0&1&2&3 \\
\hline
5& 5&0&1&2&3&4 \\
\hline

\end{array}
&

\begin{array}{c||c|c|c|c|c|c|}   
S_{3} & e&a&b&c&f&g \\
\hline
\hline
e& e&a&b&c&f&g \\
\hline
a& a&e&f&g&b&c \\
\hline
b& b&g&e&f&c&a\\
\hline
c& c&f&g&e&a&b \\
\hline
f& f&c&a&b&g&e \\
\hline
g& g&b&c&a&e&f \\
\hline

\end{array}

\end{array}
$$
Debemos notar la relación entre inversos, pues es importante para nuestra acción:
$$\begin{array}{c|c} 
\mathbb{Z}_{6} & S_{3}\\
\hline
\hline
0^{-1}=0&e^{-1}=e\\
5^{-1}=1&a^{-1}=a \\
4^{-1}=2&b^{-1}=b\\
3^{-1}=3&c^{-1}=c\\
2^{-1}=4&g^{-1}=f\\
1^{-1}=5&f^{-1}=g

\end{array}$$
Entonces para la configuración $x=(0,0,0,1,1,0)$, podemos visualizarla como

$$x=  \left( \begin{array}{cccccc} 
0&1&2&3&4&5\\
\downarrow &\downarrow &\downarrow &\downarrow &\downarrow &\downarrow   \\
0&0&0&1&1&0
\end{array}\right)=
 \left( \begin{array}{cccccc} 
e&a&b&c&f&g\\
\downarrow &\downarrow &\downarrow &\downarrow &\downarrow &\downarrow   \\
0&0&0&1&1&0
\end{array}\right),$$
según el grupo con el que se trabaje.\\

Entonces, al accionar por el primer elemento de cada grupo, $1$ y $a$ respectivamente, tenemos que:

$$1\cdot x= \left( \begin{array}{cccccc} 
0&1&2&3&4&5\\
\downarrow &\downarrow &\downarrow &\downarrow &\downarrow &\downarrow   \\
x(1^{-1}0)&x(1^{-1}1)&x(1^{-1}2)&x(1^{-1}3)&x(1^{-1}4)&x(1^{-1}5)\\
\downarrow &\downarrow &\downarrow &\downarrow &\downarrow &\downarrow   \\
x(5+0)&x(5+1)&x(5+2)&x(5+3)&x(5+4)&x(5+5)\\
\downarrow &\downarrow &\downarrow &\downarrow &\downarrow &\downarrow   \\
x(5)&x(0)&x(1)&x(2)&x(3)&x(4)\\
\downarrow &\downarrow &\downarrow &\downarrow &\downarrow &\downarrow   \\
0&0&0&0&1&1
\end{array}\right)$$

$$a\cdot x= \left( \begin{array}{cccccc} 
e&a&b&c&f&g\\
\downarrow &\downarrow &\downarrow &\downarrow &\downarrow &\downarrow   \\
x(a^{-1}e)&x(a^{-1}a)&x(a^{-1}b)&x(a^{-1}c)&x(a^{-1}f)&x(a^{-1}g)\\
\downarrow &\downarrow &\downarrow &\downarrow &\downarrow &\downarrow   \\
x(ae)&x(aa)&x(ab)&x(ac)&x(af)&x(ag)\\
\downarrow &\downarrow &\downarrow &\downarrow &\downarrow &\downarrow   \\
x(a)&x(e)&x(f)&x(g)&x(b)&x(c)\\
\downarrow &\downarrow &\downarrow &\downarrow &\downarrow &\downarrow   \\
0&0&1&0&0&1
\end{array}\right).$$
Entonces se satisface que
$$\begin{array}{c} 
x=(0,0,0,1,1,0)\\

\begin{array}{cc}
1\cdot x= (0,0,0,0,1,1)
&
a\cdot x=(0,0,1,0,0,1)
\end{array}

\end{array}$$
y tenemos que 

$$\begin{array}{cc}
Gx= \left\{ \begin{array}{c}

0\cdot x=(0,0,0,1,1,0)\\
1\cdot x=(0,0,0,0,1,1)\\
2\cdot x=(1,0,0,0,0,1)\\
3\cdot x=(1,1,0,0,0,0)\\
4\cdot x=(0,1,1,0,0,0)\\
5\cdot x=(0,0,1,1,0,0)\\

\end{array}
\right\}
&
Gx= \left\{ \begin{array}{c}

e\cdot x=(0,0,0,1,1,0)\\
a\cdot x=(0,0,1,0,0,1)\\
b\cdot x=(0,0,0,1,1,0)\\
c\cdot x=(1,1,0,0,0,0)\\
f\cdot x=(0,0,1,0,0,1)\\
g\cdot x=(1,1,0,0,0,0)\\

\end{array}
\right\}.
\end{array}
$$
Entonces concluimos que las órbitas de una configuración $x$ dependen de la acción, o del grupo $G$ con el cual se actúa. Algo que debemos notar es que los conjuntos $\mathcal{B}_{H}$ también se ven modificados a partir del grupo que actúe.\\

Para compactar el espacio, utilizaremos una notación especial, solo en esta parte, para referirnos a las configuraciones. Dada una configuración, por ejemplo $x=(0,1,1,0,1,0)$, visualizaremos la configuración como el número decimal el cual la combinación $011010$ representa en código binario, es decir:
$$x=(0,1,1,0,1,0)=11010=26.$$
En general tenemos 
$$\begin{array}{c}  
(0,0,0,0,0,0)=0\\
(0,0,0,0,0,1)=1\\
(0,0,0,0,1,0)=2\\
\vdots \\
(1,1,1,1,1,0)=62\\
(1,1,1,1,1,1)=63.

\end{array}$$

\begin{ejemplo}
Sea $G=\mathbb{Z}_{6}$ y $G'=S_{3}$, notaremos que los $\mathcal{B}_{H}$ no son iguales.\\
Es importante ver que los subgrupos de $\mathbb{Z}_{6}$ únicamente son: 
$$\begin{array}{c} 
\{0,3\}\cong \mathbb{Z}_{2}\\
\{0,2,4\}\cong \mathbb{Z}_{3} \end{array}$$
a los cuales denotaremos $H_{2}$ y $H_{3}$ respectivamente. Mientras que en $S_{3}$ tenemos que sus subgrupos son:
$$\begin{array}{c}   
\{e,a\} \cong \{e,b\} \cong \{e,c\} \cong \mathbb{Z}_{2}\\
\{e,f,g\} \cong \mathbb{Z}_{3},
\end{array}$$  
a los cuales denotaremos $H_{a}$, $H_{b}$, $H_{c}$ y $H_{3}$ respectivamente. Para identificar entre los dos conjuntos $\mathcal{B}_{H_{3}}$, primaremos a los conjuntos correspondientes cuando actúa $S_{3}$. Entonces tenemos que:

\begin{scriptsize}
$$
\begin{array}{ll}
\begin{array}{|lc|}

\hline

\mathcal{B}_{\mathbb{Z}_{6}} 
&
\begin{array}{|cc|}
\hline
0 & 63 \\
\hline

\end{array} \\

\hline

\mathcal{B}_{H_{3}} 
&
\begin{array}{|c|}
\hline
21  \\
42 \\ 
\hline

\end{array} \\

\hline

\mathcal{B}_{H_{2}} 
&

\begin{array}{|cc|}
\hline
9 & 27 \\
18 & 45 \\
36 & 54 \\
\hline

\end{array} \\
\hline

\mathcal{B}_{H_{0}} 
&
\begin{array}{|ccccccccc|}
\hline
1&3&5&7&11&13&15&23&31\\
2&6&10&14&22&26&30&46&62\\
4&12&20&28&44&52&60&29&61\\
8&24&40&56&25&41&57&58&59\\
16&48&17&49&50&19&51&53&55\\
32&33&34&35&37&38&39&43&47\\
\hline

\end{array} \\

\hline
\end{array}

&

\begin{array}{|lc|}

\hline

\mathcal{B}'_{S_{3}} 
&
\begin{array}{|c|c|}
\hline
0 & 63 \\
\hline

\end{array} \\
 
\hline

\mathcal{B}'_{H_{3}} 
&
\begin{array}{|c|}
\hline
28\\
35\\

\hline

\end{array} \\

\hline

\mathcal{B}'_{H_{a}} 
&
\begin{array}{|cccccc|}
\hline
5&48&10&15&58&53\\
\hline

\end{array} \\

\hline
\mathcal{B}'_{H_{b}} 
&
\begin{array}{|cccccc|}
\hline
40&6&17&57&23&46\\
\hline

\end{array} \\

\hline
\mathcal{B}'_{H_{c}} 
&
\begin{array}{|cccccc|}
\hline
18&9&36&54&45&27\\
\hline

\end{array} \\

\hline

\mathcal{B}'_{H_{0}} 
&
\begin{array}{|ccccccc|}
\hline
1&3&7&11&21&29&31\\
4&12&13&14&37&39&47\\
16&20&22&15&19&51&55\\
8&24&56&25&42&43&59\\
2&34&50&38&26&30&62\\
32&33&41&49&44&60&61\\

\hline

\end{array} \\

\hline
\end{array}

\end{array}
$$
\end{scriptsize}
\end{ejemplo}

A partir de la relación de equivalencia que definimos sobre los subgrupos de un grupo $G$ se definieron también los siguientes conjuntos a los cuales denominamos caja
s:

$$\mathcal{B}_{[H]}:=\{x\in A^{G}:\ [G_{x}]=[H]\}.$$

\begin{lema}
Sea $G$ un grupo y $H$ un subgrupo de $G$. Las $G$-órbitas de elementos en $\mathcal{B}_{[H]}$ forman una partición (uniforme) para $\mathcal{B}_{[H]}$. 
\end{lema}

Esto es consecuencia de la acción y la $G$-equivarianza.

\begin{ejemplo}
Sea nuevamente $A=\{0,1\}$ y tomamos nuevamente los grupos $\mathbb{Z}_{6}$ y $S_{3}$. Recordemos que en $\mathbb{Z}_{6}$ tenemos que las clases de conjugación de subgrupos son:
$$[H_{2}]=\left\{\{0,3\}\right\}  $$
$$[H_{3}]=\left\{\{0,2,4\}\right\}  $$
y para $S_{3}$ tenemos:
$$[H_{2}]=\left\{\{e,a\},\{e,b\},\{e,c\}\right\}  $$
$$[H_{3}]=\left\{\{e,f,g\}\right\},  $$
por lo que obtenemos el siguiente diagrama:
\begin{scriptsize}
$$
\begin{array}{ll}
\begin{array}{|lc|}

\hline

\mathcal{B}_{[\mathbb{Z}_{6}]} 
&
\begin{array}{|c|c|}
\hline
0 & 63 \\
\hline

\end{array} \\
 
\hline

\mathcal{B}_{[H_{3}]} 
&
\begin{array}{|c|}
\hline
21  \\
42 \\ 
\hline

\end{array} \\

\hline

\mathcal{B}_{[H_{2}]} 
&

\begin{array}{|c|c|}
\hline
9 & 27 \\
18 & 45 \\
36 & 54 \\
\hline

\end{array} \\

\hline

\mathcal{B}_{[H_{0}]} 
&
\begin{array}{|c|c|c|c|c|c|c|c|c|}
\hline
1&3&5&7&11&13&15&23&31\\
2&6&10&14&22&26&30&46&62\\
4&12&20&28&44&52&60&29&61\\
8&24&40&56&25&41&57&58&59\\
16&48&17&49&50&19&51&53&55\\
32&33&34&35&37&38&39&43&47\\
\hline

\end{array} \\

\hline
\end{array}

&

\begin{array}{|lc|}

\hline

\mathcal{B}'_{[S_{3}]} 
&
\begin{array}{|c|c|}
\hline
0 & 63 \\
\hline

\end{array} \\
 
\hline

\mathcal{B}'_{[H_{3}]} 
&
\begin{array}{|c|}
\hline
28\\
35\\

\hline

\end{array} \\

\hline

\mathcal{B}'_{[H_{2}]} 
&
\begin{array}{|c|c|c|c|c|c|}
\hline
5&6&10&15&23&27\\
18&9&17&54&45&46\\
40&48&36&57&58&53\\
\hline

\end{array} \\

\hline

\mathcal{B}'_{[H_{0}]} 
&
\begin{array}{|c|c|c|c|c|c|c|}
\hline
1&3&7&11&21&29&31\\
4&12&13&14&37&39&47\\
16&20&22&15&19&51&55\\
8&24&56&25&42&43&59\\
2&34&50&38&26&30&62\\
32&33&41&49&44&60&61\\

\hline

\end{array} \\

\hline
\end{array}
\end{array}
$$
\end{scriptsize}
Notemos que cada recuadro dentro de $\mathcal{B}_{[H]}$ es una $G$-órbita. 
\end{ejemplo}

Dada la acción de evaluación de $\text{ICA}(G:A)$ sobre $A^{G}$, tenemos pues también las órbitas generadas por esta acción. 
$$ICA(x):=\{y\in A^{G}:\ \exists \sigma\in \text{ICA}(G;A), \ y=\sigma \cdot x= \sigma(x)\}.$$

Sea $G$ un grupo y $A$ un conjunto, dado un elemento $x\in A^{G}$, como consecuencia del Corolario \ref{qw} y del Teorema \ref{qwe} , se tiene que $\mathcal{B}_{G_{x}}=ICA(x)$ y además el número de $Aut_{G}(X)$-órbitas dentro de una $G$-órbita está dado por $[G:N_{G}(G_{x})]$.

Para un subgrupo $H$ de un grupo finito $G$ tenemos las siguientes cantidades:
$$\alpha_{[H]}(G;A)=|\{Gx,\ x\in [H]\}|.$$
Dicho con palabras, es el número de $G$-órbitas dentro de $[H]$. Si tenemos un subgrupo indexado $H_{i}$ simplemente denotamos $\alpha_{i}$.\\

Tenemos un resultado particular de la acción de $G$ sobre $A^{G}$ sobresaliente:

\begin{teorema}
Sea $G$ un grupo finito y $A$ un conjunto finito de tamaño $q\geq 2$. Sea $H\leq G$ tal que $[G:H]=2$. Entonces, $\alpha_{[H]}(G;A)=1$ si y solo si $q=2$. 
\end{teorema}

\begin{proof}
Como $H\leq G$ tiene índice 2, es normal. Fijamos $s\in G\setminus H$. Definimos $x\in A^{G}$ como
$$x(g)=\left\{\begin{array}{cl} 
0 & g\in H\\ 1 & g\in sH=Hs.
\end{array}   \right. .$$
Claramente $G_{x}=H$ y $x\in \mathcal{B}_{[H]}$. Supongamos que $A=\{0,1\}$. Sea $y\in \mathcal{B}_{[H]}$. Como $H$ es normal, $[H]=\{H\}$, entonces $G_{y}=H$. Para cualquier $h\in H$, 
$$y(h)=h^{-1}\cdot y(e)\ \mbox{ y }\ y(hs)=h^{-1}\cdot y(s)=y(s),$$
entonces $y$ es una configuración constante en $H$ y $sH=Hs$. Entonces tenemos que $y=x$ o $$y(g)=\left\{ \begin{array}{cc}1& g\in H \\ 0 & g\in sH=Hs.   \end{array} \right. $$

En este último caso, $s\cdot y=x$ y $y\in Gx$. Esto muestra que existe una única $G$-órbita contenida en $\mathcal{B}_{[H]}$, entonces $\alpha_{[H]}(G;A)=1$.\\
Si $|A|\geq 3$, podemos usar un argumento similar como en la construcción anterior, excepto que ahora $y\in \mathcal{B}_{[H]}$ debe satisfacer que $y(g)\in A \setminus \{0,1\}$ para todo $g\in H$, entonces $y \notin Gx$ y $\alpha_{[H]}(G;A)\geq 2$.
\end{proof}

Dadas todas sus características tenemos que para todo grupo de Dedekind, aquel en el que todos sus subgrupos son normales, si el alfabeto tiene al menos dos elementos, entonces las cajas $\mathcal{B}_{[H]}$, para todo subgrupo no trivial $H$ de $G$,  tienen únicamente una $G$-órbita.

\subsection{Resultados aplicados}

En esta sección aplicamos el resto de los resultados al monoide de autómatas celulares. \\

Un caso muy específico dentro de nuestros trabajo son los grupos de Dedekind, aquellos grupos en los que todos sus subgrupos propios son normales. Nuestros resultados fueron demostrados para estos grupos con un alfabeto finito en \cite{finite monoids}, en esta sección mencionamos estos mismos resultados, justificados por los resultados principales obtenidos. \\

\begin{ejemplo}
Sea $G = \mathbb{Z}_2 \times \mathbb{Z}_2$ el 4-grupo de Klein y $A= \{ 0, 1\}$. Como $G$ es abeliano, $[H] = \{ H\}$, para todo $H \leq G$. Los subgrupos de $G$ son 
\[G, \ H_1 = \langle (0,1) \rangle, \ H_2 = \langle (1,0) \rangle, \ H_3 = \langle (1,1) \rangle, \ \text{y} \ H_0 = \langle (0,0) \rangle, \]
donde $\langle (a,b) \rangle$ denota el subgrupo generado por $(a,b) \in G$. Tenemos entonces el grafo $(C_{G},\varepsilon_{G})$

$$\xymatrix{
 &[G]\ar@(ul,ur)[]\ar[ld]\ar[d]\ar[rd]\ar@/_{6mm}/[dd]&  \\
 [H_{1}]\ar@(ul,dl)[]\ar[rd]& [H_{2}]\ar@(ur,dr)[]\ar[d]& [H_{3}]\ar@(ur,dr)[]\ar[ld]\\
&[H_{0}]\ar@(dr,dl)[]
}$$

\hspace{0.3in}

Debemos notar que cualquier configuración $x : G \to A$ puede ser escrita como una matriz de $2 \times 2$  $(x_{i,j})$ donde $x_{i,j} := x(i-1,j-1)$, $i,j \in \{1,2 \}$. Entonces tenemos que las $G$-órbitas en $A^G$ son
\begin{align*}
& O_1  := \left\{  \left( \begin{tabular}{cc}
$0$ \ & \ $0$ \\
 $0$ \  & \ $0$
 \end{tabular} \right) \right\}, \  \ O_2 := \left\{ \left( \begin{tabular}{cc}
$1$ \ & \ $1$ \\
 $1$ \  & \ $1$
 \end{tabular} \right) \right\}, \\ & \ O_3  :=  \left\{  \left( \begin{tabular}{cc}
$1$ \ & \ $0$ \\
 $1$ \  & \ $0$
 \end{tabular} \right), \left( \begin{tabular}{cc}
$0$ \ & \ $1$ \\
 $0$ \  & \ $1$
 \end{tabular} \right) \right\}, \\[.5em]
 & O_4  :=  \left\{ \left( \begin{tabular}{cc}
$1$ \ & \ $1$ \\
 $0$ \  & \ $0$
 \end{tabular} \right), \left( \begin{tabular}{cc}
$0$ \ & \ $0$ \\
 $1$ \  & \ $1$
 \end{tabular} \right)  \right\}, \\ &  \ O_5  :=  \left\{ \left( \begin{tabular}{cc}
$1$ \ & \ $0$ \\
 $0$ \  & \ $1$
 \end{tabular} \right), \left( \begin{tabular}{cc}
$0$ \ & \ $1$ \\
 $1$ \  & \ $0$
 \end{tabular} \right)   \right\}, \\[.5em]
& O_6  :=  \left\{ \left( \begin{tabular}{cc}
$1$ \ & \ $0$ \\
 $0$ \  & \ $0$
 \end{tabular} \right), \left( \begin{tabular}{cc}
$0$ \ & \ $1$ \\
 $0$ \  & \ $0$
 \end{tabular} \right), \left( \begin{tabular}{cc}
$0$ \ & \ $0$ \\
 $0$ \  & \ $1$
 \end{tabular} \right), \left( \begin{tabular}{cc}
$0$ \ & \ $0$ \\
 $1$ \  & \ $0$
 \end{tabular} \right) \right\},  \\[.5em] 
& O_7  :=  \left\{ \left( \begin{tabular}{cc}
$0$ \ & \ $1$ \\
 $1$ \  & \ $1$
 \end{tabular} \right), \left( \begin{tabular}{cc}
$1$ \ & \ $0$ \\
 $1$ \  & \ $1$
 \end{tabular} \right), \left( \begin{tabular}{cc}
$1$ \ & \ $1$ \\
 $1$ \  & \ $0$
 \end{tabular} \right), \left( \begin{tabular}{cc}
$1$ \ & \ $1$ \\
 $0$ \  & \ $1$
 \end{tabular} \right) \right\}.
\end{align*}
Entonces,
\begin{align*}
& B_{[G]}:=O_1 \cup O_2, \ B_{[H_1]}:=O_4, \ B_{[H_2]}:=O_3, \ B_{[H_3]}:=O_5, \ B_{[H_0]}:=O_6 \cup O_7; \\
& \alpha_{[H_i]}(G;A) = 2, \text { para } i \in \{1,5 \}, \text { y } \alpha_{[H_i]}(G;A) = 1, \text{ para } i \in \{ 2,3,4 \}.
\end{align*}
Entonces tenemos el siguiente el diagrama para todas las  $\mathcal{B}_{[H_{i}]}$.
\begin{center}

\tikzset{every picture/.style={line width=0.75pt}} 

\begin{tikzpicture}[x=0.75pt,y=0.75pt,yscale=-1,xscale=1]

\draw   (396,33) -- (488.5,33) -- (488.5,83.2) -- (396,83.2) -- cycle ;
\draw   (351,110) -- (395.5,110) -- (395.5,149.2) -- (351,149.2) -- cycle ;
\draw   (395,176) -- (487.5,176) -- (487.5,226.2) -- (395,226.2) -- cycle ;
\draw   (425,110) -- (469.5,110) -- (469.5,149.2) -- (425,149.2) -- cycle ;
\draw   (488,109) -- (532.5,109) -- (532.5,148.2) -- (488,148.2) -- cycle ;
\draw    (396,83.2) -- (395.54,108) ;
\draw [shift={(395.5,110)}, rotate = 271.07] [color={rgb, 255:red, 0; green, 0; blue, 0 }  ][line width=0.75]    (10.93,-3.29) .. controls (6.95,-1.4) and (3.31,-0.3) .. (0,0) .. controls (3.31,0.3) and (6.95,1.4) .. (10.93,3.29)   ;
\draw    (488.5,83.2) -- (488.04,108) ;
\draw [shift={(488,110)}, rotate = 271.07] [color={rgb, 255:red, 0; green, 0; blue, 0 }  ][line width=0.75]    (10.93,-3.29) .. controls (6.95,-1.4) and (3.31,-0.3) .. (0,0) .. controls (3.31,0.3) and (6.95,1.4) .. (10.93,3.29)   ;
\draw    (454,83.2) -- (453.54,108) ;
\draw [shift={(453.5,110)}, rotate = 271.07] [color={rgb, 255:red, 0; green, 0; blue, 0 }  ][line width=0.75]    (10.93,-3.29) .. controls (6.95,-1.4) and (3.31,-0.3) .. (0,0) .. controls (3.31,0.3) and (6.95,1.4) .. (10.93,3.29)   ;
\draw    (395.5,149.2) -- (395.04,174) ;
\draw [shift={(395,176)}, rotate = 271.07] [color={rgb, 255:red, 0; green, 0; blue, 0 }  ][line width=0.75]    (10.93,-3.29) .. controls (6.95,-1.4) and (3.31,-0.3) .. (0,0) .. controls (3.31,0.3) and (6.95,1.4) .. (10.93,3.29)   ;
\draw    (488,148.2) -- (487.54,173) ;
\draw [shift={(487.5,175)}, rotate = 271.07] [color={rgb, 255:red, 0; green, 0; blue, 0 }  ][line width=0.75]    (10.93,-3.29) .. controls (6.95,-1.4) and (3.31,-0.3) .. (0,0) .. controls (3.31,0.3) and (6.95,1.4) .. (10.93,3.29)   ;
\draw    (456,149.2) -- (455.54,174) ;
\draw [shift={(455.5,176)}, rotate = 271.07] [color={rgb, 255:red, 0; green, 0; blue, 0 }  ][line width=0.75]    (10.93,-3.29) .. controls (6.95,-1.4) and (3.31,-0.3) .. (0,0) .. controls (3.31,0.3) and (6.95,1.4) .. (10.93,3.29)   ;
\draw    (426.5,84.2) .. controls (402.86,95.03) and (397.65,152.44) .. (433.82,174.23) ;
\draw [shift={(435.5,175.2)}, rotate = 208.93] [color={rgb, 255:red, 0; green, 0; blue, 0 }  ][line width=0.75]    (10.93,-3.29) .. controls (6.95,-1.4) and (3.31,-0.3) .. (0,0) .. controls (3.31,0.3) and (6.95,1.4) .. (10.93,3.29)   ;

\draw (410,44) node [anchor=north west][inner sep=0.75pt]    {$O_{1}$};
\draw (449,45) node [anchor=north west][inner sep=0.75pt]    {$O_{2}$};
\draw (410,189) node [anchor=north west][inner sep=0.75pt]    {$O_{6}$};
\draw (449,190) node [anchor=north west][inner sep=0.75pt]    {$O_{7}$};
\draw (361,116) node [anchor=north west][inner sep=0.75pt]    {$O_{4}$};
\draw (435,116) node [anchor=north west][inner sep=0.75pt]    {$O_{3}$};
\draw (496,115) node [anchor=north west][inner sep=0.75pt]    {$O_{5}$};

\end{tikzpicture}

\end{center}

\end{ejemplo}

Este ejemplo no tiene mayor problema que el contar el número de aristas en el grafo $(C_{G},\varepsilon_{G})$ y los lazos por cada caja que tenga mas de una $G$-órbita dentro de su caja $\mathcal{B}_{[H_{i}]}$. Esto no sucede siempre, pues debemos recordar que las $ICA$-órbitas generan una partición diferente para las cajas.

\begin{ejemplo}
Sean $G=S_{3}$ y $A=\{0,1\}$. Entonces, con base en sus clases de conjugación de subgrupos, tenemos el grafo dirigido:
\begin{center}
\tikzset{every picture/.style={line width=0.75pt}} 

\tikzset{every picture/.style={line width=0.75pt}} 

\begin{tikzpicture}[x=0.75pt,y=0.75pt,yscale=-1,xscale=1]

\draw   (370,29) -- (440,29) -- (440,69) -- (370,69) -- cycle ;
\draw   (320,106) -- (390,106) -- (390,146) -- (320,146) -- cycle ;
\draw   (422,106) -- (538.5,106) -- (538.5,146) -- (422,146) -- cycle ;
\draw   (374,182) -- (444,182) -- (444,222) -- (374,222) -- cycle ;
\draw    (369.5,105.2) -- (369.97,71) ;
\draw [shift={(370,69)}, rotate = 450.79] [color={rgb, 255:red, 0; green, 0; blue, 0 }  ][line width=0.75]    (10.93,-3.29) .. controls (6.95,-1.4) and (3.31,-0.3) .. (0,0) .. controls (3.31,0.3) and (6.95,1.4) .. (10.93,3.29)   ;
\draw    (440.5,105.2) -- (440.03,71) ;
\draw [shift={(440,69)}, rotate = 449.21] [color={rgb, 255:red, 0; green, 0; blue, 0 }  ][line width=0.75]    (10.93,-3.29) .. controls (6.95,-1.4) and (3.31,-0.3) .. (0,0) .. controls (3.31,0.3) and (6.95,1.4) .. (10.93,3.29)   ;
\draw    (406.5,182.2) -- (406.5,71.2) ;
\draw [shift={(406.5,69.2)}, rotate = 450] [color={rgb, 255:red, 0; green, 0; blue, 0 }  ][line width=0.75]    (10.93,-3.29) .. controls (6.95,-1.4) and (3.31,-0.3) .. (0,0) .. controls (3.31,0.3) and (6.95,1.4) .. (10.93,3.29)   ;
\draw    (390.5,181.2) -- (390.03,148) ;
\draw [shift={(390,146)}, rotate = 449.19] [color={rgb, 255:red, 0; green, 0; blue, 0 }  ][line width=0.75]    (10.93,-3.29) .. controls (6.95,-1.4) and (3.31,-0.3) .. (0,0) .. controls (3.31,0.3) and (6.95,1.4) .. (10.93,3.29)   ;
\draw    (422.5,181.2) -- (422.03,148) ;
\draw [shift={(422,146)}, rotate = 449.19] [color={rgb, 255:red, 0; green, 0; blue, 0 }  ][line width=0.75]    (10.93,-3.29) .. controls (6.95,-1.4) and (3.31,-0.3) .. (0,0) .. controls (3.31,0.3) and (6.95,1.4) .. (10.93,3.29)   ;

\draw (396,38) node [anchor=north west][inner sep=0.75pt]    {$S_{3}$};
\draw (342,116) node [anchor=north west][inner sep=0.75pt]    {$H_{f}$};
\draw (432,114) node [anchor=north west][inner sep=0.75pt]    {$H_{a}$};
\draw (395,189) node [anchor=north west][inner sep=0.75pt]    {$H_{e}$};
\draw (466,114) node [anchor=north west][inner sep=0.75pt]    {$H_{b}$};
\draw (500,114) node [anchor=north west][inner sep=0.75pt]    {$H_{c}$};
\draw (392,6) node [anchor=north west][inner sep=0.75pt]  [font=\footnotesize]  {$[ G]$};
\draw (317,86) node [anchor=north west][inner sep=0.75pt]  [font=\footnotesize]  {$[ H_{3}]$};
\draw (476,86) node [anchor=north west][inner sep=0.75pt]  [font=\footnotesize]  {$[ H_{2}]$};
\draw (374,222) node [anchor=north west][inner sep=0.75pt]  [font=\footnotesize]  {$[ H_{1}]$};

\end{tikzpicture}

\end{center}
Si tomamos al primer grupo en la clase como representante notamos que:

$$\begin{array}{l}
N_{1}=  G \\
N_{2}= H_{a}  \\
N_{3}=  G  \\ 
N_{4}=  G.  
\end{array}$$
Notamos que para $H_{e}$, todos los subgrupos de $S_{3}$ lo contienen, por lo tanto hay que calcular todas conjugaciones de todos ellos por los elementos de $S_{3}$ (su normalizador) y agruparlas en $N_{i}$-clases de conjugación:

$$\begin{array}{|ccc|}
\hline
e^{-1}H_{e}e=H_{e} & a^{-1}H_{e}a=H_{e} & b^{-1}H_{e}b=H_{e} \\ c^{-1}H_{e}c=H_{e} & f^{-1}H_{e}f=H_{e} & g^{-1}H_{e}g=H_{e} \\
\hline
e^{-1}H_{a}e=H_{a} & a^{-1}H_{a}a=H_{a} & b^{-1}H_{a}b=H_{c} \\ c^{-1}H_{a}c=H_{b} & f^{-1}H_{a}f=H_{c} & g^{-1}H_{a}g=H_{b} \\
e^{-1}H_{b}e=H_{b} & a^{-1}H_{b}a=H_{c} & b^{-1}H_{b}b=H_{b} \\ c^{-1}H_{b}c=H_{a} & f^{-1}H_{b}f=H_{a} & g^{-1}H_{b}g=H_{c} \\
e^{-1}H_{c}e=H_{c} & a^{-1}H_{c}a= H_{b} & b^{-1}H_{c}b=H_{a} \\ c^{-1}H_{c}c=H_{c} & f^{-1}H_{c}f=H_{b} & g^{-1}H_{c}g=H_{a} \\
\hline
e^{-1}H_{f}e=H_{f} & a^{-1}H_{f}a=H_{f} & b^{-1}H_{f}b=H_{f} \\ c^{-1}H_{f}c=H_{f} & f^{-1}H_{f}f=H_{f} & g^{-1}H_{f}g=H_{f} \\
\hline

\end{array}$$
Entonces el conjunto $U(H_{1})$ sería:

$$ U(H_{1})=\left\{ \{H_{e}\}, \{H_{a},H_{b},H_{c}\},\{H_{f}\},\{S_{3}\}\right\}. $$
Si repetimos estos con los otros subgrupos y sus respectivos normalizadores notamos que se tienen los siguientes conjuntos:
$$\begin{array}{l}
U(H_{1})=\{ \{H_{e}\}, \{H_{a},H_{b},H_{c}\},\{H_{f}\},\{S_{3}\}\}\\ 
U(H_{2})=\{\{H_{a}\}, \{S_{3}\}\}\\
U(H_{3})=\{\{H_{f}\},\{S_{3}\}\}\\
U(H_{4})=\{\{S_{3}\}\}.   \end{array}$$
Como $H_{f}$ es el único subgrupo de $S_{3}$ de índice 2, podemos calcular el rank relativo:
$$\begin{array}{rl} rank(End_{G}(A^{G}):Aut_{G}(A^{G}))&=\sum_{i=1}^{4}{U(H_{i})} - |\kappa_{S_{3}}(A^{G})|\\
&= (4+2+2+1)-1 \\
&=8. \end{array} $$
\end{ejemplo}

\newpage

\section{Conclusiones}
Esta tesis comienza tratando de responder una pregunta muy específica dentro de la teoría de monoides: ¿cuál es el rank relativo del monoide de autómatas celulares cuando trabajamos con un grupo arbitrario? Se comenzó a trabajar bajo una conjetura, un primer estimado de cual podría ser esta cantidad. En términos de objetivos es satisfactorio notar que se logró generar un resultado que es una generalización de la conjetura original.  Se logró visualizar que la conjetura original era errónea, pero se logró crear una nueva idea la cual fue completamente verificada para un caso en específico: los $G$-conjuntos finitos. Y más aún, generamos una conjetura y antecedentes para continuar esta investigación sobre el caso de los $G$-conjuntos infinitos. Recordemos que no solo generamos una conjetura general para el resultado, si no que se plantean también algunas condiciones necesarias para que el rank relativo sea finito en estos casos. \\

Este proyecto nos permitió conocer una pequeña gama de las líneas de investigación que incluyen a los $G$-conjuntos, tales como el anillo de Burnside, los grupos de transformaciones, las aplicaciones de las acciones de grupos en diversas ramas de la ciencia como la física, la biología, entre otras. En este punto de nuestro trabajo nos enfocamos en los endomorfismos de $G$-conjuntos, trabajándolos como un monoide, abordándolo desde un enfoque meramente algebraico.  \\

Notamos que un punto base en el cual continuar nuestro trabajo es el cardinal del $G$-conjunto sobre el que se trabajó. En este trabajo abordamos el caso en el que el $G$-conjunto es finito. Nuestros resultados de estructura y rank relativo han sido probados de manera concluyente en este proyecto. Si bien se hacen menciones y se toman consideraciones para el caso en que el $G$-conjunto sea infinito, en este trabajo no se logra determinar explícitamente el rank relativo para el monoide de endomorfismos de un $G$-conjunto infinito. La continuación directa de este trabajo es determinar con precisión el rank relativo para este caso, así como determinar las diferencias o similitudes estructurales del monoide de endomorfismos con otros objetos conocidos de la misma categoría. \\

Cabe mencionar nuevamente que este trabajo es la cúspide que marca la finalización de este programa de doctorado; el doctorado en ciencias físico-matemáticas con orientación en matemáticas. Entonces se considera necesario mencionar que a lo largo de todo este proceso de investigación se conocieron nuevas ramas, nuevas lineas de investigación, nuevos colaboradores, entre otras situaciones, que tal vez no empataron directamente con los resultados plasmados en el desarrollo de esta tesis, sin embargo lograron generar un crecimiento personal en su servidor como profesionista, como docente, como físico-matemático y como investigador. El compromiso principal es aprovechar todas estas herramientas y saberes obtenidos y desarrollados para continuar siempre trabajando a favor de esta ciencia tan bonita como son las matemáticas y por su puesto también en favor del bien y desarrollo de la humanidad.

\newpage

\appendix

\section*{Apéndices}
\addcontentsline{toc}{section}{Apéndices}

\section{Conjuntos generadores del monoide de transformaciones}

En esta sección conoceremos las construcciones que motivaron nuestro trabajo. Los conjuntos generadores del monoide de transformaciones fueron pieza clave para la construcción de los conjuntos generadores en el monoide de endomorfismos. 
\subsection{Conjuntos generadores}

Siendo $X$ un conjunto finito, en esta sección deduciremos las propiedades que cualquier conjunto generador de $Trans(X)$ y $Sym(X)$.\\

Dado un conjunto $T$ de un monoide $M$, definimos su submonoide generado, $\langle T \rangle$, como el submonoide de $M$ más pequeño que contiene a $T$.
También es bien conocida la igualdad 
$$\langle T \rangle = \{t_{1}t_{2}\cdots t_{k}\in M:\ t_{i}\in T,\ k\in \mathbb{N}\}.$$

Si se tiene que $\langle T \rangle =M$, decimos que el $T$ es un \emph{conjunto generador} de $M$.\\
Además decimos que un monoide $M$ es finitamente generado si existe un conjunto generador finito de $M$.

\begin{ejemplo}
Sea $M=\mathbb{N}$, notemos que el conjunto $\{1\}$ es un conjunto generador, pues se satisface que:
$$n=1+1+...+1(n-veces)= t_{1}t_{2}...t_{n}.$$
\end{ejemplo}
\subsubsection{Generadores de $Trans(X)$}
Primero abordaremos las propiedades de los conjuntos generadores para $Trans(X)$.\\

Dado un conjunto finito $X$, definimos el rango de una transformación $f:X\rightarrow X$ como $r(f)=|Im(f)|$. Es fácil ver que $r(f)\leq |X|$ para cualquier transformación $f\in Trans(X)$. También es sencillo ver que el rango se refiere al número de valores distintos que la función $f$ toma. Además se verifica que $r(f)=|X|$ si y solo si $f\in Sym(X)$.

\begin{proposicion}\label{p1}
Dadas dos transformaciones $f,g\in Trans(X)$ se tiene que $$r(f\circ g)\leq \min\{r(f),r(g)\}.$$
\end{proposicion}

\begin{proof}
Sean $|X|=k$, $r(f)=n$ y $r(g)=m$, se tiene que al aplicar $g$ a todos los elementos de $X$ obtenemos $m$ valores distintos de $g(x_{i})$, a los cuales denotaremos $g_{1},\dots,g_{m}$. 

\begin{center}
\xymatrix{  & x_{1}\ar[ddr] & \cdots & \cdots & \cdots & \cdots & x_{k}\ar[ddl] \\
g & & & & & & \\
& & g_{1} &\cdots & \cdots & g_{m}}
\end{center}
Al aplicar $f$ se tendrán dos casos, si $n \leq m$ tendremos entonces a lo mucho $n$ valores $f(g_{k})$. 
\begin{center}
\xymatrix{  & x_{1}\ar[ddr] &\dots & \cdots & \cdots & \cdots & \cdots & x_{k}\ar[ddl] \\
g & & & & &  & & \\
& & g_{1}\ar[ddr] &\cdots & \cdots  & \cdots & g_{m}\ar[ddl]\\
f & & & & & & \\
& &  & f_{1} & \cdots& f_{n} & 
}
\end{center}

En el caso donde $n \geq m$, a lo mucho tendremos $m$ valores de $f(g_{k})$.   

\begin{center}
\xymatrix{  & x_{1}\ar[ddr] &\dots & \cdots & \cdots & \cdots & \cdots & x_{k}\ar[ddl] \\
g & & & & &  & & \\
& & g_{1}\ar[dd] &\cdots & \cdots  & \cdots & g_{m}\ar[dd]\\
f & & & & & & \\
&f_{1} \ \cdots & f_{\alpha_{1}} & \cdots & \cdots& \cdots  &f_{\alpha_{m}} &\cdots \ f_{n}
}

\end{center}

 donde $1 \leq \alpha_{i} \leq n$. Entonces el número máximo de valores que podemos tomar esta limitado por el menor de $m$ y $n$, es decir $r(f\circ g)\leq \min\{r(f),r(g)\}$.

\end{proof}

Esta proposición tiene consecuencias bastante útiles, las cuales serán utilizadas en secciones posteriores. A continuación escribimos la primera de éstas.

\begin{corolario}\label{p2}
Una permutación solo puede ser escrita como el producto de permutaciones. 
\end{corolario}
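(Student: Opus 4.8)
\textbf{Plan de la demostración del Corolario \ref{p2}.}

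La idea es argumentar por contrapositiva usando el rango de una transformación y la Proposición \ref{p1}. Recordemos que una transformación $f\in Trans(X)$ es una permutación si y solo si $r(f)=|X|$, es decir, si y solo si su rango es máximo. La afirmación que queremos probar es: si $\sigma\in Sym(X)$ y $\sigma=f_{1}f_{2}\cdots f_{k}$ con $f_{i}\in Trans(X)$, entonces cada $f_{i}$ es una permutación.

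Primero, recordaría que por la Proposición \ref{p1}, para la composición de dos transformaciones cualesquiera se tiene $r(f\circ g)\leq \min\{r(f),r(g)\}$. Aplicando esto inductivamente al producto $f_{1}f_{2}\cdots f_{k}$, obtendría que
$$r(f_{1}f_{2}\cdots f_{k})\leq \min\{r(f_{1}),r(f_{2}),\dots,r(f_{k})\}.$$
Este paso inductivo es rutinario: el caso base es $k=2$ (que es exactamente la Proposición \ref{p1}) y el paso inductivo aplica la proposición a $f_{1}\circ(f_{2}\cdots f_{k})$ junto con la hipótesis de inducción.

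A continuación, como $\sigma=f_{1}f_{2}\cdots f_{k}$ es una permutación, se tiene $r(\sigma)=|X|$. Combinando con la desigualdad anterior, $|X|=r(\sigma)\leq r(f_{i})$ para cada $i$. Pero siempre vale $r(f_{i})\leq |X|$, así que $r(f_{i})=|X|$ para todo $i$, lo que significa precisamente que cada $f_{i}\in Sym(X)$. Esto concluye la demostración. No anticipo ningún obstáculo serio: el único punto que requiere cuidado es enunciar con precisión la extensión inductiva de la Proposición \ref{p1} a un número finito de factores, pero es completamente estándar. El corazón del argumento es simplemente que el rango no puede aumentar bajo composición, de modo que un producto de rango máximo obliga a todos los factores a tener rango máximo.
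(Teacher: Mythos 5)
Tu demostración es correcta y sigue esencialmente el mismo camino que la del texto: usar la Proposición \ref{p1} junto con la caracterización $r(f)=|X|$ si y solo si $f\in Sym(X)$ (válida porque $X$ es finito) para forzar que cada factor tenga rango máximo. La única diferencia es que tú extiendes explícitamente por inducción al caso de $k$ factores, mientras que el texto solo escribe el caso de dos; ese detalle es rutinario y no cambia el argumento.
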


\begin{proof}
Sabemos que si $|X|=n$,  se tiene que $r(h)=n$, $\forall h\in Sym(X)$. Si $h=f\circ g$, entonces se debe cumplir que $$n=r(h) \leq \min\{r(f),r(g)\},$$ en donde la única opción es que $r(f)=r(g)=n$. 
\end{proof}
Además se tiene que $Sym(X)$ es cerrado bajo composiciones, esto significa que si $r(f)=r(g)=n$, se tiene que $r(f\circ g)=n$.\\

\begin{proposicion}\label{p3}
Todo conjunto $W$ generador de $Trans(X)$ contiene un conjunto generador de $Sym(X)$.
\end{proposicion}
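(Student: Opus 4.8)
La estrategia es mostrar que todo conjunto generador $W$ de $Trans(X)$, al restringirlo a sus elementos de rango máximo (es decir, a las permutaciones que contiene), ya genera todo $Sym(X)$. Concretamente, defino $W_{0}:=W\cap Sym(X)$ y afirmo que $\langle W_{0}\rangle = Sym(X)$; esto probaría la proposición tomando como conjunto generador de $Sym(X)$ precisamente a $W_{0}\subseteq W$.

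Primero, observo que como $W$ genera a $Trans(X)$ y $Sym(X)\subseteq Trans(X)$, toda permutación $\sigma\in Sym(X)$ se escribe como un producto $\sigma=w_{1}w_{2}\cdots w_{k}$ con $w_{i}\in W$. Aquí entra el Corolario \ref{p2}: como $\sigma$ tiene rango $|X|$, la desigualdad $r(\sigma)\leq\min\{r(w_{1}),\dots,r(w_{k})\}$ (consecuencia iterada de la Proposición \ref{p1}) fuerza que cada $w_{i}$ tenga rango $|X|$, es decir, cada $w_{i}\in Sym(X)$, y por tanto $w_{i}\in W_{0}$. Esto muestra que $\sigma\in\langle W_{0}\rangle$, y como $\sigma$ era arbitraria, $Sym(X)\subseteq\langle W_{0}\rangle$. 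La contención recíproca es inmediata, pues $W_{0}\subseteq Sym(X)$ y $Sym(X)$ es cerrado bajo composiciones (ya que la composición de biyecciones es biyección), de modo que $\langle W_{0}\rangle\subseteq Sym(X)$.

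Combinando ambas contenciones, $\langle W_{0}\rangle=Sym(X)$, y como $W_{0}=W\cap Sym(X)\subseteq W$, el conjunto $W$ contiene al conjunto generador $W_{0}$ de $Sym(X)$, que es lo que se quería. No anticipo ningún obstáculo serio: el único punto que conviene redactar con cuidado es la iteración de la Proposición \ref{p1} a un producto de $k$ factores (por inducción sobre $k$), y el hecho de que $Sym(X)$ sea cerrado bajo composición para que la inclusión $\langle W_{0}\rangle\subseteq Sym(X)$ sea correcta. Todo lo demás es consecuencia directa del Corolario \ref{p2}.
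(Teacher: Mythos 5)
Tu propuesta es correcta y sigue esencialmente el mismo camino que el texto: el artículo justifica la proposición como consecuencia directa de la Proposición \ref{p1} (vía el Corolario \ref{p2}), observando que las permutaciones solo pueden generarse por elementos de rango $|X|$, que es exactamente tu argumento con $W_{0}=W\cap Sym(X)$. Tu redacción simplemente detalla la iteración de la desigualdad de rangos y la cerradura de $Sym(X)$, que el artículo deja implícitas.
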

Esto también es consecuencia directa de la Proposición \ref{p1}, pues $Sym(X)\subseteq Trans(X)$ y estos solo pueden ser generados por elementos de rango $|X|$.\\

Un conjunto generador $W$ de un monoide $M$ se dice \emph{irreducible} si no existe un subconjunto propio de $W$ el cual también genere a $M$.

\begin{lema}\label{p4}
Sea $X$ un conjunto finito, cada conjunto generador de $Trans(X)$ contiene al menos un elemento de rango $n-1$.
\end{lema}

\begin{proof}
Esto se sigue de que existe al menos un elemento $h$ de rango $n-1$ en $Trans(X)$. Entonces si $h=f\circ g$ tiene rango $n-1$, se tiene que $$n-1\leq \min\{r(f),r(g)\},$$ entonces se tiene que $r(f)$ y $r(g)$ toman el valor de $n$ ó $n-1$. Por otro lado, si ambos toman el valor de $n$, por cerradura dentro de $Sym(X)$, $h$ tendría rango $n$. Entonces por lo menos uno de los dos debe de tener rango $n-1$.
\end{proof}

Este lema inspira el siguiente teorema. Necesitaremos una definición auxiliar para esto.\\

Llamamos \emph{defecto} de una transformación $f\in Trans(X)$, a la cantidad $d(f)=|X|-r(f)$, donde notamos que $0\leq d(f) < |X|$.

\begin{teorema}
Sea $X$ un conjunto finito de $n$ elementos y $W$ un conjunto generador de $Trans(X)$. Entonces $W$ es irreducible si y solo si $W=S_{1}\cup\{\alpha\}$, donde $S_{1}$ es un conjunto generador irreducible de $Sym(X)$ y $r(\alpha)=n-1$.
\end{teorema}

\begin{proof}
Dada la Proposición \ref{p1} y todas sus consecuencias directas: las Proposiciones \ref{p2}, \ref{p3} y \ref{p4}, basta probar que $W=S_{1}\cup\{\alpha\}$ donde $r(\alpha)=n-1$.\\
Como $\langle S_{1} \rangle = Sym(X)$, entonces $\sigma \in \langle W \rangle$, $\forall \sigma \in Sym(X)$. Como $r(\alpha)=n-1$, podemos visualizar $\alpha$ como:
$$\alpha = \left( \begin{array}{cccccccccccc}  1 & 2& \cdots& i-1 & i & i+1& \cdots &j-1&j&j+1&\cdots&n   \\
a_{1} & a_{2} & \dots & a_{i-1}& a & a_{i+1}&\cdots &a_{j-1}&a&a_{j+1}&\cdots & a_{n}   \end{array} \right). $$
Sea $f\in Trans(X)$, una transformación cualquiera de rango $n-1$, también podemos visualizarla como:
$$f = \left( \begin{array}{cccccccccccc}  1 & 2& \cdots& k-1 & k & k+1& \cdots &l-1&l&l+1&\cdots&n   \\
b_{1} & b_{2} & \dots & b_{k-1}& b_{k} & b_{k+1}&\cdots &b_{l-1}&b_{l}&b_{l+1}&\cdots & b_{n}   \end{array} \right). $$

Sea $\pi\in Sym(X)$, tal que $\pi(k)=i$ y $\pi(l)=j$. Entonces se tiene que 
$$\alpha \circ \pi  = \left( \begin{array}{cccccccccccc}  1 & 2& \cdots& k-1 & k & k+1& \cdots &l-1&l&l+1&\cdots&n   \\
c_{1} & c_{2} & \dots & c_{k-1}& a & c_{k+1}&\cdots &c_{l-1}&a&c_{l+1}&\cdots & c_{n}   \end{array} \right). $$

Sea $x\in X$ el elemento que faltan en la imagen de $\alpha$ y sea $y$ el elemento faltante en la imagen de $f$. Entonces podemos encontrar una permutación $\sigma \in Sym(X)$ tal que
$$\sigma = \left( \begin{array}{cccccccccccc} c_{1} & c_{2} &\cdots & c_{k-1}& c_{k+1}&\cdots c_{l-1}&c_{l+1}&\cdots & c_{n}&a&x\\
b_{1} & b_{2} &\cdots & b_{k-1}& b_{k+1}&\cdots b_{l-1}&b_{l+1}&\cdots & b_{n}&b&y
\end{array} \right), $$
donde podemos identificar $c_{r}=a_{\pi(r)}$, entonces es fácil ver que $f=\sigma \circ \alpha \circ \pi$. Esto muestra que cualquier transformación de rango $n-1$ está contenida en $\langle W\rangle$. O dicho de otra manera, cualquier transformación de defecto 1. Probaremos por inducción que cada transformación de defecto $k+1$ también está contenida en $\langle W \rangle$. La cerradura de las permutaciones prueba el caso $d(f)=0$ y lo anterior prueba el caso $d(f)=1$.  Supongamos pues que se cumple para cualquier valor $k$ y probemoslo para cualquier transformación $f\in Trans(X)$ de defecto $k+1$.\\

Sea $f\in Trans(X)$, tal que $d(f)=k+1$, con $k>0$ y suponemos que cualquier transformación de defecto menor o igual a $k$ está contenida en $\langle W \rangle$. Entonces, existe por lo menos un elemento $a\in X$, el cual tiene más de una preimagen, es decir, existen $b_{1},b_{2}\in X$ tales que $f(b_{1})=f(b_{2})=a$. Identifiquemos pues:
$$f = \left( \begin{array}{cccccccccccc}  1 & 2& \cdots& b_{1}-1 & b_{1} & b_{1}+1& \cdots &b_{2}-1&b_{2}&b_{2}+1&\cdots&n   \\
f_{1} & f_{2}& \cdots& f_{b_{1}-1} & a & f_{b_{1}+1}& \cdots &f_{b_{2}-1}&a&f_{b_{2}+1}&\cdots&f_{n}    \end{array} \right). $$

Ahora consideremos la transformación $\mu \in Trans(X)$.

$$\mu = \left( \begin{array}{cccccccccccc}  1 & 2& \cdots& b_{1}-1 & b_{1} & b_{1}+1& \cdots &b_{2}-1&b_{2}&b_{2}+1&\cdots&n   \\
1 & 2& \cdots& b_{1}-1 & b_{1} & b_{1}+1& \cdots &b_{2}-1&b_{1}&b_{2}+1&\cdots&n    \end{array} \right). $$
Notemos que $d(\mu)=1$, por lo tanto $\mu \in \langle W \rangle$. E identificaremos como $\tilde{a}\in X$ a un elemento que no este en la imagen de $f$, es decir, $\forall x\in X$ se tiene que $f(x)\neq \tilde{a}$. Entonces consideremos una transformación $\delta \in Trans(X)$ la cual cumple que $\delta(b_{2})=\tilde{a}$ y además $f(x)=\delta(x)$, para toda $x\neq b_{2}$. 
$$\delta = \left( \begin{array}{cccccccccccc}  1 & 2& \cdots& b_{1}-1 & b_{1} & b_{1}+1& \cdots &b_{2}-1&b_{2}&b_{2}+1&\cdots&n   \\
f_{1} & f_{2}& \cdots& f_{b_{1}-1} & a & f_{b_{1}+1}& \cdots &f_{b_{2}-1}&\tilde{a}&f_{b_{2}+1}&\cdots&f_{n}    \end{array} \right). $$
Entonces $d(\delta)=k$ y por lo tanto $\delta \in \langle W\rangle$. Es fácil verificar que $f=\delta \circ \mu$,

$$\left( \begin{array}{c}
\xymatrix{
&1\ar[dd] & 2\ar[dd] & \cdots & b_{1}\ar[dd] & \cdots & b_{2}\ar[ddll] & \cdots & n\ar[dd] \\
\mu & \\
&1\ar[dd] & 2\ar[dd] & \cdots & b_{1}\ar[dd] & \cdots & b_{2}\ar[dd] & \cdots & n\ar[dd] \\
\delta &   \\
&f_{1} & f_{2} & \cdots & a & \cdots & \tilde{a} & \cdots & f_{n}
} \end{array}\right)$$
y por tanto $f\in \langle W \rangle$.

\end{proof}

\subsubsection{Generadores de $Sym(X)$}

En esta sección presentaremos varias versiones de conjuntos generadores para $Sym(X)$. Deduciremos axiomáticamente las propiedades de estos.\\

Supongamos que $X$ es un conjunto finito con $n$ elementos. Entonces una permutación puede visualizarse como un arreglo que intercambia los elementos de $X$.

Llamaremos \emph{ciclo} al conjunto de imágenes de una permutación $\sigma$, según se cumpla que $\sigma^{k}(x)=x$. Una permutación puede ser descrita a partir de sus ciclos.

\begin{ejemplo}
Dada un conjunto $X$ con cinco elementos, denotados por $\{1,2,3,4,5\}$. Tomamos una permutación $\sigma \in Sym(X)$ descrita por el siguiente diagrama:

$$\sigma= \left\{ \begin{array}{c}\xymatrix{1\ar[dr] & 1\\
2\ar[ur]&2\\  3\ar[ddr]&3 \\ 4\ar[ur]&4 \\ 5\ar[ur]&5} \end{array} \right.$$
Dicho de otra manera tenemos que $\sigma(1)=2$, $\sigma(2)=1$, $\sigma(3)=5$, $\sigma(4)=3$ y $\sigma(5)=4$. Queremos visualizar que si tomamos $\sigma(1)=2$ y le aplicamos nuevamente $\sigma$, $\sigma(2)=\sigma \circ \sigma (1)=\sigma^{2}(1)=1$. Es decir, que $1$ se transforma en $2$ y luego $2$ se transforma en $1$, $1 \rightarrow 2 \rightarrow 1$. Este ciclo tiene únicamente 2 elementos de $X$, así que lo llamamos \emph{2-ciclo}. Si empezamos en el número $2$, los elementos que aparecen vuelven a ser los mismos. Por el otro lado, si empezamos en el número $3$ su primer imagen es el número 5, si aplicamos nuevamente $\sigma$ obtendremos el número y al aplicarlo por tercera vez obtenemos nuevamente 3, i.e., $3 \rightarrow 5 \rightarrow 4 \rightarrow 3$, este ciclo tiene 3 elementos, por lo que lo llamamos \emph{3-ciclo}. Notemos que nuestra función $\sigma$ esta compuesta por un 2-ciclo y un 3-ciclo.\\
Por otro lado, consideremos las siguientes funciones:

$$
\begin{array}{cc}
\alpha= \left\{ \begin{array}{c}\xymatrix{1\ar[dr] & 1\\
2\ar[ur]&2\\  3\ar[r]&3 \\ 4\ar[r]&4 \\ 5\ar[r]&5} \end{array} \right.

&

\beta= \left\{ \begin{array}{c}\xymatrix{1\ar[r] & 1\\
2\ar[r]&2\\  3\ar[ddr]&3 \\ 4\ar[ur]&4 \\ 5\ar[ur]&5} \end{array} \right.
\end{array}.
$$
Notemos que $\alpha$ tiene un 2-ciclo, $1\rightarrow 2 \rightarrow 1$ y está conformado por tres 1-ciclos, los cuales notaremos no serán importantes para lo que queremos hacer. De igual manera $\beta$ está conformado por un 3-ciclo y dos 1-ciclos.\\
También es importante que notemos lo siguiente:
$$
\alpha \circ \beta= \left\{ \begin{array}{c}\xymatrix{
1\ar[r]&1\ar[rd]&1\\
2\ar[r]&2\ar[ru]&2\\
3\ar[ddr]&3\ar[r]&3 \\ 
4\ar[ur]&4\ar[r]&4 \\ 
5\ar[ur]&5\ar[r]&5} 
\end{array} \right. 
=  
\beta \circ \alpha = \left\{ \begin{array}{c}\xymatrix{
1\ar[rd]&1\ar[r]&1\\
2\ar[ru]&2\ar[r]&2\\
3\ar[r]&3\ar[rdd]&3 \\ 
4\ar[r]&4\ar[ru]&4 \\ 
5\ar[r]&5\ar[ru]&5} 
\end{array} \right. 
= \left\{ \begin{array}{c}
\xymatrix{
1\ar[rd]&1\\
2\ar[ru]&2\\
3\ar[ddr]&3 \\ 
4\ar[ur]&4 \\ 
5\ar[ur]&5} 
\end{array} \right.= \sigma . 
$$
Si denotamos $\alpha$ y $\beta$ por sus ciclos $(12)$ el ciclo para $\alpha$ y $(354)$ el ciclo para $\beta$. Podemos escribir a $\sigma$ como $\sigma=(12)(354)$, omitiendo el símbolo de composición de funciones.

\end{ejemplo}
Notemos que el orden dentro de los ciclos no importa $(12)=(21)$, o $(354)=(543)=(435)$.  Sin embargo es importante notar que los ciclos siguen el orden de composición $3\rightarrow 5 \rightarrow 4$. Es decir que $(354)\neq(345)$. También veamos que los ciclos  conmutan para formar a $\sigma$, es decir $\sigma=(354)(12)$ es también una forma valida.

\begin{lema}
El producto de $n$-ciclos disjuntos conmuta.

\end{lema}

\begin{proof}
Sea $\sigma$ un $k$-ciclo y $\beta$ un $m$-ciclo. Verifiquemos esto por casos.
\begin{enumerate}
\item[] Caso 1. Sea $x\in X$ un elemento no parte de ninguno de los ciclos, entonces $\sigma(x)=x=\beta(x)$. Entonces tenemos que 
$$x=\sigma(x)=\sigma(\beta(x))=\sigma \circ \beta (x)$$
$$x=\beta(x)=\beta(\sigma(x))=\beta \circ \sigma (x).$$
\item[] Caso 2. Podemos suponer sin pérdida  de generalidad que $x\in X$ pertenece únicamente a uno de los ciclos, digamos $\sigma$, significa que $\sigma(x)$ es parte del ciclo y por tanto no es parte de $\beta$, es decir  $\sigma(x)=\beta(\sigma(x))$, además $x=\beta(x)$ y $\sigma(x)=\sigma(\beta(x))$.
\end{enumerate}

\end{proof}

Es fácil ver que toda permutación esta formada por $n$-ciclos, o es un $n$-ciclo. Entonces, podemos decir que el conjunto de todos los $n$-ciclos existentes es un generador de $Sym(X)$. También es importante mencionar que a los 2-ciclos también se les conoce como \emph{transposiciones}.

\begin{proposicion}
Toda permutación $\sigma \in Sym(X)$ puede ser escrita como el producto de $n$-ciclos.  
\end{proposicion}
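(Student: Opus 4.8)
La idea es descomponer $X$ mediante la acción del grupo cíclico $\langle \sigma \rangle$ y mostrar que cada órbita produce un ciclo, de modo que el producto (conmutativo, por el lema anterior) de dichos ciclos recupere a $\sigma$. Primero definiría sobre $X$ la relación $x\sim y$ si y solo si existe $k\in\mathbb{Z}$ tal que $y=\sigma^{k}(x)$. Es inmediato verificar que es una relación de equivalencia: es reflexiva tomando $k=0$, simétrica pues $x=\sigma^{-k}(y)$, y transitiva sumando los exponentes. Por tanto las clases $\overline{x}$ forman una partición de $X$, y como $X$ es finito cada clase es finita.

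Segundo, fijada una clase $C$ y un elemento $x\in C$, como $X$ es finito la sucesión $x,\sigma(x),\sigma^{2}(x),\dots$ debe repetirse, y un argumento estándar muestra que existe un mínimo $m\geq 1$ con $\sigma^{m}(x)=x$ y que $x,\sigma(x),\dots,\sigma^{m-1}(x)$ son distintos dos a dos y agotan $C$. En consecuencia, la restricción de $\sigma$ a $C$ coincide con el $m$-ciclo $c_{C}:=(x\ \sigma(x)\ \cdots\ \sigma^{m-1}(x))$, visto como permutación de $X$ que deja fijo a todo elemento fuera de $C$. Las clases de tamaño $1$ corresponden a puntos fijos de $\sigma$ y dan ciclos triviales (la identidad), que pueden omitirse del producto.

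Tercero, como las clases $C_{1},\dots,C_{t}$ son disjuntas, los ciclos $c_{C_{1}},\dots,c_{C_{t}}$ tienen soportes disjuntos, y por el lema previo sobre el producto de ciclos disjuntos, conmutan entre sí; en particular su producto $c_{C_{1}}\cdots c_{C_{t}}$ está bien definido sin ambigüedad en el orden. Finalmente comprobaría la igualdad $\sigma=c_{C_{1}}\cdots c_{C_{t}}$ evaluando en un elemento arbitrario $y\in X$: el elemento $y$ pertenece a exactamente una clase, digamos $C_{j}$; todos los ciclos $c_{C_{i}}$ con $i\neq j$ fijan a $y$ y también a $\sigma(y)$ (pues $\sigma(y)\in C_{j}$), mientras que $c_{C_{j}}(y)=\sigma(y)$, luego el producto envía $y$ a $\sigma(y)$.

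La parte más delicada es el segundo paso: garantizar, usando sólo la finitud de $X$, que cada órbita es exactamente $\{x,\sigma(x),\dots,\sigma^{m-1}(x)\}$ con entradas distintas, es decir, que la restricción de $\sigma$ a una clase de equivalencia es genuinamente un ciclo y no algo más complicado. Esto se reduce a observar que si $\sigma^{i}(x)=\sigma^{j}(x)$ con $0\le i<j<m$, entonces $\sigma^{j-i}(x)=x$, contradiciendo la minimalidad de $m$, y que cualquier $\sigma^{k}(x)$ con $k\in\mathbb{Z}$ cae en $\{x,\sigma(x),\dots,\sigma^{m-1}(x)\}$ reduciendo $k$ módulo $m$.
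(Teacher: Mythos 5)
Tu propuesta es correcta y completa: la relación de equivalencia inducida por $\langle \sigma \rangle$, la verificación de que cada órbita es $\{x,\sigma(x),\dots,\sigma^{m-1}(x)\}$ con entradas distintas, el uso del lema previo sobre conmutatividad de ciclos disjuntos y la evaluación punto a punto del producto cierran el argumento sin lagunas. Conviene que sepas que el documento no da una demostración formal de esta proposición: simplemente observa que el resultado ``se sigue de la manera en la que construimos la notación de los $n$-ciclos'', es decir, apela al procedimiento ilustrado en el ejemplo anterior de seguir la trayectoria de cada elemento bajo $\sigma$. Tu argumento es precisamente la formalización de esa idea (las trayectorias son las órbitas de $\langle\sigma\rangle$), de modo que el enfoque subyacente es el mismo, pero tu versión aporta lo que el texto omite: la prueba de que la restricción de $\sigma$ a cada órbita es genuinamente un ciclo (vía la minimalidad de $m$ y la reducción de exponentes módulo $m$) y la verificación explícita de que el producto de los ciclos así obtenidos reproduce a $\sigma$. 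En resumen, donde el documento se conforma con una remisión a la construcción, tu propuesta da una demostración autocontenida y rigurosa; el único costo es la mayor extensión, y ganas además que el paso delicado que tú mismo señalas queda justificado en lugar de asumido.
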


Esto se sigue de la manera en la que construimos la notación de los $n$-ciclos. A continuación presentamos un resultado más relevante. Notemos que si $X$ es un conjunto con un único elemento, su monoide de transformaciones esta formado solo por la identidad.

\begin{proposicion}
Sea $X$ un conjunto de cardinal $n\geq 2$, entonces $Sym(X)$ es generado por sus transposiciones.
\end{proposicion}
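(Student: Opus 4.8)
The statement to prove is that for a finite set $X$ with $|X| = n \geq 2$, the symmetric group $Sym(X)$ is generated by its transpositions.

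\medskip

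The plan is to proceed by induction on $n = |X|$, using the cycle decomposition already established in the preceding proposition (every permutation is a product of $n$-cycles), so that it suffices to show every cycle is a product of transpositions. First I would handle the base case $n = 2$: here $Sym(X) = \{id, (1\ 2)\}$, and $(1\ 2)$ is itself a transposition while $id = (1\ 2)(1\ 2)$, so both elements lie in the submonoid generated by transpositions. For the inductive step, the key observation is the factorization of a single $k$-cycle into transpositions,
$$(a_1\ a_2\ \cdots\ a_k) = (a_1\ a_k)(a_1\ a_{k-1})\cdots(a_1\ a_3)(a_1\ a_2),$$
which one verifies by tracking the image of each $a_i$ under the right-hand side (composing right-to-left as functions). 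Since, by the previous proposition, an arbitrary $\sigma \in Sym(X)$ is a product of disjoint cycles, and each cycle is by the displayed identity a product of transpositions, $\sigma$ itself is a product of transpositions.

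\medskip

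A cleaner alternative, which avoids even invoking cycle decomposition, is a direct induction on $n$ that simultaneously sharpens the statement. Fix an element $x_0 \in X$ and let $\sigma \in Sym(X)$. If $\sigma(x_0) = x_0$, then $\sigma$ restricts to a permutation of $X \setminus \{x_0\}$, a set of $n-1$ elements, and by the inductive hypothesis $\sigma|_{X\setminus\{x_0\}}$ is a product of transpositions of $X \setminus \{x_0\}$; each such transposition is also a transposition of $X$ (fixing $x_0$), so $\sigma$ is a product of transpositions. If $\sigma(x_0) = y \neq x_0$, consider $\tau = (x_0\ y)\circ\sigma$; then $\tau(x_0) = (x_0\ y)(y) = x_0$, so by the previous case $\tau$ is a product of transpositions, and hence $\sigma = (x_0\ y)^{-1}\circ\tau = (x_0\ y)\circ\tau$ is as well, since $(x_0\ y)$ is its own inverse.

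\medskip

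I expect no serious obstacle here; the result is classical and elementary. The only point requiring a little care is bookkeeping the order of composition in the cycle-to-transposition identity — one must be consistent with the convention $g\circ f(x) = g(f(x))$ used throughout the paper, so the transpositions must be read off in the correct (right-to-left) order, and it is worth checking the identity explicitly on a small example such as $(1\ 2\ 3) = (1\ 3)(1\ 2)$ to fix the convention before writing the general formula. A secondary minor subtlety is the inclusion of the identity permutation when $\sigma$ is already a product of an \emph{even} number of transpositions or none at all; writing $id = (1\ 2)(1\ 2)$ (possible since $n \geq 2$) dispenses with this.
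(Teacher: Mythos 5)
Tu primera ruta es correcta y coincide en esencia con la del trabajo: ambas reducen el problema, vía la proposición previa, a escribir cada ciclo como producto de transposiciones; la única diferencia es que el texto lo hace pelando un elemento por inducción, $(a_{1}\,a_{2}\,\cdots\,a_{k+1})=(a_{1}\,\cdots\,a_{k})(a_{k}\,a_{k+1})$, mientras que tú das la factorización cerrada $(a_{1}\,a_{2}\,\cdots\,a_{k})=(a_{1}\,a_{k})(a_{1}\,a_{k-1})\cdots(a_{1}\,a_{2})$, que es igualmente válida con la convención de composición de derecha a izquierda (tu verificación con $(1\,2\,3)=(1\,3)(1\,2)$ es acertada). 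Tu segunda ruta sí es genuinamente distinta: la inducción sobre $n$ fijando un punto $x_{0}$ y componiendo con $(x_{0}\,y)$ evita por completo la descomposición en ciclos, lo cual la hace autocontenida y más corta si uno no dispone de la proposición anterior; a cambio, el argumento del texto es más ilustrativo para la maquinaria de ciclos que el apéndice explota después (por ejemplo, para reducir a transposiciones de la forma $(i\ \,i+1)$ y al par $\{(1\,2),(1\,2\,\cdots\,n)\}$). Ambos caminos son completos; no veo ninguna laguna.
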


\begin{proof}
Dado el enunciado anterior, basta con verificar que los $n$-ciclos son generados transposiciones. Los 1-ciclos son la función identidad y debemos ver que $(1)=(12)^{2}$. Así que son generados. Los 2-ciclos son generados por ellos mismos. Se puede probar esto por inducción. Supongamos que los $k$-ciclos son generados por transposiciones. Dado un $k+1$-ciclo $(a_{1}a_{2}\cdots a_{k+1})$, basta con mostrar que $(a_{1}a_{2}\cdots a_{k+1})=(a_{1}\cdots a_{k})(a_{k}a_{k+1})$. Pero esto es bastante sencillo ver en el siguiente diagrama.
$$\begin{array}{c}  
\xymatrix{  & (a_{k}a_{k+1}) &  & (a_{1}\cdots a_{k})   \\
a_{1}\ar[rr] & & a_{1}\ar[rrd] & &  a_{1} \\
a_{2}\ar[rr] & & a_{2}\ar[rrd] & & a_{2} \\
\vdots & & \vdots & & \vdots \\
a_{k}\ar[rrd] & & a_{k}\ar[uuurr]& & a_{k} \\
a_{k+1}\ar[rru] & & a_{k+1}\ar[rr] & & a_{k+1}
}
\end{array}$$
como $(a_{1}\cdots a_{k})$ es generado por transposiciones, por hipótesis, entonces se cumple la afirmación.

\end{proof}

Entonces el conjunto de todas las transposiciones es un conjunto generador para $Sym(X)$. Notemos que el conjunto de transposiciones es un conjunto de $\frac{n(n-1)}{2}$ elementos.

\begin{proposicion}
Toda transposición es generada por elementos de la forma $(i\ i+1)$.
\end{proposicion}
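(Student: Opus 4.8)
The claim is that every transposition $(i\ j)$ can be written as a product of adjacent transpositions of the form $(k\ k+1)$. The plan is to prove this by induction on the "distance" $d = |j - i|$ between the two points being swapped, assuming without loss of generality that $i < j$.

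First I would handle the base case $d = 1$: here $j = i+1$, so $(i\ j) = (i\ i+1)$ is already an adjacent transposition, and there is nothing to prove. Next, for the inductive step, suppose the result holds for all transpositions $(i'\ j')$ with $|j' - i'| \le d$, and let $(i\ j)$ be a transposition with $j - i = d+1$. The key algebraic identity to establish is the conjugation formula
$$(i\ j) = (j-1\ j)\,(i\ j-1)\,(j-1\ j),$$
which one checks by tracking the image of each point (the three factors fix everything outside $\{i, j-1, j\}$, and a direct computation on those three points gives the $2$-cycle $(i\ j)$). Since $(i\ j-1)$ has distance $d$, it is a product of adjacent transpositions by the inductive hypothesis, and $(j-1\ j)$ is itself adjacent; substituting, $(i\ j)$ becomes a product of adjacent transpositions. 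This completes the induction, and since by the previous proposition $Sym(X)$ is generated by all transpositions, it follows that $Sym(X)$ is in fact generated by the adjacent transpositions $\{(i\ i+1) : 1 \le i \le n-1\}$.

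The only real care needed is the verification of the conjugation identity, which is the heart of the argument; everything else is bookkeeping. I would present the check as a small diagram or a short case analysis on where each of $i$, $j-1$, $j$ goes, exactly in the style of the cycle diagrams used earlier in the appendix. A clean way to phrase it: conjugation by a permutation $\sigma$ sends a cycle $(a_1\ a_2\ \cdots\ a_k)$ to $(\sigma(a_1)\ \sigma(a_2)\ \cdots\ \sigma(a_k))$, so conjugating the adjacent transposition $(i\ j-1)$ by $(j-1\ j)$ yields $(i\ j)$ directly. The main obstacle, such as it is, is simply making sure the induction is set up on the right quantity (the gap $j-i$, not $j$ or $i$ alone) so that the recursion genuinely terminates at adjacent transpositions.
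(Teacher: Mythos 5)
Tu propuesta es correcta y sigue esencialmente el mismo camino que la demostración del documento: inducción sobre la distancia $b-a$, con caso base la transposición adyacente, y paso inductivo mediante conjugación por una transposición adyacente. La única diferencia es cosmética: tú usas la identidad $(i\ j)=(j-1\ j)(i\ j-1)(j-1\ j)$ conjugando en el extremo superior, mientras que el texto emplea la identidad espejo $(a\ b)=(a\ a+1)(a+1\ b)(a\ a+1)$ conjugando en el extremo inferior, ambas verificables con la fórmula $\alpha(a_{1}\cdots a_{k})\alpha^{-1}=(\alpha(a_{1})\cdots\alpha(a_{k}))$.
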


\begin{proof}
Dada una transposición $(ab)$. Como se tiene que $(ab)=(ba)$, podemos suponer sin pérdida  de generalidad que $a < b$. Probaremos este hecho por inducción sobre $b-a$, que $(ab)$ es el producto de transposiciones de la forma $(i\ i+1)$. Esto es obvio cuando $b-a=1$, pues se tiene que $(ab)=(a\ a+1)$, la cual es una transposición de la forma deseada. Ahora, supongamos que se cumple para $b-a=k$. Solo debemos notar que se tiene que $(ab)=(a\ a+1)(a+1\ b)(a\ a+1)$, lo cual está descrito el en siguiente diagrama.
\begin{scriptsize}
$$\begin{array}{c}  
\xymatrix{
 & (a\ a+1) &  & (a+1\ b) &  & (a\ a+1) & \\
a\ar[rrd] & & a\ar[rr] & & a\ar[rrd] & & a\\
a+1\ar[rru] & & a+1\ar[rrd] & & a+1\ar[rru] & & a+1\\
b\ar[rr] & & b\ar[rru] & & b\ar[rr] & & b
}
\end{array}$$
\end{scriptsize}
Es evidente que $(a\ a+1)$ es una transposición de la forma deseada y $(a+1\ b)$ es una transposición cuya diferencia es igual a $k$, entonces por hipótesis de inducción es generada por transposiciones de la forma deseada.

\end{proof}

\begin{ejemplo}
Si tenemos un conjunto de cardinal finito $X$, podemos identificar sus elementos únicamente como $X=\{1,2,\dots,n\}$. Sea $n=6$, entonces tenemos que dada la transposición $(2\ 5)$ la podemos visualizar como sigue. 

$$\begin{array}{ccc}    
(2\ 5)&=&(2\ 3)(3\ 4)(4\ 5)(3\ 4)(2\ 3)\\
 \xymatrix{1\ar[rr]& &1\\
2\ar[rrddd]& &2\\
3\ar[rr]& &3\\
4\ar[rr]& &4\\
5\ar[rruuu]& &5\\
6\ar[rr]& &6}
&

&
\xymatrix{ 1\ar[r]&1\ar[r]&1\ar[r]&1\ar[r]&1\ar[r]&1\\
2\ar[rd]&2\ar[r]&2\ar[r]&2\ar[r]&2\ar[rd]&2\\
3\ar[ru]&3\ar[rd]&3\ar[r]&3\ar[rd]&3\ar[ru]&3\\
4\ar[r]&4\ar[ru]&4\ar[rd]&4\ar[ru]&4\ar[r]&4\\
5\ar[r]&5\ar[r]&5\ar[ru]&5\ar[r]&5\ar[r]&5\\
6\ar[r]&6\ar[r]&6\ar[r]&6\ar[r]&6\ar[r]&6}
\end{array}$$
\end{ejemplo}
Entonces podemos ver que este nuevo conjunto de transposiciones de la forma $(i\ i+1)$ es también un conjunto generador para $Sym(X)$, este en particular cuenta únicamente con $n-1$ elementos.\\

A continuación mencionaremos algunas propiedades que cumple el $n$-ciclo $(1\ 2\ \dots \ n)$, las cuales serán de utilidad para la construcción del siguiente conjunto generador. Sabemos que todas las transposiciones son sus propios inversos. Notemos que, para cualquier valor $x< n$, la permutación $(1\ 2\ \dots n)$ mapea $x \rightarrow  x+1$ y $n \rightarrow 1$. Entonces, si denotamos $\sigma=(1\ 2\ \dots n)$, entonces se tiene que 
$$\begin{array}{cccccccccccccc}
  & \sigma &  & \sigma^{2} &  & \sigma^{3} &  & \sigma^{n-2} &  & \sigma^{n-1} &  & \sigma^{n} &  \\

1& \longrightarrow & 2& \longrightarrow &3& \longrightarrow& \cdots& \longrightarrow & n-1& \longrightarrow& n& \longrightarrow &1 \\

2& \longrightarrow & 3& \longrightarrow &4& \longrightarrow& \cdots& \longrightarrow & n& \longrightarrow& 1& \longrightarrow &2 \\

& & & & & & \vdots \\

n& \longrightarrow & 1& \longrightarrow &2& \longrightarrow& \cdots& \longrightarrow & n-2& \longrightarrow& n-1& \longrightarrow &n \\

\end{array},$$
lo cual nos permite ver que $\sigma^{n}=Id$. Y por la asociatividad de la operación podemos fácilmente que $\sigma\circ \sigma^{n-1}=\sigma^{n-1}\circ \sigma = Id$, es decir que $\sigma^{n-1}$ es la permutación inversa de $\sigma$. También, en la notación de $n$-ciclos y después de desarrollarlo, se puede ver que $(1\ 2\ \dots \ n)^{-1}=(n \ n-1 \ \dots \ 2 \ 1)$. En consecuencia tenemos el siguiente lema que nos será de utilidad.

\begin{lema}
Sea $X$ un conjunto de $n$ elementos, dada una permutación $\alpha \in Sym(X)$ y un $k$-ciclo $(a_{1}\ \dots \ a_{k})$, entonces se cumple que:
$$\alpha (a_{1}\ \dots \ a_{k})\alpha^{-1}=(\alpha(a_{1})\dots\alpha(a_{k})).$$
\end{lema}

\begin{proof}
Basta notar que el elemento $\alpha(a_{i})$ es mapeado al elemento $\alpha(a_{i+1})$, lo cual es fácil de visualizar que cuando aplicamos $\alpha(a_{1}\cdots a_{n})\alpha^{-1}$ tenemos que:
$$\begin{array}{rl}\alpha(a_{1}\cdots a_{n})\alpha^{-1}(\alpha(a_{i}))&=
\alpha(a_{1}\cdots a_{n})\left( \cancel{\alpha^{-1}}(\cancel{\alpha}(a_{i})) \right)\\
&=\alpha(a_{1}\cdots a_{k})(a_{i}) \\
&=\alpha(a_{i+1}).
\end{array}
$$
\end{proof}

Si representamos $(1\ 2\ \cdots \ n)$ por $\sigma$, como consecuencia del lema anterior se tiene que:
\begin{enumerate}
\item[i)] $\sigma(a\ b)\sigma^{-1}=(a+1\ b+1)$.
\item[ii)] $ \sigma^{k}(a\ b)\sigma^{-k}=(a+k\ b+k)$.
\end{enumerate}
Tomando en cuenta que $n \mapsto 1$, y por consecuencia al aplicarse a la transposición $(1\ 2)$, tenemos que:
\begin{enumerate}
\item[iii)] $\sigma (1\ 2)\sigma^{-1}=(2 \ 3)$.
\item[iv)] $\sigma^{k}(1\ 2)\sigma^{-k}=(k+1\ k+2)$.
\end{enumerate}
Es decir que todas las transposiciones de la forma $(i\ i+1)$ son generadas por los elementos $(1\ 2)$ y $(1\ 2\ \cdots n)$. Además es fácil ver que $(1\ 2\ \cdots n)=(1\ 2)(2\ 3\ \cdots n)$. Por lo tanto $\{(1\ 2),(2\ 3\ \cdots n)\}$ es un conjunto generador de $Sym(X)$.

\begin{ejemplo}
Sea $X$ un conjunto con seis elementos los cuales denotaremos por $\{1,2,\cdots, 6\}$ entonces denotaremos por $\alpha$ a la transposición $(1\ 2)$, por $\beta$ al $n-1$-ciclo $(2 \ 3\ \cdots n)$ y por $\sigma$ al $n$-ciclo $(1\ 2\ \cdots n)$. Entonces tenemos que $\alpha^{-1}=\alpha$, $\beta^{-1}=\beta^{4}$, además $\sigma=\alpha \beta$ y $\sigma^{-1}=(\alpha \beta)^{-1}=\beta^{-1}\alpha^{-1}=\beta^{4}\alpha$. Entonces tenemos que para el $n$-ciclo $(2\ 4\ 3\ 1 \ 5)$ tenemos que:
$$\begin{array}{l}
(2\ 4\ 3\ 1\ 5)=(2\ 4)\ (4\ 3)\ (3\ 1)\ (1\ 5)\\
=(2\ 3)(3\ 4)(2\ 3)\ (3\ 4)\ (1\ 2)(2\ 3)(1\ 2)\ (1\ 2)(2\ 3)(3\ 4)(4\ 5)(3\ 4)(2\ 3)(1\ 2)
\end{array},$$
siendo $(k\ k+1)= (1\ 2)^{k-1} \beta ^{k-1}(1\ 2) \beta^{-(k-1)}(1\ 2)^{k-1}$ y al simplificar todos los cálculos obtenemos que
$$(2\ 4\ 3\ 1\  5 )=(1\ 2)\beta^{2}(1\ 2)\beta^{4}(1\ 2)\beta(1\ 2)\beta^{4}(1\ 2)\beta^{4}(1\ 2)\beta(1\ 2)\beta^{2}(1\ 2)\beta^{2}.$$
\end{ejemplo}

\subsection{El monoide de transformaciones de un conjunto infinito}

En esta sección mostramos como el rank relativo de $Trans(X)$ sobre su grupo de unidades es 2, para el caso en que $X$ es un conjunto infinito. \\
Recordamos algunos parámetros asociados a la medición de las cardinalidades de funciones. La idea de estos parámetros asociados a funciones es que queremos medir que tan alejada esta una función de ser de un tipo especial, por ejemplo inyectiva o sobreyectiva. \\

Llamamos \emph{rango} de una función $f:X\rightarrow X$ al cardinal de su imagen, i.e., $ran(f)=|Im(f)|$.\\

Se dice que dos funciones $f$ y $g$ son $\mathcal{J}$-equivalentes (o también $\mathcal{D}$-equivalentes) si y solo si tienen el mismo rango. Para el caso finito tenemos que $ran(f)=|X|$ si y solo si $f$ es sobreyectiva y  además, en este caso, el rango describe que tan diferente es $f$ de una función biyectiva.  De cualquier modo, cuando $X$ es un conjunto infinito existen funciones no sobreyectivas de rango $|X|$. Una mejor medida de la no sobreyectividad de una función es su \emph{defecto}, el cual es definido como el tamaño del complemento de la imagen de una función $f$, i.e., $def(f)=|X \setminus Im(f) |$. Es fácil ver que $def(f)=0$ si y solo si $f$ es sobreyectiva. Estos son los parámetros más usuales para trabajar con los cardinales de funciones, sin embargo para los objetivos de esta sección es necesario definir algunos más. \\

El \emph{shift} de una función mide que tan diferente es una función de la identidad y se define como el cardinal del conjunto de elementos transformados por la función, i.e., $shift(f)=|\{x\in X:\ f(x)\neq x\}|$. El \emph{colapso} de una función mide que tan diferente es una función $f$ de una función inyectiva y se define a partir de lo siguiente. Consideremos la \emph{relación kernel} de $\alpha$:

$$ker(\alpha):=\{(x,y)\in X\times X,\ \alpha(x)=\alpha(y)\}.$$

Es fácil ver que esta es una relación de equivalencia. Sea $T_{\alpha}$ el conjunto de representantes de clase  de dicha relación de equivalencia; en otras palabras, sea $T_{\alpha}$ un conjunto tal que para cada clase de equivalencia $C$ de $ker(\alpha)$ tenemos que $|C \cap T_{\alpha} |=1$. (También solemos referirnos a $T_{\alpha}$ como un transversal de $ker(\alpha)$.)\\
Entonces el \emph{colapso} de $\alpha$ está definido como:
$$col(\alpha)=|X\setminus T_{\alpha}|.$$
Es fácil ver que $$col(\alpha)=\sum_{y\in Im(\alpha)}{(|\alpha^{-1}(y)|-1)},$$
y que este número cardinal no depende de $T_{\alpha}$.\\

Definimos también el \emph{índice de contracción infinito}, una cantidad que nos ayuda a medir cuantas ''clases grandes'' tiene el kernel de  una transformación. Este se define como el número de clases en $ker(\alpha)$ de tamaño $|X|$. En otras palabras, sea el conjunto 
$$K(\alpha)=\{x\in X:\ |\alpha^{-1}(x)|=|X|\},$$
entonces el índice de contracción infinito esta dado por
$$k(\alpha)=|K(\alpha)|.$$

\begin{proposicion}
Sea $X$ un conjunto infinito y sea $\mu\in Trans(X)$ una transformación arbitraria. Entonces $\langle Sym(X),\mu \rangle\neq Trans(X)$.
\end{proposicion}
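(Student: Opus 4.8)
The plan is to show that no single transformation $\mu$ can, together with the symmetric group, generate all of $\mathrm{Trans}(X)$ when $X$ is infinite, by exhibiting a cardinal invariant that is ``monotone under multiplication by permutations and by $\mu$'' and that $\mu$ cannot make arbitrarily large. The natural candidate is a pair of invariants: the \emph{defecto} $\mathrm{def}$ and the \emph{índice de contracción infinito} $k$, exactly the parameters introduced just before the statement (and, implicitly, the rango). The key elementary facts I would establish first are: (i) composing with a permutation on either side leaves $\mathrm{def}$, $\mathrm{col}$, $k$ and $\mathrm{ran}$ unchanged (a permutation is a bijection, so it does not create or destroy points outside the image, nor merge kernel classes, nor change their sizes); and (ii) for a composite $\alpha\beta$ one has crude submultiplicativity/subadditivity estimates — e.g. $\mathrm{def}(\alpha\beta)\le \mathrm{def}(\alpha)+\mathrm{def}(\beta)$ and a similar control on $k$ and on $\mathrm{ran}$ from the Proposition \ref{p1}-type bound $\mathrm{ran}(\alpha\beta)\le\min\{\mathrm{ran}(\alpha),\mathrm{ran}(\beta)\}$.

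With these in hand, the argument splits according to what kind of transformation $\mu$ is. First I would dispose of the case where $\mu\in\mathrm{Sym}(X)$: then $\langle \mathrm{Sym}(X),\mu\rangle=\mathrm{Sym}(X)\ne\mathrm{Trans}(X)$ since $X$ has non-injective self-maps (e.g. a constant map). So assume $\mu$ is not bijective. Now consider the behaviour of the relevant invariant along a word $w=\sigma_0\mu\sigma_1\mu\cdots\mu\sigma_n$ with $\sigma_i\in\mathrm{Sym}(X)$. If $\mu$ fails to be surjective, i.e. $\mathrm{def}(\mu)=d\ge 1$, then by (i) and (ii) every element of $\langle\mathrm{Sym}(X),\mu\rangle$ using $\mu$ exactly $n$ times has defect at most $nd$ if $d$ is finite; but since $X$ is infinite there exist transformations of defect $|X|$ (indeed any infinite defect), which cannot be reached — unless $d$ is itself infinite. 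The genuinely delicate situation is therefore when $\mu$ is surjective but not injective: then $\mathrm{def}(\mu)=0$, so defect is useless, and I must instead track how ``badly'' $\mu$ collapses. Here I would use $k$, the infinite contraction index, together with the \emph{rango}. If $\mathrm{ran}(\mu)<|X|$, then by the $\min$-bound every word has range $<|X|$, contradicting the existence of injective non-surjective self-maps (range $|X|$, not bijective). If $\mathrm{ran}(\mu)=|X|$ but $k(\mu)$ is ``small'' (say finite, or just $<|X|$), I would argue that $k$ of a word is controlled by the $k$'s of its letters — composition cannot manufacture more than boundedly many (or $<|X|$-many) fibres of size $|X|$ out of maps each having few such fibres — whereas there exist transformations (e.g. a surjection $X\to X$ all of whose fibres have size $|X|$, using $|X\times X|=|X|$) with $k=|X|$.

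The main obstacle will be pinning down the right inequality in the surjective-non-injective case: the invariant that simultaneously stays bounded under composition with permutations and under a bounded number of applications of a \emph{fixed} $\mu$, yet is unbounded over $\mathrm{Trans}(X)$, has to be chosen carefully, because naive submultiplicativity can fail (two maps with small collapse can compose to one with large collapse, as the earlier $\mathbb{Z}$-example warns for non-units in general). My expectation is that the correct bookkeeping is: a transformation $\tau\in\langle\mathrm{Sym}(X),\mu\rangle$ written with $n$ occurrences of $\mu$ satisfies $\mathrm{def}(\tau)\le n\cdot\mathrm{def}(\mu)$ and $k(\tau)\le n\cdot k(\mu)$ (interpreting sums of cardinals appropriately), while $\mathrm{ran}(\tau)=|X|$ whenever $\mathrm{ran}(\mu)=|X|$; and then, whatever the finite or infinite values of $\mathrm{def}(\mu)$, $k(\mu)$, $\mathrm{ran}(\mu)$, one of these three obstructions rules out some explicit transformation. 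The case analysis is: $\mathrm{ran}(\mu)<|X|$ (range obstruction); $\mathrm{def}(\mu)=|X|$ handled via a transformation of finite nonzero defect, say defect $1$, which cannot be produced from maps of infinite defect together with bijections; and $\mu$ surjective with $k(\mu)<|X|$ (contraction-index obstruction, against a surjection with all fibres of size $|X|$). I would organize the write-up around these three exhaustive subcases, having first proved the permutation-invariance of $\mathrm{ran}$, $\mathrm{def}$, $k$ and the additivity bounds as a short lemma, and flag the additivity bound for $k$ under composition as the one step deserving an explicit (but not long) verification.
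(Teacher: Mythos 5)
Tu estrategia general (invariantes cardinales preservados por permutaciones y controlados al componer con $\mu$) puede llevarse a buen puerto, pero tal como la planteas tiene dos huecos genuinos. El primero y más serio: tu análisis de casos no es exhaustivo. Una $\mu$ sobreyectiva, no inyectiva, con $k(\mu)=|X|$ —por ejemplo una sobreyección cuyas fibras tienen todas cardinal $|X|$, que es exactamente el testigo que tú mismo usas en otro caso— cumple $\mathrm{ran}(\mu)=|X|$, $\mathrm{def}(\mu)=0$ y $k(\mu)=|X|$, de modo que ninguna de tus tres obstrucciones (rango, defecto, índice de contracción) dice nada sobre ella, y ese caso queda simplemente sin tratar. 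El segundo: la desigualdad que señalas como ``verificación corta'', que el $k$ de una palabra está controlado por los $k$ de sus letras, es falsa en general. La fibra de $\alpha\beta$ sobre $z$ tiene tamaño $\sum_{y\in\alpha^{-1}(z)}|\beta^{-1}(y)|$, y cuando $|X|$ es un cardinal singular (digamos $|X|=\aleph_{\omega}$) una suma de menos de $|X|$ fibras, cada una de tamaño $<|X|$, puede alcanzar $|X|$; se puede construir $\mu$ con $k(\mu)=0$ cuyas fibras tienen tamaños $\aleph_{n}$ no acotados y una permutación $\pi$ tales que $k(\mu\pi\mu)=|X|$. Así que la rama basada en $k$ sólo funciona cuando $|X|$ es regular (ahí sí vale $k(\alpha\beta)\leq k(\alpha)+k(\beta)$, porque cada fibra grande del compuesto consume una fibra grande de $\alpha$ o una de $\beta$). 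Las ramas con defecto sí funcionan, aunque nota que para $\mathrm{def}(\mu)=|X|$ necesitas una cota inferior (la imagen de cualquier palabra que contenga a $\mu$ está contenida en la imagen por una permutación de $\mathrm{im}(\mu)$, luego su defecto es $\geq \mathrm{def}(\mu)$), no la subaditividad que enuncias en tu lema.

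El caso que te falta —y de hecho todos a la vez— se cierra con el invariante más simple que nunca explotas: la inyectividad misma, que es el argumento del artículo. Si $\mu$ no es inyectiva, toma $x\neq y$ con $\mu(x)=\mu(y)$; en cualquier palabra el primer factor aplicado que no es permutación es $\mu$, y como lo que se aplica antes es una biyección de $X$ sobre $X$, la palabra identifica dos puntos distintos: todo elemento de $\langle \mathrm{Sym}(X),\mu\rangle$ fuera de $\mathrm{Sym}(X)$ es no inyectivo, y una función inyectiva no sobreyectiva es inalcanzable. Si $\mu$ es inyectiva, toda palabra es inyectiva y las constantes son inalcanzables. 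En cualquier caso $\langle \mathrm{Sym}(X),\mu\rangle\neq \mathrm{Trans}(X)$, sin contabilidad cardinal alguna. Si prefieres conservar tu marco de invariantes, el que cierra el caso sobreyectivo no es $k$ sino el colapso $\mathrm{col}$: es monótono en el sentido de que toda palabra que contiene una $\mu$ no inyectiva tiene $\mathrm{col}\geq 1$, mientras que el objetivo inyectivo no sobreyectivo tiene $\mathrm{col}=0$ y no es permutación.
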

\begin{proof}
Supongamos que$\langle Sym(X),\mu \rangle= Trans(X)$. Sea $\alpha\in Trans(X)$ cualquier mapeo inyectivo de $X$ en un subconjunto propio de $X$. Descompongamos $\alpha$ como el producto de elementos en $Sym(X)\cup \{\mu\}$:
$$\alpha=\beta_{1}\beta_{2}\cdots\beta_{k}.$$

Como $\alpha$ no es una biyección, almenos un elemento $\beta_{i}$ debe ser igual a $\mu$; digamos que $\beta_{j}$ es el primer elemento con dicha cualidad. Entonces:
$$\beta_{j-1}^{-1}\dots \beta_{1}^{-1}\alpha= \mu \beta_{j+1}\cdots \beta_{k}.$$

Notemos que del lado izquierdo tenemos la composición de funciones inyectivas y por consecuencia, como la composición de funciones inyectivas sigue siendo una función inyectiva, $\mu$ debe ser inyectiva también. Entonces $\langle Sym(X),\mu\rangle$ consiste enteramente de funciones inyectivas y por tanto no puede ser todo $Trans(X)$. 
\end{proof}

\begin{proposicion}
Sea $X$ un conjunto infinito y sea $\mathcal{J}$ el conjunto de todas las funciones de $X$ en $X$ de rango $|X|$. Entonces $Trans(X)=\langle \mathcal{J}\rangle$. Definimos pues una transformación $\gamma:X\rightarrow X$ como:
$$\gamma(x)=\left\{ \begin{array}{cc} \alpha \beta^{-1} (x) & x\in Y\\
x & x\in X.\end{array} \right.$$
Como $Z \subseteq Im(\gamma)$ y $|Z|=|X|$ se tiene que $\gamma \in \mathcal{J}$. Ahora, para cualquier $x\in X$ se tiene que $\beta(x)\in Y$. Entonces se tiene que:
$$\gamma(\beta(x))=\alpha \beta^{-1}(\beta(x))=\alpha (x). $$
Asi, $\alpha=\gamma \beta \in \langle \mathcal{J} \rangle$.
\end{proposicion}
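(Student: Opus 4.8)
The plan is to prove $Trans(X)=\langle\mathcal{J}\rangle$ by a double inclusion. The inclusion $\langle\mathcal{J}\rangle\subseteq Trans(X)$ is immediate, since $\mathcal{J}\subseteq Trans(X)$ and $Trans(X)$ is itself a submonoid containing $\mathcal{J}$; note also that $\mathcal{J}$ is a legitimate generating candidate because every permutation — in particular the identity — has image of size $|X|$, so $Sym(X)\subseteq\mathcal{J}$ and $\langle\mathcal{J}\rangle$ is a genuine submonoid. The whole content lies in showing $Trans(X)\subseteq\langle\mathcal{J}\rangle$, i.e. that an arbitrary $\alpha\in Trans(X)$ — which may have small, even finite, rank — factors as a product of maps each of which has rank $|X|$. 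I would do this with a factorization into just two such maps.

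Fix $\alpha\in Trans(X)$. Using that $X$ is infinite, split it as a disjoint union $X=Y\sqcup Z$ with $|Y|=|Z|=|X|$; this is the infinite cardinal identity $|X|=|X|+|X|$ (here a choice principle is invoked, as elsewhere in the paper), and it is the only place the infiniteness of $X$ enters. Pick a bijection $\beta\colon X\to Y$; since $\mathrm{Im}(\beta)=Y$ and $|Y|=|X|$, we get $\mathrm{ran}(\beta)=|X|$, so $\beta\in\mathcal{J}$. Now define $\gamma\colon X\to X$ by $\gamma(x)=\alpha(\beta^{-1}(x))$ for $x\in Y$ and $\gamma(x)=x$ for $x\in Z$, where $\beta^{-1}\colon Y\to X$ is the inverse of $\beta$. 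Since $\gamma$ fixes every point of $Z$, we have $Z\subseteq\mathrm{Im}(\gamma)$, hence $\mathrm{ran}(\gamma)\geq|Z|=|X|$ and so $\gamma\in\mathcal{J}$. Finally, for every $x\in X$ we have $\beta(x)\in Y$, whence $(\gamma\circ\beta)(x)=\gamma(\beta(x))=\alpha(\beta^{-1}(\beta(x)))=\alpha(x)$; thus $\alpha=\gamma\circ\beta\in\langle\mathcal{J}\rangle$. As $\alpha$ was arbitrary, $Trans(X)=\langle\mathcal{J}\rangle$.

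There is no computational obstacle here; the only real idea — and it is a modest one — is the trick of \emph{padding} $\gamma$ with the identity on a complementary piece $Z$ of full cardinality, which forces $\mathrm{ran}(\gamma)=|X|$ for free regardless of how $\alpha$ behaves, together with the observation that routing everything first through a bijection $\beta$ onto a ``half'' of $X$ leaves exactly the room needed for this padding. It is worth stressing the conceptual point: although a composite of two rank-$|X|$ maps can have arbitrarily small (even unit) rank, the submonoid they generate is everything — the direct analogue, inside $Trans(X)$, of the $G$-equivariant factorization argument behind Theorem \ref{cajas}.
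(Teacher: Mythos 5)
Tu propuesta es correcta y sigue esencialmente el mismo camino que la prueba del documento: la misma descomposición $X=Y\sqcup Z$ con $|Y|=|Z|=|X|$, la misma biyección $\beta:X\rightarrow Y$ y la misma función $\gamma$ rellenada con la identidad en $Z$, para concluir $\alpha=\gamma\beta\in\langle\mathcal{J}\rangle$. De hecho tu redacción corrige el pequeño desliz tipográfico del texto (el segundo caso de $\gamma$ debe ser $x\in Z$, no $x\in X$) y hace explícita la inclusión trivial $\langle\mathcal{J}\rangle\subseteq Trans(X)$.
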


\begin{proof}
Sea $\alpha\in Trans(X)$ una transformación arbitraria. Como $X$ es infinito podemos expresarlo como la unión disjunta $X=Y\sqcup Z$, donde $|X|=|Y|=|Z|$. \\
Sea $\beta:X \rightarrow Y$ una biyección cualquiera entre $X$ y $Y$. Como $Im(\beta)=Y$ se tiene que $rank(\beta)=|X|$ y por tanto $\beta\in \mathcal{J}$.
\end{proof}

\begin{teorema}
Sea $X$ un conjunto infinito, sea $\mu\in Trans(X)$ una función inyectiva de defecto $def(\mu)=|X|$ y sea $\nu \in Trans(X)$ una función sobreyectiva con índice de contracción infinito $k(\nu)=|X|$. Entonces $\mathcal{J}\subset \langle Sym(X),\mu, \nu \rangle$.
\end{teorema}

\begin{proof}
Sea pues $\alpha \in \mathcal{J}$ y sea $\Lambda$ un conjunto de índices de cardinalidad $|X|$ y dividamos $\Lambda$ en 2 conjuntos ambos de cardinalidad $|X|$:
$$\Lambda=\Lambda_{1}\cup \Lambda_{2},\ \Lambda_{1} \cap \Lambda_{2}= \emptyset,\ |\Lambda_{1}|=|\Lambda_{2}|=|X|.$$
Como $k(\nu)=|X|$, podemos indexar los bloques de $ker(\nu)$ que tienen cardinal $|X|$ con el conjunto $\Lambda$. Entonces supongamos que $B_{\lambda}$, $\lambda\in \Lambda$, son dichos bloques. \\
Además, como $rank(\alpha)=|X|$ y $|\Lambda_{1}|=|X|$, se sigue que los bloques de $ker(\alpha)$ pueden ser indexados por $\Lambda_{1}$; sean $C_{\lambda}$, $\lambda \in \Lambda_{1}$, la colección de todos los bloques de $ker(\alpha)$. Definimos pues una permutación $\pi$ como sigue. Como $|B_{\lambda}|=|X|$, para cada $\lambda \in \Lambda_{1}$ existe una función inyectiva 
$$\pi_{\lambda}:\mu(C_{\lambda})\rightarrow B_{\lambda}.$$
Sea $$\overline{\pi}=\bigcup_{\lambda\in \Lambda_{1}}{\pi_{\lambda}},$$ 
notación personal
$$\overline{\pi}(x)=\left \{ \begin{array}{cc}  \pi_{\lambda}(x)&  x\in \mu(C_{\lambda})\\
\emptyset & \mbox{otro caso}\end{array} \right. ,$$
entonces es claro que $\overline{\pi}$ es una inyección parcial, por lo que tenemos que 
$$|X\setminus dom(\overline{\pi})|=|X\setminus im(\mu)|=def(\mu)= |X|.$$
Notemos también que $$im(\overline{\pi})\subseteq \bigcup_{\lambda\in \Lambda_{1}}{B_{\lambda}},$$
entonces tenemos que 
$$|X|=|\bigcup_{\lambda\in \Lambda_{2}}{B_{\lambda}}|\leq |X\setminus im(\overline{\pi})|.$$
Entonces $\overline{\pi}$ puede ser extendida a una biyección $\pi\in Sym(X)$.
$$\pi(x)=\left\{ \begin{array}{cc}\overline
{\pi}(x)& x\in dom(\overline{\pi})\\
x& \mbox{otro caso}\end{array} \right.=\left\{ \begin{array}{cc}\pi_{\lambda}(x)& x\in \mu(C_{\lambda})\\
x& \mbox{otro caso.}\end{array} \right.$$

Notemos pues que para cada $\lambda \in \Lambda$, se tiene que la imagen de cada bloque $B_{\lambda}$ bajo $\nu$, $\nu(B_{\lambda})$, es un conjunto unipuntual y podemos identificarlos con el elemento que contiene. Entoces ahora utilizamos el axioma de elección para seleccionar, para cada $\lambda\in \Lambda_{1}$, un elemento $z_{\lambda}$ en $\nu^{-1}(\alpha(C_{\lambda}))$. Notemos que $\nu^{-1}(\alpha(C_{\lambda}))$ es no vacío, debido a que $\nu$ es sobreyectiva. Entonces definimos una función $$\overline{\sigma}:\{\nu(B_{\lambda}): \lambda\in \Lambda_{1}\}\rightarrow X,$$
definida por:
$$\overline{\sigma}(\nu(B_{\lambda}))=z_{\lambda}\ (\lambda\in \Lambda_{1}).$$
Notación personal:
$$\overline{\sigma}: \nu(B_{\lambda})\rightarrow \alpha(C_{\lambda}).$$
Recordemos que $\Lambda=\Lambda_{1}\sqcup \Lambda_{2}$, es decir que si $\lambda\in \Lambda_{2}$, no está en $\Lambda_{1}$ y por tanto $B_{\lambda}$ no pertenece a $dom(\overline{\sigma})$ y por tanto:
$$|X\setminus dom(\overline{\sigma})|=|X|.$$

Notemos que $im(\overline{\sigma})$ contiene a lo mucho un elemento de cada uno de los bloques de $ker(\nu)$. Como $ker(\nu)$ tiene bloques de tamaño $|X|$, se sigue que $$|X\setminus im(\overline{\sigma})|=|X|,$$
y por tanto $\overline{\sigma}$ puede ser extendida a una biyección $\sigma\in Sym(X)$.

$$\sigma(x)=\left\{ \begin{array}{cc}\overline
{\sigma}(x)& x\in dom(\overline{\sigma})\\
x& \mbox{otro caso.}\end{array} \right.$$
Ahora afirmamos que $$\alpha=\nu \sigma \nu \pi \mu. $$
Sea $x\in X$ un elemento arbitrario. Entonces existe un único $\lambda \in \Lambda_{1}$ tal que $x\in C_{\lambda}$. Pero entonces $\mu(x)\in \mu(C_{\lambda})$ y así $\nu \pi \mu(x)\in \nu(\pi_{\lambda}\mu (C_{\lambda}))= \nu (B_{\lambda})$. Se sigue que $\sigma\nu\pi\mu(x)\in \nu^{-1}(\alpha(C_{\lambda}))$ y así 
$$\nu\sigma\nu\pi\mu(x)=\alpha(C_{\lambda})=\alpha(x).$$

\tikzset{every picture/.style={line width=0.75pt}} 

\begin{tikzpicture}[x=0.7pt,y=0.7pt,yscale=-1,xscale=1]

\draw   (21,107) .. controls (21,70.55) and (33.19,41) .. (48.22,41) .. controls (63.26,41) and (75.44,70.55) .. (75.44,107) .. controls (75.44,143.45) and (63.26,173) .. (48.22,173) .. controls (33.19,173) and (21,143.45) .. (21,107) -- cycle ;
\draw   (108.44,107) .. controls (108.44,70.55) and (120.63,41) .. (135.67,41) .. controls (150.7,41) and (162.89,70.55) .. (162.89,107) .. controls (162.89,143.45) and (150.7,173) .. (135.67,173) .. controls (120.63,173) and (108.44,143.45) .. (108.44,107) -- cycle ;
\draw   (197.89,107) .. controls (197.89,70.55) and (210.08,41) .. (225.11,41) .. controls (240.15,41) and (252.33,70.55) .. (252.33,107) .. controls (252.33,143.45) and (240.15,173) .. (225.11,173) .. controls (210.08,173) and (197.89,143.45) .. (197.89,107) -- cycle ;
\draw   (375.33,107) .. controls (375.33,70.55) and (387.52,41) .. (402.56,41) .. controls (417.59,41) and (429.78,70.55) .. (429.78,107) .. controls (429.78,143.45) and (417.59,173) .. (402.56,173) .. controls (387.52,173) and (375.33,143.45) .. (375.33,107) -- cycle ;
\draw   (552.78,109) .. controls (552.78,72.55) and (564.97,43) .. (580,43) .. controls (595.03,43) and (607.22,72.55) .. (607.22,109) .. controls (607.22,145.45) and (595.03,175) .. (580,175) .. controls (564.97,175) and (552.78,145.45) .. (552.78,109) -- cycle ;
\draw    (45,68) .. controls (70.57,46.44) and (108.54,47.93) .. (130.75,67.76) ;
\draw [shift={(132.09,69)}, rotate = 223.67000000000002] [color={rgb, 255:red, 0; green, 0; blue, 0 }  ][line width=0.75]    (10.93,-3.29) .. controls (6.95,-1.4) and (3.31,-0.3) .. (0,0) .. controls (3.31,0.3) and (6.95,1.4) .. (10.93,3.29)   ;
\draw    (136.09,69) .. controls (161.66,47.44) and (199.63,48.93) .. (221.84,68.76) ;
\draw [shift={(223.18,70)}, rotate = 223.67000000000002] [color={rgb, 255:red, 0; green, 0; blue, 0 }  ][line width=0.75]    (10.93,-3.29) .. controls (6.95,-1.4) and (3.31,-0.3) .. (0,0) .. controls (3.31,0.3) and (6.95,1.4) .. (10.93,3.29)   ;
\draw    (337.09,97) .. controls (351.79,70.81) and (367.45,70.28) .. (395.28,71.94) ;
\draw [shift={(397,72.05)}, rotate = 183.51] [color={rgb, 255:red, 0; green, 0; blue, 0 }  ][line width=0.75]    (10.93,-3.29) .. controls (6.95,-1.4) and (3.31,-0.3) .. (0,0) .. controls (3.31,0.3) and (6.95,1.4) .. (10.93,3.29)   ;
\draw   (290,97) -- (337.09,97) -- (337.09,121) -- (290,121) -- cycle ;
\draw   (467,96) -- (514.09,96) -- (514.09,120) -- (467,120) -- cycle ;
\draw  [fill={rgb, 255:red, 0; green, 0; blue, 0 }  ,fill opacity=1 ] (129.05,71) .. controls (129.05,68.77) and (130.86,66.95) .. (133.09,66.95) .. controls (135.33,66.95) and (137.14,68.77) .. (137.14,71) .. controls (137.14,73.23) and (135.33,75.05) .. (133.09,75.05) .. controls (130.86,75.05) and (129.05,73.23) .. (129.05,71) -- cycle ;
\draw  [fill={rgb, 255:red, 0; green, 0; blue, 0 }  ,fill opacity=1 ] (576,70.05) .. controls (576,67.81) and (577.81,66) .. (580.05,66) .. controls (582.28,66) and (584.09,67.81) .. (584.09,70.05) .. controls (584.09,72.28) and (582.28,74.09) .. (580.05,74.09) .. controls (577.81,74.09) and (576,72.28) .. (576,70.05) -- cycle ;
\draw  [fill={rgb, 255:red, 0; green, 0; blue, 0 }  ,fill opacity=1 ] (397,72.05) .. controls (397,69.81) and (398.81,68) .. (401.05,68) .. controls (403.28,68) and (405.09,69.81) .. (405.09,72.05) .. controls (405.09,74.28) and (403.28,76.09) .. (401.05,76.09) .. controls (398.81,76.09) and (397,74.28) .. (397,72.05) -- cycle ;
\draw  [fill={rgb, 255:red, 0; green, 0; blue, 0 }  ,fill opacity=1 ] (220.14,74.05) .. controls (220.14,71.81) and (221.95,70) .. (224.18,70) .. controls (226.42,70) and (228.23,71.81) .. (228.23,74.05) .. controls (228.23,76.28) and (226.42,78.09) .. (224.18,78.09) .. controls (221.95,78.09) and (220.14,76.28) .. (220.14,74.05) -- cycle ;
\draw  [fill={rgb, 255:red, 0; green, 0; blue, 0 }  ,fill opacity=1 ] (40.95,72.05) .. controls (40.95,69.81) and (42.77,68) .. (45,68) .. controls (47.23,68) and (49.05,69.81) .. (49.05,72.05) .. controls (49.05,74.28) and (47.23,76.09) .. (45,76.09) .. controls (42.77,76.09) and (40.95,74.28) .. (40.95,72.05) -- cycle ;
\draw    (235.09,44.27) -- (288.56,95.61) ;
\draw [shift={(290,97)}, rotate = 223.84] [color={rgb, 255:red, 0; green, 0; blue, 0 }  ][line width=0.75]    (10.93,-3.29) .. controls (6.95,-1.4) and (3.31,-0.3) .. (0,0) .. controls (3.31,0.3) and (6.95,1.4) .. (10.93,3.29)   ;
\draw    (232.09,171.27) -- (288.49,122.31) ;
\draw [shift={(290,121)}, rotate = 499.04] [color={rgb, 255:red, 0; green, 0; blue, 0 }  ][line width=0.75]    (10.93,-3.29) .. controls (6.95,-1.4) and (3.31,-0.3) .. (0,0) .. controls (3.31,0.3) and (6.95,1.4) .. (10.93,3.29)   ;
\draw    (412.09,44.27) -- (465.56,95.61) ;
\draw [shift={(467,97)}, rotate = 223.84] [color={rgb, 255:red, 0; green, 0; blue, 0 }  ][line width=0.75]    (10.93,-3.29) .. controls (6.95,-1.4) and (3.31,-0.3) .. (0,0) .. controls (3.31,0.3) and (6.95,1.4) .. (10.93,3.29)   ;
\draw    (409.09,171.27) -- (465.49,122.31) ;
\draw [shift={(467,121)}, rotate = 499.04] [color={rgb, 255:red, 0; green, 0; blue, 0 }  ][line width=0.75]    (10.93,-3.29) .. controls (6.95,-1.4) and (3.31,-0.3) .. (0,0) .. controls (3.31,0.3) and (6.95,1.4) .. (10.93,3.29)   ;
\draw    (570.83,171.93) -- (516.69,121.29) ;
\draw [shift={(515.23,119.92)}, rotate = 403.09000000000003] [color={rgb, 255:red, 0; green, 0; blue, 0 }  ][line width=0.75]    (10.93,-3.29) .. controls (6.95,-1.4) and (3.31,-0.3) .. (0,0) .. controls (3.31,0.3) and (6.95,1.4) .. (10.93,3.29)   ;
\draw    (572.17,44.9) -- (516.41,94.59) ;
\draw [shift={(514.92,95.93)}, rotate = 318.28999999999996] [color={rgb, 255:red, 0; green, 0; blue, 0 }  ][line width=0.75]    (10.93,-3.29) .. controls (6.95,-1.4) and (3.31,-0.3) .. (0,0) .. controls (3.31,0.3) and (6.95,1.4) .. (10.93,3.29)   ;

\draw (84,57) node [anchor=north west][inner sep=0.75pt]    {$\mu $};
\draw (173,58) node [anchor=north west][inner sep=0.75pt]    {$\pi $};
\draw (34,176) node [anchor=north west][inner sep=0.75pt]    {$C_{\lambda }$};
\draw (114,177) node [anchor=north west][inner sep=0.75pt]    {$\mu ( C_{\lambda })$};
\draw (214,178) node [anchor=north west][inner sep=0.75pt]    {$B_{\lambda }$};
\draw (264,54) node [anchor=north west][inner sep=0.75pt]    {$\nu $};
\draw (350,55) node [anchor=north west][inner sep=0.75pt]    {$\sigma $};
\draw (580.05,76.05) node [anchor=north west][inner sep=0.75pt]    {$x$};
\draw (29.05,76.05) node [anchor=north west][inner sep=0.75pt]    {$x$};
\draw (443,57) node [anchor=north west][inner sep=0.75pt]    {$\nu $};
\draw (523,57) node [anchor=north west][inner sep=0.75pt]    {$\alpha $};
\draw (390,77) node [anchor=north west][inner sep=0.75pt]    {$z_{\lambda }$};
\draw (362,175) node [anchor=north west][inner sep=0.75pt]    {$\nu ^{-1}( \alpha ( C_{\lambda }))$};
\draw (470,99) node [anchor=north west][inner sep=0.75pt]    {$\alpha ( C_{\lambda })$};
\draw (293,100) node [anchor=north west][inner sep=0.75pt]    {$\nu ( B_{\lambda })$};
\draw (570.83,175.93) node [anchor=north west][inner sep=0.75pt]    {$C_{\lambda }$};
\draw (261,229) node [anchor=north west][inner sep=0.75pt]    {$\alpha ( C_{\lambda }) =\alpha ( x)$};

\end{tikzpicture}
De aquí se sigue que $Trans(X)=\langle Sym(X),\mu,\nu\rangle$ y así se sigue que $rank(Trans(X):Sym(X))=2$.
\end{proof}

\newpage

\section{Las estructuras de $A^{G}$}
El espacio de configuraciones $A^{G}$ puede dotarse con diversas estructuras. En el estudio de los autómatas celulares, lo cual motivó este proyecto, es necesario dotar de una topología al espacio de configuraciones y estudiar sus propiedades. En esta sección se define y se estudia la topología de $A^{G}$. 
\subsection{La topología prodiscreta de $A^{G}$}
\subsubsection{La topología producto}

\begin{definicion}{(Topología producto)}\label{producto}

Sea $\{X_{\lambda}\}_{\lambda \in I}$ una familia de espacios topológicos indexados por un conjunto $I$, la topología básica del producto cartesiano $X=\prod_{\lambda \in I}{X_{\lambda}}$ asociada a los mapeos proyección $\pi_{\lambda} : X \rightarrow X_{\lambda}$ es llamada la \emph{topología producto} de $X$. 
\end{definicion}

 Ésta es la topología más pequeña para la cual las proyecciones son continuas. Definimos esta topología por su subbase, la cual es la familia 
$$\{\pi^{-1}_{\lambda}(U_{\lambda}): U_{\lambda} \mbox{ es un abierto en } X_{\lambda}\}.$$
En el caso donde cada $X_{\lambda}$ está dotada con su topología discreta, la topología producto en $X=\prod_{\lambda \in I}{X_{\lambda}}$ recibe el nombre de \emph{topología prodiscreta}.

\begin{proposicion}\label{hausdorff}
Sea $\{X_{\lambda}\}_{\lambda \in I}$ una familia de espacios topológicos Hausdorff. Entonces $X=\prod_{\lambda \in I}{X_{\lambda}}$ es Hausdorff para la topología producto.
\end{proposicion}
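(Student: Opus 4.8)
The plan is to prove the statement by the usual argument for products of Hausdorff spaces, working directly with the subbasic open sets that define the product topology. First I would take two distinct points $x = (x_\lambda)_{\lambda\in I}$ and $y = (y_\lambda)_{\lambda\in I}$ in $X = \prod_{\lambda\in I} X_\lambda$. Since $x \neq y$, there is at least one index $\mu \in I$ for which $x_\mu \neq y_\mu$ in $X_\mu$.

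Next, since $X_\mu$ is Hausdorff, I would choose disjoint open sets $U_\mu, V_\mu \subseteq X_\mu$ with $x_\mu \in U_\mu$, $y_\mu \in V_\mu$, and $U_\mu \cap V_\mu = \emptyset$. Then I would pull these back along the projection $\pi_\mu : X \to X_\mu$, setting $U := \pi_\mu^{-1}(U_\mu)$ and $V := \pi_\mu^{-1}(V_\mu)$. By the definition of the product topology (Definición \ref{producto}), $U$ and $V$ are open in $X$ because they are subbasic open sets; clearly $x \in U$ and $y \in V$.

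Finally, I would check that $U \cap V = \emptyset$: if some $z \in U \cap V$, then $\pi_\mu(z) \in U_\mu \cap V_\mu$, contradicting the disjointness of $U_\mu$ and $V_\mu$. Hence $U$ and $V$ are disjoint open neighbourhoods of $x$ and $y$ respectively, so $X$ is Hausdorff.

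There is no real obstacle here — the only subtlety worth stating carefully is that one uses a \emph{single} coordinate $\mu$ where the points differ and that the preimage under one projection is already open (one does not need a basic open box involving finitely many coordinates). As a remark, the same proof specializes to the prodiscrete topology: since every discrete space is Hausdorff, $A^G = \prod_{g\in G} A$ is Hausdorff in the prodiscrete topology, which is exactly the fact invoked in the proof of Teorema \ref{invertibles}.
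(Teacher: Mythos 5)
Tu propuesta es correcta y sigue esencialmente el mismo argumento que la demostración del documento: se elige un índice $\lambda_{0}$ donde las coordenadas difieren, se separan $x_{\lambda_{0}}$ y $y_{\lambda_{0}}$ por abiertos disjuntos en $X_{\lambda_{0}}$ y se toman sus preimágenes bajo la proyección $\pi_{\lambda_{0}}$, que son abiertos disjuntos en el producto. La verificación explícita de la disyunción vía $\pi_{\lambda_{0}}(z)\in U_{\lambda_{0}}\cap V_{\lambda_{0}}$ y la observación sobre el caso prodiscreto son detalles adicionales válidos que no cambian el enfoque.
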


\begin{proof}
Sean $x=(x_{\lambda})$ y $y=(y_{\lambda})$ puntos distintos de $X$. Entonces existe un $\lambda_{0} \in I$ para el cual $x_{\lambda_{0}}\neq y_{\lambda_{0}}$. Como $X_{\lambda_{0}}$ es Hausdorff, podemos encontrar conjuntos abiertos $U, V \subseteq X_{\lambda_{0}}$ tales que $U\cap V =\emptyset$, que contienen a $x_{\lambda_{0}}$ y a $y_{\lambda_{0}}$ respectivamente. Entonces las preimágenes de $U$ y $V$ bajo el mapeo proyección $\pi_{\lambda_{0}}:\prod_{\lambda \in I}{X_{\lambda}} \rightarrow X_{\lambda_{0}}$ son conjuntos disjuntos y abiertos en $X$ que contienen a $x$ y $y$. Por lo tanto $X$ es un espacio topológico de Hausdorff.
\end{proof}

\begin{teorema}{(Teorema de Tychonoff)} \label{tychonoff}

Sea $(X_{\lambda})_{\lambda \in I}$ una familia de espacios topológicos. Entonces el espacio producto $X=\prod_{\lambda \in I}X_{\lambda}$ es un espacio topológico compacto si y solo si cada espacio topológico $X_{\lambda}$ es compacto.
\end{teorema}

Para una demostración detallada puede consultar \cite[capítulo 5]{Mun}.\\

\begin{proposicion}
Sea $\{X_{\lambda}\}_{\lambda \in I}$ una familia de espacios topológicos. Supongamos que $F_{\lambda}$ es un conjunto cerrado de $X_{\lambda}$ para cada $\lambda \in I$. Entonces el conjunto $$F=\prod_{\lambda \in I}{F_{\lambda}}$$ es un conjunto cerrado de $X=\prod_{\lambda \in I}{X_{\lambda}}$ para la topología producto.
\end{proposicion}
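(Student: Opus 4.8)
The statement to prove is that if $\{X_\lambda\}_{\lambda\in I}$ is a family of topological spaces and $F_\lambda\subseteq X_\lambda$ is closed for each $\lambda$, then $F=\prod_{\lambda\in I}F_\lambda$ is closed in $X=\prod_{\lambda\in I}X_\lambda$ for the product topology.

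\begin{proof}[Demostración]
La idea es mostrar que el complemento $X\setminus F$ es abierto, exhibiendo para cada punto de $X\setminus F$ un abierto básico de la topología producto que lo contiene y que no interseca a $F$. Primero recordaría la descripción de $F$: un punto $x=(x_\lambda)_{\lambda\in I}$ pertenece a $F$ si y solo si $x_\lambda\in F_\lambda$ para todo $\lambda\in I$. Por lo tanto, si $x\notin F$, existe al menos un índice $\lambda_0\in I$ tal que $x_{\lambda_0}\notin F_{\lambda_0}$, es decir $x_{\lambda_0}\in X_{\lambda_0}\setminus F_{\lambda_0}$.

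El siguiente paso es usar que $F_{\lambda_0}$ es cerrado en $X_{\lambda_0}$, de modo que $U:=X_{\lambda_0}\setminus F_{\lambda_0}$ es abierto en $X_{\lambda_0}$. Considero entonces la preimagen $\pi_{\lambda_0}^{-1}(U)$ bajo la proyección $\pi_{\lambda_0}:X\to X_{\lambda_0}$. Por la Definición \ref{producto}, $\pi_{\lambda_0}^{-1}(U)$ es un elemento de la subbase de la topología producto y, en particular, es abierto en $X$. Además $x\in \pi_{\lambda_0}^{-1}(U)$ pues $\pi_{\lambda_0}(x)=x_{\lambda_0}\in U$.

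Finalmente, verificaría que $\pi_{\lambda_0}^{-1}(U)\cap F=\emptyset$: si $y\in \pi_{\lambda_0}^{-1}(U)$, entonces $y_{\lambda_0}=\pi_{\lambda_0}(y)\in U=X_{\lambda_0}\setminus F_{\lambda_0}$, luego $y_{\lambda_0}\notin F_{\lambda_0}$ y por la descripción de $F$ se concluye $y\notin F$. Así, cada punto de $X\setminus F$ posee un entorno abierto contenido en $X\setminus F$, de donde $X\setminus F$ es abierto y $F$ es cerrado. No anticipo ningún obstáculo serio; el único punto que conviene enunciar con cuidado es que $\pi_{\lambda_0}^{-1}(U)$ es abierto, lo cual es inmediato de la definición de la topología producto como la menor topología que hace continuas a las proyecciones (equivalentemente, la generada por la subbase indicada).
\end{proof}
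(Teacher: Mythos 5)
Tu demostración es correcta, pero sigue una ruta distinta de la del texto. El texto prueba la identidad $F=\bigcap_{\lambda\in I}\pi_{\lambda}^{-1}(F_{\lambda})$ por doble contención y concluye observando que cada $\pi_{\lambda}^{-1}(F_{\lambda})$ es cerrado en $X$ (las proyecciones son continuas) y que una intersección arbitraria de cerrados es cerrada. Tú, en cambio, trabajas con el complemento: para cada $x\notin F$ eliges un índice $\lambda_{0}$ con $x_{\lambda_{0}}\notin F_{\lambda_{0}}$ y usas el abierto subbásico $\pi_{\lambda_{0}}^{-1}(X_{\lambda_{0}}\setminus F_{\lambda_{0}})$ como entorno de $x$ disjunto de $F$, de modo que $X\setminus F$ es abierto. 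Ambos argumentos descansan en el mismo hecho (la preimagen bajo una proyección de un abierto/cerrado del factor es abierta/cerrada), pero lo que cada uno aporta es distinto: la versión del texto deja como subproducto la identidad $F=\bigcap_{\lambda}\pi_{\lambda}^{-1}(F_{\lambda})$, que es compacta y reutilizable, mientras que la tuya es más elemental y local, pues solo usa la descripción de la subbase y la definición de cerrado como complemento de abierto, evitando la verificación por doble contención. En tu redacción conviene únicamente dejar explícito, como de hecho señalas al final, que $\pi_{\lambda_{0}}^{-1}(U)$ es abierto por ser elemento de la subbase de la topología producto; con eso el argumento queda completo.
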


\begin{proof}
Bastará con demostrar que $$F=\bigcap_{\lambda \in I} \pi^{-1}_{\lambda}(F_{\lambda}).$$

Sea $x=(x_{\lambda}) \in F$. Por definición $x_{\lambda} \in F_{\lambda}$. \\
Debemos ver que $$x\in \pi^{-1}_{\lambda}(\{x_{\lambda}\}), \hspace{0.1in} \forall \lambda \in I,$$ entonces $$x \in \bigcap_{\lambda \in I}{\pi^{-1}_{\lambda}(\{x_{\lambda}\})}.$$
Luego tenemos que\\
\begin{center} 
$ \begin{array}{c}
x_{\lambda} \in F_{\lambda}, \hspace{0.1in}\forall \lambda \in I,\\
 \pi^{-1}_{\lambda}(\{x_{\lambda}\}) \subseteq \pi^{-1}_{\lambda}(F_{\lambda}),\\
\bigcap_{\lambda \in I} \pi^{-1}_{\lambda}(\{x_{\lambda}\}) \subseteq \bigcap_{\lambda \in I} \pi^{-1}_{\lambda}(F_{\lambda}).\end{array}$
\end{center}
Así $$x \in  \bigcap_{\lambda \in I} \pi^{-1}_{\lambda}(F_{\lambda})$$ 
y tenemos que \\
$$\prod_{\lambda \in I}{F_{\lambda}}\subseteq \bigcap_{\lambda \in I}{\pi^{-1}_{\lambda}}{F_{\lambda}}.$$
Ahora sea $$x \in \bigcap_{\lambda \in I}{\pi^{-1}_{\lambda}(F_{\lambda})},$$
tenemos que \\
\begin{center}
$\begin{array}{c}
  x \in \pi^{-1}_{\lambda}(F_{\lambda}), \hspace{0.1in} \forall \lambda \in I, \\
\pi_{\lambda}(x)=x_{\lambda} \in F_{\lambda}, \hspace{0.1in} \\
 x \in \prod_{\lambda \in I}{F_{\lambda}}.
\end{array}$
\end{center}
Lo cual implica que\\  
$$ F=\bigcap_{\lambda \in I} \pi^{-1}(F_{\lambda}).$$\\
Entonces, como $\pi^{-1}(F_{\lambda})$ es un cerrado en $X$, $\forall \lambda \in I$, $F$ es la intersección arbitraria de conjuntos cerrados en $X$ y por lo tanto es cerrado en la topología producto.

\end{proof}

\subsubsection{Caracterización de la topología producto}

\begin{teorema}{(Topología producto)}\label{tproducto}

Sea $\{X_{\lambda}\}_{\lambda \in I}$ una familia de espacios topológicos indexados por el conjunto $I$, dado el producto cartesiano $X=\prod_{\lambda \in I}{X_{\lambda}}$ y las proyecciones $\pi_{\lambda}: X \rightarrow X_{\lambda}$. Las siguientes afirmaciones son equivalentes y definen la topología producto de $X$:
\begin{enumerate}
\item[i)] $\tau$ es la topología producto.
\item[ii)] $\tau$ tiene como base la familia $$\left\{\bigcap_{k=1}^{n}\pi^{-1}_{\lambda_{k}}(U_{\lambda_{k}}): U_{\lambda_{k}} \mbox{ es abierto en } X_{\lambda}\right\}.$$
\item[iii)] $\tau$ es la topología en la cual los abiertos son de la forma $$U=\prod_{\lambda\in I}{U_{\lambda}},$$
donde $U_{\lambda}\subset X_{\lambda}$ es abierto y además $U_{\lambda} \neq X_{\lambda}$ solo para un número finito de índices $\lambda \in I$. 
\item[iv)] $\tau$ es la topología más pequeña en la cual todas las proyecciones son continuas.
\end{enumerate}
\end{teorema}

Estas se siguen directamente de las definiciones de base, subbase y topología generada por éstas. Recordemos que la intersección finita de elementos de una subbase genera la base para la topología y que los abiertos en una topología generada por una base son las uniones arbitrarias de elementos de ésta. Además, que sea la topología más pequeña solo nos dice que en cualquier otra topología en la cual las funciones $\pi_{\lambda}$ son continuas, la topología producto debe estar contenida en ésta, es decir que los conjuntos de la forma $\pi^{-1}_{\lambda}(U_{\lambda})$ están contenidos en ésta otra topología.\\

Sea $G$ un grupo y $A$ un conjunto, al que nos referiremos como alfabeto, es de particular interés el espacio de configuraciones, esto es, el conjunto de las funciones $x: G \rightarrow A$. Entonces definimos el espacio de configuraciones como $$A^{G}:=\{\alpha : G \rightarrow A\}.$$
Toda función en $A^{G}$ puede representarse como una túpla de elementos de $A$ como se muestra a continuación,

$$x \in A^{G}, \hspace{0.2in} x=(...x(g_{0}),x(g_{1}),x(g_{2}),...), \hspace{0.2in} g_{i}\in G,$$
notando que $x(g_{i})$ son elementos del alfabeto.\\

En particular podemos ver un elemento de la manera $x=(...a_{0},a_{1},a_{2},...)$ donde es fácil ver que $x(g_{i})=a_{i}$.\\
Esta representación funciona cuando $G$ es un conjunto contable, no lo generalizaremos, solo utilizamos esta notación para generar un mejor entendimiento del concepto de autómata celular.

\begin{lema}
Cualquier conjunto unipuntual no es abierto.
\end{lema}

\begin{proof}
Sabemos que lo abiertos en la topología producto son de la forma $$U=\prod_{\lambda \in \ I} U_{\lambda},$$ donde $U_{\lambda} \subset X_{\lambda}$ es abierto y $U_{\lambda} \neq X_{\lambda}$ para un número finito de índices $\lambda \in I$.\\

Es por eso que dada una configuración $x=(a_{\lambda})_{\lambda\in I} \in A^{G}$, el conjunto unipuntual ${x}$ no es un conjunto abierto.\\

Por ejemplo en $A^{\mathbb{Z}}$, siendo $A=\{0,1\}$, la configuración $x=(...0,0,0,0,...)$ es una configuración para la cual cada entrada está fija con el elemento $0$.\\
Podemos ver entonces que ${x}\in \Pi_{n}^{-1}\{0\}$ $\forall n \in \mathbb{Z}$. Entonces $${x}=\pi_{0}^{-1}\{0\}\cap\pi_{1}^{-1}\{0\}\cap\pi_{-1}^{-1}\{0\}\cap\pi_{2}^{-1}\{0\}\cap\pi_{-2}^{-1}\{0\}\cap...=\bigcap_{n\in \mathbb{Z}}\pi_{n}^{-1}\{0\},$$  pero esto no es la intersección finita de elementos $\Pi_{n}^{-1}\{0\}$ y no puede ser descrito como tal. Entonces el conjunto unipuntual ${x}$ no puede ser un conjunto abierto.
\end{proof}

Este ejemplo nos genera una deducción muy importante. La topología prodiscreta es la topología obtenida del producto de espacios equipados con la topología discreta. Y recordando que la topología discreta es aquella en la que todos los conjuntos son abiertos y también cerrados. Este ejemplo nos muestra que la topología prodiscreta para $A^{G}$ no es precisamente la topología discreta.

\subsubsection{La topología prodiscreta de $A^{G}$}

En esta sección visualizaremos la forma de la topología prodiscreta para $A^{G}$ y además mostraremos algunos resultados sencillos sobre esta topología.\\

Recordando que el Teorema \ref{tproducto} nos genera definiciones equivalentes para la topología producto, entonces podemos reestructurar la topología del espacio de configuraciones $A^{G}$.\\

Por la definición de la topología prodiscreta, tenemos que su subbase está dada por $$\{\pi^{-1}_{\lambda}(U_{\lambda}): U_{\lambda} \mbox{ es un abierto en } X_{\lambda}\},$$
pero sabemos que en $A^{G}$ se tiene que  $X_{\lambda}=A$ con la topología discreta para todo $\lambda$. Entonces la subbase está dada por $$\{\pi^{-1}_{\lambda}(U):U\subseteq A\}.$$
Recordemos que una configuración está dada como $$x=(...,x(g_{\lambda}),...),$$ donde indexamos cada elemento de $G$ como $g_{\lambda}$. Entonces si $x\in \pi^{-1}_{g}(U)$, nos referimos a las configuraciones tales que $x(g)\in U$.\\
Por otro lado, un abierto $U$ de $A$ no es otra cosa que un conjunto finito de elementos de $A$. Entonces para un abierto $U=\{a_{1},...,a_{n}\}$ podemos reescribirlo como $$U=\bigcup_{k=1}^{n}\{a_{k}\}.$$
Es decir que la familia de conjuntos unipuntuales $\{a_{k}\}$ genera a cualquier abierto en $A$.  Y por consiguiente la familia  $\{\pi_{k}^{-1}(\{a_{k}\})\}$ genera la topología prodiscreta en $A^{G}$. Como $a_{k}=x(g_{k})$ para algún $x\in A^{G}$ y algún $g_{k}\in G$, podemos escribir la subbase para $A^{G}$ como la familia $$\{\pi_{g}^{-1}(x):x\in A^{G}, g\in G\}$$
más aún, los abiertos son las uniones de intersecciones finitas de elementos de esta colección.
$$U=\bigcup_{\alpha}\left(\bigcap_{i=1}^{k}\pi^{-1}_{g_{i}}(x_{\alpha})\right).$$

\begin{proposicion}{(La estructura de $A^{G}$)}

Sea $G$ un grupo y $A$ el conjunto al que llamamos alfabeto.  El espacio de configuraciones $A^{G}$ es Hausdorff y completamente desconectado.
\end{proposicion}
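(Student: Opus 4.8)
La afirmación tiene dos partes: que $A^{G}$ es Hausdorff y que es completamente desconectado (en la topología prodiscreta). La estrategia natural es reducir ambas propiedades a las propiedades correspondientes de los factores, que aquí son todos copias de $A$ con la topología discreta. Para la parte Hausdorff, lo primero que haré es invocar directamente la Proposición \ref{hausdorff}: basta observar que todo espacio discreto es Hausdorff (dados dos puntos distintos $a\neq b$ de $A$, los conjuntos unipuntuales $\{a\}$ y $\{b\}$ son abiertos disjuntos que los separan), y por lo tanto, siendo $A^{G}=\prod_{g\in G}A$ un producto de espacios Hausdorff, es Hausdorff para la topología producto. Esto ocupa apenas un par de líneas.

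Para la parte de conexión total, el plan es mostrar que las únicas componentes conexas de $A^{G}$ son los puntos, equivalentemente, que todo subconjunto con al menos dos elementos es disconexo. Dados $x,y\in A^{G}$ con $x\neq y$, existe $g_{0}\in G$ tal que $x(g_{0})\neq y(g_{0})$. Consideraré el conjunto $C=\{a\in A : a = x(g_{0})\}=\{x(g_{0})\}$; como $A$ tiene la topología discreta, $C$ es a la vez abierto y cerrado en $A$. Entonces $\pi_{g_{0}}^{-1}(C)$ es un subconjunto abierto y cerrado (``clopen'') de $A^{G}$ que contiene a $x$ pero no a $y$. Esto da una separación por clopen de cualquier par de puntos distintos, lo que muestra que $A^{G}$ es totalmente separado, y en particular completamente desconectado (toda componente conexa es un punto). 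Conviene apoyarse en la caracterización iii) del Teorema \ref{tproducto}, o simplemente en el hecho de que las proyecciones $\pi_{g}$ son continuas (caracterización iv)), para justificar que $\pi_{g_{0}}^{-1}(C)$ es clopen: es la preimagen continua de un clopen.

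El único punto que requiere un poco de cuidado conceptual —y que probablemente sea el ``obstáculo'' principal, aunque menor— es no confundir la topología prodiscreta con la topología discreta sobre $A^{G}$: ya se señaló en el excerpt que los conjuntos unipuntuales de $A^{G}$ no son abiertos cuando $G$ es infinito, así que la conexión total \emph{no} se sigue trivialmente de que $A^{G}$ sea discreto (no lo es). La clave es que basta \emph{un solo} factor para separar: un clopen en una sola coordenada ya es clopen en el producto, y eso no exige ninguna condición de finitud. Por tanto cerraré la demostración enfatizando que la disconexión total se obtiene exhibiendo, para cada par $x\neq y$, el clopen $\pi_{g_{0}}^{-1}(\{x(g_{0})\})$, y que la propiedad Hausdorff se obtiene observando que el mismo tipo de conjunto, siendo abierto, separa $x$ de $y$ — de hecho ambas partes del enunciado salen de la misma observación elemental sobre los factores discretos.
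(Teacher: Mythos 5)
Tu propuesta es correcta. La parte de Hausdorff coincide esencialmente con la del texto: ambas invocan la Proposición \ref{hausdorff} tras observar que cada factor discreto $A$ es Hausdorff. En la parte de desconexión total, sin embargo, tomas una ruta distinta a la del documento: la prueba del texto se limita a citar el hecho general de que el producto de espacios completamente desconectados es completamente desconectado, remitiendo a \cite{Mun}, mientras que tú das un argumento directo y autocontenido: para $x\neq y$ eliges $g_{0}\in G$ con $x(g_{0})\neq y(g_{0})$ y exhibes el conjunto $\pi_{g_{0}}^{-1}(\{x(g_{0})\})$, que es abierto y cerrado por ser preimagen continua de un clopen del factor discreto, contiene a $x$ y no a $y$; esto muestra que $A^{G}$ es totalmente separado y, en particular, que sus componentes conexas son puntos. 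Lo que gana el texto es brevedad (delega en un resultado estándar de productos); lo que gana tu argumento es que no usa ninguna caja negra, demuestra de paso una propiedad ligeramente más fuerte (separación total por clopens), y emplea precisamente los cilindros $\pi_{g}^{-1}(\{a\})$ que forman la subbase de la topología prodiscreta, lo cual encaja bien con el resto de la sección; además es acertada tu advertencia de que la conclusión no puede obtenerse alegando que $A^{G}$ sea discreto, pues no lo es cuando $G$ es infinito.
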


\begin{proof}
Como lo prueba la Proposición \ref{hausdorff}, el producto de espacios de Hausdorff es Hausdorff. También el producto de espacios completamente desconectados es completamente desconectado. Para verificar este hecho ver \cite{Mun}.
\end{proof}

\begin{lema}
Sea $G$ un grupo finito, entonces la topología prodiscreta para $A^{G}$ es precisamente la topología discreta de éste.
\end{lema}

\subsubsection{La métrica en $A^{G}$}
En este apartado mostraremos que es posible definir una métrica para el espacio de configuraciones $A^{G}$ y no solo que sea posible definirla, sino que además la topología generada por dicha métrica es precisamente la topología prodiscreta de $A^{G}$. Gran parte del material que se necesita como base para el desarrollo de esta sección se encuentra en materiales como \cite{krey}, \cite{Lip} y \cite{Mun}.\\

\begin{definicion}{(Métrica)}

Sea $X$ un conjunto. Una métrica para $X$ es una función $d:X\times X \rightarrow \mathbb{R}$ que satisface las siguientes propiedades:
\begin{enumerate}
\item[i)] $d(x,y)>0$, $\forall x,y, \in X$.
\item[ii)] $d(x,y)=0\ \Leftrightarrow \  x=y$.
\item[iii)] $d(x,y)=d(y,x)$, $\forall x,y\in X$.
\item[iv)] $d(x,z)\leq d(x,y)+d(y,z)$, $\forall x,y,z \in X$.
\end{enumerate}
\end{definicion}

Llamamos \emph{bola abierta} con centro en $x$ y radio $r$ a el conjunto definido a continuación:  $$B(x,r):= \{y\in X:\ d(x,y)<r \}.$$

\begin{proposicion}
Dado un punto $x$ en un espacio métrico $M$, sean $r_{1}<	r_{2}$, entonces $$B(x,r_{1})\subset B(x,r_{2}).$$
\end{proposicion}

\begin{proof}
Dado un punto $y\in B(x,r_{1})$ se tiene que $d(x,y)<	r_{1}<	r_{2}$, por lo tanto $y\in B(x,r_{2})$.

\end{proof}
\begin{teorema}{(Topología generada por una métrica)}

Sea $X$ un conjunto con una métrica $d$. Entonces la familia $$\tau_{d}:=\{U\subseteq X: \ \forall x\in U, \  \exists r>0 : \ B(x,r)\subset U\}$$ es una topología para $X$.
\end{teorema}

\begin{proof}
El conjunto vacío y todo el espacio $X$ pertenecen trivialmente a la familia $\tau_{d}$. Ahora, consideremos una colección arbitraria $\{U_{\alpha}\}$ de conjuntos en $\tau_{d}$. Entonces para $x\in \bigcup_{\alpha}U_{\alpha}$, sabemos que $x\in U_{\alpha}$ para algún $\alpha$. 
Esto significa que existe $r>0$ tal que $B(x,r) \subset U_{\alpha}$ y por transitividad tenemos que $$B(x,r)\subset \bigcup_{\alpha}U_{\alpha}. $$

Ahora dados dos conjuntos $U_{1},U_{2}\in \tau_{d}$ y dado $x\in U_{1}\cap U_{2}$ tenemos que $\exists r_{1},r_{2}>0$ tales que $B(x,r_{1})\subset U_{1}$ y $B(x,r_{2})\subset U_{2})$. Podemos suponer sin pérdida de generalidad que $r_{1}<r_{2}$. Entonces, por la proposición anterior, tenemos que $$B(x,r_{1})\subset B(x,r_{2})\subset U_{2}.$$
Y finalmente vemos que $B(x,r_{1})\subset U_{1}\cap U_{2}$.
\end{proof}

Definimos pues una métrica para $A^{G}$.\\

Sean $A$ un conjunto y $G$ un grupo contable. Tomamos una familia de subconjuntos finitos $\{B_{n}\}_{n\in \mathbb{N}}$ de $G$ con la condición de que $$\emptyset=B_{0} \subseteq B_{1} \subseteq B_{2} \subseteq \ldots \subseteq B_{n} \subseteq \ldots $$ y además que cumple que $$\bigcup_{n=0}^{\infty} B_{n} = G.$$

Entonces para dos configuraciones $x,y\in A^{G}$ definimos una métrica $d:A^{G}\times A^{G} \rightarrow \mathbb{R}$:

 $$d(x,y)= \left\{ \begin{array}{cc} 0 & x=y \\ 2^{-k} & x\neq y \end{array} \right. ,$$
donde $k=max\{n\in \mathbb{N}: x|_{B_{n}}=y|_{B_{n}}\}$.\\

Observemos que $k$ es una cantidad entera que describe el punto más \emph{grande} dentro de la cadena de elementos de $\{B_{n}\}$ en donde $x$ y $y$ coinciden totalmente.\\

Únicamente para el siguiente lema y la Proposición \ref{metrica}, utilizaremos la siguiente notación. Sean $x,y\in A^{G}$ dos configuraciones, entonces denotaremos por $$n_{xy}=max\{n\in \mathbb{N}: x|_{B_{n}}=y|_{B_{n}}\}.$$

\begin{lema}\label{superlema}
Dadas tres configuraciones $x,y,z\in A^{G}$. Entonces una de las siguientes tres afirmaciones es verdadera:
\begin{enumerate}
\item[i)] $n_{xy}=n_{xz}\leq n_{yz}$.
\item[ii)] $n_{xy}=n_{yz}\leq n_{xz}$.
\item[iii)] $n_{yz}=n_{xz}\leq n_{xy}$.
\end{enumerate}
\end{lema}
\begin{proof}
Bastará con verificar uno de los casos, pues los otros 2 son casos completamente análogos, ordenando de manera adeacuada las configuraciones  $x,$, $y$ y $z$.\\
Supongamos pues que $$n_{xy}\neq n_{xz}> n_{yz}.$$
Sin perder generalidad, una de las dos cantidades de la izquierda debe ser estrictamente mayor que la otra. Supongamos pues que $n_{xy} >n_{xz}>n_{yz}$. Entonces existe $g \in B_{n_{xz}}$ tal que $x(g)=z(g)$. Sin embargo, $g\notin B_{n_{yz}}$, lo cual implica que $y(g) \neq z(g)$. Por otro lado $g\in B_{n_{xy}}$ y esto significa que $x(g)=y(g)$, lo cual es una contradicción. Por lo tanto lo supuesto no es cierto y se tiene que $$n_{xy}=n_{xz}\leq n_{yz}.$$
\end{proof}

\begin{proposicion}{(Métrica)}\label{metrica}

La función $d:A^{G}\times A^{G} \rightarrow [0,1]$, es precisamente una métrica para $A^{G}$.
\end{proposicion}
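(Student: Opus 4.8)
The plan is to verify the four metric axioms directly from the definition $d(x,y)=2^{-n_{xy}}$ (with the convention $d(x,y)=0$ when $x=y$, corresponding to $n_{xy}=\infty$). The first two axioms are essentially immediate: positivity holds because $2^{-k}>0$ for every $k\in\mathbb{N}$ and because $d(x,y)=0$ is declared exactly when $x=y$; and axiom (ii), the identity of indiscernibles, is built into the definition of $d$ by cases, together with the observation that if $x\neq y$ then, since $\bigcup_n B_n=G$, there is some $g\in G$ with $x(g)\neq y(g)$, hence $g\in B_m$ for some $m$, so $n_{xy}$ is a well-defined finite natural number and $d(x,y)=2^{-n_{xy}}>0$. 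Symmetry (axiom (iii)) is clear because the condition $x|_{B_n}=y|_{B_n}$ is symmetric in $x$ and $y$, so $n_{xy}=n_{yx}$ and therefore $d(x,y)=d(y,x)$.

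The only substantive point is the triangle inequality, $d(x,z)\le d(x,y)+d(y,z)$, and here is where Lemma \ref{superlema} does all the work. First I would dispose of the degenerate cases where two of the three configurations coincide (then the inequality reduces to $d\le d$ or $d\le d+0$, both trivial), so assume $x,y,z$ are pairwise distinct and $n_{xy},n_{yz},n_{xz}$ are all finite. By Lemma \ref{superlema}, the minimum of the three indices is attained by (at least) two of them; in every one of the three cases listed in the lemma one reads off that
$$n_{xz}\ \ge\ \min\{n_{xy},\,n_{yz}\}.$$
Applying the decreasing function $t\mapsto 2^{-t}$ gives $d(x,z)=2^{-n_{xz}}\le 2^{-\min\{n_{xy},n_{yz}\}}=\max\{2^{-n_{xy}},2^{-n_{yz}\}}=\max\{d(x,y),d(y,z)\}$, which is even stronger than what we need: $d$ is in fact an \emph{ultrametric}, and in particular
$$d(x,z)\ \le\ \max\{d(x,y),d(y,z)\}\ \le\ d(x,y)+d(y,z),$$
since both summands are nonnegative. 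This completes the verification of axiom (iv).

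I do not anticipate any real obstacle; the one place where care is needed is making sure the case analysis of Lemma \ref{superlema} is invoked correctly — namely that one genuinely concludes $n_{xz}\ge\min\{n_{xy},n_{yz}\}$ from whichever of the three alternatives holds, rather than some weaker statement. It is also worth remarking explicitly that the proof uses the nesting $B_0\subseteq B_1\subseteq\cdots$ only through Lemma \ref{superlema} (whose proof already uses it), and uses $\bigcup_n B_n=G$ only to guarantee finiteness of $n_{xy}$ for $x\neq y$; both hypotheses on the family $\{B_n\}$ are thus accounted for. Finally I would note, for later use in identifying $\tau_d$ with the prodiscrete topology, that the open ball $B(x,2^{-k})$ is precisely the cylinder $\{y\in A^G: y|_{B_k}=x|_{B_k}\}$, but that identification belongs to the next proposition rather than to this one.
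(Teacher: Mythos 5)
Tu propuesta es correcta y sigue esencialmente el mismo camino que la demostración del texto: verificación directa de los axiomas y uso del Lema \ref{superlema} para la desigualdad triangular. La única diferencia es de presentación: en lugar del análisis por casos del texto, condensas las tres alternativas del lema en la desigualdad única $n_{xz}\geq \min\{n_{xy},n_{yz}\}$, obteniendo de paso que $d$ es una ultramétrica, lo cual es una observación válida y ligeramente más fuerte que lo requerido.
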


\begin{proof}
Es fácil ver que con $k\in \mathbb{Z}$, $2^{-k}$ es una cantidad positiva y por la manera en la que se define $d$ se cumple que $d(x,y)=0$ $\Longleftrightarrow$ $x=y$.\\
Además, si $x\neq y$ se tiene que $$\begin{array}{cl} d(x,y)&=2^{-max\{n\in \mathbb{N}: x|_{B_{n}}=y|_{B_{n}}\}}\\&=2^{-max\{n\in \mathbb{N}: y|_{B_{n}}=x|_{B_{n}}\}}\\ &=d(y,x),\end{array} $$ por lo tanto es simétrica.\\

Ahora para tres configuraciones $x,y,z \in A^{G}$ tenemos entonces por el Lema \ref{superlema} que una de las tres cantidades $n_{xy},n_{xz},n_{yz}$ debe de ser mayor o por lo menos igual a las otras y que las otras dos cantidades deben de ser iguales. Supondremos pues que se cumple con $n_{xy}=n_{xz}\leq n_{yz}$. Podemos entonces visualizar dos casos. El primero en el que las tres cantidades son iguales, i.e. $n_{xy}=n_{xz}=n_{yz}$. Y el segundo, en el cual una cantidad es estrictamente mayor. Es obvio también que $d(x,y)=2^{-n_{xy}}$. Si las tres cantidades son iguales entonces la desigualdad $d(x,z)\leq d(x,y)+d(y,z)$ se cumple trivialmente, sin importar el orden de las configuraciones $x,y$ o $z$. Ahora si una de las cantidades es estrictamente mayor, supongamos $n_{xy}=n_{xz}< n_{yz}$, entonces es fácil ver que, sin importar la combinación, se cumple la desigualdad triangular. Para los casos donde $d(x,y)$ o $d(x,z)$ están del lado izquierdo de la desigualdad y como $n_{xy}=n_{xz}$ implica directamente que $d(x,y)=d(x,z)$ y al ser $d(y,z)\geq 0$. Tenemos entonces que $d(x,y)\leq d(x,z)+d(y,z)$ y también que $d(x,z)\leq d(x,y)+d(y,z)$. Para el caso en el que $d(y,z)$ está en la parte izquierda de la desigualdad debemos visualizar lo siguiente:

$$\begin{array}{l}
n_{xy}=n_{xz}\leq n_{yz}\\
\rightarrow-n_{xy}=-n_{xz}\geq -n_{yz},
\end{array}$$
entonces se cumplen que
$$2^{-n_{xy}}\geq 2^{-n_{yz}}$$
y también que
$$2^{-n_{xz}}\geq 2^{-n_{yz}},$$
o dicho de otra forma
$$\begin{array}{l}
d(x,y)\geq d(y,z)\\
d(x,z)\geq d(y,z)
\end{array}$$
$$\Rightarrow d(x,y)+d(x,z)\geq d(y,z).$$

Por todo lo anterior, podemos concluir entonces que la función $d$ es una métrica para el espacio de configuraciones $A^{G}$.
\end{proof}

\begin{definicion}

Dos métricas $d_{1}$ y $d_{2}$ se dicen \emph{fuertemente equivalentes} si existen cantidades $\alpha, \beta >0$ tales que $$\alpha d_{1}(x,y)\leq d_{2}(x,y) \leq \beta d_{1}(x,y) \ \forall x,y\in X.$$
\end{definicion}

\begin{proposicion}\label{equivalentes}
Si dos métricas $d_{1}$ y $d_{2}$ son fuertemente equivalentes, entonces generan la misma topología.
\end{proposicion}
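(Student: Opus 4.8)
The statement to prove is Proposición \ref{equivalentes}: if two metrics $d_1$ and $d_2$ on a set $X$ are strongly equivalent, then they generate the same topology.

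\medskip

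The plan is to use the characterization of the topology generated by a metric in terms of open balls, which was just established in the theorem on ``Topología generada por una métrica''. Recall that a set $U$ is open in $\tau_{d}$ precisely when every point of $U$ contains an open $d$-ball around it that is still inside $U$. So to show $\tau_{d_1} = \tau_{d_2}$ it suffices to show that each $d_1$-ball contains a $d_2$-ball with the same center, and vice versa; this immediately implies that any set open for one metric is open for the other.

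\medskip

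First I would unwind the hypothesis: there are constants $\alpha, \beta > 0$ with $\alpha\, d_1(x,y) \le d_2(x,y) \le \beta\, d_1(x,y)$ for all $x,y \in X$. Then I would prove the two ball-containment inclusions. For a point $x$ and radius $r > 0$, I claim $B_{d_2}(x, \alpha r) \subseteq B_{d_1}(x, r)$: indeed, if $d_2(x,y) < \alpha r$, then $\alpha\, d_1(x,y) \le d_2(x,y) < \alpha r$, so $d_1(x,y) < r$. Symmetrically, $B_{d_1}(x, r/\beta) \subseteq B_{d_2}(x, r)$: if $d_1(x,y) < r/\beta$ then $d_2(x,y) \le \beta\, d_1(x,y) < r$. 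Both inclusions follow by dividing the defining inequality by the positive constant, so there is no real obstacle here — it is a one-line manipulation each.

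\medskip

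Finally I would assemble the conclusion. Take $U \in \tau_{d_1}$ and $x \in U$; by definition there is $r > 0$ with $B_{d_1}(x,r) \subseteq U$, and then by the first inclusion $B_{d_2}(x, \alpha r) \subseteq B_{d_1}(x, r) \subseteq U$, so $U \in \tau_{d_2}$. This shows $\tau_{d_1} \subseteq \tau_{d_2}$; the reverse inclusion $\tau_{d_2} \subseteq \tau_{d_1}$ is identical using the second ball-containment (with radius $r/\beta$). Hence $\tau_{d_1} = \tau_{d_2}$. The only point worth stating carefully is that strong equivalence is a symmetric condition (rewriting the inequalities gives $\beta^{-1} d_2 \le d_1 \le \alpha^{-1} d_2$), so the two halves of the argument are genuinely mirror images and nothing new is needed for the second one.
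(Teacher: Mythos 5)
Tu demostración es correcta. De hecho, el documento no incluye demostración para esta proposición (se remite a la referencia de Kreyszig), así que tu argumento la suple: es el razonamiento estándar, obteniendo de las desigualdades $\alpha\, d_{1}\leq d_{2}\leq \beta\, d_{1}$ las contenciones de bolas $B_{d_{2}}(x,\alpha r)\subseteq B_{d_{1}}(x,r)$ y $B_{d_{1}}(x,r/\beta)\subseteq B_{d_{2}}(x,r)$, y aplicando luego la caracterización de abiertos de $\tau_{d}$ dada en el teorema previo; las desigualdades y el ensamblaje final están bien verificados y no hay ningún hueco.
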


La demostración de esta Proposición no se incluye en este documento pues no empata directamente con sus objetivos. Si se quiere ver mas a detalle las definiciones y herramientas necesarias para la demostración de esta proposición pueden encontrarse en materiales como \cite{krey}.

\begin{teorema}{(Topología prodiscreta)}\label{metricaprodiscreta}

La topología generada por la métrica $d$, es precisamente la topología prodiscreta de $A^{G}$.
\end{teorema}

\begin{proof}
Recordemos que en la topología generada por una métrica, los abiertos son aquellos conjuntos en los que para cada punto en ellos es posible determinar un radio para el cual las bolas abiertas $$B(x,r):=\{y\in A^{G}: d(x,y)<r \}$$
estén completamente contenidas en el abierto. Es decir $U\subseteq A^{G}$ es un conjunto abierto si y solo si $\forall x \in U$  existe$ r >0$ tal que $B(x,r)\subset U$.\\

Para probar este teorema bastará con mostrar que las bolas abiertas, las cuales son base para la topología generada por la métrica $d$, son abiertos en la topología prodiscreta y de igual manera, que los cilindros básicos $\Pi_{\alpha}^{-1}(\{a\})$, con $a\in A$,  son abiertos para la topología generada por $d$.\\

Dada una bola abierta 
$$\begin{array}{rcl}
B(x,r)&:=&\{y\in A^{G}: d(x,y)<r\}\\
&=&\{y\in A^{G}: 2^{-k}<r\}\\
&=&\{y\in A^{G}: k>log_{2}\left(\frac{1}{r}\right)\}
\end{array}$$
donde $k=max\{n\in \mathbb{N}:\  x|_{B_{n}}=y|_{B_{n}}\}$, entonces podemos describir las bolas abiertas de la siguiente manera:
$$B(x,r):=\{y\in A^{G}: x|_{B_{k}}=y|_{B_{k}}\}.$$
Esto significa que $x(g)=y(g)$, $\forall g\in B_{k}$. Recordando que hay un número finito de elementos $g\in B_{k}$. \\
Dicho con palabras más simples, los puntos en $B(x,r)$ están en biyección con todas las configuraciones $y\in A^{G}$ tales que $x(g)=y(g)$ $\forall g\in B_{k}$. Así $$B(x,r)=\bigcap_{g\in B_{k}} \pi_{g}^{-1}(\{x(g)\})$$
notando que $x(g)\in A$ y además $k$ es el mínimo entero tal que $k>log_{2}\left(\frac{1}{r}\right)$. Al ser esto una intersección finita de elementos de la forma $\pi_{\alpha}^{-1}(\{a\})$, tenemos pues que $B(x,r)$ es un conjunto abierto en la topología producto (la topología prodiscreta).\\

Ahora bien, dado un abierto en la topología prodiscreta:
$$U=\bigcup_{\beta\in B}(\bigcap_{\alpha}(\pi_{\alpha}^{-1}(a_{\alpha \beta})) ).$$
Basta con verificar para cada $\pi_{\alpha}^{-1}(\{a\})$. Entonces. sea $\pi_{\alpha}^{-1}(\{a\})$ con $\alpha \in G$, $a\in A$. Dada una configuración $y\in \pi_{\alpha}^{-1}(\{a\})$ se tiene que $y(\alpha)=a$. Sea $k=min\{n\in \mathbb{N}: \alpha \in B_{n}\}$ determinamos $r= 2^{-k}$. Entonces dada una configuración $z\in B(y,r)$, ésta cumple con $$d(y,z)< 2^{-max\{n\in \mathbb{N}: y|_{B_{n}}=z|_{B_{n}}\}}<2^{-min\{n: \alpha \in B_{n}\}}i$$
Esto implica que $$max\{n\in \mathbb{N}: y|_{B_{n}}=z|_{B_{n}}\} > min\{n\in \mathbb{N}: \alpha \in B_{n}\},$$
siendo $m=max\{n\in \mathbb{N}: y|_{B_{n}}=z|_{B_{n}}\}$, esto nos dice que $$B_{0}\subseteq B_{1} \subseteq ... \subseteq B_{k}\subseteq ... \subseteq B_{m} \subseteq ...$$
lo cual implica que $\alpha \in B_{m}$. Como $y|_{B_{m}}=z|_{B_{m}}$, tenemos que $y(\alpha)=z(\alpha)=a$.\\
Entonces $z\in \pi_{\alpha}^{-1}(\{a\})$. Y luego $$y\in B(x,r) \subset \pi_{\alpha}^{-1}(\{a\}).$$
Esto prueba que $\pi_{\alpha}^{-1}(\{a\})$ es un abierto en la topología generada por la métrica $d$.
\end{proof}

\begin{ejemplo}
Dado un grupo contable $G$ y un conjunto $A$, definimos las siguientes familias de conjuntos $\{B_{n}\}_{n\in \mathbb{N}}$ y $\{C_{n}\}_{n\in \mathbb{N}}$:
$$    \begin{array}{cc}
\begin{array}{ccl}
B_{0}&=&\emptyset\\
B_{1}&=&\{g_{1}\}\\
B_{2}&=&\{g_{1},g_{2}\}\\
 & \vdots&  \\
B_{i}&=&\{g_{1},g_{2},...,g_{i}\}\\
 & \vdots&  \\
\end{array} &

\begin{array}{c}
C_{0}=\emptyset \\
C_{i}=G, \ \forall i \geq 1.
\end{array}

\end{array}$$
\end{ejemplo}

Sea $d_{1}$ la métrica inducida por la familia $\{B_{n}\}_{n\in \mathbb{N}}$ y $d_{2}$ la inducida por la familia $\{C_{n}\}_{n\in \mathbb{N}}$. 
Debemos visualizar que para dos configuraciones $x \neq y \in A^{G}$, el máximo índice en el cual coinciden en la familia $\{C_{n}\}_{n\in \mathbb{N}}$ es cero. Por lo tanto $d_{2}(x,y)=1$. Entonces, para una configuración $x\in A^{G}$ y para cualquier $\alpha >0$, es posible determinar una configuración $y\in A^{G}$ tal que cumpla con $$\begin{array}{c}\alpha d_{2}(x,y) > d_{1}(x,y)= \frac{1}{2^{n_{y}}}\\
\alpha > d_{1}(x,y).\end{array}$$
Así las métricas $d_{1}$ y $d_{2}$ no son fuertemente equivalentes. \\

Es aquí donde analizamos al grupo $G$. Si $G$ es un grupo de cardinal infinito, la familia $\{C_{n}\}_{n\in \mathbb{N}}$ definida en este ejemplo no cumple con las características para definir la métrica que queremos para $A^{G}$, pues los $C_{n}$ deben de ser finitos. Sin embargo, si $G$ fuera infinito, la métrica $d_{2}$ no estaría bien definida. Además, volvemos a los casos triviales donde en $A^{G}$ la topología prodiscreta es la topología discreta.\\

Este ejemplo busca mostrar un poco más a detalle cómo funciona la métrica que acabamos de definir.
Una vez visualizada la métrica definida por una familia $\{B_{n}\}_{n\in \mathbb{N}}$, por el Teorema \ref{metricaprodiscreta}, cualquier familia $\{C_{n}\}_{n\in \mathbb{N}}$ distinta, pero bien definida, genera también la topología prodiscreta para $A^{G}$. También se debe ver que estas métricas no serían fuertemente equivalentes. Esto sería el contraejemplo que muestra como es que el recíproco de la Proposición \ref{equivalentes} no es cierto, pues al generar la misma topología son métricas topológicamente equivalentes, pero no son fuertemente equivalentes.\\

Recordemos que en un espacio topológico $(X,\tau)$ la clausura de un subconjunto $A$ también se define de la siguiente manera:
$$\overline{A}:= \{x\in X|\  \forall U \in \tau, x\in U: A \cap U \neq \emptyset \}.$$

\begin{ejemplo}\label{ejemploi}
\end{ejemplo}
Sea $\mathbb{R}^{\mathbb{Z}}$, con su topología prodiscreta. Un elemento $x\in \mathbb{R}^{\mathbb{Z}}$ es de la forma $x=(...,x_{-1},x_{0},x_{1},...)$. Tomemos la función $f:\mathbb{R}^{\mathbb{Z}} \rightarrow \mathbb{R}^{\mathbb{Z}}$ de tal manera que $f(x)(n)=x(n+1)-x(n)^{2}$ $\forall n \in \mathbb{Z}$. Entonces, el conjunto $f(\mathbb{R}^{\mathbb{Z}})$ es denso en $\mathbb{R}^{\mathbb{Z}}$.\\

\begin{proof}
Entonces queremos mostrar que $\overline{f(\mathbb{R}^{\mathbb{Z}})}=\mathbb{R}^{\mathbb{Z}}$. Pero la primer contención es automática, $\overline{f(\mathbb{R}^{\mathbb{Z}})}\subseteq \mathbb{R}^{\mathbb{Z}}$.\\

Ahora queremos mostrar que $\mathbb{R}^{\mathbb{Z}} \subseteq \overline{f(\mathbb{R}^{\mathbb{Z}})}$.
Y recordemos que $$\overline{f(\mathbb{R}^{\mathbb{Z}})}:=\{x\in \mathbb{R}^{\mathbb{Z}} |\  \forall U\subset \mathbb{R}^{\mathbb{Z}} : x\in U, \Rightarrow  U \cap f(\mathbb{R}^{\mathbb{Z}}) \neq \emptyset\}.$$
Sea $y\in \mathbb{R}^{\mathbb{Z}}$. Mostraremos que $y\in \overline{f(\mathbb{R}^{\mathbb{Z}})}$. Sea $U$ un abierto que contiene a $y$. $U$ tiene la forma $U=\bigcup(\bigcap\pi^{-1}_{i\in F}(\{a\}))$ donde $a$ es un elemento de $\mathbb{R}$. Basta con tomar un abierto de la forma $U=\bigcap\pi^{-1}_{i\in F}(\{a\})$, donde $F\subset \mathbb{Z}$ es un conjunto finito. Seleccionemos $m\in \mathbb{Z}$ de tal manera que $F\subset [m, \infty)$. Tomemos pues la configuración $x_{F} \in \mathbb{R}^{\mathbb{Z}}$ de tal manera que $x_{F}(n)=0$, $\forall n\leq m$ y $x_{F}(n+1)=y(n)+x_{F}(n)^{2}$, $\forall n \geq m $. Esto significa que $f(x_{F})(n)=y(n)$ $\forall n \geq m$, entonces las configuraciones $f(x_{F})$ y $y$ coinciden en $[m,\infty)$ y por lo tanto en $F$.\\
Esto significa que $f(x_{F})\in U$. O dicho en otras palabras $U\cap f(\mathbb{R}^{\mathbb{Z}})\neq \emptyset$ y por lo tanto $y$ está en la clausura de $f(\mathbb{R}^{\mathbb{Z}})$.\\
\end{proof}

\subsection{La estructura prodiscreta uniforme}
Como se vio en la sección anterior la topología prodiscreta no es siempre sencilla de imaginar, pues tiene sus propias características que se vuelven un tanto complicada. El Teorema de Curtis-Hedlind es un resultado importante el cual depende de la continuidad de una función para definirla como autómata celular, sin embargo esto no siempre es posible, nuevamente a causa de su topología, es por eso que es necesario encontrar una estructura donde las funciones sean ''continuas'' a pesar de que la topología no lo permita. En esta sección definimos la ''estructura uniforme'' la cual nos permite obtener una generalización del Teorema de Curtis-Hedlund. \\

Sea $X$ un conjunto. Denotamos por $\Delta_{X}$ la diagonal en $X\times X$, i.e., $$\Delta_{X}:=\{(x,x)\in X\times X:\ x\in X \}.$$
Supongamos que $R$ es un subconjunto de $X\times X$ (podemos decir que $R$ es una relación sobre $X$). Para $y\in X$ definimos $R[y]\subseteq X$ dado por:
$$R[y]:=\{x\in X:\ (x,y)\in R\}.$$
El inverso de $R$, denotado por $R^{-1}:=\{(x,y)\in X\times X:\ (y,x)\in R\}$. Decimos que $R$ es simétrica si $R=R^{-1}$.\\
Si $R$ y $S$ son relaciones sobre $X$, definimos su composición como: $$R\circ S:=\{(x,y)\in X\times X:\ \exists z\in X\ \mbox{ tal que }\ (x,z)\in R, \ (z,y)\in S\}.$$

\begin{definicion}{(Estructura uniforme)}

Sea $X$ un conjunto, una estructura uniforme $\mathcal{U}$ es una colección de subconjuntos de $X\times X$ que satisface las siguientes condiciones:
\begin{enumerate}
\item[i)] Si $V\in \mathcal{U}$, entonces $\Delta_{X} \subset V$. 
\item[ii)] Si $V\in \mathcal{U}$ y $V\subset V' \subset X\times X$, entonces $V' \in \mathcal{U}$.
\item[iii)] Si $V\in \mathcal{U}$ y $W\in \mathcal{U}$, entonces $V\cap W \in \mathcal{U}$.
\item[iv)] Si $V\in \mathcal{U}$, entonces $V^{-1}\in \mathcal{U}$.
\item[v)] Si $V\in \mathcal{U}$, entonces existe $W\in \mathcal{U}$ tal que $W\circ W \subset V$.
\end{enumerate}
\end{definicion}

Un conjunto $X$ equipado con una estructura uniforme $\mathcal{U}$ es llamado \emph{espacio uniforme} y los elementos de $\mathcal{U}$ son llamados \emph{séquitos} de $X$.

\begin{ejemplo}
Dado un conjunto $X$, entonces $\mathcal{U}=\{X\times X\}$ es una estructura uniforme sobre $X$. Esta estructura uniforme es llamada \emph{estructura uniforme trivial} sobre $X$ y es la estructura uniforme más pequeña sobre $X$.
\end{ejemplo}

\begin{ejemplo}
La \emph{estructura uniforme discreta} sobre un conjunto $X$ es la estructura uniforme cuyos séquitos consisten en todos los subconjuntos de $X\times X$ que contienen $\Delta_{X}$. Esta es la estructura uniforme más grande sobre $X$. Se sigue de la segunda propiedad de las estructuras uniformes que la estructura uniforme discreta de $X$ es la única estructura uniforme de $X$ que admite la diagonal $\Delta_{X}$ como séquito.
\end{ejemplo}

Una estructura uniforme se dice \emph{metrizable} si está asociada con alguna métrica sobre $X$. Por ejemplo, la estructura uniforme discreta está asociada a la métrica discreta sobre $X$.

\begin{ejemplo}
Supongamos que $d$ es una métrica sobre $X$. Para cada $\varepsilon > 0$ definimos $V_{\varepsilon}\subset X\times X$ como el conjunto de pares $(x,y)$ tales que $d(x,y)< \varepsilon$. Sea $\mathcal{U}$ el conjunto de todos los subconjuntos  $W\subset X\times X$ tales que podemos encontrar un $\varepsilon >0$ para el cual se cumple que $V_{\varepsilon}\subset W$. Entonces $\mathcal{U}$ es una estructura uniforme sobre $X$ y es llamada la estructura uniforme asociada a la metrica $d$.
\end{ejemplo}

Sea $X$ un espacio uniforme, es fácil verificar que es posible definir una topología sobre $X$ tomando como conjuntos abiertos los subconjuntos $U\subset X$ los cuales satisfacen la siguiente propiedad: para cada $x\in U$, existe un séquito $V\subset X\times X$ tal que $V[x]=U$. Decimos que esta es la topología asociada a la estructura uniforme de $X$. Un subconjunto $N\subset X$ es un entorno de un punto $x\in X$ para esta topología si y solo si existe un séquito $V$ tal que $N=V[x]$. Esta topología es Hausdorff si y solo si la intersección de los séquitos de $X$ coincide con la diagonal $\Delta_{X}\subset X \times X$. \\

Sea $\mathcal{U}$ una estructura uniforme sobre $X$ y $Y$ un subconjunto de $X$. Entonces podemos definir una estructura uniforme sobre $Y$ inducida por la estructura uniforme sobre $X$ dada por $$\mathcal{U}_{Y}:=\{V\cap(Y\times Y): V\in \mathcal{U}\}.$$ 

La topología asociada a la estructura uniforme asociada a una métrica $d$ es la topología asociada a la métrica $d$. La topología asociada a la estructura uniforme inducida sobre $Y$ por la estructura uniforme de $X$ coincide con la topología inducida sobre $Y$ por la topología de $X$.

\begin{proposicion}
Sea $X$ un conjunto y sea $\mathcal{B}$ un conjunto no vacío de subconjuntos de $X\times X$. Entonces $\mathcal{B}$ es una base para alguna (necesariamente única) estructura uniforme sobre $X$ si y solo si se satisfacen las siguientes propiedades:
\begin{enumerate}
\item[i)] Si $V\in \mathcal{B}$, entonces $\Delta_{X} \subset V$.
\item[ii)] Si $V\in \mathcal{B}$ y $W \in \mathcal{B}$, entonces existe $U\in \mathcal{B}$ tal que $U\subset V\cap W$.
\item[iii)] Si $V\in \mathcal{B}$, entonces existe $W\in \mathcal{B}$ tal que $W\subset V^{-1}$.
\item[iv)] Si $V\in \mathcal{B}$, entonces existe $W\in \mathcal{B}$ tal que $W\circ W \subset V$.
\end{enumerate}
\end{proposicion}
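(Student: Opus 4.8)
El enunciado afirma que un conjunto no vacío $\mathcal{B}$ de subconjuntos de $X\times X$ es base de una (única) estructura uniforme sobre $X$ si y solo si cumple las cuatro propiedades i)--iv). La estrategia es probar las dos implicaciones por separado. Para la dirección directa ($\mathcal{B}$ base $\Rightarrow$ i)--iv)), primero fijaría la definición de ''base'': $\mathcal{B}$ es base de $\mathcal{U}$ cuando $\mathcal{U}=\{W\subseteq X\times X:\ \exists V\in\mathcal{B},\ V\subseteq W\}$, es decir, $\mathcal{U}$ es la familia de sobreconjuntos de elementos de $\mathcal{B}$. Cada una de i)--iv) se obtiene entonces traduciendo directamente el axioma correspondiente de estructura uniforme: i) es el axioma i) de $\mathcal{U}$ restringido a $\mathcal{B}\subseteq\mathcal{U}$; para ii), como $V,W\in\mathcal{U}$ se tiene $V\cap W\in\mathcal{U}$, luego existe $U\in\mathcal{B}$ con $U\subseteq V\cap W$; para iii), $V^{-1}\in\mathcal{U}$ da un $W\in\mathcal{B}$ con $W\subseteq V^{-1}$; y iv) sale del axioma v) de $\mathcal{U}$ combinado con que todo séquito contiene un elemento de $\mathcal{B}$.

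\textbf{Dirección recíproca.} Aquí está el grueso del trabajo: supuestas i)--iv), definimos $\mathcal{U}:=\{W\subseteq X\times X:\ \exists V\in\mathcal{B},\ V\subseteq W\}$ y verificamos que $\mathcal{U}$ satisface los cinco axiomas de estructura uniforme, además de que $\mathcal{B}$ es efectivamente base suya y de que tal estructura es única. El axioma i) de $\mathcal{U}$ ($\Delta_X\subseteq V$ para todo $V\in\mathcal{U}$) se sigue de i) para $\mathcal{B}$ y del hecho de que $\Delta_X$ está contenida en cualquier sobreconjunto. El axioma ii) de $\mathcal{U}$ (cerradura por sobreconjuntos) es inmediato por la forma de $\mathcal{U}$. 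Para iii) de $\mathcal{U}$ ($V,W\in\mathcal{U}\Rightarrow V\cap W\in\mathcal{U}$), tomo $B_1,B_2\in\mathcal{B}$ con $B_1\subseteq V$, $B_2\subseteq W$, aplico ii) para obtener $U\in\mathcal{B}$ con $U\subseteq B_1\cap B_2\subseteq V\cap W$, luego $V\cap W\in\mathcal{U}$. El axioma iv) de $\mathcal{U}$ (cerradura por inversos) usa que $(\,\cdot\,)^{-1}$ invierte inclusiones: de $B\subseteq V$ con $B\in\mathcal{B}$ saco por iii) de $\mathcal{B}$ un $W\in\mathcal{B}$ con $W\subseteq B^{-1}\subseteq V^{-1}$. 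El axioma v) de $\mathcal{U}$ es la traducción directa de iv) de $\mathcal{B}$: dado $V\in\mathcal{U}$ tomo $B\in\mathcal{B}$ con $B\subseteq V$, y por iv) hay $W\in\mathcal{B}\subseteq\mathcal{U}$ con $W\circ W\subseteq B\subseteq V$. Finalmente, $\mathcal{B}\subseteq\mathcal{U}$ por reflexividad de la inclusión, y $\mathcal{B}$ es base por construcción; la unicidad es clara porque cualquier estructura uniforme que tenga a $\mathcal{B}$ como base debe, por el axioma ii), contener todos los sobreconjuntos de elementos de $\mathcal{B}$, y no puede contener nada más por definición de base.

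\textbf{El punto delicado.} No hay obstáculos conceptuales profundos: la prueba es esencialmente una verificación axioma por axioma. El paso que requiere más cuidado es la traducción entre las propiedades ''existenciales'' de la base (''existe $U\in\mathcal{B}$ tal que $U\subseteq V\cap W$'', etc.) y las propiedades ''absolutas'' de la estructura uniforme completa (''$V\cap W\in\mathcal{U}$''); en particular hay que usar sistemáticamente la monotonía de las operaciones $R\mapsto R^{-1}$ y $(R,S)\mapsto R\circ S$ respecto de la inclusión —por ejemplo, que $W\circ W\subseteq V$ implica $W'\circ W'\subseteq V'$ cuando $W'\subseteq W$ y $V\subseteq V'$— para pasar de un representante en la base al séquito general. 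También conviene observar explícitamente que $\mathcal{B}\neq\emptyset$ garantiza $\mathcal{U}\neq\emptyset$ (de hecho $X\times X\in\mathcal{U}$), detalle menor pero necesario. Con estas observaciones, cada implicación se cierra en pocas líneas.
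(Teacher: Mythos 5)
Tu propuesta es correcta y completa: la verificación axioma por axioma en ambas direcciones, usando la monotonía de $R\mapsto R^{-1}$ y de la composición respecto de la inclusión, es precisamente el argumento estándar para este resultado. Cabe señalar que el documento enuncia esta proposición sin demostración (es un hecho clásico de la teoría de espacios uniformes, tomado de las referencias sobre autómatas celulares y grupos), y ni siquiera fija explícitamente la definición de base de una estructura uniforme; tu propuesta suple ambas cosas, fijando la definición $\mathcal{U}=\{W\subseteq X\times X:\ \exists V\in\mathcal{B},\ V\subseteq W\}$ y deduciendo de ella tanto los cinco axiomas como la unicidad, por lo que no hay nada que objetar.
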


Dada una función $f:X\rightarrow Y$, definimos una función en los productos cartecianos $f\times f: X\times X \rightarrow Y\times Y$ dada por $(f\times f) (x_{1},x_{2})=(f(x_{1}),f(x_{2}))$.

\begin{definicion}{(Función uniformemente continua)}
Sean $X$ y $Y$ estructuras uniformes, una función se dice \emph{uniformemente continua} si para cada séquito $W$ de $Y$, existe un séquito $V$ de $X$ tal que $(f\times f)(V)\subset W$.
\end{definicion}

Si $\mathcal{B}$ y $\mathcal{B}'$ son bases para $X$ y $Y$ respectivamente, entonces una función $f:x\rightarrow Y$ es uniformemente continua si y solo si se satisface que para cada $W\in \mathcal{B}'$, existe un $V\in \mathcal{B}$ tal que $(f\times f)(V) \subset W$.  Notese que esta condición es equivalente a que $(f\times f)^{-1}(W)$ es un séquito de $X$ para cada séquito $W$ de $Y$.

\begin{proposicion}
Sean $X$ y $Y$ estructuras uniformes. Entonces toda función uniformemente continua $f:x\rightarrow Y$ es continua (con respecto a las topologías asociadas a las estructuras uniformes de $X$ y $Y$). 
\end{proposicion}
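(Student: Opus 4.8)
The plan is to verify continuity pointwise, unwinding the definition of the topology associated to a uniform structure. Recall that, for a uniform space $X$, a subset $N \subseteq X$ is a neighborhood of a point $x \in X$ precisely when $N = V[x]$ for some séquito $V$, where $V[x] = \{x' \in X : (x',x) \in V\}$; equivalently, it suffices that $V[x] \subseteq N$ for some séquito $V$. So to show that $f: X \to Y$ is continuous at a point $x_0 \in X$, I must produce, for every neighborhood $N$ of $f(x_0)$ in $Y$, a neighborhood of $x_0$ in $X$ whose image under $f$ is contained in $N$.

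First I would fix $x_0 \in X$ together with a neighborhood $N$ of $f(x_0)$ in $Y$. By the characterization of neighborhoods, there is a séquito $W$ of $Y$ with $W[f(x_0)] \subseteq N$. Next I invoke the uniform continuity of $f$: there is a séquito $V$ of $X$ such that $(f \times f)(V) \subseteq W$. The set $V[x_0]$ is then a neighborhood of $x_0$ in $X$, and I claim $f(V[x_0]) \subseteq N$. Indeed, if $x \in V[x_0]$, then $(x, x_0) \in V$, hence $(f(x), f(x_0)) = (f \times f)(x, x_0) \in (f\times f)(V) \subseteq W$, which says exactly that $f(x) \in W[f(x_0)] \subseteq N$. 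This establishes continuity at $x_0$, and since $x_0$ was arbitrary, $f$ is continuous. Alternatively, one may run the same computation globally: given an open set $U \subseteq Y$ and a point $x_0 \in f^{-1}(U)$, choose $W$ with $W[f(x_0)] \subseteq U$ and $V$ with $(f\times f)(V)\subseteq W$, and conclude $V[x_0] \subseteq f^{-1}(U)$, so that $f^{-1}(U)$ is open in $X$.

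There is essentially no hard part here; the only point requiring care is the bookkeeping with the convention $R[y] = \{x : (x,y) \in R\}$ adopted earlier in the text, so that the membership $(f(x), f(x_0)) \in W$ is read correctly as $f(x) \in W[f(x_0)]$ rather than the reverse. Optionally, if a base $\mathcal{B}$ for the uniform structure of $Y$ is available, it suffices to take $W \in \mathcal{B}$, since every séquito contains a basic one; this does not alter the argument.
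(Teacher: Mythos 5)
Tu demostración es correcta y sigue esencialmente el mismo camino que la del texto: fijar un punto y una vecindad $N$ de su imagen, tomar un séquito $W$ con $W[f(x_0)]\subseteq N$, usar la continuidad uniforme para obtener un séquito $V$ (en el texto, directamente $V=(f\times f)^{-1}(W)$) y verificar que $f(V[x_0])\subseteq N$. La única diferencia es cosmética: tú usas la formulación $(f\times f)(V)\subseteq W$ y el texto la formulación equivalente con la preimagen, ya señalada antes en la sección.
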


\begin{proof}
Supongamos que $f:X \rightarrow Y$ es uniformemente continua. Sea $x\in X$ y sea $N\subset Y$ una vecindad de $f(x)$. Entonces existe un séquito $W$ de $Y$ tal que $W[f(x)]=N$. Como $f$ es uniformemente continua, el conjunto $V=(f\times f)^{-1}(W)$ es un séquito de $X$. El conjunto $V[x]$ es una vecindad de $X$ y satisface que $f(V[x])\subset W[f(x)]=N$. Esto muestra que $f$ es continua. 
\end{proof}

Una función continua entre espacios uniformes puede no ser uniformemente continua. 
\begin{ejemplo}
La función $x\mapsto x^{2}$ no es uniformemente continua sobre $\mathbb{R}$, equipada con la estructura uniforme asociada con su métrica usual.
\end{ejemplo}

\begin{definicion}{(Isomorfismo uniforme)}

Sean $X$ y $Y$ estructuras uniformes. Una función $f:X \rightarrow Y$ es un \emph{isomorfismo uniforme} si $f$ es biyectiva y tanto $f$ como $f^{-1}$ son uniformemente continuas.
\end{definicion}

Un isomorfismo uniforme de un conjunto $X$ en su mismo recibe el nombre de \emph{automorfismo}.

\subsubsection{Producto de espacios uniformes}

Sea $X$ un conjunto. Supongamos que tenemos una familia $(X_{\lambda})_{\lambda\in \Lambda}$ de espacios uniformes y una familia $(f_{\lambda})_{\lambda\in \Lambda}$ de funciones $f_{\lambda}:X \rightarrow X_{\lambda}$. Entonces la \emph{estructura uniforme inicial} asociada con estas es la estructura uniforme sobre $X$ más pequeña tal que todas las funciones $f_{\lambda}:X\rightarrow X_{\lambda}$ son uniformemente continuas. \\
En el caso particular cuando $x=\prod_{\lambda\in\Lambda}X_{\lambda}$ y $f_{\lambda}:X\rightarrow X_{\lambda}$ es el mapeo proyección, la estructura uniforme inicial asociada sobre $X$ es llamada \emph{estructura uniforme producto}. Una base de séquitos para la estructura uniforme producto sobre $X$ es obtenida al tomar todos los subconjuntos de $X\times X$ los cuales son de la forma:
$$\prod_{\lambda\in \Lambda}V_{\lambda}\subset \prod_{\lambda \in \Lambda} X_{\lambda}\times X_{\lambda}= \left( \prod_{\lambda \in \Lambda}X_{\lambda} \right) \times \left( \prod_{\lambda \in \Lambda}X_{\lambda} \right) = X\times X ,$$
donde $V_{\lambda}\subset X_{\lambda}\times X_{\lambda}$ es un séquito de $X_{\lambda}$ y $V_{\lambda}=X_{\lambda}\times X_{\lambda}$ para todos salvo un número finito de $\lambda \in \Lambda$. \\
Cuando cada $X_{\lambda}$ esta dotado con la estructura uniforme discreta, la estructura uniforme producto sobre $X=\prod_{\lambda\in \Lambda}$ es llamada \emph{estructura uniforme prodiscreta}.

\subsubsection{Teorema de Curtis-Hedlund generalizado}

Sea $G$ un grupo y $A$ un conjunto. La \emph{estructura uniforme prodiscreta} sobre $A^{G}$ es la estructura uniforme producto obtenida por tomar la estructura uniforme discreta sobre cada factor $A$ de $A^{G}=\prod_{g\in G}{A}$.\\
Una base de séquitos para la estructura uniforme prodiscreta sobre $A^{G}$ está dada por los conjuntos $W_{\Omega}\subset A^{G} \times A^{G}$, donde
$$W_{\Omega}=\{(x,y)\in A^{G}\times A^{G}:\ x|_{\Omega}=y|_{\Omega}\}$$
y $\Omega$ corre sobre todos los subconjuntos finitos de $G$.\\
Observemos que $$V(x,\Omega)=\{y\in A^{G}:\ (x,y)\in W_{\Omega}\},$$
para todo $x\in A^{G}$.

\begin{teorema}{(Teorema de Curtis-Hedlund generalizado)}\label{curtis-hedlund gen}

Sea $G$ un grupo y $A$ un conjunto. Sea $\tau:A^{G} \rightarrow A^{G}$ una función y equipamos $A^{G}$ con la estructura uniforme prodiscreta . Entonces las siguientes afirmaciones son equivalentes.

\begin{enumerate}
\item[i)] $\tau$ es un autómata celular.
\item[ii)] $\tau$ es uniformemente continua y $G$-equivariante.

\end{enumerate}

\end{teorema}
\begin{proof}
Supongamos que $\tau:A^{G} \rightarrow A^{G}$ es un autómata celular. Ya sabemos que $\tau$ es $G$-equivariante. Debemos mostrar que es uniformemente continua. Sea $S$ un conjuto memoria para $\tau$. Sabemos que si dos configuraciones $x,y\in A^{G}$ coinciden en $gS$ para algún $g\in G$, entonces $\tau(x)(g)=\tau(y)(g)$. Consecuentemente, si las configuraciones $x$ y $y$ coinciden en $\Omega S=\{gs:\ g\in \Omega,\ s\in S\}$ para algún subconjunto $\Omega \subset G$, entonces $\tau(x)$ y $\tau(y)$ coinciden en $\Omega$. Observemos que $\Omega S$ es finito siempre que $\Omega$ sea finito. Deducimos entonces que $$(\tau \times \tau)(W_{\Omega S}) \subset W_{\Omega S}$$ para cada subconjunto finito $\Omega$ de $G$. Como los conjuntos $W_{\Omega}$, donde $\Omega$ corre por todos los subconjuntos finitos de $G$, forman una base de séquitos para la estructura uniforme prodiscreta sobre $A^{G}$, se sigue que $\tau$ es uniformemente continua. Esto muestra que i) implica ii).\\
Conversamente, si suponemos que $\tau$ es uniformemente continua y $G$-equivariante. Probemos pues que $\tau$ es un autómata celular. Consideremos el subconjunto $\Omega=\{1_{G}\}\subset G$. Como $\tau$ es uniformemente continua, existe un subconjunto finito $S \subset G$ tal que $(\tau \times \tau )(W_{S})\subset W_{\Omega})$. Esto significa que $\tau(x)(1_{G})$ solo depende de la restricción de $x$ sobre $S$. Entonces, existe una función $\mu:A^{S}\rightarrow A$ tal que $$\tau(x)(1_{G})=\mu(x|_{S}),$$ para todo $x\in A^{G}$. Usando la $G$-equivarianza de $\tau$ obtenemos que $$\tau(x)(g)=[g^{-1}\tau(x)](1_{G})=\tau(g^{-1}x)(1_{G})=\mu((g^{-1}x)|_{S})$$ para todo $x\in A^{G}$. Esto muestra que $\tau$ es un autómata celular con conjunto memoria $S$ y regla local $\mu$. 
\end{proof}

Cada función uniformemente continua entre espacios uniformes es continua con respecto a las topologías asociadas y el converso es cierto cuando el dominio es un espacio compacto. La topología definida por la estructura uniforme prodiscreta en $A^{G}$ es la topología prodiscreta. En el caso de que A sea finito, la topología prodiscreta en $A^{G}$ es compacta según el Teorema de Tychonoff. Por tanto, este teorema se reduce al Teorema de Curtis-Hedlund en este caso.

\begin{corolario}[Autómatas celulares invertibles]
Sea $G$ un grupo y $A$ un conjunto. Sea $\tau:A^{G} \rightarrow A^{G}$ una función y equipemos a $A^{G}$ con su estructura uniforme prodiscreta. Entonces las siguientes afirmaciones son equivalentes:
\begin{enumerate}
\item[a)] $\tau$ es un autómata celular invertible.
\item[b)] $\tau$ es una automorfismo uniforme $G$-equivariante de $A^{G}$. 
\end{enumerate}
\end{corolario}

\begin{proof}
Es fácil ver que la función inversa de una función $G$-equivariante de $A^{G}$ en su mismo es también $G$-equivariante. Entonces, la condición de equivalencia entre las afirmaciones se sigue de la caracterización de autómatas celulares dada por el Teorema de Curtis-Hedlund generalizado \ref{curtis-hedlund gen}.
\end{proof}

\addcontentsline{toc}{section}{Referencias}

\end{document}